\newtheorem{theorem}{Theorem}[section]
\newtheorem{lemma}[theorem]{Lemma}
\newtheorem{proposition}[theorem]{Proposition}
\newtheorem{corollary}[theorem]{Corollary}
\newtheorem{remark}[theorem]{Remark}
\newtheorem{question}[theorem]{Question}
\newtheorem{claim}{Claim}
\newenvironment{proof2} {\begin{proof}[Proof of the claim]} {\end{proof}}
\newenvironment{proof3} {\begin{proof}[Proof of Proposition \ref{poprawka}]} {\end{proof}}
\theoremstyle{definition}
\newtheorem{definition}[theorem]{Definition}
\newtheorem{example}[theorem]{Example}
\newtheoremstyle{sltheorem}
{}                
{}                
{\slshape}        
{}                
{\bfseries}       
{.}               
{ }               
{\thmname{#1}\thmnote{ \bfseries #3}}                
\theoremstyle{sltheorem}
\theoremstyle{sltheorem}
\newtheorem{reptheorem}{Theorem}
\DeclareMathOperator{\at}{at}
\DeclareMathOperator{\bor}{Bor}
\DeclareMathOperator{\st}{St}
\DeclareMathOperator{\clop}{Clop}
\DeclareMathOperator{\cov}{cov}
\begin{document}

\newcommand{\cc}{\mathfrak{c}}
\newcommand{\N}{\mathbb{N}}
\newcommand{\B}{\mathbb{B}}
\newcommand{\A}{\mathbb{A}}
\newcommand{\C}{\mathbb{C}}
\newcommand{\E}{\mathbb{E}}
\newcommand{\Q}{\mathbb{Q}}
\newcommand{\R}{\mathbb{R}}
\newcommand{\K}{\mathbb{K}}
\newcommand{\mem}{\varrho}
\newcommand{\Z}{\mathbb{Z}}
\newcommand{\T}{\mathbb{T}}
\newcommand{\G}{\mathbb{G}}
\newcommand{\HH}{\mathbb{H}}
\newcommand{\F}{\mathbb{F}}
\newcommand{\PP}{\mathbb{P}}
\newcommand{\rin}{\right\rangle}
\newcommand{\SSS}{\mathbb{S}}
\newcommand{\forces}{\Vdash}
\newcommand{\dom}{\text{dom}}
\newcommand{\osc}{\text{osc}}
\newcommand{\FF}{\mathcal{F}}
\newcommand{\AAA}{\mathcal{A}}
\newcommand{\BB}{\mathcal{B}}
\newcommand{\I}{\mathcal{I}}
\newcommand{\X}{\mathcal{X}}
\newcommand{\Y}{\mathcal{Y}}
\newcommand{\MM}{\mathcal{M}}
\newcommand{\CC}{\mathcal{C}}
\newcommand{\OO}{\mathcal{\nu_\infty}}
\newcommand{\non}{\mathfrak{non}}
\newcommand{\add}{\mathfrak{add}}

\newcommand{\cof}{\mathfrak{cof}}
\newcommand{\downset}[1]{\langle #1 \rangle}

\author{Damian G\l odkowski}
\address[D.~G\l odkowski]{Institute of Mathematics of the Polish Academy of Sciences,
ul.  \'Sniadeckich 8,  00-656 Warszawa, Poland}
\address{Faculty of Mathematics, Informatics, and Mechanics, 
University of Warsaw, ul. Banacha 2, 02-097 Warszawa, Poland}
\email{\texttt{d.glodkowski@uw.edu.pl}}

\thanks{The research of the first named author was partially supported by the NCN (National Science Centre, Poland) research grant no. 2021/41/N/ST1/03682.}

\author{Agnieszka Widz}
\address[A.~Widz]{Institute of Mathematics, {\L}\'od\'z University of Technology, Aleje Politechniki 8, 93-590 {\L}\'od\'z, Poland}
\email{AgnieszkaWidzENFP@gmail.com}
\thanks{The research of the second named author was supported by the NCN (National Science Centre, Poland), under the Weave-UNISONO call in the Weave programme 2021/03/Y/ST1/00124.}
\thanks{For the purpose of Open Access, the authors have applied a CC-BY public copyright licence to any Author Accepted Manuscript (AAM) version arising from this submission.}

\subjclass[2020]{03E35, 03E75, 06E15, 28A33, 46E15, 46E27}

\title{Epic math battle of history: Grothendieck vs Nikodym}

\begin{abstract}
We define a $\sigma$-centered notion of forcing that forces the existence of a Boolean algebra with the Grothendieck property and without the Nikodym property. In particular, the existence of such an algebra is consistent with the negation of the continuum hypothesis. The algebra we construct consists of Borel subsets of the Cantor set and has cardinality $\omega_1$. We also show how to apply our method to streamline Talagrand's construction of such an algebra under the continuum hypothesis.
\end{abstract}
\keywords{Grothendieck property, Nikodym property, Boolean algebras, balanced sets, consistency result, forcing, sequences of measures, convergence of measures}
\maketitle
\section{Introduction}
In 1953, Grothendieck \cite[Section 4]{Grothendieck} proved that the space $l_\infty$ of bounded sequences has the following property:

\begin{center}
    \textit{All weak*-convergent sequences in the dual space $l_\infty^*$ are also weakly convergent.}
\end{center}
The above theorem motivated the following definition.

\begin{definition}
     A Banach space $X$ has the \textbf{Grothendieck property} if all weak*- convergent sequences in the dual space $X^*$ are also weakly convergent. 
\end{definition}

Research on the Grothendieck property has long history and is still ongoing \cite{Barcenas_Diomedes, Bourgain, Diestel_and_vector_measures,  Gonzalez_Onieva, Gonzalez_Kania, Romeo, Haydon_Boolean_rings, Kakol_Sobota_Zdomskyy, Koszmider_continuous_functions, Talagrand_Un_nouveau}. 
If $X$ is of the form $C(K)$ for a compact space $K$, then $X$ has the Grothendieck property if and only if each  weak*-convergent sequence of Radon measures on $K$ is also weakly convergent. Recall that $l_\infty$ is isometric to the Banach space $C(\beta\N)$ of continuous functions on the \v{C}ech-Stone compactification of the natural numbers. Moreover, $\beta \N$ is the Stone space of the Boolean algebra $\mathcal{P}(\N)$.

Schachermayer, inspired by the Grothendieck's result, 
introduced the notion of the Grothendieck property for Boolean algebras \cite[Definition 2.3]{Schachermayer}.

\begin{definition}
  A Boolean algebra $\A$ has the \textbf{Grothendieck property}, if the Banach space $C(\st(\A))$ of continuous functions on the Stone space of $\A$ has the Grothendieck property.
\end{definition}
 
Analogously, motivated by Nikodym's article \cite{Nikodym}, Schachermayer defined the Nikodym property for Boolean algebras \cite[Definition 2.4]{Schachermayer}.  

\begin{definition}
    A Boolean algebra $\A$ has the \textbf{Nikodym property}, if every sequence $(\mu_n)$ of bounded finitely additive signed measures on $\A$, which is pointwise convergent to zero (i.e. for all $A\in \A$ we have $\lim_{n \to \infty}\mu_n(A)=0$) is bounded in the norm (i.e. $\sup_{n\in\N} ||\mu_n||$ is bounded, cf. Section \ref{notation}).
\end{definition}
 The Nikodym property is similar to the Grothendieck property in many ways. For example, if a Boolean algebra $\A$ has the Grothendieck or Nikodym property, then its Stone space does not contain non-trivial convergent sequences. 
 In \cite{Grothendieck} and \cite{Ando} the authors proved that the complete Boolean algebras have both the Grothendieck and Nikodym properties. 
The completeness assumption can be relaxed to some combinatorial property (SCP) introduced by Haydon \cite[Definition 1A, Proposition 1B]{Haydon}, which is even weaker than  $\sigma$-completeness \cite{Freniche}. Other connections between the Grothendieck property and the Nikodym property may be found in \cite{Molto, Seever}. Both of the properties were also considered in a recent paper by \.Zuchowski in the context of filters on $\N$ \cite{zuchowski2024nikodym}.

 However, the Grothendieck and Nikodym properties are not equivalent. There are Boolean algebras with the Nikodym property but without the Grothendieck property, e.g. the Boolean algebra of Jordan measurable subsets of the unit interval \cite{Lopez_Alfonso},\cite[Propositions 3.2, 3.3]{Schachermayer}. The question if there is a Boolean algebra with the Grothendieck property, but without the Nikodym property turned out to be much more difficult. This is the central question of our article.

 \begin{question}
     Does there exist a Boolean algebra with the Grothendieck property that does not have the Nikodym property? 
 \end{question}

 So far there was only one known example of such a Boolean algebra. It was constructed by Talagrand in \cite{talagrand}. However, his construction uses the continuum hypothesis ({\sf CH}) and so the question of the existence of such a Boolean algebra in {\sf ZFC} remains open. Since Talagrand's construction there was no much progress in this subject, so it was natural to ask the following question. 

 \begin{question}
  Is it consistent with $\neg${\sf CH} that there is a Boolean algebra with the Grothendieck property but without the Nikodym property?   
 \end{question}

In this article we answer this problem in the affirmative. Moreover, the algebra we construct has cardinality $\omega_1$.

\begin{reptheorem}[\ref{main}]
It is consistent with $\neg${\sf CH} that there is a Boolean algebra of size $\omega_1$ with the Grothendieck property but without the Nikodym property. 
 \end{reptheorem}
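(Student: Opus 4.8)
The plan is to realize $\mathbb{A}$ as the Boolean subalgebra of $\bor(2^\omega)$ generated by $\clop(2^\omega)$ together with a sequence $\langle A_\alpha:\alpha<\omega_1\rangle$ of new Borel sets, this sequence being added by a single $\sigma$-centered notion of forcing $\PP$ whose conditions record finitely much clopen information about finitely many of the $A_\alpha$, together with finitely many auxiliary promises. Writing $\mathbb{A}_\alpha$ for the subalgebra generated by $\clop(2^\omega)\cup\{A_\beta:\beta<\alpha\}$, the $\mathbb{A}_\alpha$ are countable and $\mathbb{A}=\bigcup_{\alpha<\omega_1}\mathbb{A}_\alpha$. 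Before forcing, fix in the ground model a norm-unbounded, pointwise null sequence $(\nu_n)$ of finitely supported signed measures on $2^\omega$; a natural candidate is $\nu_n=\sum_{s\in2^n}(-1)^{s(n-1)}\delta_{x_s}$ for fixed points $x_s\in[s]$, which has norm $2^n$ and vanishes on every basic clopen set of depth $<n$, hence tends to $0$ on all of $\clop(2^\omega)$. This $(\nu_n)$ is the intended witness to the failure of the Nikodym property, and since it is fixed in the ground model it stays norm-unbounded in every outer model.

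The auxiliary promises serve two ends. First, each new generator is kept ``off'' the supports of the $\nu_n$ --- say, meeting $\bigcup_n\supp(\nu_n)$ in a finite set --- so that, combined with $\nu_n\restriction\clop(2^\omega)\to0$, one gets $\nu_n(b)\to0$ for \emph{every} $b\in\mathbb{A}$. Second, and this is the engine of the construction, the family $\langle A_\alpha\rangle$ is forced to be \emph{balanced} in the sense isolated earlier in the paper; by Proposition~\ref{construction-properties} a balanced family generates an algebra with the Grothendieck property. I would then verify that $\PP$ is $\sigma$-centered --- two conditions agreeing on their finite clopen data are compatible, so $\PP$ is a countable union of centered sets --- and that $|\PP|\le\cc$ (indeed $|\PP|=\omega_1$ if the promises are purely combinatorial). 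Hence $\PP$ is ccc, so forcing with it over a ground model satisfying $2^\omega=2^{\omega_1}=\omega_2$ preserves cardinals and the continuum, and $\neg${\sf CH} survives into $V[G]$.

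It then remains to read off the conclusion in $V[G]$. The chain $\langle\mathbb{A}_\alpha\rangle$ is generically strictly increasing cofinally often, so $|\mathbb{A}|=\omega_1$. The sequence $(\nu_n)$ is pointwise null on $\mathbb{A}$ by genericity against the density sets attached to the first promise, while $\sup_n\|\nu_n\|=\infty$, so $\mathbb{A}$ fails the Nikodym property. And $\mathbb{A}$ has the Grothendieck property by Proposition~\ref{construction-properties}; I expect that part to proceed by reflection, since a putative counterexample in $V[G]$ amounts to a bounded, pointwise null sequence of measures $(\mu_n)$ on $\mathbb{A}$, a pairwise disjoint sequence $(a_k)$ in $\mathbb{A}$, an $\varepsilon>0$ and indices $n_k\uparrow\infty$ with $|\mu_{n_k}(a_k)|\ge\varepsilon$ (the Dieudonn\'e--Grothendieck reformulation of the failure of relative weak compactness), and each $a_k$ together with each finite fragment of each $\mu_n$ is captured at some countable stage; as the part of $\PP$ beyond any countable stage is again $\sigma$-centered, one reduces to an intermediate model and contradicts the balancedness that $\mathbb{A}$ enjoys there.

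The main obstacle is the design of the balancedness condition. It must be weak enough to be forced by finite approximations with a $\sigma$-centered poset --- in particular expressible as $\omega_1$-many local, finitary demands on the generators, and compatible with keeping $(\nu_n)$ pointwise null --- yet strong enough that \emph{every} bounded pointwise null sequence of measures appearing in the final model, not merely those coded in the ground model, is forced to be weakly null. Collapsing the quantifier ``over all sequences of measures on an algebra of size $\omega_1$'' down to the countably-many-per-stage requirements one can genuinely meet, via the chain condition together with the chain decomposition of $\mathbb{A}$, is the heart of the matter; the remaining pieces --- the Banach-space reformulation of the Grothendieck property and the density and bookkeeping computations --- are comparatively routine.
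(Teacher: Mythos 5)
There is a fundamental confusion at the heart of your proposal about which combinatorial property does what. In the paper, \emph{balancedness} is the mechanism for the \emph{failure of the Nikodym property} (Proposition~\ref{NN_implies_not_Nikodym}): every balanced algebra admits the pointwise-null, norm-unbounded sequence $\mu_n = n\varphi_n$, where $\varphi_n(A)=\int_A \delta_n\,d\lambda$. Balancedness does \emph{not} give the Grothendieck property — $\clop(C)$ itself is balanced and its Stone space is $C$, which has plenty of convergent sequences and hence fails Grothendieck badly. There is no ``Proposition~\ref{construction-properties}'' in the paper, and no result asserting that balanced families generate Grothendieck algebras.

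This misattribution has two downstream effects. First, your separate Nikodym witness $\nu_n=\sum_{s\in 2^n}(-1)^{s(n-1)}\delta_{x_s}$, together with the promise that new generators meet $\bigcup_n\supp(\nu_n)$ in a finite set, is perfectly fine as stated — but it is redundant once the algebra is balanced, since balancedness alone already supplies $n\varphi_n$. Second, and more seriously, the Grothendieck property is left entirely unaddressed. You wave at it in the final paragraph (``strong enough that every bounded pointwise null sequence of measures appearing in the final model \ldots\ is forced to be weakly null''), but this is precisely the hard part, and it is not delivered by any balancedness-type condition at all. The paper's device is the property $(\mathcal{G})$ (Definition~\ref{cisza_przed_burza}), reduced to $(\mathcal{G}^*)$ (Definition~\ref{G*-definition}, Proposition~\ref{thm-for-forcing-version}), and realized at each stage by a condition structure that records not just approximations to the new generator $G$ but also an auxiliary antichain $(H_n)$ and a finite set of probability measures $\MM^p$; the density lemmas (Lemmas~\ref{construction-finite-step}, \ref{one-step-forcing}) use $\MM^p$ to ensure generically that $G$ simultaneously separates, in the sense of $(\mathcal{G}^*)$, every normal sequence whose limit measure eventually enters $\MM^p$. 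Without that layer, the ``collapse of the quantifier over all null sequences'' you gesture at has no mechanism. The rest of your plan — a single $\sigma$-centered poset of size $\omega_1$ in place of the paper's length-$\omega_1$ finite-support iteration, the cardinality and chain-condition accounting, the reflection to intermediate submodels — is a reasonable sketch of the surrounding scaffolding, but it cannot be completed until the $(\mathcal{G})$ engine is in place.
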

 
Very recently, there has been released a preprint by Sobota and Zdomskyy \cite{SZpreprint} with a proof that Martin's axiom ({\sf MA}) implies the existence of such an algebra of cardinality $\mathfrak c$.\footnote{We were not aware of the existence of these results before the preprint appeared. Our results were obtained independently.}
 
The proof of Theorem \ref{main} strongly relies on the ideas behind Talagrand's construction. His Boolean algebra consists of Borel sets satisfying certain symmetry property (we call such sets \emph{balanced sets}). This ensures that the constructed Boolean algebra will not have the Nikodym property.
 
We define a $\sigma$-centered forcing notion that extends a given countable balanced Boolean algebra to a bigger one that is still balanced. Moreover, some sequences of measures (picked by a generic filter) which were weak*-convergent in the initial algebra are no longer weak*-convergent in the extension. Then we show that in the model obtained from the finite support iteration of length $\omega_1$ of such forcing notions, there exists a balanced Boolean algebra with the Grothendieck property. The idea behind this forcing comes from the work of Koszmider \cite{Koszmider_minimal_extensions} and of Fajardo \cite{fajardo}. In the former paper Koszmider introduced a notion of forcing that adds a Boolean algebra of cardinality $\omega_1$, whose Stone space does not contain non-trivial convergent sequences. Fajardo adapted this method to obtain a Banach space $C(K)$  of small density and with few operators. In particular, this space has the Grothendieck property. In our article we show how to combine this approach with the theory of balanced algebras to obtain a Boolean algebra without the Nikodym property.   

Most of the results concerning fundamental properties of balanced sets (cf. Section \ref{balanced-families-section}) that we use in our paper are essentially due to Talagrand. However, our construction requires some significant changes. Since the construction is rather complicated and technical, we decided to include detailed proofs at each step. We also show how to construct a balanced Boolean algebra with the Grothendieck property under {\sf CH} using our modification of Talagrand's (see Theorem \ref{main_CH}).

Another interesting related issue is the question about the possible sizes of Boolean algebras with the Grothendieck and Nikodym properties. There always exists such an algebra of size $\mathfrak{c}$ (e.g. $\mathcal{P}(\N)$). It is well-known that if $\mathfrak{p}=\mathfrak{c}$, then $\mathfrak{c}$ is the only possible size of such an algebra (it follows from \cite[Corollary 3F]{Haydon_Levy_Odell}). In particular, it happens under {\sf MA}. Brech showed the consistency of the existence of a Boolean algebra with the Grothendieck property of cardinality smaller than $\mathfrak{c}$ \cite{Brech}. In \cite[Chapter 52, Question 10]{Open_problems_II}
Koszmider asked whether it is consistent that there is no Boolean algebra with the Grothendieck property of size $\mathfrak p$. It turned out that the answer is positive \cite[Proposition 6.18]{Separably_injective}. Recently Sobota and Zdomskyy published several articles on cardinal characteristics related to Boolean algebras with the Grothendieck or Nikodym property \cite{Sobota_Applied_Logic, Sobota_kukuryku, SZ-Nikodym_in-Sacks, SZ_forcing, SZ_adding_real}. The Boolean algebra we construct is the first example of a Boolean algebra with the Grothendieck property and without the Nikodym property of size $\omega_1< \mathfrak c$ (in particular, our model satisfies $\mathfrak p = \omega_1 < \mathfrak c$, cf. Corollary \ref{pseudointersection_number}) and the first construction of such a Boolean algebra of size less than $\mathfrak{c}$. In particular, the Stone space of this algebra is another example of a Efimov space. In fact, if we want only to obtain a Boolean algebra with the Grothendieck property, then our forcing can be simplified in a natural way (by dropping some restrictions on the conditions).  

It is also worth mentioning that the Grothendieck and Nikodym properties are also discussed in the non-commutative setting in the category of C*-algebras. The definition of the Grothendieck property for C*-algebras is the same as for general Banach spaces. We say that a C*-algebra $A$ has the Nikodym property, if every sequence of bounded linear functionals on $A$ that is convergent to $0$ on projections is bounded in the norm. If $\A$ is a Boolean algebra, then it has the Nikodym property if and only if $C(\st(\A))$ has the Nikodym property, when considered as a C*-algebra. The Nikodym property is especially interesting in the case when given C*-algebra has many projections, e.g. when its real rank is zero. It is well-known that von Neumann algebras have both the Grothendieck and Nikodym properties (see \cite[Corollary 7]{pfitzner} and \cite[Theorem 1]{von_neumann_nikodym}). The problem of the existence of C*-algebras with the Grothendieck property and without the Nikodym property is still open in {\sf ZFC} even in the non-commutative case. 
\begin{question}
    Is there a C*-algebra of real rank zero, which has the Grothendieck property, but does not have the Nikodym property?
\end{question}
The structure of the paper is the following. The next section describes notation and terminology. In Section \ref{GN-section} we introduce the property $(\mathcal{G})$ of Boolean algebras and the property of being balanced. Then we show that they imply the Grothendieck property and the negation of the Nikodym property respectively. Section \ref{balanced-families-section} is devoted to properties of finite balanced families (this includes the behavior of balanced families under basic operations and approximating balanced families with families of clopen subsets of the Cantor set) and tools for extending countable balanced Boolean algebras. In Section \ref{construction} we show a method of extending countable balanced Boolean algebras to bigger ones in a way that destroys weak*-convergence of given sequences of measures. Then we show how to apply this method to construct a Boolean algebra with the Grothendieck property and without Nikodym property assuming the continuum hypothesis. In Section \ref{forcing} we describe a $\sigma$-centered notion of forcing that forces the existence of a Boolean algebra with the Grothendieck property and without the Nikodym property. In the last section we include final remarks and state some open questions.

\section{Notation}\label{notation}

\subsection*{Basic symbols.}
 For the purpose of this paper we will denote by $\N$ the set of positive integers. The smallest uncountable ordinal is denoted by $\omega_1$. The cardinality of the set of all real numbers is denoted by $\mathfrak c$. 
 
 For a sequence $s$, its $m$-th term will be denoted by $s_m$. By the Cantor set we mean the set $C=\{-1,1\}^\N$ of the sequences with values in $\{-1,1\}$ with the usual product topology. The set $\{-1,1\}^n$ consists of all sequences of length $n$ with values in $\{-1,1\}$. 

For $s\in \{-1,1\}^n $ we put $$\downset{s}=\{x\in C: x\upharpoonright n=s\},$$ where $x\upharpoonright n$ is the sequence of first $n$ elements of $x$. The family of all Borel subsets of $C$ will be denoted by $\bor(C)$. For a set $Z\subseteq C, \chi_Z$ stands for the characteristic function of $Z$.  

\subsection*{Boolean algebras.} 
For the basic terminology concerning Boolean algebras see \cite{DDLS, Introduction_to_Boolean_Algebras, koppelberg}. We will focus on Boolean algebras consisting of Borel subsets of the Cantor set endowed with the standard operations $\cup,\cap,\backslash$. The symmetric difference of sets $A$ and $B$ will be denoted by $A\triangle B$. For $n\in \N$, $\A_n$ is the finite subalgebra of $\bor(C)$ generated by $\{\downset{s}: s\in\{-1,1\}^n \}$. The family of all clopen subsets of $C$ will be denoted by $\clop(C)$. Note that $\clop(C)=\bigcup_{n\in\N} \A_n$. For a Boolean algebra $\A$ we denote by $\at(\A)$ the set of its atoms. In particular, $\at(\A_n)=\{\downset{s}:s\in \{-1,1\}^n\}$. The Stone space of $\A$ will be denoted by $\st(\A)$. For $A\in \A$ we denote by $[A]$ the corresponding clopen subset of $\st(\A)$.
A family $\{H_n\}_{n\in\N} \subseteq \A$ is called an antichain, if $H_n\cap H_m = \varnothing$ for $n\neq m$. For a Boolean algebra $\B$ and a subset $B\subseteq C$ we put
$$\FF(\B, B)= \{A\cap B, A\backslash B: A\in \B\}.$$

\subsection*{Measures.} We will consider measures on Boolean algebras and on their Stone spaces. For the general theory of such measures see \cite[Chapter V]{semadeni}. The symbol $\lambda$ will denote the normalized Haar measure on $C$ (considered as a group with coordinate-wise multiplication). In particular, $\lambda(\downset{s})=1/2^n$ for $s\in \{-1,1\}^n$. Throughout the article, we will discuss canonical measures (witnesses to the lack of the Nikodym property) $\varphi_n$ for $n\in \N$, given by the formula
$$\varphi_n (A) = \int_A \delta_n d\lambda$$
for $A\in \bor(C)$, where $\delta_n\colon C\rightarrow \{-1,1\}, \delta_n(x)=x_n$.

We will denote by $\mathcal{L}_2(C)$ the real Hilbert space of square-integrable (with respect to $\lambda$) functions on $C$ with the inner product 
$$\downset{f,g}=\int_C fg d\lambda.$$

In what follows a measure on a Boolean algebra $\A$ is always a finitely additive signed bounded measure on $\A$. We will denote them by the lowercase Greek letters $\mu,\nu, \vartheta$, and we will call them concisely ``measures on $\A$''. If $\mu$ is a measure on a Boolean algebra $\A$ and $\B\subseteq \A$ is a subalgebra, then $\mu\upharpoonright \B$ denotes the restriction of $\mu$ to $\B$. For a measure $\mu$ on $\A$ we define its variation $|\mu|$ as a measure on $\A$ given by 
$$|\mu|(X) = \sup\{|\mu(A)|+|\mu(B)|: A,B\in\A, A,B\subseteq X, A\cap B = \varnothing\},$$
and its norm (total variation) as 
$$\|\mu\| = |\mu|(1),$$
where $1$ is the biggest element of $\A$. Note that for every $n\in\N$ we have $|\varphi_n|=\lambda$ and $\|\varphi_n\|=1$. If $\mu$ is non-negative and $\|\mu\|=1$, then $\mu$ is called a probability measure.

For a compact space $K$ we denote by $M(K)$ the Banach space of Radon measures on $K$, endowed with the total variation norm. Every measure on a Boolean algebra $\A$ extends uniquely to a Radon measure on the space $\st(\A)$ (cf. \cite[Section 18.7]{semadeni}). If $\mu$ is a measure on a Boolean algebra $\A$, then $\widetilde{\mu}$ denotes the corresponding Radon measure on $\st(\A)$. In particular, $\|\widetilde{\mu}\|=\|\mu\|$ and $|\widetilde{\mu}|=|\mu|$. For a Radon measure $\widetilde{\mu}$ a Borel set $F\subseteq \st(\A)$ is a Borel support of $\widetilde{\mu}$, if $\widetilde{\mu}(X)=0$ for every Borel $X\subseteq \st(\A)\backslash F$. We say that a sequence $(\widetilde{\mu}_n)_{n\in\N}$ of Radon measures has pairwise disjoint Borel supports, if there are pairwise disjoint Borel sets $(F_n)_{n\in\N}\subseteq \st(\A)$ such that $F_n$ is a Borel support of $\widetilde
{\mu}_n$ for every $n\in \N$. Note that, unlike the support of measure, a Borel support is not unique.

We will consider 3 types of convergence of sequences of measures on a compact space $K$. We say that a sequence $(\widetilde{\mu}_n)_{n\in \N}\subseteq M(K)$ converges weakly, if it is convergent in the weak topology of the Banach space $M(K)$. We say that $(\widetilde{\mu}_n)_{n\in \N}\subseteq M(K)$ is weak*-convergent, if it converges in the weak* topology, where $M(K)$ is treated as the dual space to the Banach space of continuous functions $C(K)$. A sequence $(\mu_n)_{n\in\N}$ of measures on a Boolean algebra $\A$ is said to be pointwise convergent, if there is a measure $\mu$ on $\A$ such that $\mu_n(A)\xrightarrow{n\to \infty} \mu(A)$ for every $A\in \A$. It is a well-known fact, that a sequence of Radon measures $(\widetilde{\mu}_n)_{n\in \N}$ on $\st(\A)$ is weak*-convergent if and only if the sequence $(\mu_n)_{n\in\N}$ is bounded in the norm and pointwise convergent on $\A$. 

\subsection*{Forcing.} Most of the notation concerning forcing should be standard. For the unexplained terminology see \cite{Bartoszynski, jech, kunen}. The universe of sets will be denoted by $V$. For a forcing notion $\PP$ we denote by $V^\PP$ a generic extension of $V$ obtained by forcing with $\PP$. The evaluation of a constant $c$ in the class $V$ is denoted by $c^V$. For the purpose of the section involving forcing we will identify Borel subsets of $C$ with their codes with respect to some absolute coding (cf. \cite[Section 25]{jech}).

\section{Grothendieck and Nikodym properties}\label{GN-section}

 In this section we will reduce the problem of the existence of a Boolean algebra with the Grothendieck property and without the Nikodym property by introducing the property $(\mathcal{G})$ and the notion of a balanced Boolean algebra.  

We start with the notion of semibalanced sets that describes these subsets $A\subseteq C$ for which the occurrences of $1$'s and $-1$'s at $r$-th coordinate of elements of $A$ are almost equally distributed for large enough $r$. 

\begin{definition}\label{semi_balanced_def}
 Let $A\in \bor(C); m\in \N; \varepsilon>0$. We say that $A$ is $(m,\varepsilon)$\textbf{-semibalanced} if   
\begin{gather}\label{3.0.1}
\tag{3.1.1}
\forall{r>m} \ |\varphi_r(A)|< \frac{\varepsilon}{r}.
\end{gather}
 We say that $A$ is \textbf{semibalanced}, if for every $\varepsilon>0$ there is $m\in\N$ such that $A$ is $(m,\varepsilon)$-semibalanced.
\end{definition}

\begin{definition}\label{balanced_def}
    Let $A\in \bor(C); m,t\in\N;t\geqslant m;\varepsilon>0$. We say that $A$ is $(m,t,\varepsilon)$\textbf{-balanced} if for every $s\in \{-1,1\}^m$
   
\begin{gather}\label{3.1.1}
\tag{3.2.1}
\frac{\lambda(A\cap \downset{s})}{\lambda(\downset{s})}< \frac{\varepsilon}{m} \ \text{or} \ \frac{\lambda(\downset{s}\backslash A)}{\lambda(\downset{s})}< \frac{\varepsilon}{m}
\end{gather}
and
\begin{gather}\label{3.1.2}
\tag{3.2.2}
\forall{r\in (m, t]} \
\frac{|\varphi_r(A\cap \downset{s})|}{\lambda(\downset{s})}< \frac{\varepsilon}{r}.
\end{gather}
We say that A is $(m,\varepsilon)$\textbf{-balanced} if for every $s\in \{-1,1\}^m$ the condition (\ref{3.1.1}) is satisfied and
\begin{gather}\label{3.1.3}
\tag{3.2.3}
\forall{r>m} \
\frac{|\varphi_r(A\cap \downset{s})|}{\lambda(\downset{s})}< \frac{\varepsilon}{r}.
\end{gather}

\end{definition}

\begin{definition}
We say that a finite subfamily $\AAA\subseteq \bor(C)$ is $(m,\varepsilon)$\textbf{-balanced} if every $A\in \AAA$ is $(m,\varepsilon)$-balanced. 

We say that a family $\B\subseteq \bor(C)$ is \textbf{balanced}, if for every finite subfamily $\AAA\subseteq \B$ and every $\varepsilon> 0$ there is $m\in \N$ such that
$\AAA$ is  $(m,\varepsilon)$-balanced. A set $A\in \bor(C)$ is $\textbf{balanced}$, if $\{A\}$ is balanced. 
\end{definition}

\begin{remark}\label{dla_dobra_nauki}
    If $\AAA$ is a finite balanced family and $\varepsilon>0$, then for every $n\in \N$ there is $m>n$ such that $\AAA$ is $(m,\varepsilon)$-balanced.
\end{remark}

Note that if $\B$ is balanced, then every member of $\B$ is balanced, but the reverse implication does not hold in general. It may happen that $A$ and $B$ are balanced, while the sets $\{m\in \N: A $ is $(m,\varepsilon)$-balanced$\}$ and $\{m\in\N: B $ is $(m,\varepsilon)$-balanced$\}$ are disjoint for some $\varepsilon>0$.

\begin{lemma}\label{balanced+semibalanced}
    Let $A\in \bor(C),\varepsilon>0 $ and $m,t\in \N$, where $t\geqslant m$. Then $A$ is $(m,\varepsilon)$-balanced if and only if $A$ is $(m,t,\varepsilon)$-balanced and for every $s\in \{-1,1\}^m$ the set $A\cap\downset{s}$ is $(t,2^{-m}\varepsilon)$-semibalanced. In particular, if $A$ is balanced, then it is semibalanced. 
\end{lemma}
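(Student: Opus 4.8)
The plan is to unwind the three notions involved — $(m,\varepsilon)$-balancedness of $A$, $(m,t,\varepsilon)$-balancedness of $A$, and $(t,2^{-m}\varepsilon)$-semibalancedness of the pieces $A\cap\downset{s}$ — and observe that they all share the same ``mass'' clause (\ref{3.1.1}) for $s\in\{-1,1\}^m$ and differ only in the range of indices $r$ over which a uniform bound on $|\varphi_r(A\cap\downset{s})|$ is demanded. First I would record the normalization $\lambda(\downset{s})=2^{-m}$ for $s\in\{-1,1\}^m$: with this, for a fixed $r$ the inequality in (\ref{3.1.2}) reads exactly $|\varphi_r(A\cap\downset{s})|<2^{-m}\varepsilon/r$, clause (\ref{3.1.3}) says the same for every $r>m$, and ``$A\cap\downset{s}$ is $(t,2^{-m}\varepsilon)$-semibalanced'' says the same for every $r>t$.

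For the equivalence I would then compare the conditions index by index. Both $(m,\varepsilon)$-balancedness and $(m,t,\varepsilon)$-balancedness include condition (\ref{3.1.1}) for every $s\in\{-1,1\}^m$, so it suffices to match up the clauses involving $\varphi_r$. Since $t\geqslant m$, the set $\{r\in\N:r>m\}$ is the disjoint union of $\{r\in\N:m<r\leqslant t\}$ and $\{r\in\N:r>t\}$, the first summand being empty precisely when $t=m$. Hence, for a fixed $s$, requiring $|\varphi_r(A\cap\downset{s})|<2^{-m}\varepsilon/r$ for all $r>m$ (that is, (\ref{3.1.3})) is equivalent to requiring it for all $r\in(m,t]$ (that is, (\ref{3.1.2})) together with requiring it for all $r>t$ (that is, $A\cap\downset{s}$ being $(t,2^{-m}\varepsilon)$-semibalanced). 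Quantifying over all $s\in\{-1,1\}^m$ gives both implications at once.

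Finally, for the ``in particular'' clause I would argue directly: assume $A$ is balanced, fix $\varepsilon>0$, and pick $m\in\N$ so that $A$ is $(m,\varepsilon)$-balanced. Since $\{\downset{s}:s\in\{-1,1\}^m\}$ partitions $C$ into $2^m$ pieces and $\varphi_r$ is finitely additive, for every $r>m$ we obtain
$$|\varphi_r(A)|=\Bigl|\sum_{s\in\{-1,1\}^m}\varphi_r(A\cap\downset{s})\Bigr|\leqslant\sum_{s\in\{-1,1\}^m}\bigl|\varphi_r(A\cap\downset{s})\bigr|<2^m\cdot\frac{2^{-m}\varepsilon}{r}=\frac{\varepsilon}{r},$$
so $A$ is $(m,\varepsilon)$-semibalanced; as $\varepsilon>0$ was arbitrary, $A$ is semibalanced. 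I do not expect any genuine obstacle here — the whole argument is bookkeeping of index ranges plus one use of additivity — and the only point deserving a line of care is the degenerate case $t=m$, where clause (\ref{3.1.2}) is vacuous, so that the equivalence reduces to saying that $(m,\varepsilon)$-balancedness of $A$ amounts to clause (\ref{3.1.1}) holding for every $s$ together with $(m,2^{-m}\varepsilon)$-semibalancedness of each $A\cap\downset{s}$.
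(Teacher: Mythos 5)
Your proof is correct and follows essentially the same route as the paper's: after normalizing by $\lambda(\downset{s})=2^{-m}$, the three conditions differ only in the range of $r$ over which the bound on $|\varphi_r(A\cap\downset{s})|$ is required, and the equivalence is the observation that $(m,\infty)=(m,t]\cup(t,\infty)$, while the ``in particular'' clause follows by summing the pointwise bounds over the atoms of $\A_m$ using finite additivity of $\varphi_r$. Your explicit remark about the degenerate case $t=m$ is a harmless addition not spelled out in the paper.
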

\begin{proof}
If $A$ is $(m,\varepsilon)$-balanced, then it is clearly $(m,t,\varepsilon)$-balanced and by (\ref{3.1.3}) for any $s\in\{-1,1\}^m$ and $r>t$ we have
$$|\varphi_r(A\cap \downset{s})| < \lambda(\downset{s})\frac{\varepsilon}{r}=\frac{\varepsilon}{2^mr},$$
which shows that $A\cap \downset{s}$ is $(t,2^{-m}\varepsilon)$-semibalanced.

The above inequality also shows that if $A$ is $(t,2^{-m}\varepsilon)$-semibalanced, then (\ref{3.1.3}) is satisfied for $s\in\{-1,1\}^m$ and $r>t$. In particular, if  $A$ is $(m,t,\varepsilon)$-balanced and $(t,2^{-m}\varepsilon)$-semibalanced, then it is $(m,\varepsilon)$-balanced.

To see that any balanced set is semibalanced fix $\varepsilon>0$ and $m\in\N$ such that $A$ is $(m,\varepsilon)$-balanced. From the first part of the lemma applied to $t=m$ we get that for $r>m$ 
$$|\varphi_r(A)|\leqslant \sum_{s\in\{-1,1\}^m} |\varphi_r(A\cap\downset{s})| < \sum_{s\in\{-1,1\}^m} \frac{\varepsilon}{2^mr}=\frac{\varepsilon}{r},$$
so $A$ is semibalanced. 
\end{proof}

We will present a few examples to illustrate the above definitions.
\begin{example}
    Every clopen subset of $C$ is $(m,\varepsilon)$-balanced for every $\varepsilon>0$ and sufficiently large $m\in \N$.
\end{example}

\begin{example}
We will construct an open balanced set $U$ that is not clopen. More precisely, $U$ will be $\left(2^n,\frac{2^{n+2}}{2^{2^n}}\right)$-balanced for every $n \in \N$. 

For every $n\in \N$ consider the set $Z_n$ of all sequences of the form $(a_1, a_2, \dots, a_{2^n})$ of length $2^n$ with values in $\{-1, 1\}$ with the following properties:
\begin{enumerate}
    \item if $n=1$, then $a_1=-a_2$,
    \item if $n\neq 1$, then $a_1=a_2$,
    \item $\forall{l < n} \ a_{2^{l-1}+1}=a_{2^{l-1}+2}=\dots = a_{2^l}$,
    \item $a_{2^{n-1}+1}=a_{2^{n-1}+2}=\dots = a_{2^n-1}=-a_{2^{n}}$,
\end{enumerate}
and put $Z=\bigcup_{n\in\N} Z_n$.
On the figure below, the red (dark) sets are of the form $\downset{s}$, where $s\in Z_n$ and $s_{2^n}=1$ for some $n\in \N$, while blue (light) sets are of the form $\downset{s}$, where $s\in Z_n, s_{2^n}=-1$.

\begin{center}
\tikzset{every picture/.style={line width=0.75pt}} 

\begin{tikzpicture}[x=0.75pt,y=0.75pt,yscale=-1,xscale=1]

\draw    (292.19,523.55) -- (510.2,741.56) ;
\draw    (292.19,523.55) -- (73.18,743.52) ;
\draw    (219.17,596.88) -- (259.78,637.48) ;
\draw    (364.94,596.47) -- (323.94,637.47) ;
\draw    (179.74,636.56) -- (286.95,743.77) ;
\draw    (142.62,673.82) -- (162.67,693.87) ;
\draw    (216.33,673.59) -- (196.05,693.87) ;
\draw  [fill={rgb, 255:red, 158; green, 12; blue, 22 }  ,fill opacity=1 ] (259.78,637.48) -- (286.83,664.43) -- (232.72,664.43) -- cycle ;
\draw  [fill={rgb, 255:red, 130; green, 178; blue, 230 }  ,fill opacity=0.62 ] (323.94,637.47) -- (351,664.42) -- (296.88,664.42) -- cycle ;
\draw  [fill={rgb, 255:red, 130; green, 178; blue, 230 }  ,fill opacity=1 ] (196.05,693.87) -- (208.06,705.84) -- (184.04,705.84) -- cycle ;
\draw  [fill={rgb, 255:red, 158; green, 12; blue, 22 }  ,fill opacity=1 ] (162.67,693.87) -- (174.68,705.84) -- (150.65,705.84) -- cycle ;
\draw    (404.17,635.03) -- (295.37,743.83) ;
\draw    (366.33,673.14) -- (385.67,692.47) ;
\draw    (440.43,671.09) -- (419.05,692.47) ;
\draw  [fill={rgb, 255:red, 130; green, 178; blue, 230 }  ,fill opacity=1 ] (419.05,692.47) -- (431.06,704.44) -- (407.04,704.44) -- cycle ;
\draw  [fill={rgb, 255:red, 158; green, 12; blue, 22 }  ,fill opacity=1 ] (385.67,692.47) -- (397.68,704.44) -- (373.65,704.44) -- cycle ;
\draw    (124,692.56) -- (172,740.56) ;
\draw    (346.86,692.34) -- (395.27,740.75) ;
\draw    (236.22,693.04) -- (189.36,739.9) ;
\draw    (460.09,692.31) -- (408.54,743.86) ;
\draw    (439.33,713.2) -- (449.13,723) ;
\draw    (481.71,714.11) -- (473.32,722.5) ;
\draw    (449.08,704.32) -- (457,712.24) ;
\draw    (490.71,722.61) -- (478.82,734.5) ;
\draw    (472.53,703.92) -- (464.5,711.95) ;
\draw  [fill={rgb, 255:red, 130; green, 178; blue, 230 }  ,fill opacity=1 ] (478.82,734.5) -- (486.03,741.77) -- (471.61,741.77) -- cycle ;
\draw    (501.33,732.92) -- (492.73,741.52) ;
\draw    (430.47,722.7) -- (442.26,734.49) ;
\draw  [fill={rgb, 255:red, 158; green, 12; blue, 22 }  ,fill opacity=1 ] (442.26,734.49) -- (449.47,741.76) -- (435.05,741.76) -- cycle ;
\draw    (346.37,691.72) -- (398.38,743.74) ;
\draw    (325.61,713.7) -- (335.41,723.5) ;
\draw    (367.99,714.61) -- (359.6,723) ;
\draw    (334.96,704.42) -- (342.88,712.34) ;
\draw    (376.99,723.11) -- (365.1,735) ;
\draw    (358.81,704.42) -- (350.78,712.45) ;
\draw  [fill={rgb, 255:red, 130; green, 178; blue, 230 }  ,fill opacity=1 ] (365.1,735) -- (372.31,742.27) -- (357.89,742.27) -- cycle ;
\draw    (389.81,735.22) -- (381.36,743.67) ;
\draw    (316.75,723.2) -- (328.54,734.99) ;
\draw  [fill={rgb, 255:red, 158; green, 12; blue, 22 }  ,fill opacity=1 ] (328.54,734.99) -- (335.75,742.26) -- (321.33,742.26) -- cycle ;
\draw    (235.87,693.47) -- (185.72,743.62) ;
\draw    (215.11,714.37) -- (224.91,724.17) ;
\draw    (257.49,715.27) -- (249.1,723.67) ;
\draw    (224.86,705.48) -- (232.78,713.4) ;
\draw    (266.49,723.77) -- (254.6,735.67) ;
\draw    (248.31,705.09) -- (240.28,713.12) ;
\draw  [fill={rgb, 255:red, 130; green, 178; blue, 230 }  ,fill opacity=1 ] (254.6,735.67) -- (261.81,742.94) -- (247.39,742.94) -- cycle ;
\draw    (278.31,735.23) -- (269.72,743.82) ;
\draw    (206.25,723.87) -- (218.04,735.65) ;
\draw  [fill={rgb, 255:red, 158; green, 12; blue, 22 }  ,fill opacity=1 ] (218.04,735.65) -- (225.25,742.93) -- (210.83,742.93) -- cycle ;
\draw    (124,692.56) -- (175.11,743.67) ;
\draw    (103.01,714.38) -- (112.81,724.18) ;
\draw    (145.68,715) -- (137.28,723.39) ;
\draw    (112.24,704.98) -- (120.68,713.42) ;
\draw    (154.39,723.78) -- (142.5,735.68) ;
\draw    (136.21,705.1) -- (128.18,713.13) ;
\draw  [fill={rgb, 255:red, 130; green, 178; blue, 230 }  ,fill opacity=1 ] (142.5,735.68) -- (149.71,742.95) -- (135.29,742.95) -- cycle ;
\draw    (166.78,734.96) -- (158.22,743.52) ;
\draw    (93.38,723.51) -- (105.94,735.67) ;
\draw  [fill={rgb, 255:red, 158; green, 12; blue, 22 }  ,fill opacity=1 ] (105.94,735.67) -- (113.15,742.94) -- (98.73,742.94) -- cycle ;
\draw    (194.82,734.94) -- (203.41,743.53) ;
\draw    (82.82,734.64) -- (91.55,743.37) ;
\draw    (304.16,734.62) -- (313.26,743.72) ;
\draw    (417.61,734.7) -- (426.73,743.83) ;
\end{tikzpicture}
\end{center}

Let  $U= \bigcup_{s\in Z} \downset{s}$. Consider three cases. \\
1. If $s\in Z$ then $$m \frac{\lambda(\downset{s}\backslash U)}{\lambda(\downset{s})}=m \frac{\lambda(\varnothing)}{\lambda(\downset{s})}=0.$$
2. If $s \in \{-1, 1\}^{2^n}$ for $n\in \N \setminus \{1\}$ satisfy conditions (2), (3) and 
\begin{enumerate}
    \item[(4')]$ \ a_{2^{n-1}+1}=a_{2^{n-1}+2}=\dots = a_{2^n-1}=a_{2^{n}}$,
\end{enumerate}
 then there are only 4 nonempty sets of the form $U\cap\downset{s^{i}}$, for $i \in \{1, 2, 3, 4\}$ where $s^i\in \{-1,1\}^{2^{n+1}}$ and $\downset{s^{i}}\subseteq \downset{s}$. Hence

$$
m  \frac{\lambda(U\cap \downset{s})}{\lambda(\downset{s})}< m \frac{\sum_{i=1}^4\lambda(\downset{s^i})}{\lambda(\downset{s})}
\leqslant 2^n \frac{4\cdot 2^{-2^{n+1}}}{2^{-2^n}}=\frac{2^{n+2}}{2^{2^n}}.
$$
3. In other cases $$m \frac{\lambda(U\cap \downset{s})}{\lambda(\downset{s})}=m \frac{\lambda(\varnothing)}{\lambda(\downset{s})}=0.$$

For every $r\in \N$ the distribution of $1$'s and $-1$'s in the elements of $U$ at the $r$-th coordinate is symmetric, as can be easily seen in the figure - the blue (light) sets are symmetric to the red (dark) ones (i.e. $\downset{s}$ is a blue set if and only if $\downset{-s}$ is red). Thus, for every $r\in \N$ we have $$\frac{|\varphi_r(U\cap \downset{s})|}{\lambda(\downset{s})}=0.$$
\end{example}

\begin{proposition}\label{NN_implies_not_Nikodym}
    If a Boolean algebra $\B\subseteq \bor(C)$ consists of semibalanced sets, then $\B$ does not have the Nikodym property. In particular, if $\B$ is balanced, then it does not have the Nikodym property.
\end{proposition}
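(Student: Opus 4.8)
The plan is to show that the rescaled canonical measures $\mu_n:=n\,\varphi_n\upharpoonright\B$, $n\in\N$, witness the failure of the Nikodym property for $\B$. First I would note that each $\mu_n$ is a bounded measure on $\B$: recalling $\|\varphi_n\|=1$ and that restricting a measure to a subalgebra cannot increase its total variation, we get $\|\mu_n\|\le n<\infty$. It then remains to verify two things: that $(\mu_n)_{n\in\N}$ converges pointwise to $0$ on $\B$, and that $\sup_n\|\mu_n\|=\infty$. Together these say exactly that $(\mu_n)$ is a bounded, pointwise null sequence of measures on $\B$ that is not norm bounded, i.e. that $\B$ lacks the Nikodym property.

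Pointwise convergence is where the hypothesis on $\B$ enters, and it is essentially a restatement of the definition of a semibalanced set. Fixing $A\in\B$ and $\varepsilon>0$, semibalancedness of $A$ furnishes $m\in\N$ with $|\varphi_r(A)|<\varepsilon/r$ for all $r>m$; multiplying by $r$ gives $|\mu_r(A)|=r\,|\varphi_r(A)|<\varepsilon$ for all $r>m$, so $\mu_n(A)\xrightarrow{n\to\infty}0$. As $A\in\B$ was arbitrary, $(\mu_n)$ converges pointwise to the zero measure on $\B$.

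The heart of the matter — and the one step needing a moment's thought — is that this pointwise convergence is badly non-uniform over $\B$. For $r\in\N$ the coordinate set $E_r:=\{x\in C:x_r=1\}$ is clopen, being the union of the atoms $\downset{s}$ of $\A_r$ with $s_r=1$, so $E_r\in\clop(C)\subseteq\B$; and one computes directly that $\varphi_r(E_r)=\int_{E_r}\delta_r\,d\lambda=\lambda(E_r)=1/2$, hence $\varphi_r(C\setminus E_r)=-1/2$. Thus, although $r\varphi_r(A)\to0$ for every fixed $A\in\B$, choosing the ``witness'' $E_r$ afresh at stage $r$ and applying the definition of the total variation to the disjoint pair $E_r,\,C\setminus E_r\in\B$ yields $\|\mu_r\|\ge|\mu_r(E_r)|+|\mu_r(C\setminus E_r)|=r$, so $\sup_n\|\mu_n\|=\infty$. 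This would complete the proof that $\B$ has no Nikodym property; the final assertion then follows at once, since a balanced Boolean algebra consists of balanced — hence, by Lemma~\ref{balanced+semibalanced}, semibalanced — sets. (The unboundedness step uses $\B\supseteq\clop(C)$, which holds throughout our setting; some richness of $\B$ is genuinely needed, since, e.g., $\{\varnothing,C\}$ trivially has the Nikodym property.)
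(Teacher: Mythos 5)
Your argument is correct and takes exactly the paper's route: the rescaled canonical measures $\mu_n = n\varphi_n\upharpoonright\B$ are pointwise null by semibalancedness and norm-unbounded, and the balanced case reduces to this via Lemma~\ref{balanced+semibalanced}. The one place you diverge is the unboundedness step, where the paper tersely writes $\sup_n\|\mu_n\| = \sup_n n\|\lambda\| = \infty$, whereas you explicitly produce the disjoint clopen witnesses $E_r = \{x : x_r = 1\}$ and $C\setminus E_r$ to get $\|\mu_r\|\geq r$; this is the more careful version, and you correctly observe that it is exactly here that one uses the standing convention $\clop(C)\subseteq\B$ (without which the statement fails for, say, $\B=\{\varnothing,C\}$) — a hypothesis the paper's proof invokes only implicitly via $|\varphi_n|=\lambda$ on $\bor(C)$.
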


\begin{proof}
    Consider a sequence of measures $(\mu_n)_{n\in\N}$ on $\B$ given by
    $$\mu_n(A)=n\varphi_n(A).$$

    This sequence is pointwise convergent to 0, i.e.  $\lim_{n\to \infty}|\mu_n(A)|=0$ for all $A\in \B$. Indeed, for $A\in \B$ and for any $\varepsilon>0$ there exist $m\in \N$ such that for all $n> m$ by (\ref{3.0.1}) we have
 \begin{eqnarray*}
     |\mu_n(A)|=n |\varphi_n(A)|\leqslant n\frac{\varepsilon}{n}=\varepsilon.
\end{eqnarray*}
 However, $(\mu_n)_{n\in\N}$ is not bounded in the norm, because $$\sup_{n\in\N} ||\mu_n|| = \sup_{n\in\N} n\|\lambda\|=\infty.$$ In particular, $\B$ does not have the Nikodym property.

 If $\B$ is balanced, then by Lemma \ref{balanced+semibalanced} it consists of semibalanced sets, so it does not have the Nikodym property by the first part of the lemma.
\end{proof}

\begin{example}
The following Boolean algebra considered by Plebanek\footnote{Personal communication} is interesting in the context of our considerations:
$$ \B_P = \{B \in \bor(C): \lim_{n \to \infty} n\psi_n(B)=0 \},$$
     where $\psi_n(B)=\min\{\lambda(B\triangle A): A\in \A_n\}$. 

Each element of $\B_P$ is semibalanced, so by Proposition \ref{NN_implies_not_Nikodym} $\B_P$ does not have the Nikodym property. Indeed, one needs first to observe that if $B\in \B_P$ and $n<m$, then 
         $$\varphi_m(B) \leqslant \psi_n(B).$$
To show that every $B\in \B_P$ is semibalanced take any $B \in \B_P$ and $\varepsilon>0$. Since $ n \psi_n(B)$ converges to $0$, we can find $m\in \N$ such that for every $r\geqslant m$ we have $r\psi_r(B)<\frac{\varepsilon}{2}$. Then for every $r>m$ 
         $$
         r|\varphi_r(B)|\leqslant r\psi_{r-1}(B)\leqslant 2(r-1)\psi_{r-1}(B)< \varepsilon.
         $$
However, $\B_P$ is not balanced. To see this, take
    $$B= \bigcup_{k=2}^\infty \downset{s_k},$$
    where $s_k \in \{-1, 1\}^k$ is of the form $s_k=(-1,\dots,-1, 1, 1)$ for $k\geqslant 2$. Then 
    $$n\psi_n(B)=n\sum_{k>n}\lambda(\downset{s_k})=\frac{n}{2^n}$$
    converges to $0$, so $B\in \B_P$. But $B$ is not balanced. Indeed, for $k\in \N$ consider $s'_k \in \{-1, 1\}^k$ of the form $s'_k=(-1,\dots,-1, 1)$. Then 
    $$\frac{\lambda(B\cap \downset{s'_k})}{\lambda(\downset{s'_k})} =\frac{\lambda( \downset{s'_k}\setminus B)}{\lambda(\downset{s'_k})} = \frac{1}{2}.$$

There are no non-trivial convergent sequences in $\st(\B_P)$. However, this Boolean algebra does not have the Grothendieck property.  The sequence of measures on $\B_P$ given by
$$\vartheta_n(A) = 2^n \varphi_{2^n}(A \cap \downset{s'_n}),$$
(where $s'_n $ is as above) is weak*-convergent, but it is not weakly convergent\footnote{The idea behind this sequence is due to Avil\'es.}. 

Moreover, according to Borodulin-Nadzieja\footnote{Personal communication.} no semibalanced Boolean algebra containing $\B_P$ has the Grothendieck property.

\end{example}

In order to take care of the Grothendieck property it is enough to restrict the choice of sequences of measures to those with pairwise disjoint Borel supports and norms equal to 1. 

\begin{definition}\label{normal_sequence}
    Let $\A$ be a Boolean algebra. We say that a sequence $(\nu_n)_{n\in\N}$ of measures on $\A$ is \textbf{normal}, if:
     \begin{itemize}
        \item $\forall n\in\N \ \|\nu_n\|=1$,
        \item $(\widetilde{\nu}_n)_{n\in\N}$ has pairwise disjoint Borel supports.
    \end{itemize}
\end{definition}

The following definition will be important throughout the article. 

\begin{definition}\label{cisza_przed_burza}
    We say that a Boolean algebra $\B$ satisfies $(\mathcal{G})$
    if for every normal sequence $(\nu_n)_{n\in\N}$ of measures on $\B$ 
there are $G\in\B$, an antichain $\{H_0^n, H_1^n : n\in\N\}\subseteq \B$ and strictly increasing sequences $ (a_n)_{n\in\N}, (b_n)_{n\in\N} $ of natural numbers such that for all $n\in \N$
\begin{enumerate}[label=(\alph*)]
     \item $ G\cap H^{n}_1=\varnothing $,
    \item $ |\nu_{a_n}|(H^{n}_0)\geqslant 0.9$ and $|\nu_{b_n}|(H^{n}_1)\geqslant 0.9$,
    \item $ |\nu_{a_n}(G\cap H^{n}_0)|\geqslant 0.3 $.  
\end{enumerate}
\end{definition}

Note that Schachermayer introduced the property (G) in \cite{Schachermayer} as a name for the Grothendieck property. Our property ($\mathcal{G}$) is different. It implies the Grothendieck property, but the reverse implication does not hold.

To show that the property $(\mathcal{G})$ implies the Grothendieck property we will use the following lemma known as The Kadec-Pe\l czy\'nski-Rosenthal Subsequence Splitting Lemma:

\begin{lemma}\cite[Lemma 5.2.7]{albiac-kalton}\label{KPR}
Let $K$ be a compact space. For every bounded sequence $(\widetilde{\nu}_n)_{n\in\N}\subseteq M(K)$ there exists a non-negative real $r$ and a subsequence $(\widetilde{\nu}_{n_k})_{k\in\N}$, each
element of which may be decomposed into a sum of two measures $\widetilde{\nu}_{n_k}=\widetilde{\mu}_k+\widetilde{\theta}_k$, where $\widetilde{\mu}_k,\widetilde{\theta}_k\in M(K)$, 
 satisfying the following conditions:
 \begin{enumerate}
     \item the measures $\widetilde{\mu}_k$ are supported by pairwise disjoint Borel sets,
    \item $(\widetilde{\theta}_k)_{k\in\N}$ is weakly convergent,
    \item $||\widetilde{\mu}_k||=r$, for every $k \in \N.$
 \end{enumerate}
\end{lemma}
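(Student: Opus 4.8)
The plan is to transfer this statement---which concerns only the Banach-lattice structure of $M(K)$---to a concrete $L_1$-space, where it is the classical subsequence splitting lemma of Kadec--Pe\l czy\'nski and Rosenthal. We may assume all $\widetilde\nu_n$ are nonzero. Set $\mu=\sum_{n\in\N}2^{-n}\|\widetilde\nu_n\|^{-1}|\widetilde\nu_n|$, a finite positive Radon measure on $K$. Since $|\widetilde\nu_n|\le 2^n\|\widetilde\nu_n\|\,\mu$, each $\widetilde\nu_n$ is absolutely continuous with respect to $\mu$, so by the Radon--Nikodym theorem $\widetilde\nu_n=f_n\,d\mu$ for some $f_n\in L_1(\mu)$ with $\|f_n\|_{L_1(\mu)}=\|\widetilde\nu_n\|$; thus $(f_n)$ is bounded in $L_1(\mu)$. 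The map $T\colon L_1(\mu)\to M(K)$, $Tf=f\,d\mu$, is a linear isometry onto a sublattice of $M(K)$; being bounded and linear it is weak-to-weak continuous, and it sends functions with pairwise ($\mu$-a.e.) disjoint supports to mutually singular measures, i.e.\ to measures admitting pairwise disjoint Borel supports. Hence it suffices to establish the conclusion for the bounded sequence $(f_n)$ in $L_1(\mu)$.

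For $(f_n)$ in $L_1(\mu)$ I would run the standard argument. Put $\rho(\lambda)=\limsup_n\int_{\{|f_n|>\lambda\}}|f_n|\,d\mu$ for $\lambda>0$; this is nonincreasing and bounded, so $\rho(\lambda)\downarrow r$ for some $r\ge 0$ as $\lambda\to\infty$. If $r=0$ then $(f_n)$ is uniformly integrable, hence relatively weakly compact by the Dunford--Pettis theorem, so after passing to a subsequence it converges weakly and we take the disjointly supported part to be $0$. If $r>0$, a diagonal argument over an increasing sequence of truncation levels $\lambda^{(1)}<\lambda^{(2)}<\cdots\to\infty$ produces a subsequence $(f_{n_k})$ together with pairwise disjoint Borel sets $G_k$ such that $u_k:=f_{n_k}\mathbbm{1}_{G_k}$ carries all of the ``escaping'' mass, $\|u_k\|_{L_1(\mu)}\to r$, and the remainders $v_k:=f_{n_k}\mathbbm{1}_{K\setminus G_k}$ form a uniformly integrable sequence; genuine (not merely a.e.)\ disjointness of the $G_k$ is arranged by replacing the super-level sets $E_k$ by $E_k\setminus\bigcup_{j>k}E_j$, the resulting errors being negligible since $\mu(E_k)\le\lambda_k^{-1}\sup_n\|f_n\|_{L_1(\mu)}\to 0$. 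A further application of Dunford--Pettis yields a subsequence along which $(v_k)$ converges weakly in $L_1(\mu)$. Finally, passing to one more subsequence so that $\|u_k\|_{L_1(\mu)}$ is monotone and, if needed, moving a piece of each $G_k$ of suitably small mass into $v_k$ (which perturbs $(v_k)$ by a norm-null sequence and hence preserves weak convergence), we can make $\|u_k\|_{L_1(\mu)}$ equal to $r$ for every $k$.

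Transporting the decomposition back through $T$ with the same indices $n_k$: setting $\widetilde\mu_k=Tu_k$ and $\widetilde\theta_k=Tv_k$ we get $\widetilde\nu_{n_k}=\widetilde\mu_k+\widetilde\theta_k$, the measures $\widetilde\mu_k$ are supported by the pairwise disjoint Borel sets $G_k$, the sequence $(\widetilde\theta_k)$ converges weakly in $M(K)$, and $\|\widetilde\mu_k\|=\|u_k\|_{L_1(\mu)}=r$, which is exactly the assertion. I expect the main obstacle to be the $L_1$ step itself, i.e.\ arranging \emph{simultaneously} that the large parts are honestly disjointly supported, that they absorb \emph{all} of the non-uniformly-integrable mass (so the remainder is genuinely uniformly integrable), and that their norms converge to the common value $r$. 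This is classical---it is precisely \cite[Lemma 5.2.7]{albiac-kalton}, going back to Kadec--Pe\l czy\'nski and to Rosenthal---so in the write-up one may simply invoke it; the only observation really needed here is the reduction of the $M(K)$ statement to the $L_1$ statement via the control measure $\mu$.
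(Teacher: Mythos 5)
The paper does not prove this lemma at all: it is invoked as a citation to Albiac--Kalton, so there is no internal proof to compare against. Your reduction to $L_1(\mu)$ via the control measure $\mu=\sum_{n}2^{-n}\|\widetilde\nu_n\|^{-1}|\widetilde\nu_n|$ is the standard device for deriving the $M(K)$ statement from the classical Kadec--Pe\l czy\'nski--Rosenthal subsequence splitting lemma in $L_1$, and your outline of the $L_1$ step --- super-level truncations, a diagonal pass, disjointification of the level sets, Dunford--Pettis for the tame part --- is the usual route. The overall structure is sound.

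One small loose end: your final normalisation ``moving a piece of each $G_k$ of suitably small mass into $v_k$'' can only \emph{decrease} $\|u_k\|_{L_1(\mu)}$, so it handles $\|u_k\|>r$. Passing to a monotone subsequence does not guarantee $\|u_k\|\geqslant r$ for all $k$ (the subsequence could be increasing to $r$). For indices with $\|u_k\|<r$ you need the opposite transfer: add to $u_k$ a function $g_k$ supported inside $G_k$ (say with the same sign as $f_{n_k}$ on the support of $u_k$, so norms add), with $\|g_k\|_{L_1(\mu)}=r-\|u_k\|\to 0$, and set $\widetilde\theta_k=(v_k-g_k)\,d\mu$. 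This again perturbs $(v_k\,d\mu)$ by a norm-null sequence, so weak convergence is preserved, and the $\widetilde\mu_k$ remain supported on the pairwise disjoint $G_k$. With this symmetric repair the reduction is complete.
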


\begin{proposition}\label{G_implies_Grothendieck}
    If a Boolean algebra $\B$ satisfies $(\mathcal{G})$, then $\B$ has the Grothendieck property. 
\end{proposition}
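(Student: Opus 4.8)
The plan is to prove the contrapositive: assume $\B$ does not have the Grothendieck property and produce a normal sequence violating $(\mathcal{G})$, or rather, assume $(\mathcal{G})$ holds and show every weak*-convergent sequence in $M(\st(\B))$ is weakly convergent. Recall that a sequence $(\widetilde{\mu}_n)$ is weak*-convergent iff it is norm-bounded and pointwise convergent on $\B$; and that a bounded sequence which is \emph{not} weakly convergent must, after passing to a subsequence, be "spread out" — this is exactly where Lemma \ref{KPR} enters. So suppose toward a contradiction that $(\widetilde{\mu}_n)_{n\in\N}$ is weak*-convergent but not weakly convergent. Passing to a subsequence we may assume no subsequence of $(\widetilde{\mu}_n)$ is weakly convergent; since the sequence is bounded, apply the Subsequence Splitting Lemma to get $\widetilde{\mu}_{n_k} = \widetilde{\mu}_k' + \widetilde{\theta}_k$ with the $\widetilde{\mu}_k'$ supported on pairwise disjoint Borel sets, $\|\widetilde{\mu}_k'\| = r$ for all $k$, and $(\widetilde{\theta}_k)$ weakly convergent. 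If $r = 0$ then $(\widetilde{\mu}_{n_k})$ is weakly convergent, a contradiction; hence $r > 0$.

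Next I would normalize: set $\nu_k = \widetilde{\mu}_k'/r$ (or rather the corresponding measure on $\B$), so $(\nu_k)_{k\in\N}$ is a normal sequence — norm $1$ and pairwise disjoint Borel supports. Apply $(\mathcal{G})$ to obtain $G \in \B$, an antichain $\{H_0^n, H_1^n : n \in \N\}$, and increasing sequences $(a_n), (b_n)$ with properties (a), (b), (c). The idea is then to use these data to build a continuous function on $\st(\B)$ (equivalently, an element of the unit ball of $C(\st(\B))$, or better a bounded sequence of such) witnessing that $(\widetilde{\mu}_{n_k})$ is not weak-Cauchy, contradicting weak*-convergence of the original sequence. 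Concretely, since $(\widetilde{\mu}_{n_k})$ is weak*-convergent, so is $(\widetilde{\mu}_k')$ after subtracting the weakly (hence weak*) convergent $(\widetilde{\theta}_k)$; so $(\nu_k)$ is weak*-convergent, meaning $\nu_k(A) \to 0$ for every $A \in \B$ — in particular $\nu_{a_n}(A) \to 0$ and $\nu_{b_n}(A) \to 0$ along the relevant subsequences. Now consider the clopen set $[G] \subseteq \st(\B)$ and its characteristic function $\chi_{[G]} \in C(\st(\B))$. By (c), $|\nu_{a_n}(G \cap H_0^n)| \geq 0.3$; by (a) and the antichain property, $G \cap H_1^n = \varnothing$, so $G$ is "far" from $H_1^n$. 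The plan is to argue that the functional $\chi_{[G]}$ (or a suitable perturbation of it, using that the $\widetilde{\nu}_k$ live on disjoint Borel supports and (b) says most of the mass of $\nu_{a_n}$ sits on $H_0^n$ and most of $\nu_{b_n}$ on $H_1^n$) separates the putative weak* limit behavior: $\widetilde{\nu}_{a_n}(\chi_{[G]})$ stays bounded away from $0$ while $\widetilde{\nu}_{b_n}(\chi_{[G]})$ is small, so $(\widetilde{\nu}_k)$ cannot converge weak* — contradiction.

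Let me be more careful about the last step, which I expect to be the main obstacle. Weak*-convergence to $0$ would already be contradicted if we found a \emph{single} $A \in \B$ with $\nu_k(A) \not\to 0$; but $\chi_{[G]}$ tested against $\nu_{a_n}$ gives $\nu_{a_n}(G)$, and (c) only controls $\nu_{a_n}(G \cap H_0^n)$, where $H_0^n$ varies with $n$. The resolution is to exploit the disjoint Borel supports together with (b): since $|\nu_{a_n}|(H_0^n) \geq 0.9$ and $\|\nu_{a_n}\| = 1$, the measure $\nu_{a_n}$ is concentrated (up to $0.1$) on $[H_0^n]$; and since the $\widetilde{\nu}_k$ have pairwise disjoint Borel supports, for fixed $n$ only finitely many other $\nu_k$ can have substantial mass overlapping $[H_0^n]$ — actually we can arrange, by a further passage to a subsequence, that the clopen sets $[H_0^n]$ behave like a disjoint family relative to the supports, so that testing against $\chi$ of an appropriate union $\bigcup_{n \in S} (G \cap H_0^n)$ (for a suitable index set $S$) isolates the contribution of each $\nu_{a_n}$ and gives a function in $C(\st(\B))$ (the union is clopen, as a finite-or-controlled union of clopen sets) on which the $\widetilde{\nu}_k$ oscillate. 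The technical heart is thus a bookkeeping argument turning the "for each $n$" data of $(\mathcal{G})$ into one fixed continuous test function (or a bounded sequence of them), using disjointness of supports to control cross-terms and (b) to control the tails; once that is in place, (c) provides the lower bound $0.3$ and (a) plus the antichain property provide the contrasting upper bound, contradicting weak*-convergence and hence proving $\B$ has the Grothendieck property.
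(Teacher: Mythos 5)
Your setup (passing to a subsequence with no weakly convergent subsequence, applying the Kadec--Pe\l czy\'nski--Rosenthal Subsequence Splitting Lemma, deducing $r\neq0$, normalizing to get a normal sequence $(\nu_k)$, and invoking $(\mathcal{G})$) matches the paper's proof step by step. But the final step has a genuine gap: after applying $(\mathcal{G})$, you dismiss the test set $G$ as insufficient on the grounds that ``(c) only controls $\nu_{a_n}(G\cap H_0^n)$, where $H_0^n$ varies with $n$,'' and then set off to construct a much more elaborate test function from unions $\bigcup_{n\in S}(G\cap H_0^n)$, appealing to bookkeeping on supports that you never carry out. This detour is unnecessary and, more importantly, the argument is left unfinished. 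What you missed is that condition~(b) gives you exactly the missing control: since $\|\nu_{a_n}\|=1$ and $|\nu_{a_n}|(H_0^n)\geqslant0.9$, you have $|\nu_{a_n}|(C\setminus H_0^n)\leqslant0.1$, hence
$$|\nu_{a_n}(G)|\ \geqslant\ |\nu_{a_n}(G\cap H_0^n)|-|\nu_{a_n}|(G\setminus H_0^n)\ \geqslant\ 0.3-0.1\ =\ 0.2,$$
while $G\cap H_1^n=\varnothing$ together with $|\nu_{b_n}|(H_1^n)\geqslant0.9$ forces
$$|\nu_{b_n}(G)|\ \leqslant\ |\nu_{b_n}|(C\setminus H_1^n)\ \leqslant\ 0.1.$$
Thus $\nu_k(G)$ has two subsequential behaviours that cannot share a common limit, so there is no measure $\nu$ with $\nu_k(G)\to\nu(G)$, contradicting weak*-convergence of $(\widetilde{\nu}_k)=(\widetilde{\mu}_{n_k}-\widetilde{\theta}_k)$. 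No further construction or subsequence bookkeeping is needed.

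A second, smaller inaccuracy: you assert the weak*-limit of $(\nu_k)$ is $0$ (``$\nu_k(A)\to0$ for every $A\in\B$''). That does not follow in general: $(\widetilde{\mu}_{n_k})$ converges weak* to some $\widetilde{\mu}$ and $(\widetilde{\theta}_k)$ to some $\widetilde{\theta}$, so $(\widetilde{\nu}_k)$ converges weak* to $\widetilde{\mu}-\widetilde{\theta}$, which need not vanish; disjointness of Borel supports does not force the weak*-limit to be zero. The paper's argument sidesteps this entirely by showing no pointwise limit of $\nu_k(G)$ exists at all, rather than that the limit is some particular value.
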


\begin{proof}
    Suppose $\B$ satisfies the property $(\mathcal{G})$, but does not have the Grothendieck property. That is, there is a sequence $\widetilde{\mu}_n$ of measures on $\st(\B)$  weak*-convergent to a measure $\widetilde{\mu}$, which is not weakly convergent. Without loss of generality, by passing to a subsequence, we can assume that no subsequence of $(\widetilde{\mu}_n)_{n\in\N}$ is weakly convergent. Indeed, suppose that each subsequence of $(\widetilde{\mu}_n)_{n\in\N}$ contains a weakly convergent subsequence. Since the sequence is weak*-convergent to $\widetilde{\mu}$, such a subsequence must be also weakly convergent to $\widetilde{\mu}$. Then each subsequence of $(\widetilde{\mu}_n)_{n\in\N}$ has a subsequence weakly convergent to $\widetilde{\mu}$, so the whole sequence is weakly convergent to $\widetilde{\mu}$, which gives a contradiction. 
    
    Since $(\widetilde{\mu}_n)_{n\in\N}$ is weak*-convergent, it is bounded in the norm (cf. \cite[Theorem 3.88]{banach-space-theory}).   
By Lemma \ref{KPR} we can find a real $r$ and a subsequence $(\widetilde{\mu}_{n_k})_{k\in\N}$ each
element of which may be decomposed into the sum of two measures $\widetilde{\mu}_{n_k}=\widetilde{\nu}_k+\widetilde{\theta}_k$
 satisfying
 \begin{enumerate}
     \item the measures $\widetilde{\nu}_k$ are supported by pairwise disjoint Borel sets,
    \item $(\widetilde{\theta}_k)_{k\in\N}$ is weakly convergent,
    \item $||\widetilde{\nu}_k||=r$, for every $k \in \N.$ 
 \end{enumerate}

Note that since $(\widetilde{\theta}_n)_{n\in\N}$ is weakly convergent, $(\widetilde{\nu}_n)_{n\in\N}$ is not. In particular, $r\neq 0$. 
Thus, without loss of generality, by the normalization, we can assume that $r=1$. Then the sequence $(\nu_n)_{n\in\N}$ is normal. The Boolean algebra $\B$ satisfies the property $(\mathcal{G})$, so there are $G\in\B$, an antichain $\{H_0^n, H_1^n : n\in\N\}\subseteq \B$ and strictly increasing sequences $ (a_n)_{n\in\N}, (b_n)_{n\in\N} $ of natural numbers such that  
for all $n\in\N $ 
\begin{enumerate}[label=(\alph*)]
     \item $ G\cap H^{n}_1=\varnothing $,
    \item $ |\nu_{a_n}|(H^{n}_0)\geqslant 0.9$ and $|\nu_{b_n}|(H^{n}_1)\geqslant 0.9$,
    \item $ |\nu_{a_n}(G\cap H^{n}_0)|\geqslant 0.3 $.  
\end{enumerate}
 Hence for $n\in\N$ we have
\begin{enumerate}
    \item $|\nu_{a_n}(G)|\geqslant |\nu_{a_n}(G\cap H_0^n)|-|\nu_{a_n}|(G\backslash H_0^n)\geqslant 0.3-0.1=0.2$,
    \item $ |\nu_{b_n}|(G)\leqslant |\nu_{b_n}|(C\backslash H_1^n)\leqslant 0.1 $. 
\end{enumerate}
So there is no $\nu$ such that $\nu_{n}(G) \to \nu(G)$. Thus, $(\widetilde{\nu}_{k})_{k\in\N}$ is not weak*-convergent, which is a contradiction, since $(\widetilde{\nu}_{k})_{k\in\N}=(\widetilde{\mu}_{n_k}-\widetilde{\theta}_{k})_{k\in\N}$ is a difference of two weak*-convergent sequences. 
\end{proof}

Now we will introduce the property $(\mathcal{G}^*)$ similar to the property $(\mathcal{G})$, which focuses on only one sequence of measures. Then we will show that having the property  $(\mathcal{G}^*)$ for enough many sequences of measures we can conclude that the property $(\mathcal{G})$ holds.

\begin{definition}\label{G*-definition}
    Let $\B^*\subseteq \B$ be Boolean algebras and let $\nu=(\nu_n)_{n\in\N}$ be a sequence of measures on $\B^*$. We say that $(\B^*,\B, \nu)$ satisfies $(\mathcal{G}^*)$, if there are: an antichain $\{H_0^n, H_1^n : n\in\N\}\subseteq \B^*$, a set $G\in\B $ and strictly increasing sequences $ (a_n)_{n\in\N}, (b_n)_{n\in\N} $ of natural numbers such that for all $n\in\N$
    \begin{enumerate}[label=(\alph*)]
    \item $G\cap H_0^n \in \B^*$,
     \item $G\cap H_1^n = \varnothing $,
    \item $|\nu_{a_n}|(H_0^n)\geqslant 0.9$ and $|\nu_{b_n}|(H_1^n) \geqslant 0.9$,
    \item $|\nu_{a_n}(G\cap H_0^n)|\geqslant 0.3 $.
\end{enumerate}
\end{definition}

The next proposition shows the relationship between the properties $(\mathcal{G})$ and $(\mathcal{G}^*)$.

\begin{proposition}\label{thm-for-forcing-version}
    Suppose that $\B$ is a Boolean algebra such that for every normal sequence of measures $\nu=(\nu_n)_{n\in\N}$ on $\B$ 
    there is a subalgebra $\B^*\subseteq \B$ such that the sequence $(\nu_n\upharpoonright{\B^*})_{n\in \N}$ is normal and $(\B^*,\B,\nu\upharpoonright{\B^*}$) satisfies $(\mathcal{G}^*)$. Then $\B$ satisfies $(\mathcal{G})$. In particular, $\B$ has the Grothendieck property.
\end{proposition}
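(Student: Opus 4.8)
The plan is to show directly that the data witnessing $(\mathcal{G}^*)$ for the pair $(\B^*,\B,\nu\upharpoonright\B^*)$ already witnesses $(\mathcal{G})$ for $\B$ and the original sequence $\nu$. So fix a normal sequence of measures $\nu=(\nu_n)_{n\in\N}$ on $\B$. By hypothesis there is a subalgebra $\B^*\subseteq\B$ such that $\mu:=(\nu_n\upharpoonright\B^*)_{n\in\N}$ is normal and $(\B^*,\B,\mu)$ satisfies $(\mathcal{G}^*)$, giving us an antichain $\{H_0^n,H_1^n:n\in\N\}\subseteq\B^*$, a set $G\in\B$, and strictly increasing $(a_n)_{n\in\N},(b_n)_{n\in\N}$ satisfying (a)--(d) of Definition~\ref{G*-definition}. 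I claim $G$, the same antichain (now viewed inside $\B$), and the same sequences $(a_n),(b_n)$ witness $(\mathcal{G})$ for $\nu$.

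The verification amounts to transporting the estimates from $\mu$ to $\nu$. The key observation is that $|\nu_n\upharpoonright\B^*|$ and $|\nu_n|$ agree on elements of $\B^*$: more precisely, for $A\in\B^*$ one always has $|\nu_n\upharpoonright\B^*|(A)\leqslant|\nu_n|(A)$ (the supremum defining the variation is over a larger collection of partitions in $\B$), and for the reverse inequality one uses that $\mu$ is \emph{normal}, so $\|\nu_n\upharpoonright\B^*\|=1=\|\nu_n\|$; since $|\nu_n\upharpoonright\B^*|(A)+|\nu_n\upharpoonright\B^*|(1\setminus A)=1=|\nu_n|(A)+|\nu_n|(1\setminus A)$ and each summand on the left is dominated by the corresponding summand on the right, equality holds termwise. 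Hence condition (c) of Definition~\ref{G*-definition}, namely $|\nu_{a_n}|(H_0^n)\geqslant 0.9$ and $|\nu_{b_n}|(H_1^n)\geqslant 0.9$ computed for $\mu$, yields condition (b) of Definition~\ref{cisza_przed_burza} for $\nu$ verbatim, since $H_0^n,H_1^n\in\B^*$.

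For condition (a) of Definition~\ref{cisza_przed_burza} we simply reuse condition (b) of Definition~\ref{G*-definition}: $G\cap H_1^n=\varnothing$. For condition (c) of Definition~\ref{cisza_przed_burza} we need $|\nu_{a_n}(G\cap H_0^n)|\geqslant 0.3$; by condition (a) of Definition~\ref{G*-definition} the set $G\cap H_0^n$ lies in $\B^*$, so $\nu_{a_n}(G\cap H_0^n)=(\nu_{a_n}\upharpoonright\B^*)(G\cap H_0^n)$, and condition (d) of Definition~\ref{G*-definition} gives exactly the required bound. Finally $\{H_0^n,H_1^n:n\in\N\}$ is still an antichain in $\B$ and $G\in\B$, so all the structural requirements of Definition~\ref{cisza_przed_burza} are met. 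Thus $\B$ satisfies $(\mathcal{G})$, and the ``in particular'' clause follows from Proposition~\ref{G_implies_Grothendieck}. The only genuinely delicate point is the equality $|\nu_n\upharpoonright\B^*|(A)=|\nu_n|(A)$ for $A\in\B^*$, which is exactly why normality of the restricted sequence (not merely that it is a sequence of restrictions) is built into the hypothesis; everything else is bookkeeping.
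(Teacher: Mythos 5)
Your proof is correct and follows essentially the same route as the paper's: take the $(\mathcal{G}^*)$-witnesses for $(\B^*,\B,\nu\upharpoonright\B^*)$ and observe that they already witness $(\mathcal{G})$ for $\nu$. The one place where you overwork is condition (b) of Definition~\ref{cisza_przed_burza}: you establish the full equality $|\nu_n\upharpoonright\B^*|(A)=|\nu_n|(A)$ for $A\in\B^*$ (via normality of both sequences plus additivity of the variation), whereas all that is needed is the trivial half $|\nu_n|(A)\geqslant|\nu_n\upharpoonright\B^*|(A)$, since $|\nu_{a_n}|(H_0^n)\geqslant|\nu_{a_n}\upharpoonright\B^*|(H_0^n)\geqslant 0.9$ already suffices, and this is what the paper uses. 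Your equality argument is a valid observation, but it leads you to a misdiagnosis in the closing sentence: normality of the restricted sequence is \emph{not} logically required for this proposition's proof (the one-sided inequality needs nothing beyond the definitions). It appears in the hypothesis because, when the proposition is invoked in Theorem~\ref{main_CH} and Section~\ref{forcing}, the mechanisms that produce the $(\mathcal{G}^*)$-triple (Lemma~\ref{one_step_extension}, Lemma~\ref{one-step-forcing}) demand a normal sequence as input, and restricting a normal sequence to a subalgebra need not preserve normality --- so this must be arranged, as Lemma~\ref{disjoint-supports} is used to do.
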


\begin{proof}
    Fix any normal sequence $(\nu_n)_{n\in\N}$ of measures on $\B$. Pick $\B^* \subseteq \B$ such that  there exist an antichain $\{H_0^n, H_1^n : n\in\N\}\subseteq \B^*$, $G \in \B$ and sequences $ (a_n)_{n\in\N}, (b_n)_{n\in\N} $ such that for the sequence $(\nu_{n}\upharpoonright{\B^*})$ the conditions (a)-(d) from Definition \ref{G*-definition} are satisfied. In particular, $G\cap H_1^n = \varnothing $, i.e. (a) of Definition \ref{cisza_przed_burza} holds.
    To see that (b) of Definition \ref{cisza_przed_burza} is satisfied, observe that
 \begin{eqnarray*}
         |\nu_{a_n}|(H^n_0)&=& \sup\{|\nu_{a_n}(A)|+|\nu_{a_n}(B)|: A, B \in \B, A, B \subseteq H^n_0, A\cap B = \varnothing\}\geqslant \\
         &\geqslant&\sup\{|\nu_{a_n}(A)|+|\nu_{a_n}(B)|: A, B \in \B^*, A, B \subseteq H^n_0, A\cap B=\varnothing\}=\\ &=&|\nu_{a_n}\upharpoonright{\B^*}| (H^n_0)\geqslant 0.9
    \end{eqnarray*}
   and similarly $|\nu_{b_n}|(H^n_1) \geqslant |\nu_{b_n}\upharpoonright{\B^*}| (H^n_1) \geqslant 0.9$.
   
   For Definition \ref{cisza_przed_burza} (c) note that $$|\nu_{a_n}(G\cap H_0^n)| = |\nu_{a_n}\upharpoonright{\B^*}(G\cap H_0^n)|\geqslant 0.3.$$ Use Proposition \ref{G_implies_Grothendieck} to conclude that $\B$ has the Grothendieck property.
\end{proof}

For the reader's convenience we provide a brief sketch of our constructions. We describe consecutive steps of reasoning, starting from general motivations. The parts devoted only to the case of construction under the continuum hypothesis are tagged ({\sf CH}), while the parts devoted to the forcing construction are tagged (F). 
\subsection*{Construction roadmap} 
\begin{enumerate}[leftmargin=18pt]
    \item General idea: we construct an increasing sequence $(\B_\alpha)_{\alpha<\omega_1}$ of balanced countable subalgebras of $\bor(C)$.
\begin{itemize}
    \item [({\sf CH})] For every $\alpha<\omega_1$ the triple $(\B^*_\alpha, \B_{\alpha+1}, (\nu_n^\alpha)_{n\in\N})$ satisfies $(\mathcal{G}^*)$, where $(\nu_n^\alpha)_{n\in\N}$ is a sequence of measures (on some subalgebra $\B^*_\alpha$ of $\B_\alpha$) which is given in advance (by a proper bookkeeping), cf. Theorem \ref{main_CH}.
    \item [(F)] We define a finite support iteration $(\PP_\alpha)_{\alpha\leqslant\omega_1}$ of $\sigma$-centered forcings (Definition \ref{iteration-definition}). For every $\alpha<\omega_1$ the algebra $\B_\alpha$ belongs to the $\alpha$-th intermediate model obtained from this iteration. The triples $(\B_\alpha, \B_{\alpha+1}, (\nu_n)_{n\in\N})$ satisfy $(\mathcal{G}^*)$ for uncountably many sequences of measures (on $\B_\alpha$), whose choice depends on a generic filter in $\PP_\alpha$ (for the connection between the choice of sequences of measures and a generic filter see Lemma \ref{one-step-forcing}).  
\end{itemize}
We finish by taking the Boolean algebra $$\B=\bigcup_{\alpha<\omega_1} \B_\alpha,$$ which is balanced and satisfies $(\mathcal{G})$.

    \item We start with $\B_0=\clop(C)$. At limit steps we take unions. The only non-trivial step is the construction of $\B_{\alpha+1}$ from $\B_\alpha$. In this case we extend $\B_\alpha$ by a new set $G\in\bor(C)$ which is a union of countably many pairwise disjoint elements of $\B_\alpha$: $$G=\bigcup_{n\in\N} G_n$$ that satisfy the hypothesis of Lemma \ref{NN-lemma} (this ensures that $\B_{\alpha+1}$ is balanced). Moreover, we require that
    \begin{itemize}
    \item [({\sf CH})] $G$ (together with some antichain $\{H_0^n, H_1^n\}_{n\in\N}\subseteq \B_\alpha)$ is a witness for the property $(\mathcal{G}^*)$ for the triple $(\B^*_\alpha, \B_{\alpha+1}, (\nu_n^\alpha)_{n\in\N})$,

    \item [(F)] there is an antichain $\{H_n\}_{n\in\N}\subseteq \B_\alpha$ such that for every sequence $(\nu_n)_{n\in\N}$ satisfying the hypothesis of Proposition \ref{one-step-thm} the set $G$ together with some subset of $\{H_n\}_{n\in\N}$ witnesses the property $(\mathcal{G}^*)$ for $(\B_\alpha, \B_{\alpha+1}, (\nu_n)_{n\in\N})$.
    \end{itemize}
\item From now on we will assume that $\alpha$ is fixed and we will focus on the construction of $G_n$'s, $H_n$'s and $H_i^n$'s for $i=0,1$. 

\begin{itemize}
    \item [({\sf CH})] We define $(G_n)_{n\in\N}, (H_0^n)_{n\in\N}, (H_1^n)_{n\in\N}$ by induction on $n\in \N$ (cf. Lemma \ref{one_step_extension}). In order to obtain the property $(\mathcal{G}^*)$ we need to ensure that for every $n\in \N$ the set $G_n\cap H_0^n$ is ``big'' in the sense of some measure from the sequence $(\nu_n^\alpha)_{n\in\N}$ while $G_n\cap H_1^k=\varnothing$ for $k,n\in \N$.
    \item [(F)] The sets $G_n$'s and $H_n$'s appear in the forcing conditions chosen by a generic filter. Lemma \ref{one-step-forcing} will imply that for an appropriate sequence $(\nu_n)_{n\in\N}$ of measures, for infinitely many $n\in\N$ the set $G_n\cap H_n$ is ``big'' in the sense of some measure from this sequence, while $G_n\cap H_k=\varnothing$ for every $n\in\N$ and infinitely many $k\in\N$. 
\end{itemize}

\item  Given finite sequences $(G_k)_{k\leqslant n}, (H_0^k)_{k\leqslant n},(H_1^k)_{k\leqslant n}$ (or $(H_k)_{k\leqslant n}$ in the case of forcing),  we extend them using Lemma \ref{2_elements_from_antichain} and Lemma \ref{construction-finite-step} (applied to $\widehat{G}= \bigcup_{k\leqslant n} G_k$ and $\widehat{H}= \bigcup_{k\leqslant n} (H_0^k\cup H_1^k)$ or $\widehat{H}= \bigcup_{k\leqslant n} H_k$).
\begin{itemize}
    \item [({\sf CH})] The set $G_{n+1}$ consists of 2 parts: $G_{n+1}=L\cup M$, where $L=G_{n+1}\cap H_0^{n+1}$ is the part witnessing the property $(\mathcal{G}^*)$ and $M$ is a very small set disjoint from $H_i^k$'s (so it has no influence on $(\mathcal{G}^*)$).
    \item [(F)] It follows that there are $l_1, l_2>n$ such that $G_{l_1}=L\cup M$, where $L=G_{l_1}\cap H_{l_1}$ witnesses the property $(\mathcal{G}^*)$ and $M\cap H_k=\varnothing$ for $k\in\N$, while $G_{l_2}\cap H_{l_2}=\varnothing$ (this will ensure that $G\cap H_{l_2}=\varnothing$).      
\end{itemize}
\item At the same time, to make sure that the hypothesis of Lemma \ref{NN-lemma} is satisfied, we need to pick $G_{n}$'s in such a way that the families $\FF(\B_n, \bigcup_{i\leqslant k} G_i)$ (where $\B_n$'s are some finite subalgebras of $\B_\alpha$) will have appropriate degrees of balance for $k,n\in\N$. However, we do not have enough control over the choice of $L$, which can affect the balance. Thus, we need to fix it with the help of $M$.

\item The choice of $M$ depends on $L$ in the way described in Proposition \ref{poprawka}. The main idea behind this choice is to reduce the problem to finite combinatorics. We work with some finite subalgebra $\HH$ of $\B$ containing the sets that have appeared in the construction so far (including $L$) that is sufficiently well balanced (cf. Lemma \ref{tristane}). 
\item Lemma \ref{homomorphism} shows that there is $n\in\N$ and a Boolean homomorphism $h\colon \HH \to \A_n$ (recall that $\A_n$ is a finite Boolean algebra consisting of clopen subsets of $C$) such that every $A\in \HH$ is well-approximated by $h(A)$. The choice of $M$ and most of the crucial calculations take place in $\A_n$ (see Lemma \ref{lemme}). These include the use of techniques such as probability inequalities involving weighted Rademacher sums (Lemma \ref{wcale_nie_Paley_Zygmund}) and analysis in finite-dimensional subspaces of the Hilbert space $\mathcal{L}_2(C)$.
\end{enumerate}

\section{Properties of balanced families}\label{balanced-families-section}
This section is devoted to the combinatorics of balanced sets and families. 
In a series of lemmas we will describe basic properties of balanced sets and show how to modify a given set to a balanced one. 

We start with a few simple observations. 

\begin{lemma}\label{disjoint_union}
    If $A,B\in \bor(C)$ are disjoint and $(m,\varepsilon)$-balanced, then $A\cup B$ is $(m,2\varepsilon)$-balanced.
\end{lemma}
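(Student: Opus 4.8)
The plan is to unwind the definition of $(m,\varepsilon)$-balanced for $A\cup B$ directly from the corresponding data for $A$ and $B$, using disjointness to split the relevant $\varphi_r$-integrals and $\lambda$-measures additively. Recall that $A$ being $(m,\varepsilon)$-balanced means: for every $s\in\{-1,1\}^m$ either $\lambda(A\cap\downset{s})/\lambda(\downset{s})<\varepsilon/m$ or $\lambda(\downset{s}\setminus A)/\lambda(\downset{s})<\varepsilon/m$, and for every $r>m$, $|\varphi_r(A\cap\downset{s})|/\lambda(\downset{s})<\varepsilon/r$ (condition (3.2.3)). So I must verify the analogous two conditions for $A\cup B$ with $\varepsilon$ replaced by $2\varepsilon$.

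First I would handle condition (3.2.3). Fix $s\in\{-1,1\}^m$ and $r>m$. Since $A$ and $B$ are disjoint, $(A\cup B)\cap\downset{s} = (A\cap\downset{s})\sqcup(B\cap\downset{s})$, so by additivity of $\varphi_r$ and the triangle inequality,
\[
\frac{|\varphi_r((A\cup B)\cap\downset{s})|}{\lambda(\downset{s})} \leqslant \frac{|\varphi_r(A\cap\downset{s})|}{\lambda(\downset{s})} + \frac{|\varphi_r(B\cap\downset{s})|}{\lambda(\downset{s})} < \frac{\varepsilon}{r} + \frac{\varepsilon}{r} = \frac{2\varepsilon}{r},
\]
which is exactly (3.2.3) for $A\cup B$ with parameter $2\varepsilon$.

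Next I would handle condition (3.1.1) (the mass condition). Fix $s\in\{-1,1\}^m$. From the hypothesis applied to $A$ and to $B$ at this $s$, there are four cases according to which disjunct holds for each set. If $\lambda(A\cap\downset{s})/\lambda(\downset{s})<\varepsilon/m$ and $\lambda(B\cap\downset{s})/\lambda(\downset{s})<\varepsilon/m$, then since $(A\cup B)\cap\downset{s}$ is the disjoint union of the two, $\lambda((A\cup B)\cap\downset{s})/\lambda(\downset{s}) < 2\varepsilon/m$, so the first disjunct holds for $A\cup B$ with parameter $2\varepsilon$. If $\lambda(\downset{s}\setminus A)/\lambda(\downset{s})<\varepsilon/m$ and $\lambda(\downset{s}\setminus B)/\lambda(\downset{s})<\varepsilon/m$, then since $\downset{s}\setminus(A\cup B) = (\downset{s}\setminus A)\cap(\downset{s}\setminus B) \subseteq \downset{s}\setminus A$, monotonicity gives $\lambda(\downset{s}\setminus(A\cup B))/\lambda(\downset{s}) < \varepsilon/m < 2\varepsilon/m$, so the second disjunct holds. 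In the two mixed cases, say $\lambda(\downset{s}\setminus A)/\lambda(\downset{s})<\varepsilon/m$ and $\lambda(B\cap\downset{s})/\lambda(\downset{s})<\varepsilon/m$ (the other mixed case is symmetric with $A,B$ swapped): here $\downset{s}\setminus(A\cup B)\subseteq \downset{s}\setminus A$, so $\lambda(\downset{s}\setminus(A\cup B))/\lambda(\downset{s}) < \varepsilon/m < 2\varepsilon/m$, and the second disjunct holds again. In all cases (3.1.1) holds for $A\cup B$ with $2\varepsilon$, completing the proof.

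There is no real obstacle here; the only mild subtlety is the case analysis for the mass condition, where one must notice that disjointness is needed for the "small-small" case but monotonicity alone suffices whenever at least one of the two sets is (essentially) co-small in $\downset{s}$. I would simply be careful to cover all four combinations and flag that the mixed and co-small cases only use $\varepsilon$, not $2\varepsilon$, so the bound $2\varepsilon$ is comfortably attained.
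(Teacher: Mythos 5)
Your proof is correct and follows essentially the same strategy as the paper: split the $\varphi_r$-integral using disjointness for condition (\ref{3.1.3}), and do a case analysis on the two disjuncts of condition (\ref{3.1.1}). The paper organizes the mass-condition argument into just two cases (at least one of $A,B$ is co-small in $\downset{s}$, handled by monotonicity; otherwise both intersections are small, handled by summing), whereas you spell out all four combinations explicitly — a slightly longer but equivalent route.
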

\begin{proof}
First we check (\ref{3.1.1}).
Fix any $s\in \{-1,1\}^m$. If $$\frac{\lambda(\downset{s}\backslash A)}{\lambda(\downset{s})}< \frac{\varepsilon}{m} \ \text{or} \ \frac{\lambda(\downset{s}\backslash B)}{\lambda(\downset{s})}< \frac{\varepsilon}{m}$$ 
then 
$$\frac{\lambda(\downset{s}\backslash (A\cup B))}{\lambda(\downset{s})}\leqslant \min\left\{\frac{\lambda(\downset{s}\backslash A)}{\lambda(\downset{s})}, \frac{\lambda(\downset{s}\backslash B)}{\lambda(\downset{s})}\right\}< \frac{\varepsilon}{m}.$$
Otherwise, since $A$ and $B$ satisfy (\ref{3.1.1}) we have  
$$ \frac{\lambda(A\cap \downset{s})}{\lambda(\downset{s})}< \frac{\varepsilon}{m} \ \text{and} \ \frac{\lambda(B\cap \downset{s})}{\lambda(\downset{s})}< \frac{\varepsilon}{m}. $$
Summing up the inequalities we get 
$$\frac{\lambda((A\cup B) \cap \downset{s})}{\lambda(\downset{s})}< \frac{2\varepsilon}{m},$$
so $A\cup B$ satisfies (\ref{3.1.1}) for $2\varepsilon$.

Now we check (\ref{3.1.3}). Fix $r>m$. Since $A\cap B=\varnothing$ we have $$|\varphi_r((A\cup B)\cap \downset{s})|\leqslant |\varphi_r(A\cap \downset{s})|+|\varphi_r(B\cap \downset{s})|.$$ Thus, by summing up the inequalities (\ref{3.1.3}) for $A$ and $B$ we get 
$$\frac{|\varphi_r((A\cup B)\cap \downset{s})|}{\lambda(\downset{s})} \leqslant \frac{|\varphi_r(A\cap \downset{s})|}{\lambda(\downset{s})} + \frac{|\varphi_r(B\cap \downset{s})|}{\lambda(\downset{s})} < \frac{2\varepsilon}{r},$$
so $A\cup B$ satisfies (\ref{3.1.3}) for $2\varepsilon$. 
\end{proof}

\begin{lemma}\label{krwawy_baron}
    Let $A\in \bor(C), t\in \N$ and $\varepsilon>0$. If $A$ is $(t,\varepsilon)$-semibalanced then $C\backslash A$ is also $(t,\varepsilon)$-semibalanced.
\end{lemma}
\begin{proof}
    Let $r>t$. We have $$\varphi_r(A)+\varphi_r(C\backslash A)=\int_A \delta_r d\lambda + \int_{C\backslash A} \delta_r d\lambda = \int_C \delta_r d\lambda = 0.$$
    By (\ref{3.0.1}) for $A$ we get 
    $$|\varphi(C\backslash A)|= |\varphi(A)| <\frac{\varepsilon}{r},$$
    which implies (\ref{3.0.1}) for $C\backslash A$ and so $C\backslash A$ is $(t,\varepsilon)$-semibalanced.
\end{proof}

\begin{lemma}\label{algebra_E}
Suppose that $\F$ is a finite Boolean subalgebra of $\bor(C)$ and $t\in\N$ is such that $\F$ is $(t,\varepsilon)$-balanced, 
where $$ \varepsilon= \frac{1}{100}\inf \{\lambda(A): A\in \F, \lambda(A)>0 \}.$$
Then for every $A\in\F$, if $\lambda(A)>0$, then there is $s_A\in \{-1,1\}^t$ such that 
$$\frac{\lambda(A\cap \downset{s_A})}{\lambda(\downset{s_A})}\geqslant 0.99.$$
\end{lemma}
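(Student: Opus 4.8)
The plan is to argue by contradiction using the definition of the variation-type quantity $\varepsilon$ and condition (\ref{3.1.1}) from the definition of $(t,\varepsilon)$-balanced. Fix $A \in \F$ with $\lambda(A) > 0$. Since $\F$ is $(t,\varepsilon)$-balanced and $A \in \F$, the set $A$ is $(t,\varepsilon)$-balanced, so for every $s \in \{-1,1\}^t$ we have either $\lambda(A \cap \downset{s})/\lambda(\downset{s}) < \varepsilon/t$ or $\lambda(\downset{s} \setminus A)/\lambda(\downset{s}) < \varepsilon/t$. Call $s$ of the \emph{first type} in the former case and of the \emph{second type} in the latter (if both hold, assign it arbitrarily). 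The claim is that at least one $s$ is of the second type and moreover witnesses the desired inequality.

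First I would observe that not every $s \in \{-1,1\}^t$ can be of the first type: summing $\lambda(A \cap \downset{s}) < (\varepsilon/t)\lambda(\downset{s}) = \varepsilon/(t 2^t)$ over all $2^t$ strings $s$ would give $\lambda(A) < \varepsilon/t \leqslant \varepsilon < \lambda(A)$ (using $\lambda(A) > 0$, hence $\lambda(A) \geqslant \inf\{\lambda(B) : B \in \F, \lambda(B) > 0\} = 100\varepsilon > \varepsilon$), a contradiction. So there is at least one $s_A$ of the second type, i.e. $\lambda(\downset{s_A} \setminus A)/\lambda(\downset{s_A}) < \varepsilon/t$. Since $\varepsilon/t \leqslant \varepsilon < 1/100 \leqslant 1 - 0.99$, this immediately yields
\[
\frac{\lambda(A \cap \downset{s_A})}{\lambda(\downset{s_A})} = 1 - \frac{\lambda(\downset{s_A} \setminus A)}{\lambda(\downset{s_A})} > 1 - \frac{\varepsilon}{t} \geqslant 1 - \varepsilon > 0.99,
\]
which is what we wanted (note $t \geqslant 1$, so $\varepsilon/t \leqslant \varepsilon$, and $\varepsilon \leqslant 1/100$ since $\lambda(A) \leqslant 1$ for all $A$).

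There is no real obstacle here; the only point requiring a moment's care is checking that $\varepsilon$ is genuinely small enough, namely $\varepsilon \leqslant 1/100$ (which follows because every $\lambda(A) \leqslant \lambda(C) = 1$, so the infimum defining $100\varepsilon$ is at most $1$) and that $\lambda(A) > 0$ forces $\lambda(A) \geqslant 100\varepsilon$, which is exactly the role of the infimum in the definition of $\varepsilon$. The averaging argument producing a second-type string is the crux, and it is elementary. I would present the contradiction with the first-type sum first, then read off the conclusion from condition (\ref{3.1.1}).
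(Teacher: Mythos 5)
Your argument is correct and essentially identical to the paper's: both show by summing over the atoms of $\A_t$ that the first alternative of \eqref{3.1.1} cannot hold for every $s \in \{-1,1\}^t$ (using $\lambda(A)\geqslant 100\varepsilon>\varepsilon$), and then read off the conclusion from the second alternative for a witnessing $s_A$. The only cosmetic point is that your final step ``$1-\varepsilon>0.99$'' should read ``$1-\varepsilon\geqslant 0.99$'' (since $\varepsilon$ may equal $1/100$ when the infimum is $1$), but the strict inequality ``$>1-\varepsilon/t$'' in the preceding step already gives the required ``$\geqslant 0.99$''.
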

\begin{proof}
    Let $A\in \F$ be such that $\lambda(A)>0$. Then $\lambda(A)>\varepsilon$, so there must be $s_A\in \{-1,1\}^t$ such that $\lambda(A\cap \downset{s_A})>\varepsilon \lambda(\downset{s_A})$. Hence
$$\frac{\lambda(\downset{s_A}\backslash A)}{\lambda(\downset{s_A})}\leqslant 1- \varepsilon.$$
Since $A$ satisfies (\ref{3.1.1}) we have  
$$\frac{\lambda(\downset{s_A}\backslash A)}{\lambda(\downset{s_A})}<\frac{\varepsilon}{t}\leqslant \varepsilon.$$
But $\varepsilon\leqslant 0.01$, so 
$$\frac{\lambda(A\cap \downset{s_A})}{\lambda(\downset{s_A})}=\frac{\lambda(\downset{s_A})-\lambda(\downset{s_A}\backslash A)}{\lambda(\downset{s_A})} >1-\varepsilon \geqslant 0.99.$$
\end{proof}

\begin{lemma}\label{miecze_i_pierogi}
    Let $\HH_0$ be an $(n, \varepsilon)$-balanced Boolean algebra. Then the Boolean algebra $\HH$ generated by $\HH_0\cup \A_n$ is $(n, \varepsilon)$-balanced.
\end{lemma}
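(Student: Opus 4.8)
\textbf{Proof plan for Lemma \ref{miecze_i_pierogi}.}

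The plan is to verify directly that every element of $\HH$ satisfies conditions (\ref{3.1.1}) and (\ref{3.1.3}) with parameters $(n,\varepsilon)$, using the explicit description of elements of $\HH$ in terms of $\HH_0$ and $\A_n$. First I would observe that since $\A_n$ is the algebra generated by $\{\downset{s}: s\in\{-1,1\}^n\}$, every atom of $\A_n$ is exactly a basic clopen cylinder $\downset{s}$ with $s\in\{-1,1\}^n$. Hence the algebra $\HH$ generated by $\HH_0\cup\A_n$ consists precisely of finite unions $\bigcup_{s\in S}(A_s\cap\downset{s})$, where $S\subseteq\{-1,1\}^n$ and each $A_s\in\HH_0$; equivalently, an arbitrary $B\in\HH$ is determined by a function $s\mapsto A_s\in\HH_0$ and satisfies $B\cap\downset{s}=A_s\cap\downset{s}$ for each $s\in\{-1,1\}^n$. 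This "coordinatewise" description is the key structural fact, and it is essentially immediate from the definition of the generated algebra together with the fact that the $\downset{s}$ partition $C$.

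Next I would check (\ref{3.1.1}) for such a $B$. Fix $s\in\{-1,1\}^n$. Since $A_s\in\HH_0$ and $\HH_0$ is $(n,\varepsilon)$-balanced, $A_s$ satisfies (\ref{3.1.1}) at this same $s$, i.e. either $\lambda(A_s\cap\downset{s})/\lambda(\downset{s})<\varepsilon/n$ or $\lambda(\downset{s}\setminus A_s)/\lambda(\downset{s})<\varepsilon/n$. But $B\cap\downset{s}=A_s\cap\downset{s}$ and $\downset{s}\setminus B=\downset{s}\setminus A_s$, so the very same dichotomy holds for $B$ at $s$. Thus $B$ satisfies (\ref{3.1.1}) with parameters $(n,\varepsilon)$. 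Then for (\ref{3.1.3}), fix $s\in\{-1,1\}^n$ and $r>n$; again $B\cap\downset{s}=A_s\cap\downset{s}$, so $\varphi_r(B\cap\downset{s})=\varphi_r(A_s\cap\downset{s})$, and the bound $|\varphi_r(A_s\cap\downset{s})|/\lambda(\downset{s})<\varepsilon/r$ coming from $A_s$ being $(n,\varepsilon)$-balanced transfers verbatim to $B$.

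I do not expect a serious obstacle here: the content of the lemma is that adjoining $\A_n$ to an $(n,\varepsilon)$-balanced algebra cannot spoil $(n,\varepsilon)$-balance, and this is true essentially by fiat because the defining inequalities (\ref{3.1.1}) and (\ref{3.1.3}) are stated relative to a fixed partition into the $\downset{s}$, $s\in\{-1,1\}^n$, precisely the atoms of $\A_n$. The only point that requires a moment's care is writing down cleanly why an arbitrary element of the generated algebra $\HH$ has the coordinatewise form $B\cap\downset{s}=A_s\cap\downset{s}$ with $A_s\in\HH_0$; one can argue either by noting that the set of all such $B$ is already a Boolean algebra containing $\HH_0\cup\A_n$, or by a short induction on Boolean combinations. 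Once that is in place, everything reduces to the trivial observation that intersecting with $\downset{s}$ identifies $B$ with $A_s$ inside that cylinder, so the $(n,\varepsilon)$-balance conditions for $B$ are literally the $(n,\varepsilon)$-balance conditions already known for the $A_s$'s.
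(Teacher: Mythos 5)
Your proposal is correct and follows essentially the same route as the paper: express each element of $\HH$ as $\bigcup_{s\in\{-1,1\}^n}(\downset{s}\cap A_s)$ with $A_s\in\HH_0$, note $B\cap\downset{s}=A_s\cap\downset{s}$ and $\downset{s}\setminus B=\downset{s}\setminus A_s$, and then transfer (\ref{3.1.1}) and (\ref{3.1.3}) from $A_s$ to $B$ verbatim. No gap.
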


\begin{proof}
    Take $A \in \HH$. Then $A$ is of the form:
    
    $$A=\bigcup_{\downset{s}\in \at(\A_n)} (\downset{s} \cap   A_s)=\bigcup_{s\in \{-1,1\}^n} (\downset{s} \cap A_s),$$ 
where $A_s \in \HH_0$ for $s\in \{-1,1\}^n$.

Let $s_0\in \{-1,1\}^n$. Then 

$$A\cap \downset{s_0} = \bigcup_{s\in \{-1,1\}^n}(\downset{s} \cap A_s)\cap \downset{s_0}=A_{s_0} \cap \downset{s_0}$$

and

$$\downset{s_0}\backslash A =\downset{s_0}\backslash \bigcup_{s\in \{-1,1\}^n}(\downset{s} \cap A_s)=\downset{s_0}\backslash A_{s_0}.$$

By (\ref{3.1.1}) we have $$\frac{\lambda(A_{s_0} \cap \downset{s_0})}{\lambda(\downset{s_0})}< \frac{\varepsilon}{n} 
 \ \text{or} \ 
\frac{\lambda(\downset{s_0}\backslash A_{s_0})}{\lambda(\downset{s_0})}< \frac{\varepsilon}{n}.$$

If $\frac{\lambda(A_{s_0} \cap \downset{s_0})}{\lambda(\downset{s_0})}< \frac{\varepsilon}{n}$, then

$$\frac{\lambda(A\cap \downset{s_0})}{\lambda(\downset{s_0})}=\frac{\lambda(A_{s_0} \cap \downset{s_0})}{\lambda(\downset{s_0})}< \frac{\varepsilon}{n}.$$

If $\frac{\lambda(\downset{s_0}\backslash A_{s_0})}{\lambda(\downset{s_0})}< \frac{\varepsilon}{n}$, then

$$\frac{\lambda(\downset{s_0}\backslash A)}{\lambda(\downset{s_0})}=\frac{\lambda(\downset{s_0}\backslash A_{s_0})}{\lambda(\downset{s_0})}< \frac{\varepsilon}{n},$$
so $A$ also satisfies (\ref{3.1.1}).

    Let $r>n$ and let $s_0\in \{-1,1\}^n$. Then by (\ref{3.1.3})

    $$
    \frac{|\varphi_r(A\cap \downset{s_0})|}{\lambda(\downset{s_0})}=\frac{|\varphi_r(A_{s_0} \cap \downset{s_0})|}{\lambda(\downset{s_0})}
    < \frac{\varepsilon}{r}.
    $$
\end{proof}

The next lemma shows that while dealing with finite balanced families we can approximate them with finite families of clopen subsets of $C$. This will allow us to reduce many problems to the combinatorics of finite Boolean algebras $\A_n$ for $n\in \N$.

\begin{lemma}\label{homomorphism}
    Suppose that $\HH \subseteq \bor(C)$ is a finite subalgebra that is $(n, \varepsilon)$-balanced for some $n\in \N$ and $\varepsilon<1/3$. Then the function $h_n\colon \HH \rightarrow \A_n$ given by 
    $$h_n(A)= \bigcup \left\{\downset{s}: {s\in \{-1,1\}^n}, \frac{\lambda(\downset{s}\backslash A)}{\lambda(\downset{s})}<\varepsilon\right\}$$
    is a homomorphism of Boolean algebras and for every $A\in \HH$ we have $$\lambda\left(A\triangle h_n(A)\right)<\varepsilon/n.$$ 
\end{lemma}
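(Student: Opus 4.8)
The plan is to verify the two assertions of Lemma \ref{homomorphism} in turn: first that $h_n$ respects the Boolean operations, and then that each $A$ is $\varepsilon/n$-close to $h_n(A)$ in the $\lambda$-measure. The key observation, valid because $\HH$ is $(n,\varepsilon)$-balanced with $\varepsilon<1/3$, is a \emph{dichotomy} at each atom of $\A_n$: for every $s\in\{-1,1\}^n$ and every $A\in\HH$, exactly one of $\lambda(\downset{s}\backslash A)/\lambda(\downset{s})<\varepsilon$ and $\lambda(\downset{s}\cap A)/\lambda(\downset{s})<\varepsilon$ holds. That at least one holds is precisely condition (\ref{3.1.1}); that not both hold follows from $2\varepsilon<1$, since the two fractions sum to $1$. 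So for each $A\in\HH$ the set $\{-1,1\}^n$ is partitioned into $S_A^{\mathrm{in}}=\{s:\lambda(\downset{s}\backslash A)/\lambda(\downset{s})<\varepsilon\}$ (atoms ``mostly inside $A$'') and its complement $S_A^{\mathrm{out}}$ (atoms ``mostly outside $A$''), and $h_n(A)=\bigcup_{s\in S_A^{\mathrm{in}}}\downset{s}$.

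For the homomorphism claim I would check compatibility with complement, union, and intersection — it suffices to handle complement and one binary operation. For complement: $s\in S_{C\backslash A}^{\mathrm{in}}$ iff $\lambda(\downset{s}\cap A)/\lambda(\downset{s})<\varepsilon$, which by the dichotomy is iff $s\in S_A^{\mathrm{out}}$; hence $h_n(C\backslash A)=C\backslash h_n(A)$. For intersection: if $s\in S_A^{\mathrm{in}}\cap S_B^{\mathrm{in}}$ then $\lambda(\downset{s}\backslash(A\cap B))\le\lambda(\downset{s}\backslash A)+\lambda(\downset{s}\backslash B)<2\varepsilon\lambda(\downset{s})$; but $2\varepsilon$ is not quite below the threshold $\varepsilon$, so a direct union bound is too lossy. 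The fix is to note that $A\cap B\in\HH$ already satisfies the dichotomy, so it is enough to rule out $s\in S_{A\cap B}^{\mathrm{out}}$: if that held then $\lambda(\downset{s}\cap A\cap B)/\lambda(\downset{s})<\varepsilon$, while $\lambda(\downset{s}\backslash A),\lambda(\downset{s}\backslash B)<\varepsilon\lambda(\downset{s})$ force $\lambda(\downset{s}\cap A\cap B)/\lambda(\downset{s})>1-2\varepsilon>\varepsilon$, a contradiction. Thus $S_{A\cap B}^{\mathrm{in}}=S_A^{\mathrm{in}}\cap S_B^{\mathrm{in}}$, i.e. $h_n(A\cap B)=h_n(A)\cap h_n(B)$; together with the complement case this gives that $h_n$ is a Boolean homomorphism.

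For the approximation claim, fix $A\in\HH$ and estimate $\lambda(A\triangle h_n(A))$ atom by atom. On an atom $\downset{s}$ with $s\in S_A^{\mathrm{in}}$ we have $\downset{s}\subseteq h_n(A)$, so $(A\triangle h_n(A))\cap\downset{s}=\downset{s}\backslash A$, of measure $<\varepsilon\lambda(\downset{s})$ by definition of $S_A^{\mathrm{in}}$. On an atom with $s\in S_A^{\mathrm{out}}$ we have $\downset{s}\cap h_n(A)=\varnothing$, so $(A\triangle h_n(A))\cap\downset{s}=A\cap\downset{s}$, of measure $<\varepsilon\lambda(\downset{s})$ by the dichotomy. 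Summing over all $2^n$ atoms, $\lambda(A\triangle h_n(A))<\varepsilon\sum_{s}\lambda(\downset{s})=\varepsilon$. This gives the bound $\varepsilon$ rather than the claimed $\varepsilon/n$, so I would need to revisit the threshold: in fact condition (\ref{3.1.1}) gives the sharper fractions $<\varepsilon/n$ on each atom (not just $<\varepsilon$), so redoing the atom-wise estimate with $\varepsilon/n$ in place of $\varepsilon$ yields $\lambda(A\triangle h_n(A))<\varepsilon/n$ exactly. The main obstacle is precisely this bookkeeping of constants: being careful that the dichotomy (which only needs the crude threshold $\varepsilon<1/3$ for the homomorphism argument) is applied alongside the sharp per-atom bound $\varepsilon/n$ coming from (\ref{3.1.1}) for the measure estimate — conflating the two thresholds is the easy way to get a wrong constant. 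Everything else is a routine atomwise computation.
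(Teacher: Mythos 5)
Your proof is correct and follows essentially the same route as the paper: the atom-by-atom dichotomy forced by $(n,\varepsilon)$-balance together with $\varepsilon<1/3$, followed by the per-atom estimate coming from (\ref{3.1.1}). The only cosmetic differences are that you verify that $h_n$ respects intersection where the paper checks union (dual and equivalent arguments, both exploiting that $A\cap B$ resp.\ $A\cup B$ is itself $(n,\varepsilon)$-balanced), and your ``first get $\varepsilon$, then sharpen to $\varepsilon/n$'' presentation arrives at exactly the per-atom bound the paper uses directly; the one inclusion $S^{\mathrm{in}}_{A\cap B}\subseteq S^{\mathrm{in}}_A\cap S^{\mathrm{in}}_B$ you leave unstated is immediate from $\downset{s}\backslash A\subseteq\downset{s}\backslash(A\cap B)$.
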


\begin{proof}
   For the first part of the lemma we need to show that $h_n(C)=C, h_n(A\cup B)= h_n(A)\cup h_n(B)$ and $h_n(C\backslash A)=C\backslash h_n(A)$ for every $A,B\in \HH$. 

The first equality holds since for every $s\in \{-1,1\}^n$ we have $$\frac{\lambda(\downset{s}\backslash C)}{\lambda(\downset{s})}= \frac{\lambda(\varnothing)}{\lambda(\downset{s})}=0.$$
The second equality follows from the fact that 
   $$\frac{\lambda(\downset{s}\backslash (A\cup B))}{\lambda(\downset{s})}<\varepsilon \ \text{iff} \ \frac{\lambda(\downset{s}\backslash A)}{\lambda(\downset{s})}<\varepsilon \ \text{or} \ \frac{\lambda(\downset{s}\backslash B)}{\lambda(\downset{s})}<\varepsilon.$$
Indeed, if 
$$\frac{\lambda(\downset{s}\backslash (A\cup B))}{\lambda(\downset{s})}<\varepsilon,$$
   then $$\frac{\lambda((A\cup B)\cap \downset{s})}{\lambda(\downset{s})}\geqslant\frac{2}{3}$$
   and so
   $$\max \left\{\frac{\lambda(A\cap \downset{s})}{\lambda(\downset{s})}, \frac{\lambda(B\cap \downset{s})}{\lambda(\downset{s})} \right\}\geqslant \frac{1}{3} > \varepsilon. $$
   Since $A$ and $B$ are $(n, \varepsilon)$-balanced we have
   $$\max \left\{\frac{\lambda(A\cap \downset{s})}{\lambda(\downset{s})}, \frac{\lambda(B\cap \downset{s})}{\lambda(\downset{s})} \right\}> 1-\varepsilon $$
   or equivalently 
   $$\min \left\{ \frac{\lambda(\downset{s}\backslash A)}{\lambda(\downset{s})}, \frac{\lambda(\downset{s}\backslash B)}{\lambda(\downset{s})} \right\}< \varepsilon. $$
   Conversely, if $$\frac{\lambda(\downset{s}\backslash A)}{\lambda(\downset{s})}<\varepsilon \ \text{or} \ \frac{\lambda(\downset{s}\backslash B)}{\lambda(\downset{s})}<\varepsilon,$$ 
then $$\frac{\lambda(\downset{s}\backslash (A\cup B)}{\lambda(\downset{s})}\leqslant \min\left\{\frac{\lambda(\downset{s}\backslash A)}{\lambda(\downset{s})}, \frac{\lambda(\downset{s}\backslash B)}{\lambda(\downset{s})}\right\}<\varepsilon.$$
The equality $h_n(C\backslash A)=C\backslash h_n(A)$ holds since for $s\in \{-1,1\}^n$
\begin{gather*}
    \downset{s}\subseteq C\backslash h_n(A) \ \text{iff} \ \frac{\lambda(\downset{s}\backslash A)}{\lambda(\downset{s})}\geqslant\varepsilon \ \text{iff} \ \frac{\lambda(A\cap\downset{s})}{\lambda(\downset{s})}\leqslant 1-\varepsilon \ \text{iff} \\ 
    \frac{\lambda(A\cap\downset{s})}{\lambda(\downset{s})}<\varepsilon \ \text{iff} \ \frac{\lambda(\downset{s}\backslash (C\backslash A))}{\lambda(\downset{s})}<\varepsilon \ \text{iff} \ \downset{s}\subseteq h_n(C\backslash A).
\end{gather*}

   For the second part of the lemma we notice that for every $s\in \{-1,1\}^n$ if $\frac{\lambda(\downset{s}\backslash A)}{\lambda(\downset{s})} <\frac{\varepsilon}{n}$, then

   \begin{gather*}
    \lambda(A\triangle h_n(A) \cap \downset{s}) = \lambda(\downset{s}\backslash A)< \lambda(\downset{s})\frac{\varepsilon}{n}  
   \end{gather*}
and if $\frac{\lambda(\downset{s}\backslash A)}{\lambda(\downset{s})}\geqslant \frac{\varepsilon}{n}$, then $\frac{\lambda(A\cap \downset{s})}{\lambda(\downset{s})} <\frac{\varepsilon}{n}$, and so
\begin{gather*}
    \lambda(A\triangle h_n(A) \cap \downset{s})= \lambda(A\cap \downset{s})< \lambda(\downset{s})\frac{\varepsilon}{n}.
\end{gather*}
Hence
   \begin{gather*}
       \lambda(A\triangle h_n(A)) = \sum_{s\in \{-1,1\}^n} \lambda(A\triangle h_n(A) \cap \downset{s}) \leqslant \sum_{s\in \{-1,1\}^n} \lambda(\downset{s})\frac{\varepsilon}{n}= \frac{\varepsilon}{n}.
   \end{gather*}
\end{proof}

The next lemma says that small perturbations of $(m,t,\varepsilon)$-balanced sets are still  $(m,t,\varepsilon)$-balanced. 

\begin{lemma}\label{small_perturbation}
Let $m, t\in \N, t>m, \varepsilon >0$. Then there is $\mem>0$ such that for every $A, B\in \bor(C)$, if $A$ is $(m, t, \varepsilon)$-balanced and $\lambda(B)<\mem$, then $A\cup B$ and $A\backslash B$ are $(m,t, \varepsilon)$-balanced.
\end{lemma}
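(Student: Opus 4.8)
The statement is a uniform-continuity-type claim: for fixed $m,t,\varepsilon$ with $t>m$, there is a threshold $\mem>0$ so that adding or removing a set $B$ of measure $<\mem$ preserves $(m,t,\varepsilon)$-balance. The plan is to extract, for each of the finitely many defining inequalities of $(m,t,\varepsilon)$-balance, the ``slack'' by which an $(m,t,\varepsilon)$-balanced set $A$ satisfies them, and then choose $\mem$ small enough that a perturbation of measure $<\mem$ cannot eat up this slack. The subtlety is that the definition of $(m,t,\varepsilon)$-balanced involves strict inequalities (conditions (\ref{3.1.1}) and (\ref{3.1.2})), so an arbitrary $(m,t,\varepsilon)$-balanced $A$ need not have a \emph{uniform} positive slack. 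This is handled by a compactness/finiteness observation: condition (\ref{3.1.2}) only involves the finitely many values $r\in(m,t]$ and $s\in\{-1,1\}^m$, and for each such $r$ we have the crude bound $|\varphi_r(A\cap\downset{s})|\le\lambda(\downset{s})$, while the target $\varepsilon/r$ is a fixed positive number; so the \emph{amount} by which a perturbation of measure $\eta$ can change $|\varphi_r(A\cap\downset{s})|/\lambda(\downset{s})$ is at most $\eta/\lambda(\downset{s}) = 2^m\eta$, independent of $A$. The point is that we don't need slack in $A$ — we need the perturbation's \emph{effect} to be smaller than the slack, but actually the cleaner route is: the perturbation changes each quantity by a controlled amount, but since the inequalities are strict we still need care. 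I resolve this as follows.

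First I would set up notation: let $\eta := \lambda(B)$. For any $s\in\{-1,1\}^m$ and any $r$, I note the elementary estimates
\[
|\lambda((A\cup B)\cap\downset{s}) - \lambda(A\cap\downset{s})| \le \lambda(B\cap\downset{s}) \le \eta,
\]
\[
|\lambda((A\setminus B)\cap\downset{s}) - \lambda(A\cap\downset{s})| \le \lambda(B\cap\downset{s}) \le \eta,
\]
and likewise, since $|\varphi_r(X)|\le\lambda(X)$ for any Borel $X$ (as $|\delta_r|\equiv 1$),
\[
|\varphi_r((A\cup B)\cap\downset{s}) - \varphi_r(A\cap\downset{s})| \le \lambda(B\cap\downset{s})\le\eta,
\]
and the same for $A\setminus B$. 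Dividing by $\lambda(\downset{s}) = 2^{-m}$, every normalized quantity appearing in (\ref{3.1.1}) and (\ref{3.1.2}) changes by at most $2^m\eta$ when passing from $A$ to $A\cup B$ or $A\setminus B$.

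Now for the strict inequalities. For condition (\ref{3.1.2}), since $A$ is $(m,t,\varepsilon)$-balanced we have $|\varphi_r(A\cap\downset{s})|/\lambda(\downset{s}) < \varepsilon/r$ for each of the finitely many pairs $(r,s)$ with $r\in(m,t]$; but this is a strict inequality with no uniform gap. The trick is that $\varphi_r(A\cap\downset{s})$ is an integral of $\pm 1$ over a set, and more usefully: I will instead argue that it suffices to prove the claim with the strict inequalities in the conclusion, and the gap I need is supplied not by $A$ but by noting that I may as well shrink $\varepsilon$ slightly — no, cleaner: I simply pick $\mem$ so small that $2^m\mem$ is strictly less than the minimum, over the finitely many pairs $(r,s)$, of $\varepsilon/r - |\varphi_r(A\cap\downset{s})|/\lambda(\downset{s})$ — but this depends on $A$. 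To make it uniform, I observe the following: for condition (\ref{3.1.1}), an $(m,t,\varepsilon)$-balanced set $A$ satisfies, for each $s$, either $\lambda(A\cap\downset{s})/\lambda(\downset{s})<\varepsilon/m$ or $\lambda(\downset{s}\setminus A)/\lambda(\downset{s})<\varepsilon/m$; in the first case a perturbation by $2^m\eta$ keeps it below $\varepsilon/m + 2^m\eta$, which is fine once we also use the genuine slack — and here is the honest resolution: \textbf{there is no uniform $\mem$ that works for the stated strict inequalities unless we use the finitely-many-conditions structure together with the fact that each individual strict inequality, for a \emph{fixed} $A$, has positive slack, OR we accept a weaker conclusion.} I therefore expect the intended proof to pick $\mem$ \emph{after} fixing nothing but $m,t,\varepsilon$ by exploiting that (\ref{3.1.1}) is about $\varepsilon/m$ and the dichotomy, combined with (\ref{3.1.2})'s finitely many $r\le t$: concretely, set $\mem := 2^{-m}\cdot\frac{1}{2}\min\{\varepsilon/m,\ \varepsilon/t\}$ will NOT literally close strict inequalities, so the real statement being used must be that the slack in (\ref{3.1.1}), (\ref{3.1.2}) for \emph{balanced} families in practice is always bounded below — which is false in full generality but true along the construction. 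Given the scope of a proof sketch, the plan is: \emph{take $\mem>0$ with $2^m\mem < \tfrac13\varepsilon/t$ (hence also $<\tfrac13\varepsilon/r$ for all $r\le t$ and, since $t>m$, $<\tfrac13\varepsilon/m$); then by the displayed perturbation estimates, each normalized quantity for $A\cup B$ or $A\setminus B$ differs from that for $A$ by less than $\tfrac13\varepsilon/t$, and one checks directly — using the dichotomy in (\ref{3.1.1}) and the bound in (\ref{3.1.2}) for $A$ — that the required strict inequalities persist for $A\cup B$ and $A\setminus B$.} Carrying out this last verification (distinguishing, in (\ref{3.1.1}), whether $A$'s small side is $A\cap\downset{s}$ or $\downset{s}\setminus A$, and noting $\downset{s}\setminus(A\cup B)\subseteq\downset{s}\setminus A$ while $(A\setminus B)\cap\downset{s}\subseteq A\cap\downset{s}$, which already gives half the cases for free, and bounding the other half by slack $+\,2^m\eta$) is the routine core.

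The main obstacle — the one step requiring genuine care rather than bookkeeping — is precisely reconciling the \emph{strict} inequalities in the definition of $(m,t,\varepsilon)$-balanced with the demand for a threshold $\mem$ depending only on $m,t,\varepsilon$; the resolution is to exploit that for the side of the dichotomy in (\ref{3.1.1}) that shrinks under the perturbation (namely $\downset{s}\setminus A$ when we form $A\cup B$, or $A\cap\downset{s}$ when we form $A\setminus B$) the inequality is preserved for \emph{free}, and for the side that can grow we are comparing against $\varepsilon/m$ or $\varepsilon/r$ with a genuine gap coming from the corresponding ``other'' inequality that $A$ satisfies, so that a sufficiently small $2^m\mem$ closes it; condition (\ref{3.1.2}) is easier since there both $A\cup B$ and $A\setminus B$ only perturb $\varphi_r(A\cap\downset{s})$ additively by at most $\eta$ in absolute value, and one just needs $2^m\mem$ plus the gap $\varepsilon/r - |\varphi_r(A\cap\downset{s})|/\lambda(\downset{s})$ to stay positive, which for the uniform choice is arranged by noting — and this is the one genuinely substantive point — that we only need the \emph{statement} to produce \emph{some} $\mem$ working \emph{for a given} $A$ when it is invoked in the construction, so if the authors' phrasing is read with $\mem$ allowed to depend on $A$ (which the quantifier order ``there is $\mem>0$ such that for every $A,B$'' actually forbids), one would instead observe that the relevant applications only ever use sets $A$ arising with a controlled uniform slack; I will present the argument with the uniform $\mem := 2^{-m-2}\varepsilon/t$ and the case analysis above, flagging that the (\ref{3.1.1})-case where the perturbation grows the relevant side uses $\varepsilon/t\le\varepsilon/m$ together with $A$'s complementary inequality to supply the needed room.
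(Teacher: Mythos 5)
Your suspicion about the quantifiers is correct, and in fact the paper's own proof tacitly concedes it: the first line of the paper's argument is ``Let $\varepsilon_1 < \varepsilon$ be such that $A$ is $(m,t,\varepsilon_1)$-balanced, and let $\mem = \frac{\varepsilon - \varepsilon_1}{2^m t}$.'' Since $A$ appears \emph{before} $\mem$ is chosen, the $\mem$ defined there depends on $A$ through $\varepsilon_1$. Such an $\varepsilon_1$ exists for a \emph{fixed} $A$ because conditions (\ref{3.1.1}) and (\ref{3.1.2}) are finitely many strict inequalities, each with some positive slack; but no single $\varepsilon_1 < \varepsilon$ works for all $(m,t,\varepsilon)$-balanced $A$ simultaneously, so no literally uniform $\mem$ exists, exactly as you argued (take $A$ with $\lambda(A\cap\downset{s})/\lambda(\downset{s})$ approaching $\varepsilon/m$ from below while $\lambda(\downset{s}\setminus A)/\lambda(\downset{s})$ remains $\geqslant \varepsilon/m$). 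The correct reading of the lemma is thus $\forall A\,\exists\mem\,\forall B$, and this is all that is used: Proposition \ref{poprawka} invokes Lemma \ref{small_perturbation} only for the finite family $\bigcup_{n\leqslant k}\FF(\B_n, G)$, and one takes $\mem$ to be the minimum over its members.

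Once $\varepsilon_1$ is fixed, the rest of the paper's computation coincides with the perturbation bounds you wrote down. With $2^m\mem = (\varepsilon-\varepsilon_1)/t$: for each $s$, either the small side of (\ref{3.1.1}) is $\lambda(\downset{s}\setminus A)$, and then $\lambda(\downset{s}\setminus(A\cup B)) \leqslant \lambda(\downset{s}\setminus A)$ preserves the inequality for free; or it is $\lambda(A\cap\downset{s})$, and then $\lambda((A\cup B)\cap\downset{s})/\lambda(\downset{s}) < \varepsilon_1/m + (\varepsilon - \varepsilon_1)/t \leqslant \varepsilon/m$ because $t > m$; and for $r\in(m,t]$, $|\varphi_r((A\cup B)\cap\downset{s})|/\lambda(\downset{s}) < \varepsilon_1/r + (\varepsilon - \varepsilon_1)/t \leqslant \varepsilon/r$. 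The case $A\setminus B$ is symmetric. So the one missing step in your write-up is the decision to introduce the $A$-dependent $\varepsilon_1 < \varepsilon$ that turns the strict inequalities into a quantified gap; your proposed uniform $\mem := 2^{-m-2}\varepsilon/t$ genuinely cannot close (\ref{3.1.1}) when the slack $\varepsilon/m - m\lambda(A\cap\downset{s})/\lambda(\downset{s})$ is smaller than $2^m\mem$, a defect you yourself flagged. Apart from that, your perturbation estimates and case analysis match the paper's.
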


\begin{proof}
   Let $\varepsilon_1<\varepsilon$ be such that $A$ is $(m, t, \varepsilon_1)$-balanced and let 
   $$ \mem = \frac{\varepsilon-\varepsilon_1}{2^mt}.$$
   For every $s\in \{-1, 1\}^m$  we have 
\begin{gather*}
    \frac{\lambda((A\cup B)\cap \downset{s})}{\lambda(\downset{s})}\leqslant \frac{\lambda(A\cap \downset{s})}{\lambda(\downset{s})} + \frac{\mem}{\lambda(\downset{s})} < \frac{\varepsilon_1}{m} + \frac{\varepsilon - \varepsilon_1}{t} \leqslant \frac{\varepsilon}{m}
\end{gather*}
or 
\begin{gather*}
    \frac{\lambda(\downset{s}\backslash (A\cup B))}{\lambda(\downset{s})}\leqslant \frac{\lambda(\downset{s}\backslash A)}{\lambda(\downset{s})}< \frac{\varepsilon}{m}
\end{gather*}
and for every $s\in \{-1, 1\}^m$, $m<r\leqslant t$ 
\begin{gather*}
    \frac{|\varphi_r((A\cup B)\cap \downset{s})|}{\lambda(\downset{s})} \leqslant \frac{|\varphi_r(A\cap \downset{s})|}{\lambda(\downset{s})} + \frac{|\varphi_r((B\backslash A)\cap \downset{s})|}{\lambda(\downset{s})}\leqslant \frac{\varepsilon_1}{r}+\frac{\varepsilon-\varepsilon_1}{t} < \frac{\varepsilon}{r}.
\end{gather*}
Hence $A\cup B$ is  $(m, t, \varepsilon)$-balanced. Calculations showing that $A\backslash B$ is  $(m, t, \varepsilon)$-balanced are similar.
\end{proof}

In the following lemma, we provide conditions for enlarging balanced Boolean algebras to bigger ones. 

\begin{lemma}\label{NN-lemma}
    Suppose that $\B\subseteq \bor(C)$ is a balanced Boolean algebra and that $$\B=\bigcup_{n\in\N} \B_n$$ is a representation of $\B$ as an increasing union of finite subalgebras. Let $(m_n)_{n\in\N}$ be a strictly increasing sequence of natural numbers and $\{G_n\}_{n\in \N}\subseteq \B$ be an antichain such that 

$$ \forall k\in \N \ \forall n\leqslant k \ \FF\Big(\B_n, \bigcup_{i\leqslant k} G_i\Big) \ \text{is} \ (m_n, 2^{-n})\text{-balanced},$$
where $\FF\left(\B_n, \bigcup_{i\leqslant k} G_i\right)=\left\{A\cap \bigcup_{i\leqslant k} G_i, A\backslash \bigcup_{i\leqslant k} G_i: A\in \B_n\right\}$.

Put $G=\bigcup_{n\in \N} G_n$.
Then the Boolean algebra $\B'$ generated by $\B\cup\{G\}$ is balanced.
\end{lemma}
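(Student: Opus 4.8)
The goal is to show $\B'$, the Boolean algebra generated by $\B \cup \{G\}$, is balanced. Recall that every element of $\B'$ has the form $(A \cap G) \cup (A' \setminus G)$ for some $A, A' \in \B$, so a finite subfamily $\AAA' \subseteq \B'$ is controlled by a finite subfamily $\AAA \subseteq \B$ together with the single set $G$. Fix $\varepsilon > 0$ and such a finite $\AAA'$. I need to produce $m \in \N$ making $\AAA'$ $(m,\varepsilon)$-balanced. The natural idea is: choose $n$ large enough that (i) $\AAA$ is captured inside $\B_n$ (after possibly enlarging, using that $\B = \bigcup_n \B_n$), (ii) $2^{-n}$ is small compared to $\varepsilon$, and (iii) $\B_n$ itself (which is balanced, being a subalgebra of the balanced $\B$) is $(m_n, \varepsilon')$-balanced for a suitable $\varepsilon'$ and some $m_n$; then take $m = m_n$. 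The hypothesis of the lemma gives, for every $k \geq n$, that $\FF(\B_n, \bigcup_{i \leq k} G_i)$ is $(m_n, 2^{-n})$-balanced.

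**The core argument.**

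The main point is to pass from the finite truncations $\bigcup_{i \leq k} G_i$ to the actual infinite union $G = \bigcup_i G_i$. For a fixed $A \in \B_n$, the sets $A \cap \bigcup_{i \leq k} G_i$ increase to $A \cap G$ and $A \setminus \bigcup_{i \leq k} G_i$ decrease to $A \setminus G$ as $k \to \infty$. So $\lambda$ of these sets (intersected with any $\downset{s}$, $s \in \{-1,1\}^{m_n}$) converges, and likewise $\varphi_r$ of them converges for each fixed $r$, by dominated convergence. The inequalities (3.2.1) and (3.2.3) defining $(m_n, 2^{-n})$-balancedness are \emph{non-strict in the limit}: each holds with $2^{-n}$ for every truncation $k$, hence passes to the limit with a weak inequality, i.e. $A \cap G$ and $A \setminus G$ satisfy (3.2.1) and (3.2.3) with $2^{-n}$ replaced by, say, $2 \cdot 2^{-n}$ (to convert $\leq 2^{-n}$ into a strict $<$, absorb a factor of $2$; or simply note strictness is preserved since one of the two strict alternatives in (3.2.1) persists for cofinally many $k$). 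Thus each member of $\FF(\B_n, G) = \{A \cap G, A \setminus G : A \in \B_n\}$ is $(m_n, 2^{1-n})$-balanced.

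**Assembling $\B'$.**

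Now take an arbitrary element of $\AAA'$, write it as $B = (A \cap G) \cup (A' \setminus G)$ with $A, A' \in \B_n$ (enlarging $n$ so all finitely many members of $\AAA'$ are represented, and so that $2^{1-n} < \varepsilon$; I may also need to invoke Lemma~\ref{miecze_i_pierogi} or Remark~\ref{dla_dobra_nauki} to arrange that $\B_n$, and hence the relevant sets, are balanced with the right index — since $\B$ is balanced, for each fixed finite collection and each $\varepsilon$ there is $m > n$ with that collection $(m,\varepsilon)$-balanced). The two pieces $A \cap G$ and $A' \setminus G$ are disjoint, so by Lemma~\ref{disjoint_union} their union is $(m_n, 2 \cdot 2^{1-n})$-balanced, hence $(m_n, \varepsilon)$-balanced once $n$ is large. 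Doing this uniformly for the finitely many members of $\AAA'$ (taking the largest required $n$, noting $m_n$ is increasing) shows $\AAA'$ is $(m_n, \varepsilon)$-balanced. Since $\varepsilon$ and $\AAA'$ were arbitrary, $\B'$ is balanced.

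**Anticipated obstacle.**

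The delicate step is the limiting argument: I must check that the degrees of balance genuinely survive the passage from finite unions $\bigcup_{i\le k} G_i$ to $G$, in particular that the dichotomy (3.2.1) — where for each $k$ \emph{one} of the two strict inequalities holds, but possibly a different one for different $k$ — still yields a single alternative in the limit. This is handled by a pigeonhole: for each $s$, one of the two alternatives holds for infinitely many $k$, and along that subsequence the corresponding quantity converges, giving the (weak, then strict after shrinking the constant) inequality for $G$. A secondary bookkeeping point is making sure the finitely many sets $A, A'$ coming from the members of $\AAA'$ all lie in a common $\B_n$ with the appropriate balance index available; this is exactly what the increasing union representation and the definition of "balanced family" (via Remark~\ref{dla_dobra_nauki}) provide.
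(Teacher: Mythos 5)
Your proposal follows essentially the same route as the paper's proof: decompose elements of $\B'$ as $(A_1\cap G)\cup(A_2\setminus G)$ with $A_1,A_2$ in a fixed $\B_n$, pass to the limit in $k$ from $\bigcup_{i\le k}G_i$ to $G$ to transfer the hypothesized $(m_n,2^{-n})$-balance of $\FF(\B_n,\bigcup_{i\le k}G_i)$ to $\FF(\B_n,G)$, then combine the two disjoint pieces via Lemma~\ref{disjoint_union} and absorb everything into $\varepsilon$ by taking $n$ large. Your bookkeeping differs slightly — you explicitly double to $2^{1-n}$ to make the limiting inequalities strict, whereas the paper keeps $2^{-n}$ (silently non-strict in the limit) and relies on the final $1/2^{n-1}<\varepsilon$ to recover strictness — but this is cosmetic; your doubling is, if anything, the cleaner way to handle it. One small imprecision: you do not actually need to invoke Remark~\ref{dla_dobra_nauki} or Lemma~\ref{miecze_i_pierogi}, since the needed degree of balance for $\FF(\B_n,\cdot)$ is supplied directly by the hypothesis of the lemma at the given $m_n$, not chosen afresh; and the pigeonhole you mention for the dichotomy in~(3.2.1) is in fact a monotonicity observation (the two alternatives are monotone in $k$, so whichever holds stabilizes), which is worth making explicit but is implicit in the paper too.
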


\begin{proof}
   Let $\varepsilon>0$ and let $\AAA'$ be a finite subfamily of $\B'$. Every element $A'\in \AAA'$ is of the form  $(A_1\cap G)\cup(A_2\backslash G)$ where $A_1, A_2\in \B$, so there is a finite family $\AAA\subseteq \B$ such that $\AAA'\subseteq \{(A_1\cap G)\cup(A_2\backslash G): A_1,A_2\in \AAA\}$. Let $n\in \N$ be such that $\AAA\subseteq \B_n$ and  $1/2^{n-1}<\varepsilon$. Fix $A_1, A_2\in \AAA$. Then for every $k\geqslant n$  
   $$A_1\cap \bigcup_{i\leqslant k} G_i \ \text{and} \ A_2\backslash \bigcup_{i\leqslant k} G_i \ \text{are} \ (m_n, 1/2^n)-\text{balanced}.$$ 
    Hence for $s\in \{-1,1\}^{m_n}$ we have
    \begin{eqnarray*}
         \frac{\lambda(A_1 \cap G \cap  \downset{s})}{\lambda(\downset{s})}&\leqslant& \frac{\lambda\left(A_1 \cap \bigcup_{i\leqslant k} G_i \cap  \downset{s}\right)+ \lambda\left(\bigcup_{i> n} G_i\right)}{\lambda(\downset{s})}\leqslant \\ &\leqslant& \frac{1}{m_n2^{n}} + \frac{\lambda\left(\bigcup_{i> k} G_i\right)}{\lambda(\downset{s})} \xrightarrow{k\rightarrow \infty} \frac{1}{m_n2^{n}}  
    \end{eqnarray*}
    or
    \begin{eqnarray*}  
         \frac{\lambda(\downset{s}\backslash (A_1 \cap G))}{\lambda(\downset{s})}&\leqslant& \frac{\lambda\left(\downset{s}\backslash \left(A_1 \cap \bigcup_{i\leqslant k} G_i\right)\right)}{\lambda(\downset{s})}\leqslant  \frac{1}{m_n2^{n}} 
    \end{eqnarray*}
    and for $m> m_n$
    \begin{eqnarray*}
        \frac{|\varphi_m(A_1\cap G\cap \downset{s})|}{\lambda(\downset{s})} \!\!\!\!\! &\leqslant\!\!\!\!\!&  \frac{\left|\varphi_m\left(A_1\cap \bigcup_{i\leqslant k} G_i\cap \downset{s}\right)\right|}{\lambda(\downset{s})} + \frac{\left|\varphi_m\Big(A_1\cap \bigcup_{i> k} G_i\cap \downset{s}\Big)\right|}{\lambda(\downset{s})} \leqslant \\ \!\!\!\!\!&\leqslant\!\!\!\!\!& \frac{1}{m2^n} +\frac{\lambda\left(\bigcup_{i> k} G_i\right)}{\lambda(\downset{s})} \xrightarrow{k \rightarrow\infty}  \frac{1}{m2^n},  
    \end{eqnarray*}
    so $A_1\cap G$ is $(m_n, 1/2^n)$-balanced. By a similar argument $A_2\backslash G$ is $(m_n, 1/2^n)$-balanced. By Lemma \ref{disjoint_union} the set $(A_1\cap G)\cup (A_2\backslash G)$ is $(m_n, 1/2^{n-1})$-balanced, and so  $(m_n,\varepsilon)$-balanced, which completes the proof. 
\end{proof}

The next proposition is key for the construction in Lemma \ref{construction-finite-step}. It says how much we can modify a given finite balanced family without losing its balance.
 
\begin{proposition}\label{poprawka}
   Let $k\in \N$, $\eta >0$. Let $(m_n)_{n\leqslant k}$ be an increasing sequence of natural numbers. Let $\B^*\subseteq\B\subseteq \bor(C)$ be balanced Boolean algebras and assume that $\clop(C)\subseteq \B^*$. Let $(\B_n)_{n\leqslant k}\subseteq \B$ be finite subalgebras. Suppose that $G, P\in \B^*$ and the following are satisfied:
   \begin{enumerate}
       \item [(A)] $G\subseteq P$,
       \item [(B)] $\forall n\leqslant k \ \FF(\B_n, G)$ is $(m_n, 2^{-n})$-balanced. 
   \end{enumerate}
Then there is $\theta>0$ such that for every $L,Q\in \B^*$ satisfying 
\begin{enumerate}
    \item [(a)] $\max\{\lambda (L), \lambda(Q)\}<\theta$,
    \item [(b)] $L\cap P=\varnothing$,
\end{enumerate}
there is $M\in \B^*$ such that 
 \begin{enumerate}
     \item  $M\cap (P\cup Q)=\varnothing$,
     \item  $\lambda(M)<\eta$,
     \item  $\forall n\leqslant k \ \FF(\B_n, G\cup L\cup M)$ is $(m_n, 2^{-n})$-balanced. 
 \end{enumerate}
   
\end{proposition}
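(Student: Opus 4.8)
The plan is to reduce requirement~(3) to a finite combinatorial statement about clopen subsets of $C$, settle that statement inside some $\A_N$, and transfer back with the help of Lemma~\ref{homomorphism}. I begin by \emph{extracting slack}: each $\FF(\B_n,G)$ is a finite family of balanced (hence semibalanced) sets, so the strict inequalities (\ref{3.1.1}) and (\ref{3.1.3}) defining "$(m_n,2^{-n})$-balanced" hold with uniform room, i.e.\ there is $\delta>0$ with each $\FF(\B_n,G)$ actually $(m_n,2^{-n}-\delta)$-balanced. (For (\ref{3.1.1}), and for (\ref{3.1.3}) in a bounded range of $r$, this is immediate from finiteness; for large $r$ one uses that a balanced set $A$ satisfies $r\,|\varphi_r(A\cap\downset{s})|\to 0$, so the quantity in (\ref{3.1.3}) eventually drops below $\delta\lambda(\downset{s})/r$.) Then I choose $\theta>0$ small, depending on all the data including $P$: small enough that adding to $G$ any set of measure $<\theta+\eta$ perturbs each of the finitely many quantities governing the "$(m_n,N,2^{-n}-\delta/2)$-balanced" part by less than $\delta/4$, exactly as in Lemma~\ref{disjoint_union} and Lemma~\ref{small_perturbation}; small enough that (\ref{3.1.1}) survives such a perturbation; and below $(1-\lambda(P))/8$ when $\lambda(P)<1$. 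We may also assume $\eta$ is itself this small (shrinking $\eta$ only strengthens the statement), and dispose of the trivial case $\lambda(P)=1$ (then $L\cap P=\varnothing$ forces $\lambda(L)=0$ and $M=\varnothing$ works).

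Now fix such $L,Q$. Since $\B^*$ is balanced and $\clop(C)\subseteq\B^*$, pick (cf.\ Lemma~\ref{tristane}) a finite subalgebra $\HH\subseteq\B^*$ containing $G,P,L,Q$, all generators of the $\B_n$, and — using Lemma~\ref{miecze_i_pierogi} — the algebra $\A_N$, such that $\HH$ is $(N,\varepsilon)$-balanced for a prescribed tiny $\varepsilon<\min\{1/3,2^{-k},\delta\,2^{-m_k}/8\}$; here $N$ may depend on $L$ and $Q$. By Lemma~\ref{homomorphism} there is a Boolean homomorphism $h=h_N\colon\HH\to\A_N$ that is the identity on $\A_N$ (so fixes every $\downset{s}$ with $|s|\le N$) and satisfies $\lambda(A\triangle h(A))<\varepsilon/N$ for $A\in\HH$. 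The point of including $\A_N$ in $\HH$: for any $M=M'\setminus(P\cup Q)$ with $M'\in\A_N$ we get $G\cup L\cup M\in\HH$, so each set $B\cap(G\cup L\cup M)\cap\downset{s}$ in requirement~(3) lies in $\HH$ and is $(N,\varepsilon)$-balanced; since $\varepsilon<2^{-n}$, summing (\ref{3.1.3}) over the level-$N$ refinements of $\downset{s}$ shows the tail $r>N$ of (\ref{3.1.3}) is automatic for all $n\le k$, and (\ref{3.1.1}) at level $m_n$ holds because $\FF(\B_n,G)$ has $\delta$-slack there and $\lambda(L\cup M)<\theta+\eta$ is small. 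Hence everything collapses to the band $m_n<r\le N$: pushing through $h$ and absorbing the errors $\lambda(A\triangle h(A))<\varepsilon/N$ into the slack (legitimate since $r\le N$ makes $\delta/r\ge\delta/N$), it suffices to find a clopen $M'\in\A_N$ with $M'\cap\bigl(h(P)\cup h(Q)\cup h(L)\bigr)=\varnothing$ and $\lambda(M')<\eta/2$ such that, for all $n\le k$, $B\in\B_n$, $s\in\{-1,1\}^{m_n}$,
\[
\frac{\bigl|\varphi_r\bigl(h(B)\cap(h(L)\cup M')\cap\downset{s}\bigr)\bigr|}{\lambda(\downset{s})}<\frac{\delta}{4r}\qquad(m_n<r\le N).
\]
In other words, $M'$ must counteract, frequency by frequency, the (small) Walsh coefficients that $h(L)$ contributes to each $h(B)\cap\downset{s}$.

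This last step is the combinatorial heart, and the main obstacle: one must produce a \emph{single, small} clopen set $M'$ — disjoint from $h(P)\cup h(Q)\cup h(L)$, where $h(P)$ may fill most of $C$, leaving little room — whose coefficients $\varphi_r(h(B)\cap M'\cap\downset{s})$ simultaneously cancel those of $h(L)$ for all the finitely many relevant $B$, $s$, $r\le N$, while keeping $\lambda(M')<\eta/2$. The tension is that the numbers to be cancelled may be of order $\lambda(h(L))/\lambda(\downset{s})$ at many frequencies at once, yet $M'$ must remain small; resolving this is precisely the purpose of the analysis in the finite-dimensional subspace of $\mathcal{L}_2(C)$ spanned by the Walsh functions of the relevant atoms, together with the probabilistic estimates for weighted Rademacher sums (cf.\ Lemma~\ref{wcale_nie_Paley_Zygmund}, Lemma~\ref{lemme}). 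Concretely, one partitions $C$ into the atoms $\pi$ of the finite algebra generated by the $h(B)$'s, the $\downset{s}$'s, and $h(P),h(Q),h(L)$; inside each $\pi$ contained in the available room $C\setminus(h(P)\cup h(Q)\cup h(L))$ one carves $M'\cap\pi$, at a finer level $N'>N$, as a thin set of the form $\pi\cap\{x:\sum_r c_r x_r\in I\}$, choosing the weights $c_r$ and the interval $I$ so that the distribution of the Rademacher sum yields coefficients matching the prescribed small targets while $\lambda\bigl(\pi\cap\{x:\sum_r c_r x_r\in I\}\bigr)$ stays controlled; summing over the finitely many pieces gives $M'$. Finally put $M:=M'\setminus(P\cup Q)\in\B^*$: it is genuinely disjoint from $P\cup Q$, it differs from $M'$ by measure $<2\varepsilon/N$ so all estimates survive, and (1)--(3) follow.
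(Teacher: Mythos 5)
Your reduction framework is essentially the paper's: pass to a finite approximating subalgebra $\HH$ containing the data, push through the Boolean homomorphism $h_N\colon\HH\to\A_N$ of Lemma~\ref{homomorphism}, use the slack in (B) plus the smallness of $L\cup M$ for the level--$m_n$ condition~(\ref{3.1.1}) and for the bounded band of frequencies (this is what Lemma~\ref{small_perturbation} phrases as preservation of $(m_n,t,2^{-n})$-balancedness), and dispose of the tail $r>N$ via the $(N,\varepsilon)$-balancedness of $\HH$ with $\varepsilon<2^{-k}$ (cf.\ Lemmas~\ref{balanced+semibalanced} and~\ref{miecze_i_pierogi}). The gap is in the step that you yourself flag as the combinatorial heart: producing, inside $\A_N$, a single clopen set $M'$ of measure $<\eta/2$, disjoint from $h(P)\cup h(Q)\cup h(L)$, whose Walsh coefficients control the band for all $n\leqslant k$, $B\in\B_n$, $s\in\{-1,1\}^{m_n}$ simultaneously. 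Your sketch proposes to match prescribed targets frequency by frequency by carving thin slivers $\pi\cap\{x:\sum_r c_r x_r\in I\}$ inside admissible atoms; this rests on an unproved solvability claim (that the weights $c_r$ and the interval $I$ can be tuned to hit several small targets at once while keeping $\lambda(M')$ small and $M'$ inside the possibly tiny room $C\setminus h(P)$), and it is not clear such a set exists. Note also that carving at a finer level $N'>N$, as you suggest, would take $M'$ out of $\A_N\subseteq\HH$ and break the tail argument.

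The paper's Lemma~\ref{lemme}, which you cite but whose argument your sketch does not reproduce, resolves the tension by a genuinely different and subtler device: it fixes the admissible class $\mathcal{M}$ of clopen subsets of $F\setminus(Q\cup Z)$ of a prescribed measure $k/2^n\approx\eta$, chooses $M\in\mathcal{M}$ \emph{minimizing} $S(M)=\sum_{m>t}\varphi_m(M\cup Z)^2$, and then \emph{certifies} that the minimum is small: by the Paley--Zygmund-type estimate of Lemma~\ref{wcale_nie_Paley_Zygmund} there is an atom in the large residual region $(F'\setminus M)\cap T[F'\setminus M]$ whose swap into $M$ would strictly decrease $S$ unless $S$ is already small, while Bessel's inequality (via Lemma~\ref{orthonormal}) controls the $Z$-contribution. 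This is a variational existence argument, not a constructive cancellation scheme. In parallel, Lemma~\ref{algebra_E} guarantees that each atom $E$ of the ambient finite algebra $\F$ is dense in some cell $\downset{s_E}$ of $\A_t$, so there is genuine room in which to build $M_E$ even when $P$ has large measure, and Lemma~\ref{tristane} assembles the per-atom sets $M_E$ and simultaneously verifies semibalancedness of the complements $F\setminus(M\cup L)$ — a family your proposal never addresses, yet one that is needed for $\FF(\B_n,G\cup L\cup M)$ rather than only for the sets of the form $B\cap(G\cup L\cup M)$. Without carrying out these steps the proof is incomplete.
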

\begin{center}
 
\tikzset{
pattern size/.store in=\mcSize, 
pattern size = 5pt,
pattern thickness/.store in=\mcThickness, 
pattern thickness = 0.3pt,
pattern radius/.store in=\mcRadius, 
pattern radius = 1pt}
\makeatletter
\pgfutil@ifundefined{pgf@pattern@name@_2zf3f2suk}{
\pgfdeclarepatternformonly[\mcThickness,\mcSize]{_2zf3f2suk}
{\pgfqpoint{0pt}{0pt}}
{\pgfpoint{\mcSize+\mcThickness}{\mcSize+\mcThickness}}
{\pgfpoint{\mcSize}{\mcSize}}
{
\pgfsetcolor{\tikz@pattern@color}
\pgfsetlinewidth{\mcThickness}
\pgfpathmoveto{\pgfqpoint{0pt}{0pt}}
\pgfpathlineto{\pgfpoint{\mcSize+\mcThickness}{\mcSize+\mcThickness}}
\pgfusepath{stroke}
}}
\makeatother
\tikzset{every picture/.style={line width=0.75pt}} 

\begin{tikzpicture}[x=0.75pt,y=0.75pt,yscale=-1,xscale=1]

\draw  [fill={rgb, 255:red, 74; green, 144; blue, 226 }  ,fill opacity=1 ] (251.28,444) .. controls (241.42,411.58) and (245.55,385.3) .. (260.52,385.3) .. controls (275.49,385.3) and (295.62,411.58) .. (305.48,444) .. controls (315.34,476.42) and (311.2,502.7) .. (296.23,502.7) .. controls (281.27,502.7) and (261.14,476.42) .. (251.28,444) -- cycle ;
\draw   (190,399.24) .. controls (190,385.7) and (200.97,374.73) .. (214.51,374.73) -- (288.05,374.73) .. controls (301.58,374.73) and (312.56,385.7) .. (312.56,399.24) -- (312.56,488.76) .. controls (312.56,502.3) and (301.58,513.27) .. (288.05,513.27) -- (214.51,513.27) .. controls (200.97,513.27) and (190,502.3) .. (190,488.76) -- cycle ;
\draw  [fill={rgb, 255:red, 208; green, 2; blue, 27 }  ,fill opacity=0.86 ] (324.51,457.3) .. controls (328.87,457.22) and (332.52,463.27) .. (332.66,470.81) .. controls (332.81,478.35) and (329.39,484.53) .. (325.02,484.61) .. controls (320.66,484.69) and (317.01,478.65) .. (316.87,471.11) .. controls (316.73,463.56) and (320.15,457.38) .. (324.51,457.3) -- cycle ;
\draw  [fill={rgb, 255:red, 187; green, 226; blue, 151 }  ,fill opacity=0.75 ] (322,450.54) .. controls (322,442.14) and (328.81,435.33) .. (337.2,435.33) .. controls (345.6,435.33) and (352.41,442.14) .. (352.41,450.54) .. controls (352.41,458.93) and (345.6,465.74) .. (337.2,465.74) .. controls (328.81,465.74) and (322,458.93) .. (322,450.54) -- cycle ;
\draw  [pattern=_2zf3f2suk,pattern size=3.2249999999999996pt,pattern thickness=0.75pt,pattern radius=0pt, pattern color={rgb, 255:red, 0; green, 0; blue, 0}] (296.88,486.22) .. controls (296.88,483.98) and (308.23,482.16) .. (322.22,482.16) .. controls (336.21,482.16) and (347.56,483.98) .. (347.56,486.22) .. controls (347.56,488.46) and (336.21,490.27) .. (322.22,490.27) .. controls (308.23,490.27) and (296.88,488.46) .. (296.88,486.22) -- cycle ;

\draw (271.33,435.67) node [anchor=north west][inner sep=0.75pt]    {$G$};
\draw (328.5,444) node [anchor=north west][inner sep=0.75pt]  [color={rgb, 255:red, 0; green, 0; blue, 0 }  ,opacity=1 ]  {$M$};
\draw (319,464.5) node [anchor=north west][inner sep=0.75pt]    {$L$};
\draw (320,491.27) node [anchor=north west][inner sep=0.75pt]    {$Q$};
\draw (197.94,383.18) node [anchor=north west][inner sep=0.75pt]    {$P$};

\end{tikzpicture}
\end{center}
Before we prove the proposition, we will need a few lemmas.

The following lemma is a version of \cite[Theorem 3, page 31]{kahane}.
\begin{lemma}\label{wcale_nie_Paley_Zygmund}
   Let $n\in\N$. Let $\lambda_n$ be the standard product probability measure on the space $\{-1, 1\}^n$ ($\lambda(\{x\})=\lambda(\{y\})$ for every $x,y\in \{-1, 1\}^n$). Then for all $(d_m)_{m\leqslant n}\in \R^n$ and any $\xi\in(0, 1)$

    $$
    \lambda_n \left(\left\{y\in\{-1, 1\}^n: \left|\sum_{m=1}^n y_m d_m\right|^2\geqslant\xi \sum_{m=1}^n |d_m|^2\right\}\right) \geqslant\frac{1}{3} (1-\xi)^2.
    $$
\end{lemma}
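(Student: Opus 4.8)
The statement is a quantitative anti-concentration (Paley--Zygmund--type) inequality for weighted Rademacher sums, and the plan is to prove it via the second moment method, i.e.\ the one-sided Chebyshev (Paley--Zygmund) inequality applied to the non-negative random variable
\[
W=\Bigl(\sum_{m=1}^n y_m d_m\Bigr)^2,
\]
where $y=(y_1,\dots,y_n)$ is distributed according to $\lambda_n$. First I would normalize: since both sides of the claimed inequality are homogeneous of degree $2$ in $(d_m)_{m\le n}$, we may assume $\sum_{m=1}^n |d_m|^2=1$, so the goal becomes $\lambda_n(W\ge\xi)\ge\frac13(1-\xi)^2$. Then I would compute the first two moments of $W$. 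Using independence and $\E y_m=0$, $\E y_m^2=1$ one gets $\E W=\sum_m d_m^2=1$. For the second moment, expanding $W^2=\sum_{i,j,k,l} y_iy_jy_ky_l\, d_id_jd_kd_l$ and using that $\E[y_iy_jy_ky_l]$ is $1$ when the indices pair up (either $i=j,k=l$ or $i=k,j=l$ or $i=l,j=k$) and $0$ otherwise (here one uses $y_m\in\{-1,1\}$, so $y_m^2=1$), a standard bookkeeping gives
\[
\E W^2 = 3\Bigl(\sum_m d_m^2\Bigr)^2 - 2\sum_m d_m^4 \le 3.
\]

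With these moment bounds in hand, the Paley--Zygmund inequality states that for a non-negative random variable $W$ with $0<\E W<\infty$ and any $\theta\in(0,1)$,
\[
\PP\bigl(W\ge\theta\,\E W\bigr)\ge (1-\theta)^2\,\frac{(\E W)^2}{\E W^2}.
\]
Applying this with $\theta=\xi$ (legitimate since $\xi\in(0,1)$), and using $\E W=1$ together with $\E W^2\le 3$, yields
\[
\lambda_n\bigl(W\ge\xi\bigr)=\PP\bigl(W\ge\xi\,\E W\bigr)\ge (1-\xi)^2\,\frac{1}{\E W^2}\ge\frac{(1-\xi)^2}{3},
\]
which is exactly the desired conclusion. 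I would either invoke Paley--Zygmund as a known fact or include its two-line proof (write $\E W = \E[W\mathbbm{1}_{W<\theta\E W}] + \E[W\mathbbm{1}_{W\ge\theta\E W}] \le \theta\E W + \sqrt{\E W^2}\sqrt{\PP(W\ge\theta\E W)}$ by Cauchy--Schwarz, then rearrange).

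\textbf{Main obstacle.} There is no deep difficulty here; the only point requiring care is the fourth-moment computation, specifically getting the combinatorics of the pairings right and being careful about the diagonal terms (when $i=j=k=l$) so as not to overcount — this is the source of the $-2\sum_m d_m^4$ correction, which is harmless since it only helps (it makes $\E W^2$ smaller, hence the bound stronger), but one must make sure the inequality $\E W^2\le 3(\sum d_m^2)^2$ genuinely holds rather than an inequality in the wrong direction. A secondary (purely cosmetic) subtlety is the degenerate case where all $d_m=0$: then both sides are trivially handled, or one simply excludes it by the normalization $\sum_m d_m^2 = 1$. If one instead prefers to cite \cite[Theorem 3, page 31]{kahane} directly, the work reduces to checking that the constants there specialize to $1/3$ and $(1-\xi)^2$ in the Rademacher case, but I would favor giving the self-contained second-moment argument since it is short and makes the constants transparent.
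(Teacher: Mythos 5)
Your proof is correct, but it is worth noting that the paper does not prove this lemma at all: it simply states it with the remark that it is ``a version of \cite[Theorem 3, page 31]{kahane}'' and gives no argument. So you are supplying a self-contained proof where the authors rely on a citation. Your route is the standard one for this Rademacher case: normalize to $\sum_m d_m^2=1$, compute $\E W=1$ and $\E W^2=3\bigl(\sum_m d_m^2\bigr)^2-2\sum_m d_m^4\le 3$ by the four-index pairing count (your bookkeeping and the $-2\sum_m d_m^4$ correction are correct, and the direction of the inequality is the right one), and then apply the Paley--Zygmund inequality with $\theta=\xi$. The one small point to make explicit if you write this up: the clean form of Paley--Zygmund gives $\PP(W>\theta\,\E W)\ge(1-\theta)^2(\E W)^2/\E W^2$, and monotonicity of measure then gives the corresponding bound for $\{W\ge\theta\,\E W\}$; your split into $\{W<\theta\E W\}$ and $\{W\ge\theta\E W\}$ handles this correctly. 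Compared with the paper's approach, yours is longer by a short paragraph but is entirely elementary, makes the constant $1/3$ transparent (it is exactly the Khintchine fourth-moment bound for Rademacher sums), and avoids an external reference; the paper's citation to Kahane is terser but leaves the reader to locate and specialize the general statement. Either is acceptable; yours is arguably preferable for readability.
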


The above lemma will allow us to pick $y$ from some big enough subset of $\{-1, 1\}^n$ satisfying the appropriate inequality.

\begin{lemma}\label{orthonormal}
    The sequence $(\delta_n)_{n\in\N}$ is orthonormal in the Hilbert space $\mathcal{L}_2(C)$. 
\end{lemma}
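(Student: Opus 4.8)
The plan is to verify the two defining properties of an orthonormal system directly from the definition of the functions $\delta_n$ and the inner product on $\mathcal{L}_2(C)$. Recall $\delta_n\colon C\to\{-1,1\}$ is given by $\delta_n(x)=x_n$, and $\langle f,g\rangle=\int_C fg\, d\lambda$ where $\lambda$ is the normalized Haar measure on $C=\{-1,1\}^{\N}$.

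First I would compute $\|\delta_n\|^2=\langle\delta_n,\delta_n\rangle=\int_C x_n^2\, d\lambda(x)$. Since $x_n\in\{-1,1\}$ we have $x_n^2\equiv 1$, so $\|\delta_n\|^2=\lambda(C)=1$; hence each $\delta_n$ is a unit vector. Next I would show $\langle\delta_n,\delta_m\rangle=0$ for $n\neq m$. Here $\langle\delta_n,\delta_m\rangle=\int_C x_nx_m\, d\lambda(x)$. The cleanest way is to split $C$ according to the value of the $n$-th coordinate: writing $\downset{s}$ for basic clopen sets, group the cylinder sets so that the involution $x\mapsto x'$ which flips the $n$-th coordinate and leaves all others fixed is a measure-preserving bijection of $C$ (it is a translation by a group element, so it preserves Haar measure). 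Under this involution $x_nx_m$ changes sign (since $n\neq m$ means $x_m$ is untouched), so $\int_C x_nx_m\, d\lambda=\int_C (x_n'x_m')\, d\lambda=-\int_C x_nx_m\, d\lambda$, forcing the integral to be $0$. Alternatively one can use the $\varphi_r$ notation already in the paper: $\langle\delta_n,\delta_m\rangle=\varphi_m(\{x:x_n=1\})-\varphi_m(\{x:x_n=-1\})$ and both terms vanish by a direct cylinder-set computation, or note $x_nx_m$ is (up to sign) a Walsh/Rademacher function whose integral is $0$.

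There is essentially no obstacle here — the statement is a routine warm-up lemma. The only point requiring a line of justification is the measure-preservation of the coordinate-flip map (equivalently, the fact that $\lambda(\downset{s})=2^{-n}$ for every $s\in\{-1,1\}^n$, which is recorded in the Notation section), and that the inner product is computed termwise over the finite partition into cylinders of length $\max\{n,m\}$. I would present the argument in two short sentences: one for normalization, one for orthogonality via the sign-flipping symmetry.
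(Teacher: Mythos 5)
Your argument is correct. The normalization step is the same as the paper's: $\delta_n^2\equiv 1$ on $C$, so $\|\delta_n\|=1$. For orthogonality, however, you take a genuinely different route. The paper partitions $C$ into the four cylinder sets $C^i_j=\{x\in C: x_n=i,\ x_m=j\}$ for $i,j\in\{-1,1\}$, observes that $\delta_n\delta_m\equiv ij$ on $C^i_j$ and that $\lambda(C^i_j)=1/4$, and then computes $\sum_{i,j}ij\cdot\tfrac14=0$ directly. You instead invoke the involution of $C$ that flips only the $n$-th coordinate, note that it is a translation in the group $(\{-1,1\}^{\N},\cdot)$ and hence $\lambda$-preserving, and observe that it sends $x_nx_m$ to $-x_nx_m$ when $n\neq m$, so the integral must equal its own negative. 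This is slightly more conceptual: it exploits Haar invariance abstractly rather than summing over an explicit four-cell partition, and it would generalize verbatim to any odd function of a single coordinate. Both proofs are complete and equally short; yours just replaces a tiny finite computation with a symmetry argument, which some readers will find cleaner and others will find slightly more indirect for so elementary a fact.
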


\begin{proof}
    We need to show that for any $n,m\in \N$
    \begin{gather*}
           \downset{\delta_n,\delta_m} =
\left\{
	\begin{array}{ll}
		1  & \mbox{if }  n=m, \\
		0 & \mbox{if } n\neq m.
	\end{array}
\right.  
\end{gather*}
If $n=m$, then $\downset{\delta_n,\delta_m}= \|\delta_n\|=1$. 

Suppose that $n\neq m$. For $i,j\in \{-1,1\}$ let $C^i_j = \{s\in C: s_n=i, s_m=j\}$ and note that if $x\in C^i_j$, then 
\begin{gather*}
           \delta_n(x)\delta_m(x) = ij.
\end{gather*}
Since $\lambda(C_j^i)=1/4$ for $i,j\in\{-1,1\}$, we get 
\begin{gather*}
    \downset{\delta_n,\delta_m} = \int_C \delta_n\delta_m d\lambda = \sum_{i,j\in \{-1,1\}} \int_{C^i_j} \delta_n\delta_m d\lambda = \sum_{i,j\in \{-1,1\}} ij\lambda(C^i_j)=0.
\end{gather*}

\end{proof}

\begin{lemma}\label{lemme}
    Let $t\in \N$. Let $\eta \in (0, 1/2^{t+10})$. Let $n_0$ be large enough so that for all $n>n_0$ there exists $k\in \N$ that 
    $$\frac{\eta}{2} <\frac{k}{2^n}< \eta \ \text{and} \ \frac{n^3}{2^{n-1}}\leqslant \eta. $$ Let $n>n_0$ and $k$ satisfy the above inequality. Then for all
    \begin{itemize}
        \item  $Q \in \A_n$ such that $\lambda(Q)<\eta$,
       
        \item  $F\in \A_n$ for which there exists $s \in \{-1, 1\}^t$ such that $\frac{\lambda(F \cap \downset{s})}{\lambda(\downset{s})}\geqslant 0.95,$

         \item  $Z \in \A_n$ such that $\lambda(Z)<\frac{\eta^2}{64}$ and $Z \subseteq F$,
    \end{itemize}
    there exists $M \subseteq F \setminus (Q \cup Z)$ such that
    \begin{enumerate}
        \item [(a)] $M \in \A_n$,
        \item [(b)] $\lambda(M)=\frac{k}{2^n}$,
        \item [(c)] $M\cup Z$ is $(t,\eta)$-semibalanced.
    \end{enumerate}

\end{lemma}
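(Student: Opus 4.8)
The plan is to push everything into the finite algebra $\A_n$, working inside the atom $\downset s$ of $\A_t$ that $F$ almost fills, and to build $M$ as a disjoint union $M=M_1\cup M_0$: a ``large balanced'' piece $M_0$ all of whose Walsh coefficients $\varphi_r$ (for $t<r$) vanish, and a short ``correction'' piece $M_1$, produced greedily via Lemma~\ref{wcale_nie_Paley_Zygmund}, whose sole job is to cancel the coefficients $\varphi_r(Z)$ for $t<r\leqslant n$. For the reduction, since $t<n$ the set $F'':=(F\cap\downset s)\setminus(Q\cup Z)$ lies in $\A_n$ and is a union of atoms of $\A_n$ contained in $\downset s$, with $\lambda(\downset s\setminus F'')\leqslant 0.05\,\lambda(\downset s)+\lambda(Q)+\lambda(Z)<0.051\cdot 2^{-t}$, so at most $0.06\cdot 2^{n-t}$ atoms of $\A_n$ inside $\downset s$ fail to lie in $F''$. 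Identify an atom $\downset u\subseteq\downset s$ with $y(u)=(u_{t+1},\dots,u_n)\in\{-1,1\}^{n-t}$. For any union of $\A_n$-atoms $A$ one has $\varphi_r(A)=2^{-n}\sum_{\downset u\subseteq A}u_r$ for $r\leqslant n$ and $\varphi_r(A)=0$ for $r>n$, so it suffices to arrange $|\varphi_r(M)+\varphi_r(Z)|<\eta/r$ for $t<r\leqslant n$. Put $z_r=2^n\varphi_r(Z)=\sum_{\downset v\subseteq Z}v_r$; by Lemma~\ref{orthonormal} and Bessel's inequality $\sum_{r=t+1}^n z_r^2\leqslant 2^{2n}\lambda(Z)<2^{2n}\eta^2/64$, so $\vec z=(z_{t+1},\dots,z_n)$ has $\|\vec z\|_2<2^n\eta/8$.

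Fix a constant $\xi\in(1/4,1)$ with $\tfrac16(1-\xi)^2>0.07$ (say $\xi=0.3$) and run a greedy vector-balancing process. Keep a residual $\vec\rho\in\R^{n-t}$, initially $\vec z$, and a set of chosen atoms, initially empty. While $\|\vec\rho\|_2\geqslant(n-t)/\sqrt\xi$: Lemma~\ref{wcale_nie_Paley_Zygmund} applied to $\vec\rho$ produces a set of $\lambda_{n-t}$-measure $\geqslant\tfrac13(1-\xi)^2$ of points $y$ with $|\sum_m y_m\rho_m|\geqslant\sqrt\xi\,\|\vec\rho\|_2$, and by the symmetry $y\mapsto-y$ at least half of these satisfy $\sum_m y_m\rho_m\leqslant-\sqrt\xi\,\|\vec\rho\|_2$; since the atoms of $\A_n$ in $\downset s$ that are missing from $F''$ or already chosen number fewer than $0.06\cdot 2^{n-t}+k<0.07\cdot 2^{n-t}$ (using $k<2^{n-t-10}$), we may pick a so-far-unused atom $\downset u\subseteq F''$ with $\sum_m y(u)_m\rho_m\leqslant-\sqrt\xi\,\|\vec\rho\|_2$; add it and replace $\vec\rho$ by $\vec\rho+y(u)$. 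From $\|\vec\rho+y(u)\|_2^2=\|\vec\rho\|_2^2+2\sum_m y(u)_m\rho_m+(n-t)\leqslant\|\vec\rho\|_2^2-2\sqrt\xi\,\|\vec\rho\|_2+(n-t)$ one checks that, while $\|\vec\rho\|_2\geqslant(n-t)/\sqrt\xi$, each step lowers $\|\vec\rho\|_2$ by at least $\sqrt\xi/2$. Hence the process halts after $k_1\leqslant 2\|\vec z\|_2/\sqrt\xi<2^n\eta/(4\sqrt\xi)<2^{n-1}\eta<k$ steps, and on halting $\|\vec\rho\|_2<(n-t)/\sqrt\xi=O(n)$. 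Let $M_1$ be the union of the $k_1$ chosen atoms.

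Then I would complete $M$ to the prescribed size. Pick $\lfloor(k-k_1)/2\rfloor$ pairwise disjoint pairs of atoms $\{\downset u,\downset{u'}\}\subseteq F''$ with $y(u')=-y(u)$, disjoint from $M_1$; this is possible because at least $0.44\cdot 2^{n-t}$ such pairs lie in $F''$ while fewer than $k<2^{n-t-10}$ atoms have been used. Their union $M_0$ satisfies $\varphi_r(M_0)=0$ for $t<r\leqslant n$, the two atoms of each pair having opposite $r$-th coordinate there. If $k-k_1$ is odd, perform one further greedy step first (still $k_1<k$, and $\|\vec\rho\|_2$ stays $O(n)$) so that $k-k_1$ becomes even. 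Put $M=M_1\cup M_0$. Then $M\subseteq F''\subseteq F\setminus(Q\cup Z)$, $M\in\A_n$, and $M$ is a union of exactly $k$ atoms, so $\lambda(M)=k/2^n$, giving (a) and (b). For $r>n$, $\varphi_r(M\cup Z)=0$; for $t<r\leqslant n$, $\varphi_r(M\cup Z)=2^{-n}\big(\sum_{\downset u\subseteq M}u_r+z_r\big)=2^{-n}\rho_r$, so $|\varphi_r(M\cup Z)|\leqslant 2^{-n}\|\vec\rho\|_2=O(n)2^{-n}<\eta/n\leqslant\eta/r$, because $2^n\eta\geqslant 2n^3$. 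Hence $M\cup Z$ is $(t,\eta)$-semibalanced, which is (c).

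The main obstacle is recognizing that for large $r$ the tolerance $\eta/r$ can lie far below $\lambda(Z)\geqslant|\varphi_r(Z)|$, so one cannot simply take $M$ balanced and disregard $Z$: the coefficients $\varphi_r(Z)$ must be killed simultaneously for all $t<r\leqslant n$. Doing this while (i) meeting the exact size constraint $\lambda(M)=k/2^n$, (ii) keeping $M$ inside the mildly irregular set $F''$ with a well-correlated unused atom available at every step, and (iii) finishing with a residual of norm only $O(n)$, which the per-coordinate budget $2^n\eta/r$ then dwarfs, is exactly what the interplay of Lemma~\ref{wcale_nie_Paley_Zygmund}, the Bessel bound from Lemma~\ref{orthonormal} (which forces the number of correction atoms below $k$ via $\|\vec z\|_2<2^n\eta/8$), and the hypotheses $\eta<2^{-t-10}$ and $n^3/2^{n-1}\leqslant\eta$ is engineered to deliver.
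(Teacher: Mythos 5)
Your proof is correct, but it takes a genuinely different route from the paper's, so it is worth comparing the two.

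The paper picks $M$ as a \emph{global minimizer} of $S(M')=\sum_{m=t+1}^n\varphi_m(M'\cup Z)^2$ over all size-$k$ candidates $M'\subseteq F\setminus(Q\cup Z)$ in $\A_n$, and then bounds the minimum value by a stationarity (first-order) argument: swapping one atom of $M$ for a fresh atom cannot decrease $S$ (Claim~1), and Lemma~\ref{wcale_nie_Paley_Zygmund} guarantees an atom to swap \emph{in} that is strongly anticorrelated with $\varphi(M\cup Z)$ (Claim~3); Bessel (via Lemma~\ref{orthonormal}) bounds the cross term with $Z$ (Claim~4). You instead \emph{construct} $M$: work inside $\downset s$, encode $\A_n$-atoms as points of $\{-1,1\}^{n-t}$, initialize the residual at the rescaled Walsh vector of $Z$, and run a greedy descent using Lemma~\ref{wcale_nie_Paley_Zygmund} at each step to find a fresh atom whose sign pattern maximally cancels the current residual; then pad to the exact cardinality $k$ with mirror-image atom pairs, which are Walsh-neutral on the range $(t,n]$. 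Bessel enters for you as a bound on the initial residual norm, which (together with the $\sqrt\xi/2$ per-step decrease) guarantees the greedy phase uses fewer than $k$ atoms. Both proofs thus rely on the same two analytic inputs --- the anticoncentration estimate of Lemma~\ref{wcale_nie_Paley_Zygmund} and the Bessel bound --- but wire them up in opposite directions: the paper deduces the bound \emph{after the fact} from optimality, while you enforce it step by step during the construction. Your version is more explicit about what $M$ looks like and exposes the halting threshold $\|\vec\rho\|_2<(n-t)/\sqrt\xi$ quite transparently; the price is the extra bookkeeping (the parity fix, the availability counts at every greedy step, the restriction to $\downset s$), which the paper's one-shot minimization avoids entirely by letting the size constraint and the balance requirement be handled simultaneously. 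Your numerical estimates (the $0.07$ vs.\ $\tfrac16(1-\xi)^2\approx0.082$ margin, the pair count $\geqslant0.44\cdot2^{n-t}$, the step count $<2^{n-1}\eta<k$, and the final bound $n\|\vec\rho\|_2<2n^3\leqslant 2^n\eta$) all check out for $n>n_0$.
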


\begin{center}
 
 
\tikzset{
pattern size/.store in=\mcSize, 
pattern size = 5pt,
pattern thickness/.store in=\mcThickness, 
pattern thickness = 0.3pt,
pattern radius/.store in=\mcRadius, 
pattern radius = 1pt}
\makeatletter
\pgfutil@ifundefined{pgf@pattern@name@_tlo87qvif}{
\pgfdeclarepatternformonly[\mcThickness,\mcSize]{_tlo87qvif}
{\pgfqpoint{0pt}{0pt}}
{\pgfpoint{\mcSize+\mcThickness}{\mcSize+\mcThickness}}
{\pgfpoint{\mcSize}{\mcSize}}
{
\pgfsetcolor{\tikz@pattern@color}
\pgfsetlinewidth{\mcThickness}
\pgfpathmoveto{\pgfqpoint{0pt}{0pt}}
\pgfpathlineto{\pgfpoint{\mcSize+\mcThickness}{\mcSize+\mcThickness}}
\pgfusepath{stroke}
}}
\makeatother

 
\tikzset{
pattern size/.store in=\mcSize, 
pattern size = 5pt,
pattern thickness/.store in=\mcThickness, 
pattern thickness = 0.3pt,
pattern radius/.store in=\mcRadius, 
pattern radius = 1pt}
\makeatletter
\pgfutil@ifundefined{pgf@pattern@name@_xk1y1x3w1}{
\pgfdeclarepatternformonly[\mcThickness,\mcSize]{_xk1y1x3w1}
{\pgfqpoint{0pt}{0pt}}
{\pgfpoint{\mcSize+\mcThickness}{\mcSize+\mcThickness}}
{\pgfpoint{\mcSize}{\mcSize}}
{
\pgfsetcolor{\tikz@pattern@color}
\pgfsetlinewidth{\mcThickness}
\pgfpathmoveto{\pgfqpoint{0pt}{0pt}}
\pgfpathlineto{\pgfpoint{\mcSize+\mcThickness}{\mcSize+\mcThickness}}
\pgfusepath{stroke}
}}
\makeatother

 
\tikzset{
pattern size/.store in=\mcSize, 
pattern size = 5pt,
pattern thickness/.store in=\mcThickness, 
pattern thickness = 0.3pt,
pattern radius/.store in=\mcRadius, 
pattern radius = 1pt}
\makeatletter
\pgfutil@ifundefined{pgf@pattern@name@_vci555xxo}{
\pgfdeclarepatternformonly[\mcThickness,\mcSize]{_vci555xxo}
{\pgfqpoint{0pt}{0pt}}
{\pgfpoint{\mcSize+\mcThickness}{\mcSize+\mcThickness}}
{\pgfpoint{\mcSize}{\mcSize}}
{
\pgfsetcolor{\tikz@pattern@color}
\pgfsetlinewidth{\mcThickness}
\pgfpathmoveto{\pgfqpoint{0pt}{0pt}}
\pgfpathlineto{\pgfpoint{\mcSize+\mcThickness}{\mcSize+\mcThickness}}
\pgfusepath{stroke}
}}
\makeatother

 
\tikzset{
pattern size/.store in=\mcSize, 
pattern size = 5pt,
pattern thickness/.store in=\mcThickness, 
pattern thickness = 0.3pt,
pattern radius/.store in=\mcRadius, 
pattern radius = 1pt}
\makeatletter
\pgfutil@ifundefined{pgf@pattern@name@_50kmkb5pa}{
\pgfdeclarepatternformonly[\mcThickness,\mcSize]{_50kmkb5pa}
{\pgfqpoint{0pt}{0pt}}
{\pgfpoint{\mcSize+\mcThickness}{\mcSize+\mcThickness}}
{\pgfpoint{\mcSize}{\mcSize}}
{
\pgfsetcolor{\tikz@pattern@color}
\pgfsetlinewidth{\mcThickness}
\pgfpathmoveto{\pgfqpoint{0pt}{0pt}}
\pgfpathlineto{\pgfpoint{\mcSize+\mcThickness}{\mcSize+\mcThickness}}
\pgfusepath{stroke}
}}
\makeatother

 
\tikzset{
pattern size/.store in=\mcSize, 
pattern size = 5pt,
pattern thickness/.store in=\mcThickness, 
pattern thickness = 0.3pt,
pattern radius/.store in=\mcRadius, 
pattern radius = 1pt}
\makeatletter
\pgfutil@ifundefined{pgf@pattern@name@_9g2mn5ju6}{
\pgfdeclarepatternformonly[\mcThickness,\mcSize]{_9g2mn5ju6}
{\pgfqpoint{0pt}{0pt}}
{\pgfpoint{\mcSize+\mcThickness}{\mcSize+\mcThickness}}
{\pgfpoint{\mcSize}{\mcSize}}
{
\pgfsetcolor{\tikz@pattern@color}
\pgfsetlinewidth{\mcThickness}
\pgfpathmoveto{\pgfqpoint{0pt}{0pt}}
\pgfpathlineto{\pgfpoint{\mcSize+\mcThickness}{\mcSize+\mcThickness}}
\pgfusepath{stroke}
}}
\makeatother
\tikzset{every picture/.style={line width=0.75pt}} 

\begin{tikzpicture}[x=0.75pt,y=0.75pt,yscale=-1,xscale=1]

\draw  [color={rgb, 255:red, 245; green, 166; blue, 35 }  ,draw opacity=1 ] (165.88,46.91) -- (283.54,194) -- (48.23,194) -- cycle ;
\draw  [color={rgb, 255:red, 208; green, 2; blue, 27 }  ,draw opacity=1 ][fill={rgb, 255:red, 208; green, 2; blue, 27 }  ,fill opacity=1 ] (67.05,183.65) -- (71.55,194) -- (62.54,194) -- cycle ;
\draw [color={rgb, 255:red, 208; green, 2; blue, 27 }  ,draw opacity=1 ]   (67.05,183.65) -- (61.75,176.46) -- (165.88,46.91) ;
\draw [color={rgb, 255:red, 65; green, 117; blue, 5 }  ,draw opacity=1 ]   (166.81,183.65) -- (172.69,176.46) -- (155.63,154.66) -- (138.38,133.49) -- (154.65,112.63) -- (143.89,98.73) -- (158.17,81.35) -- (148.55,68.71) -- (165.88,46.91) ;
\draw [color={rgb, 255:red, 65; green, 117; blue, 5 }  ,draw opacity=1 ]   (165.88,46.91) -- (252.57,155.92) -- (230.74,183.65) ;
\draw  [color={rgb, 255:red, 65; green, 117; blue, 5 }  ,draw opacity=1 ][pattern=_tlo87qvif,pattern size=3pt,pattern thickness=0.75pt,pattern radius=0pt, pattern color={rgb, 255:red, 65; green, 117; blue, 5}] (230.74,183) -- (235.25,194) -- (226.23,194) -- cycle ;
\draw [color={rgb, 255:red, 65; green, 117; blue, 5 }  ,draw opacity=1 ]   (218.93,183.65) -- (229.03,170.56) -- (217.3,156.71) -- (222.79,149.48) -- (198.52,120.61) -- (211.5,104.49) -- (165.88,46.91) ;
\draw  [color={rgb, 255:red, 65; green, 117; blue, 5 }  ,draw opacity=1 ][pattern=_xk1y1x3w1,pattern size=3pt,pattern thickness=0.75pt,pattern radius=0pt, pattern color={rgb, 255:red, 65; green, 117; blue, 5}] (218.93,183) -- (223.44,194) -- (214.42,194) -- cycle ;
\draw  [color={rgb, 255:red, 65; green, 117; blue, 5 }  ,draw opacity=1 ][pattern=_vci555xxo,pattern size=3pt,pattern thickness=0.75pt,pattern radius=0pt, pattern color={rgb, 255:red, 65; green, 117; blue, 5}] (166.81,183) -- (171.46,194) -- (162.16,194) -- cycle ;
\draw [color={rgb, 255:red, 65; green, 117; blue, 5 }  ,draw opacity=1 ]   (198.98,183.65) -- (192.47,176.15) -- (175.98,154.66) -- (192.97,133.49) -- (176.94,112.63) -- (187.55,98.73) -- (173.48,81.35) -- (182.95,68.71) -- (165.88,46.91) ;
\draw  [color={rgb, 255:red, 65; green, 117; blue, 5 }  ,draw opacity=1 ][pattern=_50kmkb5pa,pattern size=3pt,pattern thickness=0.75pt,pattern radius=0pt, pattern color={rgb, 255:red, 65; green, 117; blue, 5}] (198.98,183) -- (194.4,194) -- (203.56,194) -- cycle ;
\draw  [color={rgb, 255:red, 0; green, 0; blue, 0 }  ,draw opacity=1 ] (410.34,46.91) -- (528,194) -- (292.69,194) -- cycle ;
\draw [color={rgb, 255:red, 208; green, 2; blue, 27 }  ,draw opacity=1 ]   (356.43,183.65) -- (346.18,170.56) -- (358.1,156.71) -- (352.52,149.48) -- (377.18,120.61) -- (363.99,104.49) -- (410.34,46.91) ;
\draw  [color={rgb, 255:red, 139; green, 87; blue, 42 }  ,draw opacity=1 ] (345.74,189.15) .. controls (345.74,184.59) and (354.37,180.88) .. (365.01,180.88) .. controls (375.66,180.88) and (384.29,184.59) .. (384.29,189.15) .. controls (384.29,193.72) and (375.66,197.42) .. (365.01,197.42) .. controls (354.37,197.42) and (345.74,193.72) .. (345.74,189.15) -- cycle ;
\draw  [color={rgb, 255:red, 139; green, 87; blue, 42 }  ,draw opacity=1 ] (53.2,191.19) .. controls (53.2,183.84) and (98.16,177.88) .. (153.63,177.88) .. controls (209.09,177.88) and (254.05,183.84) .. (254.05,191.19) .. controls (254.05,198.54) and (209.09,204.5) .. (153.63,204.5) .. controls (98.16,204.5) and (53.2,198.54) .. (53.2,191.19) -- cycle ;
\draw  [color={rgb, 255:red, 208; green, 2; blue, 27 }  ,draw opacity=1 ][fill={rgb, 255:red, 208; green, 2; blue, 27 }  ,fill opacity=1 ] (356.43,183) -- (351.85,194) -- (361.01,194) -- cycle ;
\draw  [color={rgb, 255:red, 14; green, 67; blue, 243 }  ,draw opacity=1 ] (409.1,183) -- (413.6,194) -- (404.59,194) -- cycle ;
\draw  [color={rgb, 255:red, 14; green, 67; blue, 243 }  ,draw opacity=1 ] (264.37,183) -- (268.88,194) -- (259.87,194) -- cycle ;
\draw  [color={rgb, 255:red, 14; green, 67; blue, 243 }  ,draw opacity=1 ] (118.35,183) -- (122.86,194) -- (113.85,194) -- cycle ;
\draw  [color={rgb, 255:red, 14; green, 67; blue, 243 }  ,draw opacity=1 ] (141.4,183) -- (145.91,194) -- (136.89,194) -- cycle ;
\draw  [color={rgb, 255:red, 14; green, 67; blue, 243 }  ,draw opacity=1 ] (435.4,183) -- (439.91,194) -- (430.89,194) -- cycle ;
\draw  [color={rgb, 255:red, 14; green, 67; blue, 243 }  ,draw opacity=1 ] (180.49,183) -- (185,194) -- (175.98,194) -- cycle ;
\draw  [color={rgb, 255:red, 65; green, 117; blue, 5 }  ,draw opacity=1 ][pattern=_9g2mn5ju6,pattern size=3pt,pattern thickness=0.75pt,pattern radius=0pt, pattern color={rgb, 255:red, 65; green, 117; blue, 5}] (183.47,221) -- (188.12,231.42) -- (178.82,231.42) -- cycle ;
\draw  [color={rgb, 255:red, 14; green, 67; blue, 243 }  ,draw opacity=1 ] (243.6,221) -- (248.11,231.42) -- (239.09,231.42) -- cycle ;
\draw  [color={rgb, 255:red, 208; green, 2; blue, 27 }  ,draw opacity=1 ][fill={rgb, 255:red, 208; green, 2; blue, 27 }  ,fill opacity=1 ] (303.45,221) -- (307.95,231.42) -- (298.94,231.42) -- cycle ;
\draw  [color={rgb, 255:red, 139; green, 87; blue, 42 }  ,draw opacity=1 ] (348.49,227.23) .. controls (348.49,224.92) and (355.06,223.04) .. (363.16,223.04) .. controls (371.27,223.04) and (377.84,224.92) .. (377.84,227.23) .. controls (377.84,229.55) and (371.27,231.42) .. (363.16,231.42) .. controls (355.06,231.42) and (348.49,229.55) .. (348.49,227.23) -- cycle ;
\draw  [color={rgb, 255:red, 155; green, 155; blue, 155 }  ,draw opacity=1 ] (168.79,212.84) -- (407.55,212.84) -- (407.55,242.03) -- (168.79,242.03) -- cycle ;

\draw (153.06,25.55) node [anchor=north west][inner sep=0.75pt]    {$\textcolor[rgb]{0.96,0.65,0.14}{\langle s\rangle }$};
\draw (195.84,221) node [anchor=north west][inner sep=0.75pt]  [color={rgb, 255:red, 65; green, 117; blue, 5 }  ,opacity=1 ]  {$M$};
\draw (254.35,221) node [anchor=north west][inner sep=0.75pt]  [color={rgb, 255:red, 14; green, 67; blue, 243 }  ,opacity=1 ]  {$W$};
\draw (315.06,221) node [anchor=north west][inner sep=0.75pt]  [color={rgb, 255:red, 208; green, 2; blue, 27 }  ,opacity=1 ]  {$Z$};
\draw (384.33,221) node [anchor=north west][inner sep=0.75pt]    {$\textcolor[rgb]{0.55,0.34,0.16}{Q}$};

\end{tikzpicture}

\end{center}

In the figure above, the lines indicating the position of the triangles forming Z and M at each level go to the left as many times as to the right, which means that for every $r\in\N$ we have $\varphi_r(M\cup Z)=0$ (and so $M\cup Z$ is $(t,\eta)$-semibalanced).

\begin{proof}
    Let $F'=F \setminus (Q\cup Z)$ and fix $s\in \{-1,1\}^t$ such that $$\frac{\lambda(F \cap \downset{s})}{\lambda(\downset{s})}\geqslant 0.95.$$ 
    In particular, 
    $$\frac{\lambda(F' \cap \downset{s})}{\lambda(\downset{s})}\geqslant 0.9.$$
    Put $\mathcal{M}= \left\{M' \in \A_n : M' \subseteq F', \lambda(M')=\frac{k}{2^n}\right\}$. Since for $M' \in \mathcal{M}$ we have $M' \cap Z = \varnothing$. It follows that $\varphi_m(M'\cup Z)=\varphi_m(M')+\varphi_m(Z)$.

Define 
$$S(M')=\sum_{m=t+1}^n(\varphi_m(M'\cup Z))^2.$$

Choose a set $M \in \mathcal{M}$ such that 
$$S(M)=\min\{S(M'):M' \in \mathcal{M}\}.$$ 

We will show, that  $M\cup Z$ is $(t,\eta)$-semibalanced. Namely, we will show, that $S(M)<\frac{\eta^2}{n^2}$. This implies that for all $n\geqslant m>t$ we have 
$$|\varphi_m(M\cup Z)|\leqslant \sqrt{S(M)} < \frac{\eta}{n}\leqslant \frac{\eta}{m},$$ while for $m>n$ we have $|\varphi_m(M\cup Z)|=0$  (because $Z, M \in \A_n$).  Thus, $M\cup Z$ is $(t,\eta)$-semibalanced.\\
 We need to show that 
 \begin{equation}\label{S(M)}
   S(M)<\frac{k}{2^n}\frac{n}{2^{n-1}}. 
 \end{equation}
 Indeed, if the above inequality holds, then  $$S(M)< \frac{k}{2^n}\frac{n}{2^{n-1}}< \eta\frac{n}{2^{n-1}}= \frac{\eta}{n^2}\frac{n^3}{2^{n-1}}\leqslant \frac{\eta^2}{n^2}.$$

The inequality (\ref{S(M)}) may be written as:

\begin{eqnarray*}
S(M)&=&\sum_{m=t+1}^n \varphi_m(Z)\varphi_m(M\cup Z)+\sum_{m=t+1}^n \varphi_m(M)\varphi_m(M\cup Z)< \\ &<&\frac{k}{2^n}\frac{\sqrt{S(M)}}{4}+\frac{k}{2^n}\left(\frac{n}{2^{n-1}} - \frac{\sqrt{S(M)}}{4}\right).
\end{eqnarray*}

To prove it we will split it into two inequalities: 

\begin{equation}\label{b_m(a_m+b_m)}
    \sum_{m=t+1}^n \varphi_m(M)\varphi_m(M\cup Z) \leqslant \frac{k}{2^n}\left(\frac{n}{2^{n-1}} - \frac{\sqrt{S(M)}}{4}\right)
\end{equation}
and
\begin{equation}\label{a_m(a_m+b_m)}
\sum_{m=t+1}^n \varphi_m(Z)\varphi_m(M\cup Z)< \frac{k}{2^n}\frac{\sqrt{S(M)}}{4}.
\end{equation}

We will prove the inequalities (\ref{b_m(a_m+b_m)}) and (\ref{a_m(a_m+b_m)}) with the help of four claims. The first three are necessary to show the inequality (\ref{b_m(a_m+b_m)}) while the last one will prove inequality (\ref{a_m(a_m+b_m)}).

In Claim \ref{mala zmiana M} we make use of the minimality of $S(M)$ analyzing the situation when we change $M$ by one atom of $\A_m$.

\begin{claim}\label{mala zmiana M}
    For any $x, y \in \{-1, 1\}^n$ such that $\downset{x} \subseteq M$ and $\downset{y} \subseteq F' \setminus M$
   \begin{equation}\label{x(a+b)}
       \sum_{m=t+1}^n x_m \varphi_m(M\cup Z)\leqslant \frac{n}{2^{n-1}} + \sum_{m=t+1}^n y_m \varphi_m(M\cup Z),
   \end{equation}
where $x_m$ and $y_m$ are the $m$-th terms of the sequences $x$ and $y$ respectively.
   
\end{claim}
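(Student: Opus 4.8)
The plan is to use the minimality of $S(M)$ against the \emph{one‑atom swap} competitor. Given $x,y$ as in the statement, put $M'=(M\setminus\downset{x})\cup\downset{y}$. Since $\downset{x}\subseteq M\subseteq F'$ and $\downset{y}\subseteq F'\setminus M$, the two atoms $\downset{x},\downset{y}$ are distinct and $M'$ is again a union of atoms of $\A_n$ contained in $F'$ with $\lambda(M')=\lambda(M)-2^{-n}+2^{-n}=k/2^n$; hence $M'\in\mathcal{M}$ and, by the choice of $M$, $S(M')\geqslant S(M)$.

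Next I would compute $S(M')$ explicitly. Because $\downset{x}\subseteq M$ is disjoint from $Z$ (as $M\subseteq F'=F\setminus(Q\cup Z)$) and $\downset{y}$ is disjoint from both $M$ and $Z$ (again since $\downset{y}\subseteq F'$), we have $M'\cup Z=\big((M\cup Z)\setminus\downset{x}\big)\cup\downset{y}$ as a disjoint modification, so for every $m$
$$\varphi_m(M'\cup Z)=\varphi_m(M\cup Z)-\varphi_m(\downset{x})+\varphi_m(\downset{y}).$$
For $t<m\leqslant n$ the function $\delta_m$ is constant on each atom of $\A_n$, so $\varphi_m(\downset{x})=x_m2^{-n}$ and $\varphi_m(\downset{y})=y_m2^{-n}$. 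Writing $b_m:=\varphi_m(M\cup Z)$ and $c_m:=(y_m-x_m)2^{-n}$, this reads $\varphi_m(M'\cup Z)=b_m+c_m$, whence
$$S(M')=\sum_{m=t+1}^{n}(b_m+c_m)^2=S(M)+2\sum_{m=t+1}^{n}b_mc_m+\sum_{m=t+1}^{n}c_m^2 .$$

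From $S(M')\geqslant S(M)$ I then get $\sum_{m=t+1}^{n}b_mc_m\geqslant-\tfrac12\sum_{m=t+1}^{n}c_m^2$. Since $(y_m-x_m)^2\leqslant 4$, we have $\sum_{m=t+1}^{n}c_m^2\leqslant 4n\cdot2^{-2n}$, hence $\sum_{m=t+1}^{n}b_mc_m\geqslant-2n\cdot2^{-2n}$. Multiplying by $2^n$ and using $2^nc_m=y_m-x_m$ yields $\sum_{m=t+1}^{n}(y_m-x_m)\varphi_m(M\cup Z)\geqslant-\tfrac{n}{2^{n-1}}$, which is exactly the asserted inequality \eqref{x(a+b)} after rearranging. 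There is no genuine obstacle here; the only points requiring care are checking that the atom swap keeps $M'$ inside $F'$ with the prescribed measure (so that $M'\in\mathcal{M}$), and tracking the disjointness of $\downset{x},\downset{y}$ from $Z$ and from $M$ so that the set identity for $M'\cup Z$ upgrades to additivity of $\varphi_m$ — everything else is the short algebra above.
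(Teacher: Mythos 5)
Your proposal is correct and matches the paper's proof: both use the one-atom swap competitor $M'=(M\setminus\downset{x})\cup\downset{y}$, the observation $\varphi_m(M'\cup Z)=\varphi_m(M\cup Z)+2^{-n}(y_m-x_m)$, and the minimality $S(M')\geqslant S(M)$, then bound $\sum(y_m-x_m)^2$ by $4n$ to extract the error term $n/2^{n-1}$. The only cosmetic difference is that you expand $S(M')-S(M)$ all at once as $2\sum b_mc_m+\sum c_m^2$ rather than factoring term by term as the paper does.
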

\begin{proof2}
   Let $M'=(M \setminus\downset{x})\cup\downset{y}$. We have

    \begin{eqnarray*}
		\varphi_m(M')&=& \int\limits_{M'} \delta_m d\lambda\ \ \ \ 
        =\int\limits_{M} \delta_m d\lambda-\int\limits_{\downset{x}} \delta_m d\lambda+\int\limits_{\downset{y}} \delta_m d\lambda\\
	&=& \varphi_m(M) +\frac{1}{2^n}(y_m - x_m).
    \end{eqnarray*}
    Since $M$ minimizes $S$, we have $S(M')-S(M)\geqslant 0$.
    Then
    \begin{eqnarray*}
        S(M')-S(M)&=&\sum_{m=t+1}^n\left ((\varphi_m(Z)+\varphi_m(M'))^2-(\varphi_m(Z)+\varphi_m(M))^2\right )\\
        &=& \sum_{m=t+1}^n \left (\varphi_m(M')^2-\varphi_m(M)^2+2\varphi_m(Z)(\varphi_m(M')-\varphi_m(M))\right )\\
        &=& \sum_{m=t+1}^n (\varphi_m(M')-\varphi_m(M))(\varphi_m(M')+\varphi_m(M)+2\varphi_m(Z))\\ 
        &=& \sum_{m=t+1}^n \frac{1}{2^n}(y_m-x_m)\left(\frac{1}{2^n}(y_m-x_m)+2\varphi_m(M)+2\varphi_m(Z)\right). 
    \end{eqnarray*}
    Multiplying the above by $2^{n-1}$ and using the fact that $(y_m-x_m)^2\in\{0, 4\}$  we get
     \begin{eqnarray*}
        0&\leqslant& \sum_{m=t+1}^n (y_m-x_m)\left(\frac{1}{2^{n+1}}(y_m-x_m)+\varphi_m(M)+\varphi_m(Z)\right)\\
        &\leqslant& \sum_{m=t+1}^n \frac{4}{2^{n+1}}+(y_m-x_m)(\varphi_m(M)+\varphi_m(Z))\\
        &\leqslant&  \frac{n}{2^{n-1}}+\sum_{m=t+1}^n (y_m-x_m)\varphi_m(M\cup Z).
    \end{eqnarray*}
\end{proof2}
Let $T\colon C \to C$ be a function that swaps the sign of the coordinates from $t+1$ to $n$ given by the formula:

$$
T(y)(i)=
    \begin{cases} -y(i) &\text{if} \ i\in(t, n],\\
        y(i) &\text{if} \  i\notin(t, n].
    \end{cases}
$$
Note that for $A\in \bor(C)$ we have $\varphi_m(A \cup T[A])=0$, where $m\in \{t+1, \dots, n\}$.

\begin{claim}
    $$
    \frac{\lambda\big(((F' \cap \downset{s})\cap T[F' \cap \downset{s}])\setminus (M \cup T[M])\big)}{\lambda(\downset{s})}\geqslant 0.75.
    $$
\end{claim}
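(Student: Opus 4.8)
The plan is to reduce everything to elementary measure estimates inside the cylinder $\downset{s}$, exploiting that the sign-flip map $T$ fixes the first $t$ coordinates and hence maps $\downset{s}$ onto itself while preserving $\lambda$.

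First I would set $A=F'\cap\downset{s}$ and recall the two facts already recorded just before the claim: $\lambda(A)/\lambda(\downset{s})\geqslant 0.9$, equivalently $\lambda(\downset{s}\setminus A)\leqslant 0.1\,\lambda(\downset{s})$. Since $T$ changes only the coordinates in $(t,n]$, it fixes every point of $\downset{s}$ on the first $t$ coordinates, so $T[\downset{s}]=\downset{s}$; being a composition of coordinate sign-flips it is $\lambda$-preserving and an involution. Consequently $T[A]\subseteq\downset{s}$ and $\lambda(\downset{s}\setminus T[A])=\lambda\big(T[\downset{s}\setminus A]\big)=\lambda(\downset{s}\setminus A)\leqslant 0.1\,\lambda(\downset{s})$.

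Next I would combine these to get $\lambda\big(\downset{s}\setminus(A\cap T[A])\big)\leqslant\lambda(\downset{s}\setminus A)+\lambda(\downset{s}\setminus T[A])\leqslant 0.2\,\lambda(\downset{s})$, that is, $\lambda(A\cap T[A])\geqslant 0.8\,\lambda(\downset{s})$. Since $\lambda(M)=k/2^n<\eta$ and $T$ preserves $\lambda$, we have $\lambda(M\cup T[M])\leqslant 2\lambda(M)<2\eta$, whence $\lambda\big((A\cap T[A])\setminus(M\cup T[M])\big)\geqslant 0.8\,\lambda(\downset{s})-2\eta$. Finally, dividing by $\lambda(\downset{s})=2^{-t}$ and using $\eta<2^{-(t+10)}$ yields $0.8-2\eta\cdot 2^{t}>0.8-2^{-9}>0.75$, which is exactly the claim.

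There is essentially no serious obstacle here; the only point requiring care is the bookkeeping that $T$ fixes $\downset{s}$ setwise and is measure-preserving, so that symmetrizing $A$ to $A\cap T[A]$ costs at most twice the mass that $A$ already lost inside $\downset{s}$, and that the available slack (the $0.9$ from the earlier estimate, together with $\eta<2^{-(t+10)}$ and $k/2^n<\eta$) comfortably beats the target $0.75$.
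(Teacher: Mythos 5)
Your proof is correct and follows essentially the same route as the paper's: both use that $T$ preserves $\lambda$ and fixes $\downset{s}$ setwise to get $\lambda\big(A\cap T[A]\big)\geqslant 0.8\,\lambda(\downset{s})$ via a union bound, then subtract the small mass of $M\cup T[M]$. You are slightly more explicit about why $T[\downset{s}]=\downset{s}$ and why $T$ is measure-preserving; the paper just uses the intermediate bound $\eta<0.05/2^{t+1}$ (which follows from $\eta<2^{-(t+10)}$) to land exactly on $0.8-0.05=0.75$, whereas you keep the slack $0.8-2^{-9}$, but these are cosmetic differences.
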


\begin{proof2}
    Since $$\lambda(M)=\frac{k}{2^n}<\eta <\frac{0.05}{2^{t+1}},$$ we have $$\lambda(M \cup T[M])\leqslant \frac{0.05}{2^{t}}.$$ Since $$\frac{\lambda(F' \cap \downset{s})}{\lambda(\downset{s})}\geqslant 0.9,$$ we have $$\frac{\lambda((F' \cap \downset{s})\cap T[F' \cap \downset{s}])}{\lambda(\downset{s})}\geqslant 0.8.$$ Therefore
\begin{eqnarray*}
    \frac{\lambda(((F' \cap \downset{s})\cap T[F' \cap \downset{s}])\setminus (M \cup T[M]))}{\lambda(\downset{s})}
    &\geqslant& 0.8 -0.05=0.75.
\end{eqnarray*}
\end{proof2}

It is clear that if $y \in \downset{s}$ then $T(y) \in \downset{s}$. In the proof of the next claim we will use an obvious observation that if $\frac{\lambda(A \cap \downset{s})}{\lambda(\downset{s})}\geqslant 0.75$ and $\frac{\lambda(B \cap \downset{s})}{\lambda(\downset{s})}> 0.25$ then there exists $y \in A \cap B \cap \downset{s}$.
\begin{claim}
    There exists $y^M\in\{-1,1\}^n$ such that $\downset{y^M} \subseteq (F' \setminus M) \cap T[F' \setminus M]$ and

    $$
    \left|  \sum_{m=t+1}^n y_m^M\varphi_m(M\cup Z)\right| \geqslant \frac{\sqrt{S(M)}}{4}.
    $$
\end{claim}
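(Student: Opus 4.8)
The plan is to feed the coefficients $d_m = \varphi_m(M\cup Z)$, $m\in\{t+1,\dots,n\}$, into the Paley--Zygmund-type estimate of Lemma \ref{wcale_nie_Paley_Zygmund}, and then intersect the resulting ``large Rademacher sum'' set with the ``large measure'' set furnished by the preceding claim.

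First I would observe that $\sum_{m=t+1}^n d_m^2 = S(M)$ and that the set
\[
B = \left\{x\in C:\ \left|\sum_{m=t+1}^n x_m d_m\right|^2 \geqslant \tfrac1{16}\,S(M)\right\}
\]
depends only on the coordinates $t+1,\dots,n$, hence is a union of atoms of $\A_n$, and for every $s\in\{-1,1\}^t$ its relative measure inside $\downset{s}$ equals the $\lambda_{n-t}$-measure of the corresponding event on those coordinates. Applying Lemma \ref{wcale_nie_Paley_Zygmund} with $\xi = 1/16$ (chosen precisely so that $\sqrt{\xi}=1/4$) then yields
\[
\frac{\lambda(B\cap\downset{s})}{\lambda(\downset{s})}\ \geqslant\ \frac13\left(1-\tfrac1{16}\right)^2 = \frac{75}{256} > \frac14 .
\]

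Next I would rewrite $(F'\setminus M)\cap T[F'\setminus M]$ using that $T$ is a bijection fixing the first $t$ coordinates: $T[F'\setminus M]=T[F']\setminus T[M]$ and $T[\downset{s}]=\downset{s}$, so that $(F'\setminus M)\cap T[F'\setminus M]=(F'\cap T[F'])\setminus(M\cup T[M])$, and intersecting with $\downset{s}$ gives exactly the set appearing in the preceding claim; therefore its relative measure inside $\downset{s}$ is at least $0.75$. This set also lies in $\A_n$, since $F,Q,Z,M\in\A_n$ and $T$ preserves $\A_n$. Now the observation recorded just before the claim applies: the two relative measures, $\geqslant 0.75$ and $>1/4$, sum to more than $1$, so the two sets meet inside $\downset{s}$; taking a point $y$ in the intersection and setting $y^M = y\upharpoonright n$, the atom $\downset{y^M}$ is contained in $(F'\setminus M)\cap T[F'\setminus M]$ and, since $y\in B$,
\[
\left|\sum_{m=t+1}^n y^M_m\,\varphi_m(M\cup Z)\right| \geqslant \frac{\sqrt{S(M)}}{4},
\]
which is the assertion.

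The only genuine difficulty here is the constant bookkeeping: one must pick the parameter $\xi$ in Lemma \ref{wcale_nie_Paley_Zygmund} so that the threshold it produces is exactly $\sqrt{S(M)}/4$ while the measure lower bound it produces still strictly exceeds the $1/4$ left over after the $0.75$ of the previous claim. The value $\xi=1/16$ achieves both, but with essentially no slack, so this is the step I would double-check most carefully.
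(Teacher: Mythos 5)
Your argument is correct and matches the paper's proof: both apply Lemma~\ref{wcale_nie_Paley_Zygmund} with $\xi=1/16$ and $d_m=\varphi_m(M\cup Z)$ for $m\in(t,n]$ (and $d_m=0$ otherwise), obtain a relative measure $\frac13(15/16)^2>1/4$ for the ``good'' Rademacher-sum event, and intersect it inside $\downset{s}$ with the set of relative measure $\geqslant 0.75$ from Claim~2. The only difference is that you make explicit the (correct) observation that this event depends only on coordinates $t+1,\dots,n$ and therefore has the same relative measure inside every $\downset{s}$ as its global $\lambda_n$-measure, a point the paper leaves implicit.
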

\begin{proof2}
  Recall that $S(M)=\sum_{m=t+1}^n \varphi_m(M\cup Z)^2$. By Lemma \ref{wcale_nie_Paley_Zygmund} for $\xi=1/16$ and
    $$
    d_m=
    \begin{cases} 
    \varphi_m(M\cup Z) &\text{if} \ m\in(t, n],\\
    0          &\text{if} \ m\in[0, t],
    \end{cases}
    $$
    we have
    $$
    \lambda_n\left(\left\{y\in\{-1, 1\}^{n}: \left|\sum_{m=t+1}^n y_m \varphi_m(M\cup Z)\right|^2\geqslant \frac{1}{16} S(M)\right\}\right)\geqslant\frac{1}{3} \left(\frac{15}{16}\right)^2.
    $$
    Hence 
    $$
    \lambda_n\left(\left\{y\in\{-1, 1\}^{n}: \left|\sum_{m=t+1}^n y_m \varphi_m(M\cup Z)\right|\geqslant \frac{\sqrt{S(M)}}{4} \right\}\right)>0.25.
    $$
    Now Claim 2 implies the existence of the desired $y^M$.
\end{proof2}
Note that $(F' \setminus M)\cap T[F' \setminus M] =T[(F' \setminus M)\cap T[F' \setminus M]]$.
So since $\downset{y^M} \subseteq (F' \setminus M) \cap T[F' \setminus M]$ we also have $T[\downset{y^M}] \subseteq (F' \setminus M) \cap T[F' \setminus M]$. Moreover $\sum_{m=t+1}^n y^M_m \varphi_m(M\cup Z)=-\sum_{m=t+1}^n T(y^M_m) \varphi_m(M\cup Z)$. Thus, by replacing $y^M$ with $T(y^M)$ if needed Claim 3 implies that
\begin{equation}\label{suma iloczynu (a+b) z y}
    \sum_{m=t+1}^n y^M_m \varphi_m(M\cup Z)\leqslant\ -\frac{\sqrt{S(M)}}{4}.
\end{equation}
From inequalities (\ref{x(a+b)}) and (\ref{suma iloczynu (a+b) z y}) for every $x\in \{-1,1\}^n$ with $\downset{x}\subseteq M$, we get
\begin{equation}\label{suma iloczynu (a+b) z x}
    \sum_{m=t+1}^n x_m \varphi_m(M\cup Z)\leqslant \frac{n}{2^{n-1}} + \sum_{m=t+1}^n y^M_m \varphi_m(M\cup Z)\leqslant \frac{n}{2^{n-1}} - \frac{\sqrt{S(M)}}{4}.
\end{equation}
Let $\{x^{(i)}: i\in\{1, \dots k \}\}$ be an enumeration of all $x\in\{-1,1\}^n$ such that $\downset{x}\subseteq M$. 

Taking into account that $M\in \mathcal{M}\subseteq\A_n$, we have
$$
M= \bigcup_{\substack{x\in \{-1,1\}^n\\ \downset{x}\subseteq M}} \downset{x}= \bigcup_{i=1}^{k} \downset{x^{(i)}}
$$
Since for all $x\in \{-1, 1\}^n$ and $m\leqslant n$ we have $x_m= 2^n \int_{\downset{x}} \delta_m d \lambda$ we conclude that
\begin{equation}\label{suma z x}
\sum_{i=1}^{k} x_m^{(i)}=2^n \int_{M} \delta_m d \lambda=2^n \varphi_m(M).
\end{equation}
By (\ref{suma z x}) and (\ref{suma iloczynu (a+b) z x}) we obtain

\begin{equation} \label{2^n b(a+b)}
\begin{split}
\sum_{m=t+1}^n 2^n \varphi_m(M)\varphi_m(M\cup Z)&=\sum_{m=t+1}^n\sum_{i=1}^{k} x_m^{(i)}\varphi_m(M\cup Z)=\\
    =\sum_{i=1}^{k} \sum_{m=t+1}^n x_m^{(i)}\varphi_m(M\cup Z) &\leqslant k\left(\frac{n}{2^{n-1}} - \frac{\sqrt{S(M)}}{4}\right).
\end{split}
\end{equation}
Multiplying both sides of the inequality (\ref{2^n b(a+b)}) by $2^{-n}$ we get

\begin{equation*}\label{b(a+b)}
\sum_{m=t+1}^n \varphi_m(M)\varphi_m(M\cup Z)\leqslant \frac{k}{2^n}\left(\frac{n}{2^{n-1}} - \frac{\sqrt{S(M)}}{4}\right),
\end{equation*}
that is the inequality (\ref{b_m(a_m+b_m)}).

Now we will prove (\ref{a_m(a_m+b_m)}).
\begin{claim}
 $$\sum_{m=t+1}^n \varphi_m(Z)\varphi_m(M\cup Z) < \frac{k}{2^n}\frac{\sqrt{S(M)}}{4}.$$   
\end{claim}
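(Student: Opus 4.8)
The plan is to prove this claim (which is exactly the still-missing inequality (\ref{a_m(a_m+b_m)})) by the Cauchy--Schwarz inequality together with Bessel's inequality for the orthonormal system $(\delta_m)_{m\in\N}$. Note first that $M$ and $Z$ are disjoint by hypothesis, so $\varphi_m(M\cup Z)=\varphi_m(M)+\varphi_m(Z)$, and recall $S(M)=\sum_{m=t+1}^n\varphi_m(M\cup Z)^2$. Applying Cauchy--Schwarz to the finite tuples $(\varphi_m(Z))_{t<m\leqslant n}$ and $(\varphi_m(M\cup Z))_{t<m\leqslant n}$ gives
\[
\sum_{m=t+1}^n \varphi_m(Z)\varphi_m(M\cup Z)\leqslant \Big(\sum_{m=t+1}^n \varphi_m(Z)^2\Big)^{1/2}\Big(\sum_{m=t+1}^n \varphi_m(M\cup Z)^2\Big)^{1/2}=\Big(\sum_{m=t+1}^n \varphi_m(Z)^2\Big)^{1/2}\sqrt{S(M)},
\]
so it suffices to bound the first factor by $k/2^{n+2}$.

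The point of this bound is that it must be independent of $n$. Since $\varphi_m(Z)=\int_C\chi_Z\,\delta_m\,d\lambda=\downset{\chi_Z,\delta_m}$, Lemma \ref{orthonormal} tells us these are the Fourier coefficients of $\chi_Z$ with respect to an orthonormal sequence in $\mathcal{L}_2(C)$, so Bessel's inequality yields
\[
\sum_{m=t+1}^n \varphi_m(Z)^2\leqslant \|\chi_Z\|^2=\lambda(Z)<\frac{\eta^2}{64}.
\]
Hence $\big(\sum_{m=t+1}^n \varphi_m(Z)^2\big)^{1/2}<\eta/8$; and since the hypotheses of Lemma \ref{lemme} give $\eta/2<k/2^n$, i.e. $\eta/8<k/2^{n+2}$, we obtain $\big(\sum_{m=t+1}^n \varphi_m(Z)^2\big)^{1/2}<k/2^{n+2}$. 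Plugging this into the displayed Cauchy--Schwarz estimate yields $\sum_{m=t+1}^n \varphi_m(Z)\varphi_m(M\cup Z)<\tfrac{k}{2^n}\tfrac{\sqrt{S(M)}}{4}$, which is (\ref{a_m(a_m+b_m)}). (If $S(M)=0$ both sides vanish, but in that case $S(M)<\eta^2/n^2$ already holds, so one may assume $S(M)>0$ and the inequality is strict.)

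The only real obstacle is precisely this $n$-uniformity: a naive pointwise estimate $|\varphi_m(Z)|\leqslant\lambda(Z)$ only gives $\sqrt{n-t}\,\lambda(Z)$ for the first factor, which is useless because $n$ is large while $\eta$ is fixed; orthonormality of $(\delta_m)$ (Lemma \ref{orthonormal}) is exactly what replaces this with the $n$-free bound $\sqrt{\lambda(Z)}$, after which the smallness of $\lambda(Z)$ and the lower bound $k/2^n>\eta/2$ finish the argument. Once Claim 4 is in hand, adding it to (\ref{b_m(a_m+b_m)}) and using $\varphi_m(M\cup Z)=\varphi_m(M)+\varphi_m(Z)$ gives $S(M)=\sum_{m=t+1}^n\varphi_m(M)\varphi_m(M\cup Z)+\sum_{m=t+1}^n\varphi_m(Z)\varphi_m(M\cup Z)<\tfrac{k}{2^n}\tfrac{n}{2^{n-1}}$, which is (\ref{S(M)}) and completes the proof of Lemma \ref{lemme}.
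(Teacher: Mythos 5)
Your proof is correct and follows essentially the same route as the paper: Cauchy--Schwarz applied to $(\varphi_m(Z))_{t<m\leqslant n}$ and $(\varphi_m(M\cup Z))_{t<m\leqslant n}$, then Bessel's inequality via orthonormality of $(\delta_m)$ (Lemma \ref{orthonormal}) to bound $\sum_{m>t}^n\varphi_m(Z)^2\leqslant\lambda(Z)<\eta^2/64$, then the hypotheses $\sqrt{\lambda(Z)}<\eta/8$ and $\eta/2<k/2^n$. Your parenthetical handling of the degenerate case $S(M)=0$ (where the displayed strict inequality would read $0<0$) is a small but legitimate refinement of the paper's presentation.
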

\begin{proof2}
    By the Cauchy-Schwarz inequality we have
 \begin{eqnarray*} 
    \sum_{m=t+1}^n \varphi_m(Z)\varphi_m(M\cup Z) &\leqslant& \sqrt{\sum_{m=t+1}^n (\varphi_m(Z))^2}\sqrt{\sum_{m=t+1}^n \varphi_m(M\cup Z)^2}=\\
    &=&\sqrt{\sum_{m=t+1}^n (\varphi_m(Z))^2}\sqrt{S(M)}.
\end{eqnarray*}

By Lemma \ref{orthonormal} the sequence $(\delta_n)_{n\in\N}$ is orthonormal in the Hilbert space $\mathcal{L}_2(C)$, so by the Bessel inequality we get
\begin{eqnarray*} 
    \sum_{m=t+1}^n (\varphi_m(Z))^2&=&\sum_{m=t+1}^n \left(\int_{Z} \delta_m d \lambda\right)^2=\sum_{m=t+1}^n \left(\int_{C} \chi_{Z} \delta_m d \lambda\right)^2=\\
    &=&\sum_{m=t+1}^n (\downset{\chi_{Z}, \delta_m})^2\leqslant\|\chi_{Z}\|_{2}^2=\lambda({Z}).
\end{eqnarray*}

Since $\lambda(Z)<\eta^2/64$, we have $\sqrt{\lambda(Z)}<\eta/8<k/2^{n+2}$ and so
 \begin{equation*}\label{a(a+b)}
 \sum_{m=t+1}^n \varphi_m(Z)\varphi_m(M\cup Z)\leqslant\sqrt{\lambda(Z)}\sqrt{S(M)}< \frac{k}{2^n}\frac{\sqrt{S(M)}}{4}.
 \end{equation*}
\end{proof2}

Adding inequalities (\ref{b_m(a_m+b_m)}) and (\ref{a_m(a_m+b_m)}) side by side we get the following estimate

\begin{eqnarray*}
\sum_{m=t+1}^n \varphi_m(M\cup Z)^2
&<&\frac{k}{2^n}\frac{\sqrt{S(M)}}{4} +\frac{k}{2^n}\left(\frac{n}{2^{n-1}} - \frac{\sqrt{S(M)}}{4}\right) =\frac{k}{2^n}\frac{n}{2^{n-1}},
\end{eqnarray*}
which shows (\ref{S(M)}) and finishes the proof. 
\end{proof}

\begin{lemma}\label{tristane}
Let $\B^*\subseteq \B$ be balanced Boolean subalgebras of $\bor(C)$ containing $\clop(C)$ and let $\F$ be a finite subalgebra of $\B$. Let $P\in \F\cap \B^*$. Let $t\in \N, \delta>0$. Suppose that for every $A\in \F$ such that $\lambda(A)>0$ there is $s_A\in \{-1,1\}^t$ such that 

\begin{equation*}
\begin{gathered}
\frac{\lambda(A\cap \downset{s_A})}{\lambda(\downset{s_A})}\geqslant 0.99.
\end{gathered}
\end{equation*}
 Then there is $\theta>0$ such that for any $L, Q\in \B^*$, if $\max\{\lambda (L), \lambda(Q)\}<\theta$ and $L\cap P=\varnothing$, then there is $M\in \B^*$ such that
 \begin{enumerate}
     \item $M\cap (P\cup Q)=\varnothing$,
     \item $\lambda(M)<\delta$,
     \item $\forall F\in\F \ (M\cup L)\cap F$ is $(t,\delta)$-semibalanced,
     \item $\forall F\in\F \ F\backslash (M\cup L)$ is $(t,\delta)$-semibalanced.
 \end{enumerate}
\end{lemma}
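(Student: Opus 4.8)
The plan is to reduce the statement to the atoms of $\F$ and then run Lemma~\ref{lemme} atomwise, building a clopen template for $M$ that is afterwards cut down so as to lie in $\B^*$ and to miss $P\cup Q$. Write $\at(\F)=\{E_1,\dots,E_j\}$ (the null atoms may be discarded, as they add nothing to any $\varphi_r$). Every $F\in\F$ is a disjoint union of atoms, and both $(M\cup L)\cap F$ and $F\setminus(M\cup L)$ split along this partition; since $\varphi_r$ is additive on disjoint sets it suffices to arrange that $(M\cup L)\cap E$ and $E\setminus(M\cup L)$ are $(t,\delta/(3j))$-semibalanced for every atom $E$. For an atom $E\subseteq P$ this is automatic once $M\cap E=\varnothing$: then $L\cap E=\varnothing$ as well (as $L\cap P=\varnothing$), so $(M\cup L)\cap E=\varnothing$, while $E\setminus(M\cup L)=E$, which is $(t,\delta/(3j))$-semibalanced since $E\in\F$ (here, and in the analogous use for arbitrary $F$ in~(4), one uses that $\F$ is well enough balanced that its members are $(t,\delta)$-semibalanced). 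So it remains to build $M\in\B^*$ with $M\subseteq C\setminus(P\cup Q)$, $\lambda(M)<\delta$, and with $(M\cup L)\cap E$ and $E\setminus(M\cup L)$ $(t,\delta/(3j))$-semibalanced for every atom $E$ disjoint from $P$.

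Fix $\eta>0$ small enough for Lemma~\ref{lemme} and with $\eta<2^{-t-10}$ and $\eta<\delta/(4j)$, and set $\theta:=\eta^{2}/128$. Given $L,Q\in\B^*$ with $\lambda(L),\lambda(Q)<\theta$ and $L\cap P=\varnothing$, let $\HH\subseteq\B$ be the finite subalgebra generated by $\F\cup\{L,Q\}$ and pick a tiny $\varepsilon\in(0,1/3)$. Since $\HH$ is a finite balanced family, Remark~\ref{dla_dobra_nauki} lets us choose $n$ as large as we wish with $\HH$ being $(n,\varepsilon)$-balanced; take $n$ so large that moreover $n$ is large enough for Lemma~\ref{lemme} (in particular there is $k$ with $\eta/2<k/2^{n}<\eta$) and that every quantity of the form $\varepsilon/n$ below is negligible, and let $h=h_n\colon\HH\to\A_n$ be the homomorphism of Lemma~\ref{homomorphism}, so $\lambda(A\triangle h(A))<\varepsilon/n$ on $\HH$. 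For each atom $E$ disjoint from $P$, apply Lemma~\ref{lemme} to $h(E),h(Q),h(L\cap E)$ in the roles of $F,Q,Z$: indeed $h(E)\in\A_n$ is $\ge 0.95$-concentrated in $\downset{s_E}$ (because $E$ is $\ge 0.99$-concentrated there by hypothesis and $\lambda(E\triangle h(E))<\varepsilon/n$), $\lambda(h(Q))<\eta$, and $h(L\cap E)=h(L)\cap h(E)\subseteq h(E)$ has measure $<\eta^{2}/64$. This yields a clopen $M_E^{0}\subseteq h(E)\setminus(h(Q)\cup h(L\cap E))$ with $\lambda(M_E^{0})=k/2^{n}$ and with $M_E^{0}\cup h(L\cap E)$ being $(t,\eta)$-semibalanced. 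The sets $h(E)$, hence the $M_E^{0}$, are pairwise disjoint; put $M:=\bigl(\bigsqcup_E M_E^{0}\bigr)\setminus(P\cup Q)\in\B^{*}$. Then (1) and (2) are immediate, since $M\cap(P\cup Q)=\varnothing$ and $\lambda(M)\le jk/2^{n}<j\eta<\delta$.

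For (3) and (4) fix an atom $E$ disjoint from $P$ and split according to whether $r>n$ or $t<r\le n$. The range $r>n$ is the main obstacle: a priori one can only bound $|\varphi_r((M\cup L)\cap E)|$ by $\lambda(M)\approx j\eta$, a constant, while $\delta/(3jr)\to0$. The way out is that, $\HH$ having been made $(n,\varepsilon)$-balanced, every $Y\in\HH$ satisfies $|\varphi_r(Y\cap\downset x)|<\varepsilon\lambda(\downset x)/r$ for every atom $\downset x$ of $\A_n$ and every $r>n$ — exactly the $1/r$-decay that is needed. Writing $(M\cap E)\setminus L=\bigsqcup_{E'}(M_{E'}^{0}\cap Y)$ with $Y:=E\setminus(P\cup Q\cup L)\in\HH$, decomposing each $M_{E'}^{0}$ into its $\A_n$-atoms and summing the above inequality over the $\le jk$ atoms involved gives $|\varphi_r((M\cap E)\setminus L)|<j\eta\varepsilon/r$; together with $|\varphi_r(L\cap E)|<\varepsilon/r$ and $|\varphi_r(E)|<\varepsilon/r$ (because $L\cap E,E\in\HH$ are $(n,\varepsilon)$-balanced, hence $(n,\varepsilon)$-semibalanced, cf.\ Lemma~\ref{balanced+semibalanced}) this yields, for $\varepsilon$ small, both $|\varphi_r((M\cup L)\cap E)|<\delta/(3jr)$ and $|\varphi_r(E\setminus(M\cup L))|<\delta/(3jr)$. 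For $t<r\le n$, where $\delta/(3jr)\ge\delta/(3jn)$ is comparatively large, a measure comparison suffices: $(M\cup L)\cap E$ differs from the clopen set $M_E^{0}\cup h(L\cap E)$ by a set of measure $O(\varepsilon/n)$ — the contributions being $L\cap E$ against $h(L\cap E)$, $M_E^{0}\cap E$ against $M_E^{0}$ (controlled by $\lambda(h(E)\setminus E)$), the leakage $M_{E'}^{0}\cap E$ for $E'\ne E$, and the deletions $M_E^{0}\cap P$ and $M_E^{0}\cap Q$, each $O(\varepsilon/n)$ by Lemma~\ref{homomorphism} — so, since $M_E^{0}\cup h(L\cap E)$ is $(t,\eta)$-semibalanced and $\varphi_r$ changes by at most the symmetric difference, $|\varphi_r((M\cup L)\cap E)|\le\eta/r+O(\varepsilon/n)<\delta/(3jr)$ for $r\le n$ and $\eta,\varepsilon$ small; subtracting from $\varphi_r(E)$ and using that $E$ is $(t,\delta/(3j))$-semibalanced disposes of $E\setminus(M\cup L)$ as well. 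Combining the two ranges and summing over the atoms of each $F\in\F$ gives (3) and (4).
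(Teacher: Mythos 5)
Your proof follows the same plan as the paper's: pass to $\A_n$ via the homomorphism $h_n$ of Lemma~\ref{homomorphism}, run Lemma~\ref{lemme} atomwise with $Z=h_n(L\cap E)$ to get the $M^0_E$'s, assemble a clopen $M$, and verify semibalance by splitting into $r>n$ (use the $(n,\varepsilon)$-balance of $\HH$ for the $1/r$-decay) and $t<r\leqslant n$ (use Lemma~\ref{lemme}(c) together with $O(\varepsilon/n)$ symmetric-difference control). Your reduction to atoms of $\F$ and your definition $M=\bigl(\bigsqcup_E M^0_E\bigr)\setminus(P\cup Q)$ are cosmetic rearrangements of the paper's $M_0=\bigcup_E M_E\setminus Q$, $M=M_0\setminus P$; the set produced is the same, and the paper's reduction $(M\cup L)\cap F=(M_0\cup L)\cap(F\setminus P)$ plays the role of your separate handling of atoms $E\subseteq P$. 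One small bookkeeping slip: you announce the target of $(t,\delta/(3j))$-semibalance for both $(M\cup L)\cap E$ and $E\setminus(M\cup L)$, but obtaining the latter by subtracting from $\varphi_r(E)$ only gives $(t,2\delta/(3j))$; after summing over atoms this is $(t,2\delta/3)$, still under $\delta$, so no harm.

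The one thing you flag without proving — that the members of $\F$ are $(t,\cdot)$-semibalanced, which you need in order to bound $|\varphi_r(E)|$ for $t<r\leqslant n$ and to handle atoms $E\subseteq P$ — is genuinely not derivable from the stated $s_A$-concentration hypothesis. For instance $A=\downset{s^{(1)}}\cup\bigl(\downset{s^{(2)}}\cap\{x:x_{t+1}=1\}\bigr)$ together with its complement (for $t\geqslant 2$) satisfies the concentration hypothesis, yet $\varphi_{t+1}(A)=2^{-(t+1)}$, so $A$ is not $(t,\delta)$-semibalanced once $\delta<(t+1)2^{-(t+1)}$; and no choice of small $\theta$ can repair $\varphi_{t+1}(A\setminus(M\cup L))$, so the conclusion (4) would fail for $F=A$. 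This is not a deficiency of your proof relative to the paper's, though: the paper's own argument for (4) invokes ``$\HH$ is $(t,\delta/2)$-balanced'' when it only established $(n,\eta)$-balance, with $t$ and $n$ in general distinct. Both arguments are tacitly relying on the stronger hypothesis that is visible in the application (Proposition~\ref{poprawka}), where $t$ is chosen so that $\F$ is $(t,\varepsilon)$-balanced with $\varepsilon$ tiny; adding that as a stated hypothesis of the lemma would close the gap in both arguments. You were right to single this step out.
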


\begin{proof}
Let $ \eta<\min\{\delta/(4|\F|), 1/2^{t+10}\}, \theta<\eta^2/64$ and let $n_0$ be large enough so that for every $n>n_0$ 
\begin{enumerate}\setcounter{enumi}{4}
  \item there is $k\in \N$ such that $\frac{\eta}{2} <\frac{k}{2^n}< \eta$,
  \item  $\frac{n^3}{2^{n-1}}\leqslant \eta$,
  \item  $\frac{\eta}{n}<\frac{\eta^2}{64}-\theta.$
\end{enumerate}
 Fix $Q$ and $L$ satisfying the hypothesis of the lemma. Denote by $\HH_0$ the Boolean algebra generated by $\F\cup \{Q, L\}$. Since $\B$ is balanced, there is $n> n_0$ such that $\HH_0$ is $(n, \eta)$-balanced. Let $\HH$ be the Boolean algebra generated by $\HH_0 \cup \A_n$. By Lemma \ref{miecze_i_pierogi} 
 $$\HH \ \text{is} \ (n, \eta)\text{-balanced}.$$
Let $h_n$ be defined as in Lemma \ref{homomorphism}. By the same lemma, for $F\in \F$ with $\lambda(F)>0$ we have $$\frac{\lambda(h_n(F)\cap \downset{s_F})}{\lambda(\downset{s_F})} \geqslant \frac{\lambda(F\cap \downset{s_F})}{\lambda(\downset{s_F})}-\frac{\lambda(F\triangle h_n(F))}{\lambda(\downset{s_F})}\geqslant 0.99-\frac{2^t\eta}{n}\geqslant 0.95.$$
By Lemma \ref{homomorphism} and (7)
$$\lambda(h_n(L)\cap h_n(F))\leqslant \lambda(h_n(L)) \leqslant \lambda(L)+\frac{\eta}{n}<\theta+\frac{\eta^2}{64}-\theta= \frac{\eta^2}{64}.$$
By Lemma \ref{lemme} (applied to $Z=h_n(L)\cap h_n(E))$ for every $E\in \at(\F)$ there is $M_E\in \A_n$ such that 
\begin{enumerate}
    \item [(a)] $M_E\subseteq h_n(E)\backslash h_n(Q)$,
    \item [(b)] $\lambda(M_E)<\eta<\delta/|\F|$,
    \item [(c)] $M_E\cup( h_n(L)\cap h_n(E))$ is $(t,\delta/(4|\F|))$-semibalanced.
\end{enumerate} 
Put $$M_0=\bigcup_{E\in \at(\F) } M_E\backslash Q, M=M_0\backslash P.$$
Then $M\cap (P\cup Q)= \varnothing$ and $\lambda(M)<\delta$. To show (3) and (4) fix $F\in \F$ and $r>t$. 
Consider 2 cases.
\medskip\\
\textbf{Case 1.} $r\leqslant n$.
\medskip\\
For every $E\in \at(\F)$ we have $M_E\in \A_n$, so $h_n(M_E)=M_E$. From (a) we get 
\begin{eqnarray*}
    h_n(M_0\cap h_n(F)) &=& h_n(M_0\cap F) = h_n\left( \bigcup_{E\in \at(\F)} (M_{E}\backslash Q)\cap F\right) =\\ 
    &=& \bigcup_{\substack{E\in \at(\F)\\ E\subseteq F}} (h_n(M_{E})\cap h_n(F))\backslash h_n(Q) = \\ &=& \bigcup_{\substack{E\in \at(\F)\\ E\subseteq F}} ((M_{E}\cap h_n(F))\backslash h_n(Q) =\\ &=& \bigcup_{\substack{E\in \at(\F)\\ E\subseteq F}} (M_E\cap h_n(F))\backslash h_n(Q) =\bigcup_{\substack{E\in \at(\F)\\ E\subseteq F}} M_E.
\end{eqnarray*} 
By Lemma \ref{homomorphism} for any $A\in \HH$ we have 
$$|\varphi_r(A)|\leqslant |\varphi_r(h_n(A))|+\lambda(A\triangle h_n(A)) < |\varphi_r(h_n(A))|+ \frac{\eta}{n}. $$
Putting $A= (M_0\cup L)\cap F$ and using (c) we get
\begin{eqnarray*}
    |\varphi_r((M_0\cup L)\cap F)|
    &\leqslant&  |\varphi_r (h_n((M_0\cup L)\cap F))| + \frac{\eta}{n}<\\
    &<& |\varphi_r ((h_n(M_0)\cup h_n(L))\cap h_n(F))|+ \frac{\delta}{4n} = \\ 
    &=& \Bigg|\varphi_r\Bigg(\bigcup_{\substack{E\in \at(\F)\\ E\subseteq F}} M_E\cup (h_n(L)\cap h_n(E))\Bigg)\Bigg|+\frac{\delta}{4n} \leqslant \\
    &\leqslant& \sum_{\substack{E\in \at(\F)\\ E\subseteq F}} |\varphi_r(M_E\cup (h_n(L)\cap h_n(E)))|+\frac{\delta}{4n} \leqslant \\
    &\leqslant& |\F|\frac{\delta}{4r|\F|}+\frac{\delta}{4n} \leqslant \frac{\delta}{2r}.
\end{eqnarray*}
\textbf{Case 2.} $r>n$.
\medskip\\
In this case, since $\HH$ is $(n, \delta/2)$-balanced, we have 
$$|\varphi_r ((M_0\cup L)\cap F)|\leqslant \sum_{s\in \{-1,1\}^n} |\varphi_r ((M_0\cup L)\cap F\cap\downset{s})|< 2^n\lambda(\downset{s})\frac{\delta}{2r}=\frac{\delta}{2r}.$$
Hence 
\begin{gather}\label{panie_lasu}
\tag{8}
(M_0\cup L)\cap F \ \text{is} \ \left(t,\frac{\delta}{2}\right)\text{-semibalanced for} \ F\in \F.
\end{gather}
Since $L\cap P=\varnothing$ and $M=M_0\backslash P$, we get $$(M\cup L)\cap F= (M_0\cup L)\cap (F\backslash P).$$ 
Since $P\in \F$ we get that if $F\in \F$, then $F\backslash P\in \F$, so by (\ref{panie_lasu}) applied to $F\backslash P$ we get that
\begin{gather}\label{po_omacku}
\tag{9}
(M\cup L)\cap F \ \text{is} \ \left(t,\frac{\delta}{2}\right)\text{-semibalanced for} \ F\in \F,
\end{gather}
which implies (3).

For (4) note that $C\backslash (F\backslash (M\cup L)) = (C\backslash F)\cup ((M\cup L)\cap F)$. Since $C \backslash F$ and $(M\cup L)\cap F$ are disjoint we have $$|\varphi_r(C\backslash (F\backslash (M\cup L)))|\leqslant |\varphi_r(C\backslash F)| + |\varphi_r((M\cup L)\cap F)|.$$
Since $\HH$ is $(t,\delta/2)$-balanced and $C\backslash F\in \F \subseteq \HH$ we have $|\varphi_r(C\backslash F)|<\delta/(2r)$ and from (\ref{po_omacku}) we get that $|\varphi_r((M\cup L)\cap F)|< \delta/(2r)$. Hence 
$$|\varphi_r(C\backslash (F\backslash (M\cup L)))|< \frac{\delta}{r},$$
so $C\backslash (F\backslash (M\cup L))$ is $(t,\delta)$-semibalanced. By Lemma \ref{krwawy_baron} the set $F\backslash (M\cup L)$ is $(t,\delta)$-semibalanced, which shows (4) and finishes the proof.
\end{proof}

\begin{proof3}
 Denote by $\F$ the subalgebra of $\B$ generated by $$\{G, P\}\cup \bigcup_{n\leqslant k} \B_n\cup  \A_{m_k}.$$ Put 

\begin{gather*}\label{egazmin_z_alchemii}
\tag{4}
      \varepsilon= \min\left\{\frac{1}{100}\inf \{\lambda(A): A\in \F, \lambda(A)>0 \}, 2^{-m_k-k-1}\right\}.
\end{gather*}

Since $\B$ is balanced, there is $t\in \N, t>m_k$ such that 
\begin{gather}\label{niech_zyje_sztuka}
\tag{5}
  \F \ \text{is} \ (t, \varepsilon)\text{-balanced}.
\end{gather}

By Lemma \ref{algebra_E} for every $A\in \F$ there is $s_A\in \{-1,1\}^t$ such that

\begin{gather*}
\frac{\lambda(A\cap \downset{s_A})}{\lambda(\downset{s_A})}\geqslant 0.99.
\end{gather*}

By Lemma \ref{small_perturbation} and the assumption (B) of the proposition there is $\mem>0$ such that

\begin{equation}\tag{6}\label{aehrsdfg}
\begin{gathered}
\forall n\leqslant k \ \forall A\in \FF(\B_n, G) \ \forall B\in \bor(C) \\ 
\lambda(B)<\mem \implies A\cup B, A\backslash B \ \text{are} \ (m_n, t, 2^{-n})\text{-balanced}.
\end{gathered}
\end{equation}
By Lemma \ref{tristane} (applied to $\delta=\min\left\{\eta, \mem/2, 2^{-m_{k}-k-1}\right\}$), there is $0<\theta<\mem/2$ such that for any $L,Q\in \B^*$, if $\max\{\lambda (L), \lambda(Q)\}<\theta$ and $L\cap P=\varnothing$, then there is $M\in \B^*$ satisfying

\begin{itemize}
    \item [(1')] $M\cap (P\cup Q)=\varnothing$,
    \item [(2')] $\lambda(M)< \min\left\{\eta, \mem/2\right\}$,
    \item [(3')] $\forall F\in\F \ (M\cup L)\cap F$ is $(t,2^{-m_{k}-k-1})$-semibalanced,
    \item [(4')] $\forall F\in\F \ F\backslash(M\cup L)$ is $(t,2^{-m_{k}-k-1})$-semibalanced.
\end{itemize}
In particular, the conditions (1) and (2) of the proposition are satisfied.

In order to show that such $M$ satisfies (3), fix $L,Q\in \B^*$ such that $L\cap P=\varnothing$ and $\max\{\lambda (L), \lambda(Q)\}<\theta$. 
Then we have $$\lambda(L\cup M)\leqslant \lambda(L) + \lambda(M)< \mem/2+\mem/2= \mem,$$
so by (\ref{aehrsdfg}) for every $n\leqslant k$ the family
\begin{gather}\label{nabial_i_ciemne_sily}
\tag{7}
    \FF(\B_n, G\cup L \cup M) \ \text{is} \ (m_n, t, 2^{-n})\text{-balanced}.
\end{gather}

Since $\A_{m_n}, \B_n\subseteq \F$ for $n\leqslant k$ and $G\in \F$, for every $A\in\B_n$ and $s\in \{-1,1\}^{m_n}$ we have $\downset{s}\cap A\cap G, A\backslash G\in \F$.
Hence by (\ref{egazmin_z_alchemii}), (\ref{niech_zyje_sztuka}) and Lemma \ref{balanced+semibalanced}
\begin{gather}\label{incydent_w_bialym_sadzie}
\tag{8}
    \downset{s}\cap A\cap G, \downset{s}\cap A\backslash G \ \text{are} \ (t,2^{-m_n-n-1})\text{-semibalanced.} 
\end{gather}
Since $\downset{s}\cap A\in \F$, by (3') and (4')
\begin{gather}\label{sprawy_rodzinne}
\tag{9}
    \downset{s}\cap A\cap (L\cup M), \downset{s}\cap A\backslash (L\cup M) \ \text{are} \ (t,2^{-m_n-n-1})\text{-semibalanced.}
\end{gather}
Since $L\cap P= M\cap P=\varnothing$ and $G\subseteq P$, the sets $L\cup M$ and $G$ are disjoint, so by (\ref{incydent_w_bialym_sadzie}) and (\ref{sprawy_rodzinne}) for $A\in \B_n, s\in \{-1,1\}^{m_n}$ 
\begin{gather*}
    \downset{s}\cap A\cap (G\cup L \cup M), \downset{s}\cap A\backslash (G\cup L \cup M) \ \text{are} \ (t,2^{-m_n-n})\text{-semibalanced}. 
\end{gather*}
By the above, (\ref{nabial_i_ciemne_sily}) and Lemma \ref{balanced+semibalanced} 
$$ \FF(\B_n, G\cup L \cup M) \ \text{is} \ (m_n, 2^{-n})\text{-balanced},$$
which shows (3) and completes the proof.
\end{proof3}

\section{Extensions of countable balanced Boolean algebras}\label{construction}

In this section, we will show how to enlarge a given countable balanced Boolean algebra $\B$ to a balanced Boolean algebra $\B^*$, so that $(\B, \B^*, \nu)$ satisfies the property $(\mathcal{G}^*)$, where $\nu$ is a normal sequence of measures on $\B$. We will also show how to deal with finitely many measures simultaneously, which will be important in the forcing construction.

\begin{lemma}\label{zbior_X}\emph{(Folklore
)} Suppose $K$ is a compact Hausdorff space with an open basis $\BB$ that is closed under finite unions. Let $\widetilde{\nu}\in M(K)$ and let $\MM\subseteq M(K)$ be a finite set of measures such that $\widetilde{\nu} \bot \widetilde{\mu}$ for every $\widetilde{\mu}\in \MM$. Then for every $\varepsilon>0$ there is $X\in\BB$ such that $|\widetilde{\nu}|(X)<\varepsilon$ and for every $\widetilde{\mu}\in\MM$ we have $|\widetilde{\mu}|(K\backslash X)<\varepsilon$. 
\end{lemma}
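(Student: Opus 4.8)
The plan is to reduce the statement to elementary regularity properties of Radon measures, invoking the hypothesis that $\BB$ is closed under finite unions only at the very last step.

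First I would isolate a common ``singular carrier'' for the measures in $\MM$. For each $\widetilde\mu\in\MM$ the relation $\widetilde\nu\bot\widetilde\mu$ yields a Borel set $N_\mu\subseteq K$ with $|\widetilde\nu|(N_\mu)=0$ and $|\widetilde\mu|(K\setminus N_\mu)=0$. Since $\MM$ is finite, the Borel set $N=\bigcup_{\mu\in\MM}N_\mu$ still satisfies $|\widetilde\nu|(N)=0$, while $|\widetilde\mu|(K\setminus N)\le|\widetilde\mu|(K\setminus N_\mu)=0$ for every $\mu\in\MM$; thus each $|\widetilde\mu|$ is concentrated on $N$ and $|\widetilde\nu|$ is concentrated on $K\setminus N$.

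Next, fixing $\varepsilon>0$, I would use the fact that the total variations $|\widetilde\mu|$ and $|\widetilde\nu|$ are finite non-negative Radon measures on the compact Hausdorff space $K$, hence inner regular by compact sets and outer regular by open sets. By inner regularity applied inside $N$, one picks for each $\mu\in\MM$ a compact $L_\mu\subseteq N$ with $|\widetilde\mu|(N\setminus L_\mu)<\varepsilon$, and sets $L=\bigcup_{\mu\in\MM}L_\mu$, a compact subset of $N$; then $|\widetilde\mu|(K\setminus L)=|\widetilde\mu|(N\setminus L)<\varepsilon$ for every $\mu$. Since $L\subseteq N$ and $|\widetilde\nu|(N)=0$, outer regularity of $|\widetilde\nu|$ provides an open $U\supseteq L$ with $|\widetilde\nu|(U)<\varepsilon$. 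Finally, covering each point of the compact set $L$ by a member of $\BB$ contained in $U$ and extracting a finite subcover, their union $X$ belongs to $\BB$ (closure under finite unions) and satisfies $L\subseteq X\subseteq U$, so that $|\widetilde\nu|(X)\le|\widetilde\nu|(U)<\varepsilon$ and $|\widetilde\mu|(K\setminus X)\le|\widetilde\mu|(K\setminus L)<\varepsilon$ for all $\mu\in\MM$.

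I do not expect a genuine obstacle here: the argument is a routine combination of the definition of mutual singularity with the regularity of finite Radon measures. The only places where the hypotheses are really used are compactness of $K$ (so that the measures are finite and $L$ is genuinely compact) and the closure of $\BB$ under finite unions, which is exactly what turns an open cover of $L$ into a single basic set wedged between $L$ and $U$. If one preferred to avoid quoting inner/outer regularity in this form, an equivalent route is to approximate $N$ from inside by a compact set with respect to the finite measure $\sum_{\mu\in\MM}|\widetilde\mu|$ and to approximate $K\setminus N$ from inside with respect to $|\widetilde\nu|$; this yields the same conclusion after the same bookkeeping.
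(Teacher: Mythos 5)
Your proof is correct and rests on the same ingredients as the paper's: mutual singularity to extract a Borel carrier on which the $\widetilde\mu$'s live and $\widetilde\nu$ vanishes, regularity of Radon measures to shrink that carrier to a compact set and wedge an open set with small $|\widetilde\nu|$-mass between, and a finite subcover by members of $\BB$ whose union gives $X$. The only differences are presentational: the paper proves the single-measure case and then unions the resulting $X_i$ with $\varepsilon/n$ precision, whereas you handle all of $\MM$ at once via a common carrier $N$; and where the paper finds a closed $D$ carrying most of $|\widetilde\nu|$ and sets $U=K\setminus D$, you invoke outer regularity of $|\widetilde\nu|$ directly at the $|\widetilde\nu|$-null compact $L$ — equivalent formulations of Radon regularity.
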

 
\begin{proof}
    First, we will show that the lemma holds when $\MM=\{\widetilde{\mu}\}$. 
    Since $\widetilde{\nu} \bot \widetilde{\mu}$ we have  $|\widetilde{\nu}| \bot |\widetilde{\mu}|$, so there exists a Borel support $A$ of $|\widetilde{\mu}|$, such that $K \setminus A$ is a Borel support of $|\widetilde{\nu}|$. By the regularity there is closed $B \subseteq A$ such that $|\widetilde{\mu}|(B) > |\widetilde{\mu}|(K)-\varepsilon$. We can find a closed set $D \subseteq K \setminus B$ such that $|\widetilde{\nu}|(D) > |\widetilde{\nu}|(K) - \varepsilon$. Note that  $|\widetilde{\mu}|(D)\leqslant \varepsilon$. Since $U=K\setminus D$ is an open superset of $B$, there are finitely many sets $B_1, \dots B_k\in \BB, B_i\subseteq U$ for $i\leqslant k$, which cover $B$. Let $X=\bigcup_{i \leqslant k} B_i$.  Note that $|\widetilde{\mu}|(X) \geqslant |\widetilde{\mu}|(K)-\varepsilon$, and $X \subseteq K \setminus D$, so $|\widetilde{\nu}|(X)<\varepsilon$ and $|\widetilde{\mu}|(K\backslash X)<\varepsilon$.

   When $\MM=\{\widetilde{\mu}_1, \dots \widetilde{\mu}_n\}$, by the first part of the proof for each pair $(\widetilde{\nu}, \widetilde{\mu}_i)$ where $i \leqslant n$ we can find $X_i \in \BB$ such that $|\widetilde{\nu}|(X_i)< \frac{\varepsilon}{n}$ and $|\widetilde{\mu}_i|(K\backslash X_i)<\frac{\varepsilon}{n}$. Then for $X= \bigcup_{i \in {1, \dots n}} X_i$ we have $|\widetilde{\nu}|(X)< \varepsilon$ and $|\widetilde{\mu}_i|(K\backslash X)<\frac{\varepsilon}{n}<\varepsilon$. 
\end{proof}

\begin{lemma}\emph{(Pe\l czy\'nski)}\cite[Lemma 5.3]{Pelczynski_lemma}
    Let $(\widetilde{\nu}_n)_{n\in\N}$ be a bounded sequence of Radon measures on a compact space $K$. Suppose there are pairwise disjoint Borel sets $(E_n)_{n\in\N}$ and $c>0$ such that $\widetilde{\nu}_n(E_n)\geqslant c$ for every $n\in\N$. Then for every $\delta>0$ there is a subsequence $(\widetilde{\nu}_{n_k})_{n\in\N}$ and a sequence of pairwise disjoint open sets $(U_k)_{k\in\N}$ such that $\widetilde{\nu}_{n_k}(U_k)\geqslant c-\delta$ for every $k\in\N$. 
\end{lemma}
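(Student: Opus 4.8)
The plan is to reduce to compact witnesses, apply Rosenthal's Lemma, and then upgrade the resulting compact sets to pairwise disjoint \emph{open} sets by an inductive argument using normality of $K$.

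Put $C=\sup_{n}\|\widetilde{\nu}_n\|<\infty$ and fix $\delta>0$. By inner regularity of the Radon measures $|\widetilde{\nu}_n|$, choose for each $n$ a compact set $Q_n\subseteq E_n$ with $|\widetilde{\nu}_n|(E_n\setminus Q_n)<\delta/8$, so that $\widetilde{\nu}_n(Q_n)>c-\delta/8$; the $Q_n$ remain pairwise disjoint. Now apply Rosenthal's Lemma to the bounded sequence of positive measures $(|\widetilde{\nu}_n|)_{n\in\N}$ and the pairwise disjoint Borel sets $(Q_n)_{n\in\N}$: there is an infinite $M\subseteq\N$ such that $|\widetilde{\nu}_n|\bigl(\bigcup_{m\in M\setminus\{n\}}Q_m\bigr)<\delta/8$ for every $n\in M$. (Rosenthal's Lemma itself is a short pigeonhole/diagonal argument: partition $M$ into infinitely many infinite blocks and either one block works outright, or one extracts a witness to failure from each block and diagonalizes.)

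Next, recursively build indices $n_1<n_2<\cdots$ in $M$, compact sets $\widehat{Q}_k\subseteq Q_{n_k}$ with $\widetilde{\nu}_{n_k}(\widehat{Q}_k)>c-\delta$, open sets $U_k\supseteq\widehat{Q}_k$, and a decreasing chain $M=M_0\supseteq M_1\supseteq\cdots$ of infinite subsets of $M$ with $\{n_{k+1},n_{k+2},\dots\}\subseteq M_k$, so that $\overline{U_1},\dots,\overline{U_k}$ are pairwise disjoint and $|\widetilde{\nu}_m|(\overline{U_j}\setminus\widehat{Q}_j)<\delta\,2^{-j-3}$ for every $j\leqslant k$ and every $m\in M_k$. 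At stage $k$ set $R=\overline{U_1}\cup\cdots\cup\overline{U_{k-1}}$ (compact) and pick $n_k\in M_{k-1}$ with $n_k>n_{k-1}$; then, using Rosenthal's Lemma on the $\widehat{Q}_j\subseteq Q_{n_j}$ and the inductive bound on the shells,
$$|\widetilde{\nu}_{n_k}|(R)\leqslant|\widetilde{\nu}_{n_k}|\Bigl(\bigcup_{j<k}\widehat{Q}_j\Bigr)+\sum_{j<k}|\widetilde{\nu}_{n_k}|(\overline{U_j}\setminus\widehat{Q}_j)<\frac{\delta}{8}+\frac{\delta}{8}=\frac{\delta}{4},$$
so $\widetilde{\nu}_{n_k}(Q_{n_k}\setminus R)>c-3\delta/8$, and inner regularity yields a compact $\widehat{Q}_k\subseteq Q_{n_k}\setminus R$ with $\widetilde{\nu}_{n_k}(\widehat{Q}_k)>c-\delta$. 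Since $\widehat{Q}_k$ and $R$ are disjoint compacta, normality of $K$ provides an open $U_k$ with $\widehat{Q}_k\subseteq U_k\subseteq\overline{U_k}\subseteq K\setminus R$, keeping $\overline{U_1},\dots,\overline{U_k}$ pairwise disjoint and giving $\widetilde{\nu}_{n_k}(U_k)>c-\delta$.

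The delicate point is the joint choice of $U_k$ and $M_k$: the shell $\overline{U_k}\setminus\widehat{Q}_k$ must be made $|\widetilde{\nu}_m|$-small for the still-unchosen future indices $m$, and one cannot achieve this by dominating all $|\widetilde{\nu}_m|$ by a single finite auxiliary measure (the comparison constant degenerates as $m\to\infty$). Instead one shrinks: fix a decreasing sequence of open neighbourhoods $\widehat{Q}_k\subseteq U_k^{(1)}\supseteq U_k^{(2)}\supseteq\cdots$ with each $\overline{U_k^{(l)}}\subseteq K\setminus R$ and $\bigcap_l\overline{U_k^{(l)}}=\widehat{Q}_k$; then $|\widetilde{\nu}_m|\bigl(\overline{U_k^{(l)}}\setminus\widehat{Q}_k\bigr)\downarrow 0$ for each $m\in M_{k-1}$, so some $l=l(m)$ makes this quantity $<\delta\,2^{-k-3}$, and by pigeonhole one value $l^{*}$ serves infinitely many $m$; set $U_k:=U_k^{(l^{*})}$ and let $M_k$ be that infinite set (minus $\{n_1,\dots,n_k\}$). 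Obtaining such a shrinking sequence is immediate when $K$ is metrizable — the case $K=\st(\B)$ with $\B$ countable that arises in our application, where one may take $\overline{U_k^{(l)}}=\{x:d(x,\widehat{Q}_k)\leqslant 1/l\}$ — and in a general compact Hausdorff space one first arranges $\widehat{Q}_k$ to be a compact $G_\delta$, using Urysohn's lemma and regularity to place the compact witnesses inside zero sets. Once the recursion is finished, $(U_k)_k$ are pairwise disjoint open sets with $\widetilde{\nu}_{n_k}(U_k)\geqslant\widetilde{\nu}_{n_k}(\widehat{Q}_k)>c-\delta$, which is the assertion. The main obstacle is precisely this last step — turning the compact cores into pairwise disjoint \emph{open} sets while controlling the mass that later measures can place on earlier sets, since infinitely many disjoint compacta need not admit disjoint open neighbourhoods; everything else (regularity, Rosenthal's Lemma, the variation estimates) is routine.
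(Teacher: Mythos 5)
The paper does not prove this lemma---it is quoted as Pe\l czy\'nski's (Lemma 5.3 of the cited work) and used as a black box---so there is no in-paper proof to compare against, and the proposal must be judged on its own terms. The overall architecture you use (inner regularity to pass to pairwise disjoint compact cores, Rosenthal's Lemma to make each $|\widetilde{\nu}_n|$ small on the union of the other cores, then separate the cores by open neighbourhoods) is consistent with standard treatments. But the step you yourself identify as the crux is wrong, and not repairable by the argument you give.

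The problem is the assertion that, after fixing a shrinking family $U_k^{(1)}\supseteq U_k^{(2)}\supseteq\cdots$ with $\bigcap_{l}\overline{U_k^{(l)}}=\widehat{Q}_k$, ``by pigeonhole one value $l^{*}$ serves infinitely many $m$.'' What you actually need is that the set $\{m\in M_{k-1}: |\widetilde{\nu}_m|(\overline{U_k^{(l^{*})}}\setminus\widehat{Q}_k)<\delta 2^{-k-3}\}$ is infinite for some $l^{*}$. Since the sequence $\overline{U_k^{(l)}}$ is decreasing, being ``served'' by $l^{*}$ is a downward-closed condition in $l(m)$, so the claim amounts to $l(m)$ being bounded on an infinite subset of $M_{k-1}$; that can fail. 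Concretely, take $K=[0,1]$, $\widehat{Q}_k=\{0\}$, $\overline{U_k^{(l)}}=[0,1/l]$, and $|\widetilde{\nu}_m|=\delta_{1/m}$ for $m\geqslant 2$: then $|\widetilde{\nu}_m|\bigl(\overline{U_k^{(l)}}\setminus\{0\}\bigr)=1$ precisely when $m\geqslant l$, so the set of served $m$ is $\{m<l^{*}\}$, finite for every $l^{*}$. Nothing you have arranged excludes this: Rosenthal's Lemma controls $|\widetilde{\nu}_n|\bigl(\bigcup_{m\neq n}Q_m\bigr)$, not the mass which later $|\widetilde{\nu}_m|$ may place on a small neighbourhood of $\widehat{Q}_k$ off the cores $Q_m$. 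So once you commit (at the start of stage $k$) to an $n_k$ whose core is an accumulation point of the remaining $Q_m$'s, as in the example $Q_1=\{0\}$, $Q_m=\{1/m\}$, the recursion genuinely runs aground on every co-final subset of $M_{k-1}$. The lemma is nonetheless true---this is precisely what makes it nontrivial---but a correct proof cannot first fix $n_k$, $\widehat{Q}_k$ and a shrinking family and only afterwards look for the stabilising infinite $M_k$; the choice of the witness index and of the surviving tail must be intertwined, which is a genuinely different combinatorial step than the one you propose. The opening sentence of your last paragraph already diagnoses the difficulty correctly; the shrink-and-pigeonhole proposed to resolve it does not.
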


We will use the following application of the above lemma, in which $K$ is the Stone space of a Boolean algebra. 

\begin{corollary}\label{Pelczynski}
    Let $(\nu_n)_{n \in \N}$ be a sequence of measures on $\B$  and $(E_n)_{n\in\N}$ be a sequence of disjoint Borel sets in $\st(\B)$. Let $P\in \B$ be such that $E_n\cap [P]=\varnothing$ for every $n\in\N$. Let $c,\delta>0$. If $|\widetilde{\nu}_n| (E_n)\geqslant c$ for every $n \in \N$, then there exist a subsequence $(\nu_{n_k})_{k \in \N}$ and a sequence $(V_k)_{k \in \N}$ of pairwise disjoint elements of $\B$ such that $V_k\cap P=\varnothing$ and $|\nu_{n_k}|(V_k)\geqslant c- \delta$ for all $k \in \N$. 
\end{corollary}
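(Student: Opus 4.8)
The plan is to reduce the statement to the Pe\l czy\'nski lemma quoted above by first passing to the clopen subspace $K':=\st(\B)\setminus[P]$ of $\st(\B)$. Being a clopen subset of a compact Hausdorff space, $K'$ is itself compact Hausdorff, and its topology has a basis consisting of the clopen sets $[A]$ with $A\in\B$ and $A\cap P=\varnothing$; indeed $[A]\cap K'=[A\setminus P]$ for every $A\in\B$, and this family is closed under finite unions. The hypothesis $E_n\cap[P]=\varnothing$ says precisely that $E_n\subseteq K'$, so the $E_n$ are pairwise disjoint Borel subsets of $K'$, and the restrictions $\rho_n:=|\widetilde{\nu}_n|\upharpoonright K'$ are Radon measures on $K'$ with $\rho_n(E_n)=|\widetilde{\nu}_n|(E_n)\geqslant c$.

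First I would apply the Pe\l czy\'nski lemma to $(\rho_n)_{n\in\N}$ on $K'$ with $\delta/2$ in place of $\delta$, obtaining a subsequence $(\rho_{n_k})_{k\in\N}$ and pairwise disjoint open sets $U_k\subseteq K'$ with $\rho_{n_k}(U_k)\geqslant c-\delta/2$. Then I would upgrade each $U_k$ to an element of the algebra: by inner regularity of the Radon measure $\rho_{n_k}$ there is a compact $F_k\subseteq U_k$ with $\rho_{n_k}(F_k)>c-\delta$, and since $F_k$ is compact, $U_k$ is open, and the clopen sets $[A]$ with $A\cap P=\varnothing$ form a basis of $K'$, finitely many such basic clopen sets $[A_1],\dots,[A_j]$ contained in $U_k$ cover $F_k$. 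Putting $V_k:=A_1\cup\dots\cup A_j\in\B$ we obtain $V_k\cap P=\varnothing$ and $F_k\subseteq[V_k]\subseteq U_k$.

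It then remains to read off the two conclusions. Since $[V_k]\subseteq U_k$ and the $U_k$ are pairwise disjoint, the clopen sets $[V_k]$ are pairwise disjoint, hence so are the $V_k$ as elements of $\B$ (if $V_k\cap V_l\neq\varnothing$ then $[V_k]\cap[V_l]=[V_k\cap V_l]\neq\varnothing$). For the mass estimate, $|\widetilde{\nu}_{n_k}|$ is the Radon extension of $|\nu_{n_k}|$, hence agrees with it on clopen sets, so $|\nu_{n_k}|(V_k)=|\widetilde{\nu}_{n_k}|([V_k])\geqslant\rho_{n_k}(F_k)>c-\delta$ for every $k\in\N$, which is the required inequality.

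The only substantive ingredient is the Pe\l czy\'nski lemma, which is quoted; the rest is bookkeeping. The point worth flagging is that that lemma produces arbitrary open sets with no control over whether they meet $[P]$ — this is exactly why the whole argument is run inside $K'=\st(\B)\setminus[P]$, so that replacing each $U_k$ by a basic clopen set automatically yields an element of $\B$ disjoint from $P$; beyond splitting $\delta$ into two halves I do not expect any real obstacle.
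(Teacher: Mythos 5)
Your proof is correct. The paper states this as a corollary without supplying a proof, so there is nothing to compare it against in detail, but your argument is surely the intended one: pass to the clopen subspace $K'=\st(\B)\setminus[P]$ (compact Hausdorff, with the sets $[A]$, $A\in\B$, $A\cap P=\varnothing$, as a basis closed under finite unions), apply Pe\l czy\'nski's lemma to the restricted variations $\rho_n=|\widetilde{\nu}_n|\restriction K'$ with tolerance $\delta/2$, then replace each open $U_k$ by a basic clopen $[V_k]$ squeezed between a compact $F_k\subseteq U_k$ of mass $>c-\delta$ (inner regularity) and $U_k$ (compactness plus the basis property). The translation back to $\B$ via the Stone isomorphism gives pairwise disjointness of the $V_k$ and the estimate $|\nu_{n_k}|(V_k)=|\widetilde{\nu}_{n_k}|([V_k])\geqslant\rho_{n_k}(F_k)>c-\delta$. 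One point worth noting: Pe\l czy\'nski's lemma requires the sequence of measures to be bounded in norm, a hypothesis the corollary does not state explicitly; your proof inherits this implicit assumption. This is an omission in the paper's formulation of the corollary rather than a flaw in your argument (in the paper's only application, Lemma \ref{2_elements_from_antichain}, the measures are normal so $\|\nu_n\|=1$), but it would be good practice to flag that uniform boundedness is being used.
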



The next lemma will let us build an antichain needed to satisfy the property $(\mathcal{G})$.

\begin{lemma}\label{2_elements_from_antichain}
   Let $\nu=(\nu_n)_{n\in\N}$ be a normal sequence of measures on a Boolean algebra $\B\subseteq\bor(C)$.
    Let $\MM$ be a finite set of positive measures on $\B$ and assume that $(|{\nu}_n|)_{n\in\N}$ has a subsequence pointwise convergent to a measure $\OO\in \MM$. Let $d\in \N$ and $\varepsilon>0$. Let $P\in \B$  be such that 
 $$\OO(P)< 0.1.$$
 Then there are $H_0,H_1\in \B$ and $a,b>d$ such that 
 \begin{enumerate}
     \item $H_0,H_1,P$ are pairwise disjoint,
     \item $\forall \mu\in\MM \ \mu(H_0\cup H_1)<\varepsilon$, 
     \item $\lambda (H_0\cup H_1)<\varepsilon$,
    \item $|\nu_a|(H_0), |\nu_b|(H_1)\geqslant 0.9$.
 \end{enumerate}
 \end{lemma}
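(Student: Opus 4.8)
The plan is to build $H_1$ first, then $H_0$. Since $\nu = (\nu_n)_{n\in\N}$ is normal, the Radon measures $(\widetilde\nu_n)_{n\in\N}$ have pairwise disjoint Borel supports $(F_n)_{n\in\N}$, and $\|\nu_n\| = |\widetilde\nu_n|(F_n) = 1$ for every $n$. Fix the subsequence $(|\nu_{n_k}|)_{k\in\N}$ that converges pointwise on $\B$ to $\nu_\infty\in\MM$. For $H_1$: I want a Borel set carrying almost all of the variation of some $\widetilde\nu_b$ while being $\lambda$-small, $\mu$-small for each $\mu\in\MM$, and disjoint from $[P]$. The key point is that $\widetilde\nu_b$, being supported on $F_b$, is "asymptotically singular" with respect to the fixed finite family $\MM\cup\{\lambda\}$ together with the (finitely additive) measure $\widetilde{\chi_P\cdot}$; more precisely, since the supports $F_n$ are pairwise disjoint, for each fixed measure $\vartheta$ among $\MM\cup\{\widetilde\lambda\}$ and each $\varepsilon'>0$ only finitely many $n$ can have $|\vartheta|(F_n)\geqslant\varepsilon'$ (as $\sum_n|\vartheta|(F_n)\leqslant\|\vartheta\|<\infty$). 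Hence we may pick $b>d$ so large that $|\mu|(F_b)<\varepsilon/2$ for every $\mu\in\MM$, $\lambda(F_b)<\varepsilon/2$, and $\nu_\infty(F_b)$ is tiny — in particular $b$ can be taken with $F_b$ disjoint from... well, $F_b$ need not be disjoint from $[P]$, so instead I work with $F_b\setminus[P]$. Using $\nu_\infty(P)<0.1$ is not yet needed here; what I need is that $\widetilde\nu_b$ restricted to $[P]$ is small. This is where I invoke Lemma \ref{zbior_X}: applying it to the compact space $K=\st(\B)$ with basis $\clop(\st(\B))$, to the measure $\widetilde\nu_b$ and... no — $\widetilde\nu_b$ is not singular to $P$ in general. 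Let me instead use $\nu_\infty$: since $|\nu_{n_k}|\to\nu_\infty$ pointwise and $\nu_\infty$ is a fixed finite measure, and the $F_{n_k}$ are disjoint, we in fact get $|\widetilde\nu_{n_k}|([P])\to\nu_\infty(P)<0.1$ along the subsequence? No, that limit is $\nu_\infty(P)$ only if $|\nu_{n_k}|(P)\to\nu_\infty(P)$, which is exactly pointwise convergence at $P\in\B$. Good — so for large $k$, $|\nu_{n_k}|([P])<0.1$, hence $|\widetilde\nu_{n_k}|(F_{n_k}\setminus[P]) > 0.9$. Combining, choose $b=n_k$ large enough that simultaneously $|\nu_b|([P])<0.1$, and $|\mu|(F_b)<\varepsilon$ for all $\mu\in\MM$, $\lambda(F_b)<\varepsilon$; then set $H_1$ to be a clopen set approximating $F_b\setminus[P]$ from outside within $\widetilde\nu_b$-variation $<0.1$ and $\lambda,\mu$-variation still $<\varepsilon$ — this outer approximation by a clopen (equivalently, by an element of $\B$) is available because $F_b\setminus[P]$ is Borel and all the measures in play are Radon, so we pick a closed set inside carrying almost all of $|\widetilde\nu_b|$ and cover it by a clopen set inside an open set where the other measures stay small, exactly as in the proof of Lemma \ref{zbior_X}.

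Next, build $H_0$ disjoint from $P\cup H_1$. Now use $\nu_\infty(P)<0.1$ together with the fact that $H_1$ has tiny $\mu$-mass for every $\mu\in\MM$, in particular $\nu_\infty(H_1)<\varepsilon$. Along the subsequence, $|\nu_{n_k}|(P\cup H_1)\to\nu_\infty(P\cup H_1)\leqslant\nu_\infty(P)+\nu_\infty(H_1)<0.1+\varepsilon$; shrinking $\varepsilon$ at the outset (WLOG $\varepsilon<0.01$, say, or just carry the bound) we get $|\nu_{n_k}|(P\cup H_1)<0.1$ for all large $k$. Pick such an $a=n_k$ with additionally $a>\max\{d,b\}$ and — using disjointness of supports again — $|\mu|(F_a)<\varepsilon$ for every $\mu\in\MM$ and $\lambda(F_a)<\varepsilon$. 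Then $|\widetilde\nu_a|(F_a\setminus(P\cup H_1)) = |\widetilde\nu_a|(F_a) - |\widetilde\nu_a|(F_a\cap(P\cup H_1)) \geqslant 1 - 0.1 = 0.9$ (using that $[P]$ and the clopen set $H_1$ together carry $<0.1$ of $|\widetilde\nu_a|$, since $|\nu_a|(P\cup H_1)<0.1$). As before, approximate the Borel set $F_a\setminus([P]\cup H_1)$ from outside by an element $H_0\in\B$, staying inside an open set on which $\lambda$ and each $\mu\in\MM$ have mass $<\varepsilon$ and losing less than $0.1$ of $|\widetilde\nu_a|$; crucially this outer set can be taken disjoint from $P\cup H_1$ because $F_a\setminus([P]\cup H_1)$ is a Borel set disjoint from the clopen set $[P]\cup H_1$, so its closure-in-complement construction lands in the clopen complement of $[P]\cup H_1$. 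Then $H_0,H_1,P$ are pairwise disjoint, $\mu(H_0\cup H_1)<2\varepsilon$ for $\mu\in\MM$, $\lambda(H_0\cup H_1)<2\varepsilon$, and $|\nu_a|(H_0)\geqslant 0.9$, $|\nu_b|(H_1)\geqslant 0.9$; replacing $\varepsilon$ by $\varepsilon/2$ throughout gives the stated bounds.

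The routine verifications — that the total-variation sums over disjoint supports are finite, and the outer-regularity/clopen-approximation steps — are exactly the arguments in Lemma \ref{zbior_X} and standard Radon measure theory, so I would cite or mimic that proof rather than repeat it. The one genuine subtlety, and the step I expect to be the main obstacle, is coordinating the \emph{order} of choices: $H_1$ must be fixed before $H_0$ so that the pointwise-convergence estimate $|\nu_{n_k}|(P\cup H_1)\to\nu_\infty(P\cup H_1)$ is available with $H_1$ already a fixed element of $\B$; and one must make sure that enlarging $F_b$ (resp. $F_a$) to a clopen superset does not destroy the smallness with respect to $\MM$ and $\lambda$ — this is handled by doing the clopen approximation \emph{inside} an open set chosen via outer regularity of the finitely many measures in $\MM\cup\{\lambda\}$, precisely the device used in Lemma \ref{zbior_X}. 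Everything else is bookkeeping with $\varepsilon$'s.
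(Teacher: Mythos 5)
Your plan is correct, but it follows a genuinely different route from the paper. The paper does not chase $H_1$ and $H_0$ one at a time: after noting $\nu_\infty(C\setminus P)>0.9$ and picking $\delta>0$ with $|\nu_n|(C\setminus P)>0.9+\delta$ for infinitely many $n$, it invokes Corollary \ref{Pelczynski} (Pe\l czy\'nski's lemma) to produce an entire \emph{antichain} $(V_l)_{l\in\N}\subseteq\B$ disjoint from $P$ and a subsequence with $|\nu_{n_l}|(V_l)\geqslant 0.9$. The smallness conditions (2) and (3) then come for free from disjointness: for each fixed finite measure $\mu$ the series $\sum_l\mu(V_l)$ converges, so $\mu(V_l)\to 0$, and one simply takes $H_0=V_{l_1}$, $H_1=V_{l_2}$ for $l_1\neq l_2$ large. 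You instead bypass Pe\l czy\'nski's lemma and work directly with the disjoint Borel supports, observing $\sum_n|\widetilde\mu|(F_n)\leqslant\|\widetilde\mu\|<\infty$ to drive $|\widetilde\mu|(F_n)\to 0$ for each $\mu\in\MM\cup\{\lambda\}$, and then do the clopen outer approximation of $F_b\setminus[P]$ and $F_a\setminus([P]\cup[H_1])$ by hand, essentially re-running the argument of Lemma \ref{zbior_X} twice. This trades one black-box citation for more $\varepsilon$-bookkeeping and the explicit order dependence you flagged ($H_1$ must be fixed as an element of $\B$ before the pointwise-convergence estimate at $P\cup H_1$ can be used). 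Both routes are sound.

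One imprecision to fix: ``WLOG $\varepsilon<0.01$'' does \emph{not} by itself give $|\nu_{n_k}|(P\cup H_1)<0.1$ for large $k$ (if $\OO(P)=0.099$ you still end up just under $0.11$), and ``carry the bound'' degrades the eventual $|\nu_a|(H_0)$ estimate to $0.9-\varepsilon$, which is not what the lemma asserts. The actual fix is the one the paper uses at the outset: exploit the strict inequality $\OO(P)<0.1$ and shrink $\varepsilon$ to something like $\min\{\varepsilon,(0.1-\OO(P))/2\}$ before constructing $H_1$, so that $\OO(P\cup H_1)<0.1$ strictly and pointwise convergence of $|\nu_{n_k}|$ at the fixed element $P\cup H_1\in\B$ then yields $|\nu_{n_k}|(P\cup H_1)<0.1$ for large $k$. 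With that adjustment (and the trivial stipulation $b>d$), your argument is complete.
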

       \begin{proof}
       We may assume that $\lambda \upharpoonright \B \in \MM$, so it is enough to show (1), (2) and (4). 

       Since ${\nu}_\infty$ is the pointwise limit of a sequence of probability measures, we have $\OO(C)=1$, and so $\OO(C\backslash P)>0.9$. Hence there is $\delta>0$ such that for infinitely many $n\in \N$ we have
       $$|\nu_n|(C\backslash P)>0.9+\delta.$$
      
Since $(\widetilde{\nu}_n)_{n\in\N}$ has pairwise disjoint Borel supports, there are pairwise disjoint sets $(E_n)_{n\in\N}\subseteq \bor(C)$ such that $$|\widetilde{\nu}_n|(E_n\backslash [P])>0.9+\delta$$
for infinitely many $n\in\N$.
    By Corollary \ref{Pelczynski} there is an antichain $(V_l)_{l\in\N}\subseteq \B$ and a subsequence $(\nu_{n_l})_{l\in\N}$ such that $V_l\cap P=\varnothing$ and $|\nu_{n_l}|(V_l)\geqslant 0.9$ for every $l\in\N$. Since $(V_l)_{l\in\N}$ is an antichain, we have for every $\mu\in\MM$ $$\lim_{l\to \infty} \mu(V_l)=0.$$  
 In particular, if $l_1,l_2$ are big enough, then we have $\mu(V_{l_1}\cup V_{l_2})<\varepsilon$ for every $\mu\in\MM$, so it is enough to put $a=n_{l_1}, b=n_{l_2}, H_0=V_{l_1}, H_1=V_{l_2}$ where $l_1 \neq l_2$ are so big that $\min\left\{n_{l_1},n_{l_2}\right\}> \max\{d,n\}$. 

 \end{proof}

In the next two lemmas we describe how to pick a sequence of sets, whose union will be a witness for the property $(\mathcal{G}^*)$ for a given sequence of measures. 

For the purpose of the construction under {\sf CH}, in the following lemma it is enough to take $\MM=\{\OO\}$. The case when $\MM$ consists of more than one measure will be used in Section \ref{forcing}.

\begin{lemma}\label{construction-finite-step}
Suppose we are given:

\begin{enumerate}[leftmargin=21pt,label=(\Alph*)]
\item natural numbers $k, d\in\N,$
    \item subalgebras $\B^*, \B\subseteq\bor(C)$ and finite subalgebras $\B_n\subseteq\B$ for $n\leqslant k+1$ such that
    \begin{itemize}[leftmargin=13.5pt]
        \item $\B$ is balanced,
        \item $\clop(C)\subseteq \B^*\subseteq \B$,
    \end{itemize}
    \item a normal sequence of measures $(\nu_n)_{n\in\N}$ on $\B^*$ and a finite set $\MM$ of probability measures on $\B^*$ such that $(|{\nu}_n|)_{n\in\N}$ has a subsequence pointwise convergent to a measure $\OO\in \MM$,
\item a strictly increasing sequence of natural numbers $(m_n)_{n\leqslant k}$ and sets $\widehat{G},\widehat{H}\in \B^*$ such that 
\begin{itemize}[leftmargin=13.5pt]
    \item  $\forall n\leqslant k \ \FF(\B_n, \widehat{G}) \ \text{is} \ (m_n, 2^{-n})\text{-balanced},$
    \item  $\forall \mu\in \MM \ \mu(\widehat{G} \cup \widehat{H})<0.1.$
\end{itemize}
\end{enumerate}
    Then there are $a,b>d; m_{k+1} \in \N; G',H_0, H_1\in \B^*$ such that:
    \begin{enumerate}
        \item $m_{k+1}>m_k,$
        \item $\forall n\leqslant k+1 \ \FF(\B_n, \widehat{G}\cup G') \ \text{is} \ (m_n, 2^{-n})\text{-balanced},$
        \item $\forall \mu\in \MM \ \mu(\widehat{G}\cup G' \cup \widehat{H}\cup H_0\cup H_1)<0.1,$
        \item $\widehat{H}, H_0, H_1 \ \text{are pairwise disjoint},$
        \item $G'\cap (\widehat{G}\cup \widehat{H} \cup H_1) = \varnothing,$
        \item $\widehat{G}\cap (H_0 \cup H_1) = \varnothing,$
        \item $|\nu_a|(H_0), |\nu_b|(H_1) \geqslant 0.9,$
        \item $|\nu_a(G'\cap H_0)|\geqslant 0.3.$
    \end{enumerate}
\end{lemma}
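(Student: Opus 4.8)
Write $\widehat P=\widehat G\cup\widehat H\in\B^*$ and set $\delta_0=0.1-\max_{\mu\in\MM}\mu(\widehat P)$, which is positive since $\MM$ is finite and $\mu(\widehat P)<0.1$ for each $\mu\in\MM$. The plan is: produce the antichain elements $H_0,H_1$ and the indices $a,b$ from Lemma \ref{2_elements_from_antichain}; extract from $|\nu_a|(H_0)\geqslant 0.9$ a set $L_0\subseteq H_0$ with $|\nu_a(L_0)|$ large, which will be the witness; and repair the balance of $\FF(\B_n,\widehat G\cup L_0)$ by adjoining a small correction set $M$ obtained from Proposition \ref{poprawka}, arranging at the same time (via Lemma \ref{zbior_X}) that $M$ is negligible also for every $\mu\in\MM$.

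I would fix the auxiliary parameters in the following order to avoid circularity. Using the Lebesgue decompositions $\widetilde\mu=\widetilde\mu_{ac}+\widetilde\mu_{s}$ of the extensions to $\st(\B^*)$ of the measures $\mu\in\MM$ relative to $\widetilde\lambda$, choose $\eta>0$ so small that $\widetilde\lambda(X)<\eta$ forces $\widetilde\mu_{ac}(X)<\delta_0/8$ for all $\mu\in\MM$. Apply Proposition \ref{poprawka} with $G=\widehat G$ and $P=\widehat P$ (its hypotheses hold: $\clop(C)\subseteq\B^*\subseteq\B$ with $\B$, hence $\B^*$, balanced; $\widehat G\subseteq\widehat P$; and (B) is the first bullet of (D)), the given $k,(m_n)_{n\leqslant k},(\B_n)_{n\leqslant k}$, and this $\eta$; this yields $\theta>0$. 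Apply Lemma \ref{zbior_X} to $K=\st(\B^*)$ with the clopen basis, $\widetilde\nu=\widetilde\lambda$, and the finite family $\{\widetilde\mu_{s}:\mu\in\MM\}$ (each singular to $\widetilde\lambda$), with error $\min\{\theta/3,\delta_0/8\}$, obtaining $Q_0\in\B^*$ with $\lambda(Q_0)<\theta/3$ and $\widetilde\mu_{s}(\st(\B^*)\setminus[Q_0])<\delta_0/8$ for all $\mu\in\MM$. Finally apply Lemma \ref{2_elements_from_antichain} to $(\nu_n)$, $\MM$, $\OO$, the number $d$, and $P=\widehat P$ (legitimate since $\OO(\widehat P)<0.1$), with $\varepsilon=\min\{\theta/3,\delta_0/8\}$; this gives $H_0,H_1\in\B^*$ and $a,b>d$ with $H_0,H_1,\widehat P$ pairwise disjoint, $\mu(H_0\cup H_1)<\varepsilon$ for $\mu\in\MM$, $\lambda(H_0\cup H_1)<\varepsilon$, and $|\nu_a|(H_0),|\nu_b|(H_1)\geqslant 0.9$.

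Since $|\nu_a|(H_0)\geqslant 0.9>0.8$, unwinding the definition of the variation gives disjoint $A,B\in\B^*$ with $A,B\subseteq H_0$ and $|\nu_a(A)|+|\nu_a(B)|>0.8$; let $L_0$ be whichever of $A,B$ has $|\nu_a(L_0)|>0.4$. As $L_0\subseteq H_0$ we have $\lambda(L_0)<\theta$ and $L_0\cap\widehat P=\varnothing$, so Proposition \ref{poprawka}, applied with $L=L_0$ and $Q=Q_0\cup H_0\cup H_1$ (which satisfies $\lambda(Q)<\theta/3+\varepsilon<\theta$), produces $M\in\B^*$ with $M\cap(\widehat P\cup Q_0\cup H_0\cup H_1)=\varnothing$, $\lambda(M)<\eta$, and $\FF(\B_n,\widehat G\cup L_0\cup M)$ $(m_n,2^{-n})$-balanced for all $n\leqslant k$. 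Put $G'=L_0\cup M$; since $\B$ is balanced and $\FF(\B_{k+1},\widehat G\cup G')$ is a finite subfamily of $\B$, pick $m_{k+1}>m_k$ for which it is $(m_{k+1},2^{-(k+1)})$-balanced (Remark \ref{dla_dobra_nauki}). This gives (1) and (2). Items (4), (6), (7) are immediate from the disjointness of $H_0,H_1$ from one another and from $\widehat P$; (5) holds because $L_0\subseteq H_0$ is disjoint from $\widehat G\cup\widehat H\cup H_1$ and $M$ is disjoint from $\widehat P\cup H_1$; and (8) holds because $M\cap H_0=\varnothing$, so $G'\cap H_0=L_0$ and $|\nu_a(G'\cap H_0)|=|\nu_a(L_0)|>0.4$. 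For (3), using $L_0\subseteq H_0$, the disjointness of $M$ from $\widehat P\cup H_0\cup H_1$, the inclusion $[M]\subseteq\st(\B^*)\setminus[Q_0]$, and $\widetilde\lambda([M])=\lambda(M)<\eta$, one gets $\mu(\widehat G\cup G'\cup\widehat H\cup H_0\cup H_1)=\mu(\widehat P\cup H_0\cup H_1)+\mu(M)\leqslant(0.1-\delta_0)+\varepsilon+\widetilde\mu_{ac}([M])+\widetilde\mu_{s}([M])<0.1$, since the last three terms sum to less than $\delta_0$.

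The one genuinely delicate point is controlling $\mu(M)$ for those $\mu\in\MM$ that are not absolutely continuous with respect to $\lambda$: Proposition \ref{poprawka} only guarantees $\lambda(M)<\eta$, so one must separately steer $M$ away from the $\lambda$-singular mass of the measures in $\MM$ by feeding a suitable $\lambda$-tiny set $Q_0$ (furnished by Lemma \ref{zbior_X}) into the parameter $Q$ of Proposition \ref{poprawka}, and then keep the choices of $\delta_0$, $\eta$, $\theta$, $Q_0$, $\varepsilon$ in a non-circular order. The remaining steps — invoking Lemma \ref{2_elements_from_antichain}, reading $L_0$ off the definition of $|\nu_a|(H_0)$, and enlarging $m_k$ to $m_{k+1}$ using balancedness of $\B$ — are routine.
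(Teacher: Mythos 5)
Your proposal is correct and follows essentially the same structure as the paper's proof: pick the Lebesgue decomposition of each $\mu\in\MM$, use Proposition \ref{poprawka} to obtain the threshold $\theta$, Lemma \ref{zbior_X} to steer $M$ away from the singular parts, Lemma \ref{2_elements_from_antichain} with $P=\widehat G\cup\widehat H$ to produce $a,b,H_0,H_1$, extract $L_0\subseteq H_0$ from the variation, and apply the conclusion of Proposition \ref{poprawka} to repair the balance with a small $M$. The only notable divergence is cosmetic: the paper enlarges $\E$ to include $\B_{k+1}$ and fixes $m_{k+1}$ \emph{before} applying Proposition \ref{poprawka} (so the proposition handles $n\leqslant k+1$ uniformly), whereas you apply the proposition only for $n\leqslant k$ and choose $m_{k+1}$ at the end by Remark \ref{dla_dobra_nauki} applied to the finite family $\FF(\B_{k+1},\widehat G\cup G')\subseteq\B$; both orderings work since the lemma merely asserts the existence of some $m_{k+1}>m_k$. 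Your bookkeeping of the constants $\delta_0,\eta,\theta,\varepsilon$ and the final estimate for item (3) are both sound.
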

    
\begin{proof}   
Let $\E$ be the subalgebra of $\B$ generated by $\B_{k+1}\cup \{\widehat{G}, \widehat{H}\}$. Since $\B$ is balanced, by Remark \ref{dla_dobra_nauki} there is $m_{k+1}\in \N, m_{k+1}>m_k$ such that 
\begin{equation*}
\begin{gathered}
  \E \ \text{is} \ \left(m_{k+1}, \frac{1}{2^{k+1}}\right)\text{-balanced}.  
\end{gathered}
\end{equation*}
In particular, 
\begin{equation}\label{a1}
\tag{9}
\begin{gathered}
  \FF(\B_{k+1}, \widehat{G}) \ \text{is} \ \left(m_{k+1}, \frac{1}{2^{k+1}}\right)\text{-balanced}.  
\end{gathered}
\end{equation}

Put 
\begin{equation}\label{a6}
\tag{10}
\begin{gathered}
\xi=0.1-\max_{\mu\in \MM} \mu(\widehat{G}\cup \widehat{H}). 
\end{gathered}
\end{equation}
For every $\mu\in \MM$ consider its Lebesgue decomposition (cf. \cite[Theorem 6.10]{Rudin_Real_Complex_Analysis})
\begin{equation}\label{a7}
\tag{11}
\begin{gathered}
\mu=\mu_1+\mu_2, \ \text{where} \ \mu_1 \ll \lambda \ \text{and} \ \mu_2 \bot \lambda. 
\end{gathered}
\end{equation}
In particular (cf. \cite[Theorem 6.11]{Rudin_Real_Complex_Analysis}) there is $\eta>0$ such that 
\begin{equation}\label{a8}
\tag{12}
\begin{gathered}
\forall \mu\in \MM \ \forall A\in\B^* \ \lambda(A)<\eta\implies \mu_1(A)<\xi/4.
\end{gathered}
\end{equation}

By (\ref{a1}), the first part of (D) and  Proposition \ref{poprawka} (applied to $P=\widehat{G}\cup \widehat{H}$, $G=\widehat{G}$) there is $\theta>0$ such that whenever $L,Q\in \B^*$ and $\max\{\lambda(L),\lambda(Q)\}<\theta$, there is $M_{(L,Q)}\in \B^*$ such that:

\begin{enumerate}[leftmargin=23pt]
\setcounter{enumi}{12}
     \item  $M_{(L,Q)}\cap (\widehat{G}\cup \widehat{H}\cup Q)=\varnothing$,
     \item \label{x14} $\lambda\left(M_{(L,Q)}\right)<\eta$,
     \item \label{x15} $\forall n\leqslant k+1 \ \FF\left(\B_n, \widehat{G}\cup L \cup M_{(L,Q)}\right)$ is $(m_n, 2^{-n})$-balanced.  
 \end{enumerate}

By (\ref{a7}) and Lemma \ref{zbior_X} there is $X\in \B^*$ such that for every $\mu\in \MM$
\begin{equation}\label{a9}
\tag{16}
\begin{gathered}
\lambda(X)<\theta, \mu_2(C\backslash X)<\xi/4.
\end{gathered}
\end{equation}

By Lemma \ref{2_elements_from_antichain} (applied to $\varepsilon=\min\left\{\theta-\lambda(X), \xi/4\right\}$, $P=\widehat{G}\cup \widehat{H}$) there are $a,b>d$ and $H_0, H_1\in \B^*$ such that

\begin{itemize}
    \item $\lambda (H_0\cup H_1\cup X)< \theta$,
    \item $\forall \mu\in\MM \ \mu(H_0\cup H_1)<\xi/4$,
    \item $H_0\cap H_1 = (H_0\cup H_1)\cap (\widehat{G}\cup \widehat{H})=\varnothing$,
    \item $|\nu_a|(H_0), |\nu_b|(H_1) \geqslant 0.9$.
\end{itemize}

Let $L\in \B^*$ be such that 
\begin{equation}\label{a12}
\tag{17}
\begin{gathered}
L\subseteq H_0 \ \text{and} \ |\nu_a(L)|\geqslant 0.3. 
\end{gathered}
\end{equation}

Let $M=M_{(L,H_0\cup H_1\cup X)}$. Then, in particular,
    \begin{gather}\tag{18}\label{a18}
        M\cap (X\cup \widehat{H}\cup \widehat{G}\cup H_0\cup H_1)=\varnothing.
    \end{gather}

We put $G'=L\cup M$. 

We need to verify that these definitions satisfy conditions $(1)$-$(8)$.

$(1)$ follows directly from the choice of $m_{k+1}$.
$(2)$ follows from (\ref{x15}).       

For $(3)$ fix any $\mu\in \MM$. By (\ref{a8}) and (\ref{x14}) we have 
$$\mu_1(M)<\xi/4.$$
By (\ref{a9}) and (\ref{a18}) we have
$$\mu_2(M)<\xi/4.$$
By (\ref{a12})
$$\mu(L) < \xi/4.$$
Hence $$\mu(G')= \mu(M)+\mu(L)=\mu_1(M)+\mu_2(M)+\mu(L)<3\xi/4.$$
Finally, from (\ref{a6}) we get 
$$\mu(\widehat{G}\cup G'\cup \widehat{H}\cup H_0\cup H_1) = \mu(\widehat{G}\cup \widehat{H}) + \mu(G')+ \mu(H_0\cup H_1)< 0.1-\xi+3\xi/4 + \xi/4=0.1.$$

Conditions $(4)$-$(7)$ follow directly from the choice of $a,b, H_0, H_1, G'$.

(8) follows from (\ref{a12}) and the fact that $G'\cap H_0=L$.
\end{proof}

\begin{lemma}\label{one_step_extension}
    Let $\B^*\subseteq \B\subseteq \bor(C)$ be balanced countable Boolean algebras and suppose that $(\nu_n)_{n\in\N}$ is a normal sequence of measures on $\B^*$.
    
    Then there exists a balanced countable Boolean algebra $\B'\subseteq \bor(C)$ such that $\B\subseteq \B'$ and $(\B^*, \B', (\nu_n)_{n\in\N})$ satisfies $(\mathcal{G}^*)$. 
\end{lemma}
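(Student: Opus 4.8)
The plan is to build $\B'$ as the algebra generated by $\B$ together with a single new set $G=\bigcup_{n\in\N}G_n$, where the $G_n$ are pairwise disjoint elements of $\B^*$ constructed by recursion on $n$, alongside auxiliary sets $H_0^n,H_1^n\in\B^*$ witnessing $(\mathcal{G}^*)$. First I would fix an increasing representation $\B^*=\bigcup_n\B^*_n$ of $\B^*$ as a union of finite subalgebras (with $\B^*_0=\clop(C)$, say), and by passing to a subsequence of $(|\nu_n|)_{n\in\N}$ I may assume it is pointwise convergent to some measure $\nu_\infty$; since each $|\nu_n|$ is a probability measure on $\B^*$, so is $\nu_\infty$. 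Put $\MM=\{\nu_\infty,\lambda\restriction\B^*\}$, the finite set of probability measures to which Lemma \ref{construction-finite-step} will be applied at every step.

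The recursion runs as follows. At stage $k$ I will have produced natural numbers $m_0<m_1<\dots<m_k$, finite subalgebras $\B_0\subseteq\dots\subseteq\B_{k+1}$ of $\B^*$ exhausting $\B^*$ (this is the bookkeeping that guarantees $\bigcup_n\B_n=\B^*$ in the limit), pairwise disjoint sets $G_0,\dots,G_k$ and an antichain $\{H_0^n,H_1^n:n\le k\}$ in $\B^*$, such that, writing $\widehat G_k=\bigcup_{i\le k}G_i$ and $\widehat H_k=\bigcup_{i\le k}(H_0^i\cup H_1^i)$, the hypotheses (A)--(D) of Lemma \ref{construction-finite-step} hold: in particular $\FF(\B_n,\widehat G_k)$ is $(m_n,2^{-n})$-balanced for $n\le k$, $\mu(\widehat G_k\cup\widehat H_k)<0.1$ for $\mu\in\MM$, and increasing sequences $(a_n)_{n\le k},(b_n)_{n\le k}$ with $|\nu_{a_n}|(H_0^n),|\nu_{b_n}|(H_1^n)\ge 0.9$, $|\nu_{a_n}(G_n\cap H_0^n)|\ge 0.3$, $G_n\cap(H_0^i\cup H_1^i)=\varnothing$ as required, and $\widehat G_k\cap(H_0^n\cup H_1^n)=\varnothing$. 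I then apply Lemma \ref{construction-finite-step} with $k$, $d=\max\{a_k,b_k\}$ (so the new indices strictly increase), the given subalgebras, $\MM$ as above, the sequence $(m_n)_{n\le k}$, and $\widehat G=\widehat G_k$, $\widehat H=\widehat H_k$; this yields $a_{k+1},b_{k+1}>d$, $m_{k+1}>m_k$, and $G_{k+1}:=G'$, $H_0^{k+1}:=H_0$, $H_1^{k+1}:=H_1$ satisfying conclusions (1)--(8), which are exactly the inductive hypotheses at stage $k+1$. (Conclusion (5), $G'\cap(\widehat G\cup\widehat H\cup H_1)=\varnothing$, together with (4) and (6), keeps everything disjoint as needed; note also that $G_{k+1}\cap H_1^i=\varnothing$ for $i\le k$ because $H_1^i\subseteq\widehat H_k$, and $G_i\cap H_1^{k+1}=\varnothing$ for $i\le k$ because $\widehat G_k\cap H_1^{k+1}=\varnothing$ by (6).)

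Finally I set $G=\bigcup_{n\in\N}G_n$ and let $\B'$ be generated by $\B\cup\{G\}$. To see $\B'$ is balanced, I verify the hypothesis of Lemma \ref{NN-lemma} with the subalgebras $\B_n$ of $\B$ (so that $\bigcup_n\B_n=\B^*$; this is harmless since $\B$ itself is balanced and we may equally use a representation of $\B$ refining the $\B_n$) and the sequence $(m_n)_{n\in\N}$: for every $k$ and every $n\le k$, $\FF(\B_n,\bigcup_{i\le k}G_i)=\FF(\B_n,\widehat G_k)$ is $(m_n,2^{-n})$-balanced by conclusion (2) of the last application, so Lemma \ref{NN-lemma} gives that $\B'$ is balanced. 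It remains to check $(\B^*,\B',(\nu_n)_{n\in\N})$ satisfies $(\mathcal G^*)$: take the antichain $\{H_0^n,H_1^n:n\in\N\}\subseteq\B^*$ (an antichain by the pairwise-disjointness maintained above), the set $G\in\B'$, and the sequences $(a_n),(b_n)$. Then (a) $G\cap H_0^n\in\B^*$ holds because $G\cap H_0^n=G_n\cap H_0^n\in\B^*$ (all other $G_i$ miss $H_0^n$); (b) $G\cap H_1^n=\varnothing$ by the disjointness noted above; (c) is $|\nu_{a_n}|(H_0^n),|\nu_{b_n}|(H_1^n)\ge 0.9$; (d) is $|\nu_{a_n}(G\cap H_0^n)|=|\nu_{a_n}(G_n\cap H_0^n)|\ge 0.3$. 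The main obstacle is purely organizational rather than mathematical: arranging the bookkeeping so that the finite subalgebras $\B_n$ exhaust $\B^*$ in the limit while still matching the exact indexing demanded by Lemma \ref{construction-finite-step} (which grows only one subalgebra per step), and keeping careful track that the monotone families $\FF(\B_n,\widehat G_k)$ remain $(m_n,2^{-n})$-balanced as $k$ increases — but this is precisely what conclusion (2) of Lemma \ref{construction-finite-step} delivers, since $\widehat G_{k+1}\supseteq\widehat G_k$ and the lemma is stated for the enlarged union.
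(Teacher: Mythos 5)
Your approach is essentially the paper's, but there is one genuine bookkeeping error in the middle that your final parenthetical does not actually repair. In the recursion you take the finite subalgebras $\B_n$ to exhaust $\B^*$ (``finite subalgebras $\B_0\subseteq\dots\subseteq\B_{k+1}$ of $\B^*$ exhausting $\B^*$''), and the balance hypotheses you track are about $\FF(\B_n,\widehat G_k)$ for these $\B_n$. But Lemma~\ref{NN-lemma} requires a representation $\B=\bigcup_n\B_n$ of the \emph{enveloping} algebra $\B$ as an increasing union of finite subalgebras; with $\bigcup_n\B_n=\B^*\subsetneq\B$ you can only conclude that the algebra generated by $\B^*\cup\{G\}$ is balanced, not the algebra generated by $\B\cup\{G\}$, which is what $\B'$ is. Saying at the end ``this is harmless since $\B$ itself is balanced and we may equally use a representation of $\B$ refining the $\B_n$'' is an assertion, not an argument: you cannot substitute a different exhaustion after the fact, because the $(m_n,2^{-n})$-balance of $\FF(\B_n,\widehat G_k)$ was verified only for the $\B_n$ you actually carried through the recursion.

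The repair is easy and is exactly what the paper does: fix $\B=\bigcup_{n\in\N}\B_n$ (an exhaustion of $\B$, not $\B^*$) at the outset and run the recursion with these $\B_n$. Inspecting Lemma~\ref{construction-finite-step}, hypothesis (B) only requires $\B_n\subseteq\B$ (with $\B$ balanced and $\clop(C)\subseteq\B^*\subseteq\B$), while the produced sets $\widehat G,\widehat H,G',H_0,H_1$ live in $\B^*$ — so nothing else in your argument changes. With that correction, the rest of your proposal (passing to a pointwise-convergent subsequence of $(|\nu_n|)$, applying Lemma~\ref{construction-finite-step} at each step with $\MM\ni\nu_\infty$, forming $G=\bigcup_n G_n$, using Lemma~\ref{NN-lemma} for balance, and reading off $(\mathcal{G}^*)$ from conclusions (4)--(8)) matches the paper's proof. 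Your inclusion of $\lambda\restriction\B^*$ in $\MM$ is a harmless extra.
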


\begin{proof}
Since $\B^*$ is countable, the dual ball in $C(\st(\B^*))$ is metrizable and by the Banach-Alaoglu theorem it is compact in the weak* topology. Hence there is a subsequence $(|\widetilde{\nu}_{n_k}|)_{n\in\N}$ of $(|\widetilde{\nu}_n|)_{n\in\N}$ that converges to a measure $\widetilde{\nu}_\infty$ in the weak* topology. In particular, $(|{\nu}_{n_k}|)_{n\in\N}$ is pointwise convergent to $\nu_\infty$. 

Let us represent $\B$ as an increasing union of finite subalgebras
$$\B=\bigcup_{n\in\N} \B_n.$$
Using Lemma \ref{construction-finite-step} we construct by induction on $k\in\N$ sequences $(m_k)_{k\in\N}$, $(a_k)_{k\in\N}$,\\$(b_k)_{k\in\N}\subseteq \N$ and $ (G_k)_{k\in\N},(H_0^k)_{k\in\N}$, $(H_1^k)_{k\in\N}\subseteq \B^*$  such that 
    \begin{enumerate}
        \item $(m_k)_{k\in\N},(a_k)_{k\in\N},(b_k)_{k\in\N}$ are strictly increasing,
        \item $\forall k\in \N \ \forall n\leqslant k \ \FF\left(\B_n, \bigcup_{i\leqslant k} G_i\right)$ is $(m_n, 2^{-n})$-balanced, 
        \item $\forall k\in \N \ \OO\left(\bigcup_{n \leqslant k} (G_n\cup H_0^n\cup H_1^n)\right)<0.1$,
        \item $\{H_0^k,H_1^k\}_{k\in\N}$ are pairwise disjoint,
        \item $\{G_k\}_{k\in\N}$ are pairwise disjoint,
        \item $G_k\cap H_i^n \neq \varnothing$ if and only if $i=0$ and $n=k$,
        \item $\forall k\in \N \ |\nu_{a_k}|(H_0^k), |\nu_{b_k}|(H_1^k)\geqslant 0.9$,
        \item $\forall k\in\N \ |\nu_{a_k}(G_k\cap H_0^k)|\geqslant 0.3$.       
    \end{enumerate}
Let $k\in \N\cup\{0\}$ and suppose we have constructed  $(m_n)_{n\leqslant k}, (a_n)_{n\leqslant k}, (b_n)_{n\leqslant k}$, $(G_n)_{n\leqslant k}$, $(H_0^n)_{n\leqslant k}$, $(H_1^n)_{n\leqslant k}$ (if $k=0$, then we assume that all of these sequences are empty). We apply Lemma \ref{construction-finite-step} to $\MM=\{\OO\}, \widehat{G}= \bigcup_{i\leqslant k} G_i, \widehat{H}=\bigcup_{i\leqslant k} (H_0^i\cup H_1^i)$  and $ d=\max \{a_k, b_k\}$ (or $\widehat{G}=\widehat{H}=\varnothing$ and $d=1$, if $k=0$) to obtain $m_{k+1}$, $a_{k+1}=a$, $b_{k+1}=b$, $G_{k+1}=G'$, $H^{k+1}_0=H_0$, $H^{k+1}_1=H_1$ satisfying $(1)$-$(8)$.
    
Let $$G=\bigcup_{n\in\N} G_n$$ 
and let $\B'$ be the Boolean algebra generated by $\B\cup \{G\}$. By (2) and Lemma \ref{NN-lemma} $\B'$ is balanced. To see that $(\B^*, \B', (\nu_n)_{n\in\N})$ satisfies $(\mathcal{G}^*)$ we notice that by (6) for every $n\in \N$ we have $G\cap H_i^n= G_n\cap H_i^n$ for $i=0,1$, and so 
\begin{itemize}
    \item $|\nu_{a_n}(G\cap H_0^n)|= |\nu_{a_n}(G_n\cap H_0^n)|\geqslant 0.3$,
    \item $G\cap H_1^n=\varnothing$,
\end{itemize}
which together with (7) and (8) shows that the conditions (a)-(d) of Definition \ref{G*-definition} are satisfied and it completes the proof. 
\end{proof}

Properties of measures on a Boolean algebra $\A$ such as norm or disjointness of Borel supports depend only on countably many elements of $\A$. In particular, the following lemma holds. 

\begin{lemma}\label{disjoint-supports}
  Let $\B= \bigcup_{\alpha<\omega_1} \B_\alpha$, where $(\B_\alpha)_{\alpha<\omega_1}$ is an increasing sequence of countable Boolean algebras. Let $(\nu_n)_{n \in \N}$ be a normal sequence of measures on $\B$. Then there exists $\alpha<\omega_1$ such that for every $\beta> \alpha$ the sequence $(\nu_n\upharpoonright \B_\beta)_{n\in\N}$ is normal.   
\end{lemma}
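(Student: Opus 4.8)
The plan is to show that both requirements in the definition of a normal sequence depend on only countably many elements of $\B$, and then to take $\alpha<\omega_1$ above the (countably many) stages at which the relevant witnesses first appear.

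For the norms, I would use that $\|\nu_n\|=|\nu_n|(1)=\sup\{|\nu_n(A)|+|\nu_n(B)|:A,B\in\B,\ A\cap B=\varnothing\}=1$ to fix, for each $n$, disjoint pairs $(A^n_k,B^n_k)_{k\in\N}$ in $\B$ with $|\nu_n(A^n_k)|+|\nu_n(B^n_k)|\to 1$ as $k\to\infty$. Since $|\nu\upharpoonright\B'|(1)\leqslant|\nu|(1)$ for every subalgebra $\B'$, once $\B'$ contains all these sets we get $\|\nu_n\upharpoonright\B'\|=1$.

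For the disjointness of Borel supports I would first recall the folklore equivalence: a sequence $(\widetilde\mu_j)_{j\in\N}$ of Radon measures has pairwise disjoint Borel supports iff the $\widetilde\mu_j$ are pairwise mutually singular (for the nontrivial direction, pick for $i\neq j$ a Borel support $S_{i,j}$ of $\widetilde\mu_i$ with $|\widetilde\mu_j|(S_{i,j})=0$, put $S_i=\bigcap_{j\neq i}S_{i,j}$, and then $F_i=S_i\setminus\bigcup_{j<i}S_j$). The crux is that mutual singularity of $\widetilde\nu_n$ and $\widetilde\nu_m$ is witnessed inside $\B$: Lemma \ref{zbior_X}, applied with $K=\st(\B)$ and the basis $\{[A]:A\in\B\}$ (which is closed under finite unions), yields for every $k$ a set $A_{n,m,k}\in\B$ with $|\nu_n|(A_{n,m,k})<2^{-k}$ and $|\nu_m|(1\setminus A_{n,m,k})<2^{-k}$; and conversely, whenever such a family exists inside a subalgebra $\B'$, the set $D=\limsup_k[A_{n,m,k}]\subseteq\st(\B')$ is a Borel support of $\widetilde{\nu_m\upharpoonright\B'}$ whose complement is a Borel support of $\widetilde{\nu_n\upharpoonright\B'}$, because $|\widetilde{\nu_n\upharpoonright\B'}|(D)\leqslant\sum_{k\geqslant N}|\nu_n\upharpoonright\B'|(A_{n,m,k})\to 0$ and $|\widetilde{\nu_m\upharpoonright\B'}|(\st(\B')\setminus D)\leqslant\lim_N|\nu_m\upharpoonright\B'|(1\setminus A_{n,m,N})=0$; in particular $\widetilde{\nu_n\upharpoonright\B'}\perp\widetilde{\nu_m\upharpoonright\B'}$.

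Finally I would choose $\alpha<\omega_1$ large enough that $\B_\alpha$ contains all the countably many sets $A^n_k,B^n_k$ and $A_{n,m,k}$ (for $n\neq m$, $k\in\N$), which is possible since the $\B_\xi$ form an increasing chain indexed by $\omega_1$ and countably many ordinals below $\omega_1$ are bounded. For $\beta>\alpha$ we then have $\|\nu_n\upharpoonright\B_\beta\|=1$ for all $n$, and for $n\neq m$, since $|\nu_n\upharpoonright\B_\beta|(A_{n,m,k})\leqslant|\nu_n|(A_{n,m,k})<2^{-k}$ and symmetrically for $\nu_m$, the previous paragraph gives $\widetilde{\nu_n\upharpoonright\B_\beta}\perp\widetilde{\nu_m\upharpoonright\B_\beta}$; hence $(\widetilde{\nu_n\upharpoonright\B_\beta})_{n\in\N}$ has pairwise disjoint Borel supports and $(\nu_n\upharpoonright\B_\beta)_{n\in\N}$ is normal. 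I expect the main obstacle to be precisely the ``transfer'' step of the third paragraph --- that mutual singularity is equivalent to separation by a clopen set with arbitrarily small error, and so descends from $\B$ to $\B_\beta$; the remaining steps are the routine ``countably many witnesses live in some $\B_\alpha$'' bookkeeping.
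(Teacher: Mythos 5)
Your proof is correct and is precisely the countable-witness argument the paper alludes to without writing out (the sentence preceding the lemma: ``Properties of measures on a Boolean algebra $\A$ such as norm or disjointness of Borel supports depend only on countably many elements of $\A$''). The only step worth making fully explicit is the last inequality you write for $|\widetilde{\nu_m\upharpoonright\B'}|(\st(\B')\setminus D)$: it does not come from a single containment but from continuity from below applied to the increasing Borel sets $C_N=\bigcap_{k\geqslant N}(\st(\B')\setminus[A_{n,m,k}])$, each of which satisfies $|\widetilde{\nu_m\upharpoonright\B'}|(C_N)\leqslant|\nu_m\upharpoonright\B'|(1\setminus A_{n,m,N})<2^{-N}$ and hence is null by monotonicity; this is a small gap in presentation, not in substance.
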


The next theorem shows how to construct a Boolean algebra with the Grothendieck property and without the Nikodym property under CH.

\begin{theorem}\label{main_CH}\emph{(Talagrand, \cite{talagrand})}
Assume {\sf CH}. There exists a Boolean algebra with the Grothendieck property, but without the Nikodym property. 
\end{theorem}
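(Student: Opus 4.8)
The plan is to use {\sf CH} to build, by transfinite recursion of length $\omega_1$, an increasing chain $(\B_\alpha)_{\alpha<\omega_1}$ of \emph{countable balanced} subalgebras of $\bor(C)$ and to take $\B=\bigcup_{\alpha<\omega_1}\B_\alpha$ as the desired algebra. I would start with $\B_0=\clop(C)$, which is countable and balanced (any clopen $A$ sits in some $\A_N$ and is then $(m,\varepsilon)$-balanced for every $\varepsilon>0$ and every $m\geqslant N$, so finitely many clopen sets are simultaneously $(m,\varepsilon)$-balanced once $m$ is large). At limit stages $\lambda$ I take $\B_\lambda=\bigcup_{\gamma<\lambda}\B_\gamma$; this is countable and balanced because any finite subfamily of the union already lies in one $\B_\gamma$. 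At successor stages I invoke Lemma~\ref{one_step_extension} to destroy the weak$^*$-convergence of one bookkept normal sequence of measures.

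The heart of the construction is the bookkeeping. Under {\sf CH}, a bounded finitely additive measure on a countable Boolean algebra is determined by its countably many values, so each $\B_\alpha$ carries at most $\mathfrak c=\omega_1$ normal sequences of measures; once $\B_\beta$ is constructed I fix an enumeration $\{\nu^\beta_\xi:\xi<\omega_1\}$ of all of them (padding by repetition if there are fewer). I also fix in advance a bijection $h\colon\omega_1\to\omega_1\times\omega_1$ with $h(\alpha)=(\beta,\xi)\Rightarrow\beta\leqslant\alpha$ (the canonical well-order of $\omega_1\times\omega_1$ by maximum has this feature). Then at stage $\alpha+1$ I write $h(\alpha)=(\beta,\xi)$, observe that $\B_\beta\subseteq\B_\alpha$ is already built, and apply Lemma~\ref{one_step_extension} with $\B^*=\B_\beta$, $\B=\B_\alpha$ and $\nu=\nu^\beta_\xi$ to obtain a countable balanced $\B_{\alpha+1}\supseteq\B_\alpha$ such that $(\B_\beta,\B_{\alpha+1},\nu^\beta_\xi)$ satisfies $(\mathcal{G}^*)$.

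It then remains to verify the two halves of the conclusion for $\B=\bigcup_{\alpha<\omega_1}\B_\alpha$. The algebra $\B$ is balanced (again: a finite subfamily lies in some $\B_\alpha$), so Proposition~\ref{NN_implies_not_Nikodym} gives that $\B$ does not have the Nikodym property. For the Grothendieck property I would apply Proposition~\ref{thm-for-forcing-version}, i.e.\ show that for every normal sequence $\nu=(\nu_n)_{n\in\N}$ on $\B$ there is a subalgebra $\B^*\subseteq\B$ with $(\nu_n\upharpoonright\B^*)_{n\in\N}$ normal and $(\B^*,\B,\nu\upharpoonright\B^*)$ satisfying $(\mathcal{G}^*)$. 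Fixing $\nu$, Lemma~\ref{disjoint-supports} supplies $\alpha_0<\omega_1$ such that $(\nu_n\upharpoonright\B_\beta)_{n\in\N}$ is normal for all $\beta\geqslant\alpha_0$; I set $\B^*=\B_{\alpha_0}$, note that $(\nu_n\upharpoonright\B_{\alpha_0})_{n\in\N}$ equals $\nu^{\alpha_0}_{\xi_0}$ for some $\xi_0$, and look at the stage $\alpha=h^{-1}(\alpha_0,\xi_0)\geqslant\alpha_0$, where by construction $(\B_{\alpha_0},\B_{\alpha+1},(\nu_n\upharpoonright\B_{\alpha_0})_{n\in\N})$ satisfies $(\mathcal{G}^*)$, witnessed by some $G\in\B_{\alpha+1}\subseteq\B$, an antichain $\{H_0^n,H_1^n:n\in\N\}\subseteq\B_{\alpha_0}$ and sequences $(a_n),(b_n)$. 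Since $\B_{\alpha+1}\subseteq\B$, exactly the same data verify conditions (a)--(d) of Definition~\ref{G*-definition} for the triple $(\B_{\alpha_0},\B,(\nu_n\upharpoonright\B_{\alpha_0})_{n\in\N})$: enlarging the ambient algebra from $\B_{\alpha+1}$ to $\B$ changes only the algebra $G$ is regarded as living in, and leaves the antichain, the index sequences, and all the inequalities on $|\nu_{a_n}|$, $|\nu_{b_n}|$ and $\nu_{a_n}(G\cap H_0^n)$ untouched. Hence $(\mathcal{G})$ holds and $\B$ has the Grothendieck property.

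I do not expect a genuine obstacle in this argument, since the entire combinatorial difficulty is encapsulated in Lemma~\ref{one_step_extension}, which I may quote; the routine checks are that balancedness is inherited along unions and that $(\mathcal{G}^*)$ is preserved under enlarging the ambient algebra. The one place that requires care is the interface with {\sf CH}: it is essential that \emph{every} normal sequence on the final $\B$ becomes, after restriction to a single countable initial segment $\B_{\alpha_0}$ (this is exactly what Lemma~\ref{disjoint-supports} provides), normal and therefore one of the $\omega_1$-many sequences that the bookkeeping is guaranteed to process at a later stage.
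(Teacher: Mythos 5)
Your proposal is correct and follows essentially the same route as the paper: an $\omega_1$-length recursion using Lemma~\ref{one_step_extension} at successors, Lemma~\ref{disjoint-supports} to stabilize normality of restrictions, and Proposition~\ref{thm-for-forcing-version} to pass from stagewise $(\mathcal{G}^*)$ to $(\mathcal{G})$. The only cosmetic difference is the bookkeeping device (you enumerate normal sequences on each $\B_\beta$ dynamically and thread through a pairing $h\colon\omega_1\to\omega_1\times\omega_1$ with $h(\alpha)=(\beta,\xi)\Rightarrow\beta\leqslant\alpha$, whereas the paper fixes in advance an enumeration of all pairs $(\nu^\alpha,\B^*_\alpha)$ with $\B^*_\alpha$ a countable subalgebra of $\bor(C)$, each repeating cofinally, and skips stages where $\B^*_\alpha\not\subseteq\B_\alpha$); these two schemes are interchangeable.
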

\begin{proof}
    By Proposition \ref{NN_implies_not_Nikodym} and Proposition \ref{G_implies_Grothendieck} it is enough to construct a balanced Boolean algebra $\B\subseteq \bor(C)$ satisfying $(\mathcal{G})$. We will define $\B$ as a union of a sequence of countable subalgebras $(\B_\alpha)_{\alpha<\omega_1}$ of $\bor(C)$, which is constructed by induction. 

First, using {\sf CH} we fix an enumeration $$(\nu^\alpha, \B^*_\alpha)_{\alpha<\omega_1}$$ of all pairs such that each $\B^*_\alpha$ is a countable subalgebra of $\bor(C)$ and $\nu^\alpha=(\nu^\alpha_n)_{n\in\N}$ is a normal sequence of measures on $\B_\alpha^*$.
We also require each such pair to appear cofinaly often in the sequence $(\nu^\alpha, \B_\alpha^*)_{\alpha<\omega_1}$. 

Successor stage:
Suppose we have constructed $\B_\alpha$. If $\B^*_\alpha$ is not a subalgebra of $\B_\alpha$, then we put $\B_{\alpha+1}=\B_\alpha$. If $\B^*_\alpha\subseteq \B_\alpha$, then by Lemma \ref{one_step_extension} there is a balanced Boolean algebra $\B'\supseteq \B_\alpha$ such that $(\B^*_\alpha, \B', \nu^\alpha)$ satisfies $(\mathcal{G}^*)$. We put $\B_{\alpha+1}=\B'$. 

If $\gamma$ is a limit ordinal then we put $\B_\gamma = \bigcup_{\alpha<\gamma} \B_\alpha$. 

We will prove that $\B=\bigcup_{\alpha<\omega_1} \B_\alpha$ satisfies $(\mathcal{G})$. By Proposition \ref{thm-for-forcing-version} it is enough to show that for every normal sequence $(\nu_n)_{n\in\N}$ of measures on $\B$
there is $\beta<\omega_1$ such that the sequence $({\nu_n\upharpoonright{\B_\beta}})_{n\in\N}$ is normal and $(\B_\beta, \B, (\nu_n\upharpoonright{\B_\beta})_{n\in\N})$ satisfies $(\mathcal{G}^*)$. By Lemma \ref{disjoint-supports} there is $\alpha< \omega_1$ such that for every $\beta\geqslant\alpha$ the sequence $({\nu_n\upharpoonright{\B_\beta}})_{n\in\N}$ is normal. To finish the proof, pick $\beta>\alpha$ such that $\B^*_\beta=\B_\alpha\subseteq\B_\beta$ and $\nu^\beta=\nu\upharpoonright{\B_\alpha}$ and notice that by the construction  $(\B^*_\beta, \B_{\beta+1}, (\nu_n\upharpoonright{\B^*_\beta})_{n\in\N})$ satisfies $(\mathcal{G}^*)$, which implies that $(\B_\beta, \B, (\nu_n\upharpoonright{\B_\beta})_{n\in\N})$ satisfies $(\mathcal{G}^*)$.
\end{proof}

\section{Forcing}\label{forcing}
For the purpose of this section we identify Borel subsets of $C$ with their codes (with respect to some absolute coding, cf. \cite[Section 25]{jech}) i.e. whenever we say about the same Borel sets in different models of {\sf ZFC} we mean Borel sets coded by the same code. 

If $\mathfrak c > \omega_1$, then the method from the previous section does not work, since it requires extending a given Boolean algebra $\mathfrak c$ many times, while this method does not allow us to enlarge uncountable Boolean algebras keeping them balanced. 
Instead, we define a notion of forcing that adds to a given balanced algebra $\B$ a witness for $(\mathcal{G^*})$ for many sequences of measures (chosen by a generic filter) on $\B$ simultaneously. However, it is not possible to pick one extension that is suitable for every sequence, so we will iterate $\omega_1$ such forcings and the final Boolean algebra will have cardinality $\omega_1<\mathfrak c$. 

\begin{definition}\label{forcing-def}
Let $\B\subseteq \bor(C)$ be a balanced countable Boolean algebra containing $\clop(C)$ and fix a representation $\B=\bigcup_{n\in \N} \B_n$, where $(\B_n)_{n\in\N}$ is an increasing sequence of finite subalgebras of $\B$. We define a forcing notion $\PP_\B$ consisting of conditions of the form $$p=(k^p, (m^p_n)_{n\leqslant k^p}, (G^p_n)_{n\leqslant k^p}, (H^p_n)_{n\leqslant k^p}, \MM^p),$$ where
    \begin{enumerate}
        \item $k^p\in \N $,
        \item $(m^p_n)_{n\leqslant k^p}$ is a strictly increasing sequence of natural numbers,
        \item $\MM^p$ is a finite set of probability measures on $\B$ such that $\lambda\upharpoonright\B \in \MM^p$,
        \item $(G^p_n)_{n\leqslant k^p}$ and $(H^p_n)_{n\leqslant k^p}$ are sequences of elements of $\B$ such that 
        \begin{enumerate}
            \item $G^p_n\cap G^p_l = H^p_n\cap H^p_l = G^p_n\cap H^p_l= \varnothing $ for $n\neq l$,
            \item $\mu\left(\bigcup_{n\leqslant k^p} (G^p_n \cup H^p_n) \right) < 0.1$ for all $\mu\in \MM^p$,
            \item $\FF\left(\B_n, \bigcup_{i\leqslant k^p} G^p_i\right) $ is $(m_n, 2^{-n})-$balanced for $n\leqslant k^p$. 
        \end{enumerate}
    \end{enumerate}
    We put $q\leqslant p$, if 
    \begin{itemize}
        \item $k^q\geqslant k^p$,
        \item $m^q_n=m^p_n$ for $n\leqslant k^p$,
        \item $G^q_n=G^p_n$ for $n\leqslant k^p$,
        \item $H^q_n=H^p_n$ for $n\leqslant k^p$,
        \item $\MM^q \supseteq \MM^p$.
    \end{itemize}
    
\end{definition}

As expected, our forcing is ccc. 

\begin{lemma}\label{ccc}
    $\PP_\B$ is $\sigma$-centered. In particular, $\PP_\B$ satisfies ccc.
\end{lemma}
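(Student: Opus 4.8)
The plan is to show that $\PP_\B$ can be written as a countable union of centered subsets, indexed by the "finite, combinatorial" part of a condition. The key observation is that the only genuinely infinite datum attached to a condition $p$ is the finite set $\MM^p$ of probability measures; everything else ($k^p$, the sequences $(m^p_n)_{n\leqslant k^p}$, $(G^p_n)_{n\leqslant k^p}$, $(H^p_n)_{n\leqslant k^p}$) ranges over a countable set, since $\B$ is countable and each such sequence has finite length with entries in $\N$ or in $\B$.

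First I would fix an enumeration and define, for each tuple $\sigma = (k, (m_n)_{n\leqslant k}, (G_n)_{n\leqslant k}, (H_n)_{n\leqslant k})$ of the appropriate form (with $G_n, H_n \in \B$, $(m_n)$ strictly increasing, and satisfying the disjointness clause 4(a) and the balance clause 4(c) — note 4(a) and 4(c) do not mention $\MM^p$), the set
$$\PP_\sigma = \{p \in \PP_\B : k^p = k,\ m^p_n = m_n,\ G^p_n = G_n,\ H^p_n = H_n \text{ for } n\leqslant k\}.$$
There are only countably many such $\sigma$ (a finite sequence of naturals together with two finite sequences from the countable set $\B$), and $\PP_\B = \bigcup_\sigma \PP_\sigma$ since every condition has some such combinatorial part. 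So it remains to show each nonempty $\PP_\sigma$ is centered.

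The heart of the matter is that two conditions $p, q \in \PP_\sigma$ have a common lower bound, namely $r$ with $k^r = k$, the same sequences $m_n, G_n, H_n$ for $n \leqslant k$, and $\MM^r = \MM^p \cup \MM^q$. One must check $r$ is a legitimate condition: clauses (1), (2), (4)(a), (4)(c) are inherited verbatim from $\sigma$; clause (3) holds because $\MM^p \cup \MM^q$ is still a finite set of probability measures on $\B$ containing $\lambda\upharpoonright\B$; and clause (4)(b) — the requirement $\mu\big(\bigcup_{n\leqslant k}(G_n \cup H_n)\big) < 0.1$ for all $\mu \in \MM^r$ — holds because it held for every $\mu \in \MM^p$ and every $\mu \in \MM^q$ separately, and $\MM^r$ is their union with the \emph{same} underlying sets $G_n, H_n$. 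Then $r \leqslant p$ and $r \leqslant q$ by the definition of $\leqslant$, and in fact the same argument shows any finite subset of $\PP_\sigma$ has the common lower bound obtained by taking the union of the (finitely many) measure sets, so $\PP_\sigma$ is centered. Hence $\PP_\B$ is $\sigma$-centered, and since $\sigma$-centered forcings are ccc, $\PP_\B$ satisfies ccc.

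I do not anticipate a serious obstacle here; this is essentially a bookkeeping argument. The one point requiring a moment's care is making sure that \emph{all} of the defining clauses of a condition that survive under the common refinement either depend only on $\sigma$ (clauses 1, 2, 4(a), 4(c)) or are preserved under taking unions of the measure sets while keeping the sets $G_n, H_n$ fixed (clauses 3 and 4(b)); once this is noted, centeredness of each $\PP_\sigma$ is immediate, and countability of the index set is clear from the countability of $\B$.
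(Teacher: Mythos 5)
Your proposal is correct and takes essentially the same approach as the paper: the paper defines the projection $f(p)=(k^p,(m^p_n),(G^p_n),(H^p_n))$ and observes that each fiber $f^{-1}(x)$ is directed by taking $\MM^r=\MM^p\cup\MM^q$, which is exactly your decomposition into the countably many sets $\PP_\sigma$ together with the same common-lower-bound construction.
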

\begin{proof}
 For $p\in \PP_\B$ define $$f(p)=(k^p, (m^p_n)_{n\leqslant k^p}, (G^p_n)_{n\leqslant k^p}, (H^p_n)_{n\leqslant k^p}).$$ If $f(p)=f(q)$, then $r\leqslant p,q$ and $f(r)=f(p)=f(q),$ where $$r=(k^p, (m^p_n)_{n\leqslant k^p}, (G^p_n)_{n\leqslant k^p}, (H^p_n)_{n\leqslant k^p}, \MM^p\cup \MM^q)\in \PP_\B.$$
 In particular, for every $x\in f[\PP_\B]$ the set $f^{-1}(x)$ is directed. Since $\B$ is countable, $f[\PP_\B]$ is also countable, so $\PP_\B$ is a union of countably many directed sets. Hence $\PP_\B$ is $\sigma$-centered.
\end{proof}

The next few lemmas concern the basic properties of $\PP_\B$.

\begin{lemma}\label{simple-extension}
    Let $p\in \PP_\B$ and $k>k^p$. Then there is $q\in \PP_\B, q\leqslant p$ such that $k^q=k$.
\end{lemma}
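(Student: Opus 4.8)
The plan is to extend the condition $p$ by adding $k - k^p$ new ``trivial'' coordinates, using the sets $G^p_{k^p}$ and $H^p_{k^p}$ already present in $p$ (or just the empty set) as the values at the new coordinates, and choosing the new values of $m_n$ to continue the strictly increasing sequence. First I would set $G^q_n = G^p_n$, $H^q_n = H^p_n$ and $m^q_n = m^p_n$ for $n \leqslant k^p$, put $\MM^q = \MM^p$, and for $k^p < n \leqslant k$ put $G^q_n = H^q_n = \varnothing$; then I would pick $m^q_n$ for $k^p < n \leqslant k$ to be any natural numbers making $(m^q_n)_{n \leqslant k}$ strictly increasing (e.g.\ $m^q_n = m^p_{k^p} + n - k^p$).

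Next I would verify that $q$ is a legitimate condition, i.e.\ that it satisfies (1)--(4) of Definition \ref{forcing-def}. Conditions (1), (2) and (3) are immediate. For (4)(a), adding empty sets at the new coordinates preserves pairwise disjointness. For (4)(b), note $\bigcup_{n \leqslant k} (G^q_n \cup H^q_n) = \bigcup_{n \leqslant k^p} (G^p_n \cup H^p_n)$ since the added sets are empty, so the inequality $\mu(\,\cdot\,) < 0.1$ is inherited from $p$ for all $\mu \in \MM^q = \MM^p$. For (4)(c): for $n \leqslant k^p$ we have $\bigcup_{i \leqslant k} G^q_i = \bigcup_{i \leqslant k^p} G^p_i$, so $\FF(\B_n, \bigcup_{i \leqslant k} G^q_i) = \FF(\B_n, \bigcup_{i \leqslant k^p} G^p_i)$ is $(m^q_n, 2^{-n})$-balanced by the corresponding property of $p$. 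For $k^p < n \leqslant k$ the condition requires $\FF(\B_n, \bigcup_{i \leqslant k} G^q_i)$ to be $(m^q_n, 2^{-n})$-balanced; here one should be slightly careful, since there is no constraint forcing the fixed value $\bigcup_{i \leqslant k^p} G^p_i$ to be well-balanced with respect to the (larger) algebra $\B_n$ at a prescribed level $m^q_n$. The clean fix is to choose the $m^q_n$ for $k^p < n \leqslant k$ large enough: since $\B$ is balanced and $\B_n$ is a finite subalgebra, by Remark \ref{dla_dobra_nauki} applied to the finite family $\FF(\B_n, \bigcup_{i \leqslant k^p} G^p_i)$ (which is a finite subfamily of $\B$) and $\varepsilon = 2^{-n}$, we may pick $m^q_n > m^q_{n-1}$ with $\FF(\B_n, \bigcup_{i \leqslant k^p} G^p_i)$ being $(m^q_n, 2^{-n})$-balanced. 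Doing this by induction on $n$ from $k^p + 1$ up to $k$ gives a strictly increasing sequence $(m^q_n)_{n \leqslant k}$ with all required balance properties.

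Finally I would check $q \leqslant p$: by construction $k^q = k \geqslant k^p$, and $m^q_n = m^p_n$, $G^q_n = G^p_n$, $H^q_n = H^p_n$ for $n \leqslant k^p$, and $\MM^q = \MM^p \supseteq \MM^p$, so all clauses of the order relation in Definition \ref{forcing-def} hold. Hence $q \in \PP_\B$, $q \leqslant p$ and $k^q = k$, as desired.

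The only mildly delicate point is clause (4)(c) at the freshly added coordinates: one must not naively set $m^q_n = m^p_{k^p} + n - k^p$ without checking balance, but rather invoke the fact that $\B$ is balanced (via Remark \ref{dla_dobra_nauki}) to choose each new $m^q_n$ large enough that the fixed finite family $\FF(\B_n, \bigcup_{i \leqslant k^p} G^p_i)$ attains the required degree of balance. Everything else is bookkeeping.
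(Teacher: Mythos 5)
Your proof is correct, but it follows a genuinely different and simpler route than the paper's. The paper reduces to $k = k^p + 1$ and then invokes Lemma \ref{construction-finite-step}: it notes $\lambda(C \setminus (\widehat{G} \cup \widehat{H})) > 0$, picks an auxiliary normal sequence $(\nu_n)_{n\in\N}$ of measures supported off $\widehat{G} \cup \widehat{H}$ with $(|\nu_n|)$ pointwise convergent to some $\nu_\infty$, enlarges $\MM$ to $\MM^p \cup \{\nu_\infty\}$, and then obtains non-trivial $G'$ and $H_0$ from that lemma. Your approach bypasses all of this machinery by simply appending empty sets $G^q_n = H^q_n = \varnothing$ at the new coordinates. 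Both are valid; yours is more elementary and self-contained, whereas the paper economizes on exposition by reusing its central construction lemma (at the cost of extra overhead: an auxiliary measure sequence, a Lebesgue decomposition inside Lemma \ref{construction-finite-step}, etc., none of which is actually needed here).

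You correctly flagged the one non-trivial point, clause (4)(c) at the fresh indices. Since $\bigcup_{i\leqslant k} G^q_i = \bigcup_{i\leqslant k^p} G^p_i$, what is needed for $k^p < n \leqslant k$ is that $\FF(\B_n, \bigcup_{i\leqslant k^p} G^p_i)$ be $(m^q_n, 2^{-n})$-balanced for a suitable $m^q_n$. This family is a finite subfamily of $\B$ (as $\B_n \subseteq \B$ and $\bigcup_{i\leqslant k^p}G^p_i \in \B$), hence balanced because $\B$ is, so Remark \ref{dla_dobra_nauki} provides arbitrarily large suitable $m^q_n$, which you then choose inductively to keep $(m^q_n)_{n\leqslant k}$ strictly increasing. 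The remaining clauses are indeed just bookkeeping, as you say.
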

\begin{proof}
    It is enough to show that there is such $q$ for $k=k^p+1$ and apply an inductive argument. For this put $\widehat{G}= \bigcup_{n\leqslant k^p} G^p_n, \widehat{H}= \bigcup_{n\leqslant k^p} H^p_n$.
    Since $\lambda\upharpoonright \B\in \mathcal{M}^p$, by the condition (4b) of Definition \ref{forcing-def} we have $\lambda(C\backslash (\widehat{G}\cup \widehat{H}))>0$. Hence there is a normal sequence $(\nu_n)_{n\in\N}$ of measures on $\B$, whose supports are included in $C\backslash (\widehat{G}\cup \widehat{H})$. We may also assume that $(|\nu_n|)_{n\in\N}$ is pointwise convergent to a probability measure $\nu_\infty$. In particular, we have 
    $$\nu_\infty(\widehat{G}\cup \widehat{H}) = \lim_{n\to\infty } |\nu_n|(\widehat{G}\cup \widehat{H}) = 0.$$
    Let $\MM=\MM^p\cup\{\nu_\infty\}$. 
    By Lemma \ref{construction-finite-step} there are $m>m_{k^p}$ and $G', H_0\in \B$ such that $$q=(k, ((m^p_n)_{n\leqslant k^p}, m), ((G^p_n)_{n\leqslant k^p},G'), ((H^p_n)_{n\leqslant k^p}, H_0), \MM) \in \PP_\B.$$ We have $q\leqslant p$. 
\end{proof}

\begin{definition}\label{important_names}
Let us introduce the following notation for a balanced Boolean algebra $\B\subseteq \bor(C)$ containing $\clop(C)$:
\begin{itemize}
    \item $\dot{\G}$ denotes the canonical name for a generic filter in $\PP_\B$,
    \item $\dot{G}$ is a $\PP_\B$-name such that $$\PP_\B \forces \dot{G} = \bigcup_{p\in \dot{\G}, n\in \N} \check{G}^p_n, $$
    \item $\dot{\B}'$ denotes a $\PP_\B$-name such that
$$\PP_\B \forces \dot{\B}' \ \text{is the subalgebra of} \ \bor(C) \ \text{generated by} \ \check{\B}\cup \{\dot{G}\}, $$
    \item $\dot{H}, \dot{H}_n$ for $n\in\N$ denote $\PP_\B$-names such that
$$\PP_\B \forces \dot{H}=(\dot{H}_n)_{n\in\N} \ \text{and} \ \forall n\in\N \ \exists p\in \dot{\G} \ \dot{H}_n=\check{H}^p_n.$$
\end{itemize}

\end{definition}
\begin{lemma}\label{G*-first-condition}
    Let $n\in \N$ and $p\in \PP_\B$ be such that $k^p\geqslant n$. Then 
    $$\PP_\B \forces \dot{G} \cap \dot{H}_n \in \check{\B} $$
    and
    $$p\forces \dot{G}\cap \dot{H}_n= \check{G}^p_n\cap \check{H}^p_n.$$

\end{lemma}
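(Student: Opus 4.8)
The plan is to unwind the names $\dot{G}$ and $\dot{H}_n$ using the combinatorics of the ordering on $\PP_\B$, and then observe that the sets $G^p_n$ and $H^p_n$ are \emph{frozen} by the condition $p$ once $n \leqslant k^p$. The two assertions are closely related: the first follows from the second together with a density argument, so I would prove the second (the statement below $p$) first, and then deduce the generic (unrestricted) statement.

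First I would fix $n \in \N$ and $p \in \PP_\B$ with $k^p \geqslant n$, and work below $p$. Let $\G$ be a generic filter containing $p$ and let $G = \dot{G}[\G] = \bigcup_{q \in \G,\, i \in \N} G^q_i$. The key observation is that if $q \in \G$, then $q$ is compatible with $p$, so there is a common extension $r \leqslant p, q$; since $k^r \geqslant k^p \geqslant n$, the definition of the ordering gives $G^r_n = G^p_n$ (because $n \leqslant k^p$) and also $G^r_n = G^q_n$ whenever $n \leqslant k^q$. Moreover, for any $q \in \G$ and any $i \leqslant k^q$ with $i \neq n$, condition (4a) of Definition \ref{forcing-def} applied inside $r$ gives $G^r_i \cap G^r_n = \varnothing$, i.e.\ $G^q_i \cap G^p_n = \varnothing$ (using again that $G^r_n = G^p_n$ and $G^r_i = G^q_i$). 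Intersecting the union defining $G$ with $H_n := \dot{H}_n[\G]$, which by the last bullet of Definition \ref{important_names} equals $H^{q_0}_n$ for some $q_0 \in \G$ with $k^{q_0} \geqslant n$, and using $H^{q_0}_n = H^p_n$ by the same freezing argument, we see that $G \cap H_n = \bigcup_{q \in \G,\, i} (G^q_i \cap H^p_n)$. Every term with $i \neq n$ or $i > k^q$ needs a little care, but in all cases condition (4a) (after passing to a common extension of $q$, $q_0$ and $p$) forces $G^q_i \cap H^p_n = \varnothing$ unless $i = n$, in which case $G^q_i = G^p_n$. Hence $G \cap H_n = G^p_n \cap H^p_n$, which is an element of $\B$ (recall $G^p_n, H^p_n \in \B$). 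This proves $p \forces \dot{G} \cap \dot{H}_n = \check{G}^p_n \cap \check{H}^p_n$ and in particular $p \forces \dot{G} \cap \dot{H}_n \in \check{\B}$.

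For the unrestricted statement $\PP_\B \forces \dot{G} \cap \dot{H}_n \in \check{\B}$, I would argue by genericity: given an arbitrary condition $p' \in \PP_\B$, Lemma \ref{simple-extension} produces an extension $q \leqslant p'$ with $k^q \geqslant n$, and then the displayed formula just proved (applied to $q$ in place of $p$) shows $q \forces \dot{G} \cap \dot{H}_n = \check{G}^q_n \cap \check{H}^q_n \in \check{\B}$. Thus the set of conditions forcing $\dot{G} \cap \dot{H}_n \in \check{\B}$ is dense, hence this statement is forced by $\PP_\B$.

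The main obstacle — really the only subtle point — is bookkeeping the ``freezing'' argument carefully: one must be sure that for \emph{every} $q \in \G$ and every index $i$ appearing in $G^q_i$, the intersection $G^q_i \cap H^p_n$ is controlled. This requires passing to a common extension $r$ of $q$, $p$, and the witness $q_0$ for $\dot H_n$, and invoking that all of $G^r_i$, $G^r_n = G^p_n$, $H^r_n = H^p_n$ are pinned down by the coordinates that are already $\leqslant k^r$, together with the disjointness clause (4a) inside $r$. Once this is set up the computation is routine, so I do not expect any real difficulty beyond being meticulous about which coordinates are frozen by which condition.
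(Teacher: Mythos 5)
Your proof is correct and follows essentially the same route as the paper's: unwind $\dot G$ and $\dot H_n$, use compatibility of conditions in the generic filter to see that the $n$-th coordinates $G^q_n, H^q_n$ are frozen to $G^p_n, H^p_n$ once $k^q, k^p \geqslant n$, and invoke clause (4a) of Definition \ref{forcing-def} to kill the cross terms $G^q_i \cap H^p_n$ for $i \neq n$. The only cosmetic difference is the order of deductions — the paper observes directly that the union $\bigcup_{q \in \dot\G,\, k^q \geqslant n} G^q_n \cap H^q_n$ is a single set of $\B$ (hence proving the unconditional statement first), while you prove the statement below $p$ and then recover the unconditional one by a density argument via Lemma \ref{simple-extension}; both are fine.
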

\begin{proof}
    By the definitions of $\dot{G}, \dot{H}_n$ and by (4a) from Definition \ref{forcing-def} we have
    \begin{eqnarray*}
        \PP_\B \forces \dot{G}\cap \dot{H}_n = \bigcup_{\substack{q\in \dot{\G}, l\in \N \\k^q\geqslant n,l}} \check{G}^q_l \cap \check{H}^q_n = \bigcup_{\substack{q\in \dot{\G}\\ k^q\geqslant n}}\check{G}^q_n \cap \check{H}^q_n. 
    \end{eqnarray*}
    If $q$ and $r$ are compatible and $k^q, k^r\geqslant n$, then $G^q_n=G^r_n$ and $H^q_n=H^r_n$, so the last union above is in fact a union of one-element family consisting of a set from $\B$. In particular, 
    \begin{eqnarray*}
        \PP_\B \forces \dot{G}\cap \dot{H}_n \in \check{\B} 
    \end{eqnarray*}
and
$$\PP_\B \forces \check{p}\in \dot{\G} \implies \dot{G}\cap\dot{H}_n = \check{G}^p_n \cap \check{H}^p_n $$ 
or equivalently $$p\forces \dot{G}\cap \dot{H}_n= \check{G}^p_n\cap \check{H}^p_n.$$    
\end{proof}

In the following lemma we will make use of Lemma \ref{construction-finite-step} in the case when $\MM$ consists of many probability measures. This is where the differences between our approach and that of work \cite{talagrand} are crucial.

\begin{lemma}\label{one-step-forcing}
Let $\B\subseteq \bor(C)$ be a balanced Boolean algebra containing $\clop(C)$.
    Let $p\in \PP_\B$ and $\nu=(\nu_n)_{n\in \N}$ be a normal sequence of measures on $\B$ such that
    the sequence $(|\nu_n|)_{n\in\N}$ is pointwise convergent on $\B$ to a measure $\OO\in \MM^p$.
 Then 
     \begin{enumerate}
         \item $p \forces  \forall k\in \N \ \exists n,l>k \ |\check{\nu}_n|(\dot{H}_l)\geqslant 0.9 \ \text{and} \ |\check{\nu}_n(\dot{G}\cap \dot{H}_l)|\geqslant 0.3$,
         \item $p \forces  \forall k\in \N \ \exists n,l>k \ |\check{\nu}_n|(\dot{H}_l)\geqslant 0.9 \ \text{and} \ \dot{G}\cap \dot{H}_l = \varnothing $.
     \end{enumerate}
    
\end{lemma}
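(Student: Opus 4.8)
The plan is to show that any condition $p \in \PP_\B$ can be extended to decide arbitrarily long initial segments of the generic data in a way that creates the desired witnesses, and then read off the statement from a density/genericity argument. First I would fix $k \in \N$ and an arbitrary extension $q \leqslant p$ with $k^q \geqslant k$ (such $q$ exists by Lemma \ref{simple-extension}), and aim to produce $r \leqslant q$ together with $n, l > k$ realizing the conclusion. The key point is that since $q \leqslant p$, we have $\MM^q \supseteq \MM^p$, so $\OO \in \MM^q$, and $(|\nu_n|)_{n\in\N}$ still converges pointwise to $\OO$ on $\B$; thus the hypotheses of Lemma \ref{construction-finite-step} are met with $\widehat G = \bigcup_{i \leqslant k^q} G^q_i$, $\widehat H = \bigcup_{i \leqslant k^q} H^q_i$, $\MM = \MM^q$, $(m_n)_{n \leqslant k^q} = (m^q_n)_{n \leqslant k^q}$, and $d = k$. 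Condition (4b) of Definition \ref{forcing-def} gives $\mu(\widehat G \cup \widehat H) < 0.1$ for all $\mu \in \MM^q$, which is precisely the second bullet of hypothesis (D).

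For part (1), applying Lemma \ref{construction-finite-step} yields $a, b > k$, $m_{k^q+1} > m^q_{k^q}$, and $G', H_0, H_1 \in \B$ satisfying its conclusions (1)--(8). I would then form the extension $r \leqslant q$ of length $k^r = k^q + 1$ by appending $m_{k^q+1}$, the set $G_{k^q+1} := G'$, the set $H_{k^q+1} := H_0$, and keeping $\MM^r = \MM^q$; one checks $r \in \PP_\B$ using (2) (for (4c)), (3) (for (4b)), and (4)--(6) (for (4a)). Taking $n := a$ and $l := k^q + 1 > k$, Lemma \ref{G*-first-condition} gives $r \forces \dot G \cap \dot H_l = \check G^r_l \cap \check H^r_l = \check G' \cap \check H_0$, and conclusions (7) and (8) of Lemma \ref{construction-finite-step} give $|\nu_a|(H_0) \geqslant 0.9$ and $|\nu_a(G' \cap H_0)| \geqslant 0.3$, i.e. $r$ forces $|\check\nu_n|(\dot H_l) \geqslant 0.9$ and $|\check\nu_n(\dot G \cap \dot H_l)| \geqslant 0.3$. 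Since $q \leqslant p$ was arbitrary with $k^q \geqslant k$ and $k$ was arbitrary, the set of conditions forcing the existence of such $n, l$ above any given $k$ is dense below $p$, which is exactly statement (1).

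For part (2) the strategy is the same, except I want to append a set $H_l$ with $G' \cap H_l = \varnothing$. Here I would apply Lemma \ref{construction-finite-step} as above to obtain $H_1$ (which by conclusion (5) satisfies $G' \cap H_1 = \varnothing$) and then append \emph{two} blocks: first $G_{k^q+1} := G'$ with $H_{k^q+1} := H_0$, then $G_{k^q+2} := \varnothing$ (or a suitably tiny set; the empty set keeps balance trivially and keeps (4a)--(4c)) with $H_{k^q+2} := H_1$; alternatively, and more cleanly, I would re-invoke the lemma once more in a second induction step so that the appended $H$-block is disjoint from the appended $G$-block. Then with $n := b$ and $l$ the index of the block carrying $H_1$, Lemma \ref{G*-first-condition} gives $r \forces \dot G \cap \dot H_l = \check G^r_l \cap \check H^r_l$ where $G^r_l = \varnothing$, so $r \forces \dot G \cap \dot H_l = \varnothing$, while conclusion (7) gives $|\nu_b|(H_1) \geqslant 0.9$. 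Density again yields statement (2).

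The main obstacle I anticipate is purely bookkeeping: ensuring that after appending the new block(s) the resulting tuple genuinely satisfies all clauses of Definition \ref{forcing-def} (especially that the new $G'$ is disjoint from \emph{all} previous $G^q_i$ and $H^q_i$, and that $\FF(\B_n, \widehat G \cup G')$ retains $(m_n, 2^{-n})$-balance for every $n \leqslant k^q$, not just $n = k^q+1$) — but all of this is exactly what conclusions (2), (4), (5), (6) of Lemma \ref{construction-finite-step} are designed to deliver, so no new idea is needed beyond careful matching of indices. A minor subtlety is that the $\nu_n$ in the statement are measures on $\B$ while Lemma \ref{construction-finite-step} is phrased for measures on $\B^*$; here we simply take $\B^* = \B$, and normality of $\nu$ together with pointwise convergence of $|\nu_n|$ to $\OO \in \MM^p$ are given by hypothesis.
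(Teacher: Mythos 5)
Your overall strategy (reduce to a density argument below $p$ via extensions built from Lemma~\ref{construction-finite-step}, then read off the conclusion through Lemma~\ref{G*-first-condition}) is exactly what the paper does, and your treatment of part~(1) matches the paper's proof step for step.

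For part~(2) you have made things harder than they need to be. You correctly observe that conclusion~(5) of Lemma~\ref{construction-finite-step} already gives $G' \cap H_1 = \varnothing$, but then you still propose appending two blocks (one carrying $G', H_0$ and a second carrying $\varnothing, H_1$) or ``re-invoking'' the lemma. The paper instead appends a \emph{single} block $(m_{k^q+1}, G', H_1)$: all the clauses of Definition~\ref{forcing-def} are then supplied directly by conclusions~(2)--(6) of Lemma~\ref{construction-finite-step}, and Lemma~\ref{G*-first-condition} with $l = k^q+1$ and $n = b$ immediately gives $r \forces \dot G \cap \dot H_l = G' \cap H_1 = \varnothing$ and $|\nu_b|(H_1) \geqslant 0.9$. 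Your two-block variant does work, but it is not as trivial as ``the empty set keeps balance'': clause~(4c) of Definition~\ref{forcing-def} with $k^r=k^q+2$ also demands a value $m_{k^q+2} > m_{k^q+1}$ making $\FF\bigl(\B_{k^q+2}, \widehat G \cup G'\bigr)$ $(m_{k^q+2}, 2^{-k^q-2})$-balanced, which is not automatic from Lemma~\ref{construction-finite-step}'s conclusion~(2) (that only reaches $n \leqslant k^q+1$); you would need an extra appeal to $\B$ being balanced (Remark~\ref{dla_dobra_nauki}) to choose such an $m_{k^q+2}$. So: a correct approach, but with an avoidable detour and a small unacknowledged bookkeeping step that the one-block version sidesteps entirely.
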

 \begin{proof}
In the light of Lemma \ref{G*-first-condition} it is enough to show that the sets $$\mathbb{D}_k=\{q\in \PP_\B : k^q>k, \exists n>k \ |\nu_n|(H^q_{k^q})\geqslant0.9 \ \text{and} \ |\nu_n(G^q_{k^q}\cap H^q_{k^q})|\geqslant 0.3\}$$
and
$$\mathbb{E}_k=\{q\in \PP_\B : k^q>k, \exists n>k \ |\nu_n|(H^q_{k^q})
\geqslant0.9 \ \text{and} \ G^q_{k^q}\cap H^q_{k^q} = \varnothing\}$$
are dense below $p$ for every $k\in \N$.

First, we will show that $\mathbb{D}_k$ is dense below $p$. 
Pick any $r\leqslant p$. By Lemma \ref{simple-extension} we may assume that $k^r\geqslant k$. 

We apply Lemma \ref{construction-finite-step} to $\MM=\MM^r, \widehat{G}= \bigcup_{i\leqslant k^r} G_i^r, \widehat{H}=\bigcup_{i\leqslant k^r} H_i^r$  and $ d=k$ to obtain $m=m_{k^r+1}$, $a>k$, $G'$, $H_0$ so that the conditions $(1)$-$(8)$ from Lemma \ref{construction-finite-step} are satisfied. In particular:
\begin{enumerate}[label=(\alph*)]
    \item $m>m_{k^r}$,
    \item $G'\cap G_i^r= H_0\cap H_i^r=G'\cap H_i^r=\varnothing$ for $i\leqslant k^r$,
    \item $\mu\left(\bigcup_{i\leqslant k^r} (G_i^r \cup H_i^r) \cup G'\cup H_0\right) <0.1$ for all $\mu \in \MM^r$,
    \item $\FF\left(\B_n, \bigcup_{i\leqslant k^r} G^r_i \cup G'\right) $ is $(m_n, 2^{-n})-$balanced for $n\leqslant k^r+1$, 
\item $|\nu_a|(H_0)\geqslant0.9$,
\item $|\nu_a(G'\cap H_0)|\geqslant 0.3$.
\end{enumerate}
It follows from (a)-(d) that
 $$q=(k^r+1, ((m^r_i)_{i\leqslant k^r}, m), ((G^r_i)_{i\leqslant k^r},G'), ((H^r_i)_{i\leqslant k^r}, H_0), \MM^r) \in \PP_\B$$ and from (e), (f) that $q\in \mathbb{D}_k$.

We show the density of $\mathbb{E}_k$ in a similar way: the difference is that instead of $H_0$ we pick $H_1$ such that 
\begin{itemize}
    \item $|\nu_a|(H_1)\geqslant0.9$,
    \item $G'\cap H_1 = \varnothing$.
\end{itemize}
 \end{proof}
Directly from Lemma \ref{G*-first-condition}, Lemma \ref{one-step-forcing} and Definition \ref{G*-definition} we obtain
 
 \begin{proposition}\label{one-step-thm}
 Let $\B\subseteq \bor(C)$ be a balanced Boolean algebra that contains $\clop(C)$.
          Let $p\in \PP_\B$ and $\nu=(\nu_n)_{n\in \N}$ be a normal sequence of measures on $\B$ such that the sequence $(|\nu_n|)_{n\in\N}$ is pointwise convergent to a measure $\nu_\infty$.
     Suppose that $\OO\in \MM^p$. Then  $$ p\forces (\check{\B}, \dot{\B}', \check{\nu}) \ \text{satisfies} \ (\mathcal{G}^*).$$
 \end{proposition}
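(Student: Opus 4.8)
The statement is an immediate packaging of the work already done in Lemma \ref{G*-first-condition} and Lemma \ref{one-step-forcing}, so the plan is simply to verify, in the generic extension $V^{\PP_\B}$ below the condition $p$, each of the clauses (a)--(d) of Definition \ref{G*-definition} for the triple $(\check\B, \dot\B', \check\nu)$, using the antichain $\{\dot H_n : n\in\N\}$ (suitably reindexed) as the source of the sets $H_0^n, H_1^n$ and the generic set $\dot G$ as the witness. First I would fix a generic filter $\mathcal G\ni p$ and work in $V[\mathcal G]$; write $G = \dot G^{\mathcal G}$, $\B' = \dot\B'^{\mathcal G}$, and $H_l = \dot H_l^{\mathcal G}$ for $l\in\N$. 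Note that $\{H_l : l\in\N\}$ is an antichain in $\B$ (by clause (4a) of Definition \ref{forcing-def} together with the definition of $\dot H$ and compatibility of conditions, exactly as in the proof of Lemma \ref{G*-first-condition}).

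Next I would invoke Lemma \ref{one-step-forcing}: since $p\in\PP_\B$, $\nu$ is normal on $\B$, and $|\nu_n|\to\nu_\infty\in\MM^p$ pointwise, both conclusions (1) and (2) of that lemma hold below $p$, hence in $V[\mathcal G]$. Conclusion (1) gives: for every $k$ there are $n,l>k$ with $|\nu_n|(H_l)\geqslant 0.9$ and $|\nu_n(G\cap H_l)|\geqslant 0.3$; conclusion (2) gives: for every $k$ there are $n,l>k$ with $|\nu_n|(H_l)\geqslant 0.9$ and $G\cap H_l = \varnothing$. Using (1) recursively I extract a strictly increasing sequence $(a_n)_{n\in\N}$ of indices and a sequence of pairwise distinct $H$-indices, calling the corresponding sets $H_0^n$, so that $|\nu_{a_n}|(H_0^n)\geqslant 0.9$ and $|\nu_{a_n}(G\cap H_0^n)|\geqslant 0.3$; interleaving applications of (2) (always choosing indices larger than those used so far, which is possible because both density statements quantify ``$\exists n,l>k$'') I extract a strictly increasing $(b_n)_{n\in\N}$ and pairwise distinct sets $H_1^n$ with $|\nu_{b_n}|(H_1^n)\geqslant 0.9$ and $G\cap H_1^n=\varnothing$, all the $H_i^n$ being chosen among the $H_l$'s with distinct indices so that $\{H_0^n, H_1^n : n\in\N\}$ is an antichain in $\B\subseteq\B'$. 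This immediately yields clause (b) ($G\cap H_1^n=\varnothing$), clause (c) (the variation estimates), and clause (d) ($|\nu_{a_n}(G\cap H_0^n)|\geqslant 0.3$) of Definition \ref{G*-definition}. For clause (a), $G\cap H_0^n\in\B$: this is precisely Lemma \ref{G*-first-condition}, which says $\PP_\B\forces \dot G\cap\dot H_l\in\check\B$, so in particular $G\cap H_0^n\in\B = \check\B^{\mathcal G}$, hence lies in the smaller algebra $\B$ as required (and a fortiori in $\B'$). Finally, since the measures $\nu_n$ live on $\B$, we have $\nu_n = \nu_n\upharpoonright\B = \nu_n\upharpoonright\B^*$ in the notation of Definition \ref{G*-definition} with $\B^*=\B$, so all estimates are literally about $\check\nu$. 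As this holds in every generic extension by a filter containing $p$, the forcing theorem gives $p\forces(\check\B,\dot\B',\check\nu)$ satisfies $(\mathcal G^*)$.

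\textbf{Main obstacle.} There is no serious analytic obstacle: all the hard work (the balancedness bookkeeping, the construction of $G'$ and $H_0$ via Lemma \ref{construction-finite-step}, the Paley--Zygmund-type estimates) is already absorbed into Lemma \ref{one-step-forcing}. The only point requiring a little care is the indexing: Definition \ref{G*-definition} demands a \emph{single} antichain $\{H_0^n, H_1^n : n\in\N\}$ with \emph{strictly increasing} sequences $(a_n), (b_n)$, whereas Lemma \ref{one-step-forcing} delivers its sets one at a time via density. One must therefore run a single recursion that alternately applies conclusion (1) and conclusion (2), at each stage feeding in a threshold $k$ larger than every index (both $a$/$b$-index and $H$-index) produced so far; the clause ``$k^q>k$'' in the definitions of $\mathbb D_k$ and $\mathbb E_k$ guarantees that the newly produced $H$-set has an index not used before, so the union of all chosen sets is genuinely an antichain, and the freshly produced measure index exceeds $k$, so the $a_n$'s and $b_n$'s can be kept strictly increasing. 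Verifying that this recursion can indeed be carried out inside $V[\mathcal G]$ — equivalently, that each relevant set is dense below $p$ — is exactly the content of Lemma \ref{one-step-forcing}, so nothing further is needed. (Alternatively, one may simply quote Lemma \ref{G*-first-condition}, Lemma \ref{one-step-forcing} and Definition \ref{G*-definition} and observe that the combination is immediate, which is how the paper phrases it.)
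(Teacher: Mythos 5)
Your proposal is correct and takes exactly the route the paper intends: the paper presents this proposition as a direct consequence of Lemma \ref{G*-first-condition}, Lemma \ref{one-step-forcing} and Definition \ref{G*-definition}, and you have simply unpacked that one-line deduction (passing to a generic $V[\mathcal G]$, running the alternating recursion through conclusions (1) and (2) of Lemma \ref{one-step-forcing} to build the antichain and the strictly increasing index sequences, and quoting Lemma \ref{G*-first-condition} for clause (a)). The indexing observation you flag — that the clause $k^q>k$ in $\mathbb D_k$ and $\mathbb E_k$ keeps the freshly produced $H$-index above the threshold, so the chosen $H_i^n$'s are genuinely pairwise distinct and hence an antichain — is the one point worth spelling out, and you have got it right.
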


\begin{proposition}\label{forcing-NN}
    Suppose that $\B\subseteq \bor(C)$ is a balanced countable Boolean algebra containing $\clop(C)$. Then $$\PP_\B \forces \dot{\B}' \ \text{is balanced}.$$ 
\end{proposition}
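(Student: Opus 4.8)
The statement $\PP_\B \forces \dot{\B}' \text{ is balanced}$ is about the algebra generated by $\B\cup\{\dot G\}$ in the generic extension, where $\dot G = \bigcup_{n} \check G^p_n$ over $p$ in the generic filter. The natural strategy is to verify that the hypotheses of Lemma~\ref{NN-lemma} are met, \emph{in the generic extension}, for the algebra $\B$ (with its fixed representation $\B=\bigcup_n \B_n$), the antichain $\{\dot G_n\}_{n\in\N}$ of ``slices'' of $\dot G$, and some strictly increasing sequence $(m_n)_{n\in\N}$. The conclusion of Lemma~\ref{NN-lemma} is exactly that the algebra generated by $\B\cup\{\dot G\}$ is balanced, which is what we want. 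Since balancedness of a countable algebra is absolute between models of {\sf ZFC} containing it (it is a $\Pi^1_1$-type statement about Borel codes, quantifying over finite subfamilies and rationals $\varepsilon$), and $\B$ is a ground-model balanced algebra, we only need to produce the data required by Lemma~\ref{NN-lemma} for $\dot G$.

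\textbf{Key steps.} First I would work with a fixed condition $p$ forcing some initial segment of the construction, and a generic filter $\dot\G$. The generic filter determines, for each $n\in\N$, a set $\dot G_n$: by a density argument (Lemma~\ref{simple-extension} guarantees that for every $k$ there is $q\leqslant p$ with $k^q\geqslant k$, so every ``slice index'' $n$ eventually gets decided), there is some $q\in\dot\G$ with $k^q\geqslant n$, and then $\dot G_n := \check G^q_n$; compatibility of conditions makes this well-defined, exactly as in the proof of Lemma~\ref{G*-first-condition}. Similarly the increasing sequence $(m_n)_{n\in\N}$ is read off from the generic filter via the $m^q_n$ coordinates. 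Second, I would check the antichain property: $\dot G_n\cap\dot G_l=\varnothing$ for $n\neq l$ holds because any single condition deciding both slices satisfies condition (4a) of Definition~\ref{forcing-def}, and such a condition lies in the generic filter. Third, and this is the heart of the matter, I would verify the balance hypothesis of Lemma~\ref{NN-lemma}: for all $k\in\N$ and all $n\leqslant k$, the family $\FF(\B_n,\bigcup_{i\leqslant k}\dot G_i)$ is $(m_n, 2^{-n})$-balanced. But this is immediate from condition (4c) of Definition~\ref{forcing-def}: pick (by density/Lemma~\ref{simple-extension}) a condition $q\in\dot\G$ with $k^q\geqslant k$; then $\bigcup_{i\leqslant k}\dot G_i = \bigcup_{i\leqslant k} G^q_i$ and $\FF(\B_n,\bigcup_{i\leqslant k}G^q_i)$ is $(m_n,2^{-n})$-balanced by the very definition of the forcing poset. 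Note the $(m_n,2^{-n})$-balancedness of a finite family is absolute, since it is a finitary statement about Lebesgue measures of Boolean combinations of finitely many Borel sets. Having checked all hypotheses, Lemma~\ref{NN-lemma} applied in $V^{\PP_\B}$ yields that $\dot\B'$, the algebra generated by $\B\cup\{\dot G\}$, is balanced. Since $p$ was an arbitrary condition, $\PP_\B\forces \dot\B'$ is balanced.

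\textbf{Main obstacle.} The genuinely non-trivial point is the bookkeeping that guarantees \emph{every} slice $\dot G_n$ is eventually defined and that the relevant balance conditions are decided for \emph{every} pair $(k,n)$ — i.e., the density of the sets $\{q : k^q\geqslant k\}$ below $p$, which is Lemma~\ref{simple-extension}. One must be a little careful that the definitions of $\dot G$, $\dot G_n$, and the sequences $(m_n)_{n}$ in Definition~\ref{important_names} are mutually coherent, so that $\dot G = \bigcup_n \dot G_n$ really holds in the extension. A secondary (but routine) point is the absoluteness remarks: one should note explicitly that ``$\B$ is balanced'' transfers from $V$ to $V^{\PP_\B}$ because $\B$ is countable and balancedness is expressible by quantification over the (ground-model) finite subfamilies of $\B$ and over rationals, with an arithmetic matrix involving only Lebesgue measures of Borel sets coded in $V$; no new finite subfamilies and no new witnesses appear in the extension. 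With these in hand, the proof is essentially an application of Lemma~\ref{NN-lemma} with the data extracted from the generic filter.
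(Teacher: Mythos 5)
Your proposal is correct and follows essentially the same route as the paper: use absoluteness to transfer balancedness of the countable ground-model algebra $\B$ to $V^{\PP_\B}$, extract $G_n$, $m_n$ from the generic filter (with Lemma~\ref{simple-extension} giving density so every index is decided), verify the hypotheses of Lemma~\ref{NN-lemma} via Definition~\ref{forcing-def}(4a),(4c), and apply that lemma in the extension. The paper's proof is just a terser version of what you wrote, leaving the density argument and the absoluteness of finite-family balancedness implicit.
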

\begin{proof}
    Since the property of being balanced is absolute between transitive models of {\sf ZFC} 
    $$V^{\PP_\B} \models \B \ \text{is balanced}. $$
In $V^{\PP_\B}$ for every $n\in \N$ we define $G_n=G^p_n$ for some $p\in \G$ such that $k^p\geqslant n$. Then $\B'$ is the Boolean algebra generated by $\B\cup \{G\}$, where $G=\bigcup_{n\in \N} G_n$. By Definition \ref{forcing-def}(4a,4c) the hypothesis of Lemma \ref{NN-lemma} is satisfied and so $\B'$ is balanced. 
\end{proof}

\begin{definition}\label{iteration-definition}
    We define an iteration $(\PP_\alpha)_{\alpha\leqslant\omega_1}$ with finite supports and $\PP_\alpha$-names $\dot{\B}_\alpha$ for every $\alpha\leqslant\omega_1$ by induction in the following way:
    \begin{itemize}
        \item $\PP_0$ is the trivial forcing and $\B_0=\clop(C)$,
        \item having constructed $\PP_\alpha$ and $\dot{\B}_\alpha$ we define 
        $$\PP_{\alpha+1}= \PP_{\alpha} * \PP_{\dot{\B}_\alpha}$$ 
        and we pick a $\PP_{\alpha+1}$-name $\dot{\B}_{\alpha+1}$ such that 
        $$\PP_{\alpha+1} \forces \dot{\B}_{\alpha+1}= \dot{\B}'_\alpha,$$
        \item if $\gamma$ is a limit ordinal, then we define $\PP_\gamma$ as the iteration of $(\PP_\alpha)_{\alpha<\gamma}$ with finite supports and we pick $\dot{\B}_\gamma$ so that 
        $$\PP_\gamma \forces \dot{\B}_\gamma = \bigcup_{\alpha<\gamma} \dot{\B}_\alpha $$
    \end{itemize}
\end{definition}

We will identify each $\PP_\alpha$ with the subset of $\PP_{\omega_1}$ consisting of those $p\in \PP_{\omega_1}$, for which $p(\beta)=\mathbbm{1}_\beta$ for all $\beta\geqslant\alpha$, where $\mathbbm{1}_\beta$ 
denotes the maximal element of $\PP_{\dot{\B}_\beta}$.
\begin{lemma}
    $\PP_{\omega_1}$ is $\sigma$-centered. In particular, $\PP_{\omega_1}$ satisfies ccc. 
\end{lemma}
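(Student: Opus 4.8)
The statement to prove is that $\PP_{\omega_1}$ is $\sigma$-centered (hence ccc). The natural strategy is to exploit the fact that a \emph{finite support} iteration of $\sigma$-centered forcings of length at most the continuum (here $\omega_1 \leqslant \mathfrak c$) is $\sigma$-centered; this is a classical theorem of Tall (see also \cite{Bartoszynski}). However, since the paper has been building up its own toolkit, I expect the authors to give a direct argument tailored to $\PP_{\omega_1}$, and that is what I would do as well. The key point is that each iterand $\PP_{\dot\B_\alpha}$ is $\sigma$-centered \emph{in a uniform, definable way}: by Lemma \ref{ccc}, $\PP_{\dot\B_\alpha}$ is the union of countably many directed (centered) pieces indexed by the first four coordinates $f(p)=(k^p,(m^p_n)_{n\leqslant k^p},(G^p_n)_{n\leqslant k^p},(H^p_n)_{n\leqslant k^p})$, and this index set is coded by a hereditarily countable object (finite sequences of natural numbers together with Borel codes from $\B_\alpha$, which lives in the $\alpha$-th intermediate model).

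\textbf{Main steps.} First I would recall that a condition $p\in\PP_{\omega_1}$ has finite support $\supp(p)=\{\alpha<\omega_1 : p(\alpha)\neq\mathbbm{1}_\alpha\}$, and that for each $\alpha\in\supp(p)$, $p\restriction\alpha$ forces $p(\alpha)$ to be a condition in $\PP_{\dot\B_\alpha}$, so it forces $f(p(\alpha))$ to be a member of a ground-model-codeable countable set $D_\alpha$ (the range of $f$ for $\PP_{\dot\B_\alpha}$, which only depends on $\dot\B_\alpha$ and which we may, by a density/absoluteness argument, assume to be decided outright — or we work with a maximal-antichain refinement). Second, since $\omega_1 \leqslant \mathfrak c$ and each $\PP_{\dot\B_\alpha}$ has at most $\mathfrak c$ conditions while its "type function" $f$ has countable range, I would fix for each $\alpha$ an injection of that countable range into $\N$; then two conditions $p,q\in\PP_{\omega_1}$ with $\supp(p)=\supp(q)=F$ (a fixed finite set) and with $p\restriction\alpha$ and $q\restriction\alpha$ agreeing, for every $\alpha\in F$, on the $f$-value of the $\alpha$-th coordinate, should be \emph{compatible}: one builds a common lower bound by induction along $F$, at each step using that $f^{-1}(x)$ is directed (Lemma \ref{ccc}) to amalgamate the two $\PP_{\dot\B_\alpha}$-coordinates below $p\restriction\alpha \wedge q\restriction\alpha$ (which is already constructed by the inductive hypothesis), taking the union of the measure-sets $\MM$ in that coordinate exactly as in the proof of Lemma \ref{ccc}. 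Third, this gives a partition of $\PP_{\omega_1}$ into centered pieces indexed by: a finite subset $F\subseteq\omega_1$ together with a function assigning to each $\alpha\in F$ an element of a fixed countable set — but there are $\omega_1$-many such finite subsets, so this alone only yields "$\omega_1$-centered", not "$\sigma$-centered".

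\textbf{The main obstacle, and how to get past it.} The genuine difficulty — and the reason the bare bookkeeping above is not enough — is precisely that there are uncountably many possible supports, so to obtain $\sigma$-centeredness one must "compress" the support data. This is exactly where $\omega_1 \leqslant \mathfrak c$ is used: one fixes in advance an injection $e\colon \omega_1 \to \R$, or more usefully observes that a $\sigma$-centered forcing of size $\leqslant\mathfrak c$ embeds densely into $\mathcal P(\N)/\mathrm{fin}$-style Cohen-algebra quotients; the cleanest route is Tall's lemma, which says the finite support iteration (or product) of $\leqslant\mathfrak c$-many $\sigma$-centered posets is $\sigma$-centered, proved by re-indexing conditions via a fixed family of almost disjoint sets / an independent family so that the "finitely many coordinates" of a condition can be encoded by a single natural number together with finitely many rationals. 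I would therefore finish by either (i) citing Tall's theorem and verifying its hypotheses — each $\PP_{\dot\B_\alpha}$ is (forced to be) $\sigma$-centered by Lemma \ref{ccc}, and the iteration is of length $\omega_1\leqslant\mathfrak c$ — or (ii) spelling out the re-indexing: fix an independent family $\{A_\alpha : \alpha<\omega_1\}\subseteq[\N]^{\N}$, and for $p\in\PP_{\omega_1}$ with support $F$ and coordinate-types $(x_\alpha)_{\alpha\in F}$ choose $n_p\in\bigcap_{\alpha\in F}A_\alpha\setminus\bigcup_{\alpha\in F'}A_\alpha$ for a suitable finite $F'$ coding also the $x_\alpha$'s; conditions landing in the same piece then have amalgamable supports and types, and directedness follows as before. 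The upshot is that $\PP_{\omega_1}$ is a union of countably many centered sets, hence $\sigma$-centered, and in particular ccc. I expect the paper's actual proof to simply invoke the standard iteration theorem, since nothing here is new; the work is entirely in having already shown (Lemma \ref{ccc}, Proposition \ref{forcing-NN}) that the iterands are uniformly $\sigma$-centered and well-behaved.
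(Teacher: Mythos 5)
Your proposal is correct and matches the paper's approach: the paper's proof is a one-line citation of Tall's theorem (that a finite support iteration of length $\omega_1$ of $\sigma$-centered forcings is $\sigma$-centered), combined with Lemma \ref{ccc} for the iterands — exactly your option (i), which you yourself predicted would be the paper's route. The additional material you sketch (uniform centering function $f$, re-indexing via an independent family to compress support data, using $\omega_1\leqslant\mathfrak{c}$) is precisely the content of Tall's argument and is not spelled out in the paper.
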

\begin{proof}
    This follows from Lemma \ref{ccc} and the fact that a finite support iteration of length $\omega_1$ of $\sigma$-centered forcings is $\sigma$-centered \cite[proof of Lemma 2]{Tall}.
\end{proof}

Since $\PP_{\omega_1}$ satisfies ccc, the standard closure argument shows the following (cf. \cite[Lemma 5.3]{fajardo})
\begin{lemma}\label{decidicion-club}
     Let $\dot{\nu}=(\dot{\nu}_n)_{n\in \N}$ be a sequence such that 
    $$\PP_{\omega_1} \forces (\dot{\nu}_n)_{n\in \N} \  \text{is a sequence of measures on} \ \dot{\B}_{\omega_1}.$$
Let $$C_{\dot{\nu}} = \{\alpha<\omega_1: \PP_{\omega_1} \forces \dot{\nu}\upharpoonright \dot{\B}_\alpha \in V^{\PP_\alpha} \}.$$ 
    Then $C_{\dot{\nu}}$ is a closed and unbounded subset of $\omega_1$. 
\end{lemma}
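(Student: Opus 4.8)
The plan is to run the standard closure/reflection argument available for finite-support iterations of ccc forcings, exactly in the spirit of \cite[Lemma 5.3]{fajardo}. Throughout I would write $\PP_{\omega_1}=\PP_\alpha * \PP_{[\alpha,\omega_1)}$, where the tail $\PP_{[\alpha,\omega_1)}$ is forced by $\PP_\alpha$ to be ccc with finitely-supported conditions, and recall that $\dot{\B}_\alpha$ is a $\PP_\alpha$-name for a countable Boolean algebra and that $\PP_\gamma\forces\dot{\B}_\gamma=\bigcup_{\beta<\gamma}\dot{\B}_\beta$ at limit $\gamma$.

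For closedness, take an increasing sequence $(\alpha_\xi)_{\xi<\omega}$ in $C_{\dot{\nu}}$ with supremum $\alpha<\omega_1$ (every limit point of $C_{\dot{\nu}}$ below $\omega_1$ has cofinality $\omega$, so this is the only case). Since $\PP_\alpha\forces\dot{\B}_\alpha=\bigcup_\xi\dot{\B}_{\alpha_\xi}$, we get $\PP_{\omega_1}\forces\dot{\nu}_n\upharpoonright\dot{\B}_\alpha=\bigcup_\xi(\dot{\nu}_n\upharpoonright\dot{\B}_{\alpha_\xi})$ for each $n\in\N$; by hypothesis each term on the right is realized by a $\PP_{\alpha_\xi}$-name, hence lies in $V^{\PP_\alpha}$, and the index set $\omega\times\N$ lies in $V$, so the whole sequence $(\dot{\nu}_n\upharpoonright\dot{\B}_\alpha)_{n\in\N}$ is realized by a $\PP_\alpha$-name. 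Thus $\alpha\in C_{\dot{\nu}}$.

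For unboundedness, fix $\beta<\omega_1$. I would build an increasing $\omega$-sequence $\beta=\beta_0\leqslant\beta_1\leqslant\cdots$ with $\PP_{\omega_1}\forces\dot{\nu}\upharpoonright\dot{\B}_{\beta_n}\in V^{\PP_{\beta_{n+1}}}$, put $\alpha=\sup_n\beta_n$, and conclude $\alpha\in C_{\dot{\nu}}$ by repeating the closedness computation ($\dot{\nu}_n\upharpoonright\dot{\B}_\alpha=\bigcup_k(\dot{\nu}_n\upharpoonright\dot{\B}_{\beta_k})$, each term a $\PP_{\beta_{k+1}}$-name, hence a $\PP_\alpha$-name). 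The step that produces $\beta_{n+1}$ from $\beta_n$ is the heart of the matter: passing to $V^{\PP_{\beta_n}}$, the algebra $\dot{\B}_{\beta_n}$ is realized as a fixed countable algebra, and $\dot{\nu}\upharpoonright\dot{\B}_{\beta_n}$ is a $\PP_{[\beta_n,\omega_1)}$-name for a function from the countable set $\dot{\B}_{\beta_n}\times\N$ into $\R$, i.e.\ essentially a real. For each $A\in\dot{\B}_{\beta_n}$, each $m\in\N$ and each $q\in\Q$ one picks a maximal antichain of $\PP_{[\beta_n,\omega_1)}$ deciding ``$\dot{\nu}_m(A)<q$''; by ccc each such antichain is countable and there are only countably many of them, so the union of the finite supports of all conditions that occur is a countable subset of $[\beta_n,\omega_1)$, and since $\PP_{\omega_1}$ preserves $\omega_1$ this set is bounded, by some $\beta_{n+1}<\omega_1$. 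Then $\dot{\nu}\upharpoonright\dot{\B}_{\beta_n}$ is equivalent to a $\PP_{[\beta_n,\beta_{n+1})}$-name, so $\PP_{\omega_1}\forces\dot{\nu}\upharpoonright\dot{\B}_{\beta_n}\in V^{\PP_{\beta_{n+1}}}$, as required.

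I expect the main obstacle to be precisely this localization step, namely making rigorous that the name $\dot{\nu}\upharpoonright\dot{\B}_{\beta_n}$ depends on only countably much of the iteration and that this countable part is bounded below $\omega_1$. The two ingredients are (i) the ccc, which keeps the antichains deciding the countably many values countable, and (ii) the finite support of the iteration, which converts ``countably many conditions'' into ``bounded support''; one also uses, mildly, that $\omega_1$ is not collapsed so that a countable set of countable ordinals is genuinely bounded in the extension. Everything else is routine bookkeeping.
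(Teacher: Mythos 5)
Your argument is exactly the standard closure/reflection argument for finite-support ccc iterations that the paper cites (referencing \cite[Lemma 5.3]{fajardo}) without spelling out, and both the closure computation and the localization step via ccc and finite supports are carried out correctly. The only cosmetic point is that the bound $\beta_{n+1}$ you extract lives a priori in $V^{\PP_{\beta_n}}$, but since $\PP_{\beta_n}$ is ccc any $\PP_{\beta_n}$-name for a countable ordinal is dominated by a ground-model countable ordinal, so you may take $\beta_{n+1}\in V$ as you implicitly do.
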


\begin{proposition}
    $\PP_{\omega_1} \forces \dot{\B}_{\omega_1}$ is balanced and satisfies $(\mathcal{G})$.
\end{proposition}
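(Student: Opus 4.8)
The plan is to prove the two assertions of the proposition separately: that $\dot\B_{\omega_1}$ is balanced (a short transfinite induction feeding on Proposition \ref{forcing-NN}) and that it satisfies $(\mathcal{G})$ (the substantial part, which plugs the one‑step forcing results into Proposition \ref{thm-for-forcing-version}).

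\emph{Balancedness.} I would first show by induction on $\alpha\leqslant\omega_1$ that $\PP_\alpha\forces\dot\B_\alpha$ is balanced, and that $\dot\B_\alpha$ is countable for $\alpha<\omega_1$. The base case is immediate: $\B_0=\clop(C)$ is countable and contains $\clop(C)$, and a finite family of clopen sets is $(m,\varepsilon)$-balanced for every $\varepsilon>0$ once $m$ is large enough. The successor step is exactly Proposition \ref{forcing-NN} applied over the intermediate model $V^{\PP_\alpha}$: there $\B_\alpha$ is a balanced countable subalgebra of $\bor(C)$ containing $\clop(C)$, so $\PP_{\dot\B_\alpha}\forces\dot\B'_\alpha$ is balanced, and $\dot\B_{\alpha+1}=\dot\B'_\alpha$; countability is preserved since one adjoins a single generator. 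For a limit $\gamma$, $\B_\gamma=\bigcup_{\alpha<\gamma}\B_\alpha$ is still countable (a countable union of countable sets, as $\gamma$ stays countable by ccc), and it is balanced because any finite subfamily $\AAA\subseteq\B_\gamma$ already lies in some $\B_\alpha$ with $\alpha<\gamma$, while ``$\AAA$ is $(m,\varepsilon)$-balanced'' is absolute (an arithmetic statement about Borel codes). Finally, in $V^{\PP_{\omega_1}}$ each $\dot\B_\alpha$ is balanced (by the induction and the absoluteness of balancedness already used in the proof of Proposition \ref{forcing-NN}), and since $\cf(\omega_1)=\omega_1$ every finite subfamily of $\B_{\omega_1}=\bigcup_{\alpha<\omega_1}\B_\alpha$ lies in some $\B_\alpha$; hence $\B_{\omega_1}$ is balanced.

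\emph{Property $(\mathcal{G})$.} By Proposition \ref{thm-for-forcing-version} it suffices to show, in $V^{\PP_{\omega_1}}$, that for every normal sequence $\nu=(\nu_n)_{n\in\N}$ of measures on $\B_{\omega_1}$ there is a subalgebra $\B^*\subseteq\B_{\omega_1}$ with $(\nu_n\upharpoonright\B^*)_{n\in\N}$ normal and $(\B^*,\B_{\omega_1},\nu\upharpoonright\B^*)$ satisfying $(\mathcal{G}^*)$. Fix a name $\dot\nu$ for such a $\nu$. By Lemma \ref{decidicion-club} the set $C_{\dot\nu}$ of $\alpha$ with $\nu\upharpoonright\B_\alpha\in V^{\PP_\alpha}$ is club, and by Lemma \ref{disjoint-supports} there is $\alpha_0<\omega_1$ such that $(\nu_n\upharpoonright\B_\beta)_{n\in\N}$ is normal for every $\beta>\alpha_0$. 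I would pick a suitable $\alpha\in C_{\dot\nu}$ with $\alpha>\alpha_0$ and set $\B^*=\B_\alpha$. In $V^{\PP_\alpha}$ the sequence $(\nu_n\upharpoonright\B_\alpha)_{n\in\N}$ is a normal sequence on the countable algebra $\B_\alpha$ (normality of a sequence of measures on a countable algebra is absolute, and $\nu\upharpoonright\B_\alpha\in V^{\PP_\alpha}$), so one may pass to a subsequence $(\nu_{n_k}\upharpoonright\B_\alpha)_{k\in\N}$ with $(|\nu_{n_k}\upharpoonright\B_\alpha|)_{k\in\N}$ weak$^*$-convergent to a probability measure $\nu_\infty$ on $\B_\alpha$ (Banach--Alaoglu and metrizability of the dual ball of $C(\st(\B_\alpha))$). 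Granting that $\alpha$ and the subsequence can be chosen so that $\nu_\infty\in\MM^p$ for some $p$ in the generic filter $\G_\alpha$ of $\PP_{\dot\B_\alpha}$, Proposition \ref{one-step-thm} (equivalently Lemma \ref{one-step-forcing} together with Lemma \ref{G*-first-condition}) yields $p\forces(\B_\alpha,\B_{\alpha+1},(\nu_{n_k}\upharpoonright\B_\alpha))$ satisfies $(\mathcal{G}^*)$. A witness for $(\mathcal{G}^*)$ of a subsequence is also one for the whole sequence (replace the index sequences $(a_j),(b_j)$ by $(n_{a_j}),(n_{b_j})$, which are still strictly increasing, and note conditions (a)--(d) of Definition \ref{G*-definition} only involve the measures $\nu_{a_j},\nu_{b_j}$), and a witness for $(\B_\alpha,\B_{\alpha+1},\cdot)$ is also a witness for $(\B_\alpha,\B_{\omega_1},\cdot)$ (the set $G$ occurring in it lies in $\B_{\alpha+1}\subseteq\B_{\omega_1}$, and all remaining data stays in $\B_\alpha$). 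Hence $(\B_\alpha,\B_{\omega_1},\nu\upharpoonright\B_\alpha)$ satisfies $(\mathcal{G}^*)$, and Proposition \ref{thm-for-forcing-version} gives that $\B_{\omega_1}$ satisfies $(\mathcal{G})$.

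\emph{Where the difficulty lies.} The step I expect to demand the most care is exactly the ``granting'' above: ensuring that the relevant limit measure $\nu_\infty$ is captured by a condition of the generic filter at a well-chosen stage. One half is easy — for a fixed probability measure $\rho$ on $\B_\alpha$ lying in $V^{\PP_\alpha}$, the set
\[
\{p\in\PP_{\dot\B_\alpha}:\rho\in\MM^p\}\ \cup\ \Big\{p\in\PP_{\dot\B_\alpha}:\rho\big(\textstyle\bigcup_{n\leqslant k^p}(G^p_n\cup H^p_n)\big)\geqslant 0.1\Big\}
\]
is dense (given $q$: if $\rho$ has mass $<0.1$ on the generic sets accumulated so far in $q$, simply enlarge $\MM^q$ by $\rho$, which is still a condition by (4b) of Definition \ref{forcing-def}; otherwise $q$ already lies in the second set). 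Thus generically either $\nu_\infty$ is captured, or $\nu_\infty$ assigns mass $\geqslant 0.1$ to the generic set $\dot G\cup\dot H$. The real work is to arrange, by choosing the stage $\alpha$ along the club $C_{\dot\nu}$ (and the subsequence) appropriately and using that $\nu$ has pairwise disjoint Borel supports, that the first alternative occurs — this is where the genericity of the iteration genuinely enters, and it is the technical heart of the argument. Once $(\mathcal{G})$ is established, combining it with the balancedness proved in the first part completes the proof of the proposition.
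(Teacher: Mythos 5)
Your overall architecture is the same as the paper's: balancedness via Proposition \ref{forcing-NN} plus unions, and $(\mathcal{G})$ via Proposition \ref{thm-for-forcing-version} combined with Lemma \ref{disjoint-supports}, Lemma \ref{decidicion-club}, Banach--Alaoglu on a countable algebra, and Proposition \ref{one-step-thm}. Your observations that $(\mathcal{G}^*)$ for a subsequence transfers to the full sequence, and that a witness over $\B_{\alpha+1}$ is also one over $\B_{\omega_1}$, are correct and explicitly used in the paper.

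The one point you flag as a gap (``granting that $\nu_\infty$ is captured'') is the only real incompleteness, and you overestimate its difficulty. You set up a dichotomy inside a fixed stage: for a given condition $q(\beta)$, either you can add $\nu_\infty$ to $\MM^{q(\beta)}$, or $\nu_\infty$ already has mass $\geqslant 0.1$ on $\widehat{G}^{q(\beta)}\cup\widehat{H}^{q(\beta)}$, and you worry that generically the second alternative might occur. But this dichotomy never needs to be confronted: one is proving a density statement in $\PP_{\omega_1}$, not analysing a fixed generic object. Having found $p_1\leqslant p$ so that $(\nu_n\upharpoonright\B_\gamma)_n$ is normal for all $\gamma\geqslant\alpha_1$, the finite-support property of $\PP_{\omega_1}$ lets one pick $\beta\in C_{\dot\nu}$ above both $\alpha_1$ and $\supp(p_1)$ (both constraints are met cofinally since $C_{\dot\nu}$ is club and $\supp(p_1)$ is finite). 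Then $p_1(\beta)=\mathbbm{1}_\beta$ has committed to nothing at coordinate $\beta$, so one can strengthen $p_1$ to $p_2$ whose $\beta$-th coordinate is decided and has $\nu_\infty\in\MM^{p_2(\beta)}$; condition (4b) of Definition \ref{forcing-def} is trivial because $\widehat{G}\cup\widehat{H}$ can be taken essentially empty. The genericity of the iteration ``enters'' only through this routine finite-support observation, and the pairwise disjoint Borel supports of $\nu$ play no role at this step (they were already used to get normality and the subsequence with limit $\nu_\infty$). So your proof is on the right track but left its central step unfilled; once you note that $\beta$ may be taken above the support of $p_1$, the ``second alternative'' disappears and the argument closes exactly as in the paper.
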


\begin{proof}
    The fact that $\PP_{\omega_1} \forces \dot{\B}_{\omega_1}$  is balanced follows directly from Proposition \ref{forcing-NN} and the fact that increasing unions of balanced Boolean algebras are balanced. 

    To prove the second part of the proposition, by Proposition \ref{thm-for-forcing-version} it is enough to show that for every sequence $(\dot{\nu}_n)_{n\in \N}$ such that 
     $$\PP_{\omega_1} \forces (\dot{\nu}_n)_{n\in \N} \ \text{is a normal sequence of measures on} \ \dot{\B}_{\omega_1}$$ 
 we have
$$\PP_{\omega_1} \forces \exists_{\alpha<\omega_1} \ (\dot{\B}_\alpha, \dot{\B}_{\omega_1}, \dot{\nu}\upharpoonright{\dot{\B}_\alpha}) \ \text{satisfies} \ (\mathcal{G}^*).$$
   Pick any $p\in\PP_{\omega_1}$. By Lemma \ref{disjoint-supports} there is $\alpha_1<\omega_1$ and $p_1\leqslant p$ such that for every $\beta\geqslant \alpha_1$ 
   $$p_1\forces (\dot{\nu}_n\upharpoonright{\dot{\B}_\beta})_{n\in\N} \ \text{is normal}.$$
By Lemma \ref{decidicion-club} there are: $\beta\in C_{\dot{\nu}}$ such that $\alpha_1<\beta$, $p_1\in V^{\PP_\beta}$ and $\PP_{\beta}$-names $\dot{\nu}^\beta, \dot{\nu}^\beta_n$ for $n\in \N$ such that
$$ \PP_\beta \forces \dot{\nu}^\beta = (\dot{\nu}^\beta_n)_{n\in\N}=(\dot{\nu}_n\upharpoonright{\dot{\B}_\beta})_{n\in\N}. $$
Without loss of generality by passing to a subsequence we may assume that there is $\dot{\nu}_\infty$ such that
$$ \PP_\beta \forces (|\dot{{\nu}}^\beta_n|)_{n\in\N} \ \text{is pointwise convergent to a measure} \ \dot{{\nu}}_\infty. $$
   Since $p_1(\beta)=\mathbbm{1}_{\beta}$, there is $p_2\leqslant p_1$ such that $$p_2\upharpoonright \beta \forces \dot{\nu}_\infty \in \dot{\MM}^{\check{p}_2({\beta})}.$$ 
   By Proposition \ref{one-step-thm} we have 
   $$p_2 \forces (\dot{\B}_{\beta}, \dot{\B}_{\beta+1}, \dot{\nu}^\beta) \ \text{satisfies} \ (\mathcal{G}^*)$$
   and hence $$p_2 \forces \exists_{\alpha<\omega_1} \ (\dot{\B}_\alpha, \dot{\B}_{\omega_1}, \dot{\nu}\upharpoonright{\dot{\B}_\alpha}) \ \text{satisfies} \ (\mathcal{G}^*)$$
   which completes the proof. 
\end{proof}
In particular, by Proposition \ref{NN_implies_not_Nikodym} and Proposition \ref{G_implies_Grothendieck} we obtain
\begin{corollary}\label{forcing_corollary}
    $\PP_{\omega_1} \forces$ there is a Boolean algebra of size $\omega_1$ with the Grothendieck property but without the Nikodym property. 
\end{corollary}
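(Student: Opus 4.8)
The plan is to invoke the two properties already packaged in the earlier sections. By Proposition~\ref{NN_implies_not_Nikodym}, any balanced Boolean algebra fails the Nikodym property, and by Proposition~\ref{G_implies_Grothendieck}, any Boolean algebra satisfying $(\mathcal{G})$ has the Grothendieck property. So the corollary follows immediately from the preceding proposition, which asserts $\PP_{\omega_1}\forces \dot{\B}_{\omega_1}$ is balanced and satisfies $(\mathcal{G})$: in the generic extension $\dot{\B}_{\omega_1}$ is a Boolean algebra with the Grothendieck property and without the Nikodym property. The only remaining point is the cardinality claim.

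First I would note that $\PP_{\omega_1}$ is ccc (indeed $\sigma$-centered), hence preserves cardinals, so it suffices to check that $\PP_{\omega_1}\forces |\dot{\B}_{\omega_1}|\leqslant\omega_1$ (it is clearly infinite, hence of size exactly $\omega_1$). By Definition~\ref{iteration-definition}, $\dot{\B}_{\omega_1}=\bigcup_{\alpha<\omega_1}\dot{\B}_\alpha$, and $\dot{\B}_0=\clop(C)$ is countable while each successor step, $\dot{\B}_{\alpha+1}=\dot{\B}'_\alpha$, adjoins a single generator $\dot{G}$ to $\dot{\B}_\alpha$ (Definition~\ref{important_names}), so $\PP_{\alpha+1}\forces |\dot{\B}_{\alpha+1}|=|\dot{\B}_\alpha|$; limit stages are unions of length $<\omega_1$. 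A straightforward transfinite induction then gives $\PP_{\omega_1}\forces |\dot{\B}_\alpha|\leqslant\omega_1$ for all $\alpha<\omega_1$, and hence $\PP_{\omega_1}\forces|\dot{\B}_{\omega_1}|\leqslant\omega_1$.

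The main (and essentially only) obstacle is not in this corollary at all, but in the proposition it quotes: establishing that $\dot{\B}_{\omega_1}$ satisfies $(\mathcal{G})$ requires, via Proposition~\ref{thm-for-forcing-version}, that every normal sequence of measures on $\dot{\B}_{\omega_1}$ already has a witness for $(\mathcal{G}^*)$ caught at some intermediate stage. That in turn rests on Lemma~\ref{disjoint-supports} (to find a stage $\alpha_1$ past which restrictions stay normal), Lemma~\ref{decidicion-club} (to find a stage $\beta>\alpha_1$ in $C_{\dot\nu}$ where the restricted sequence is already a name over $V^{\PP_\beta}$), and the key density argument of Lemma~\ref{one-step-forcing}/Proposition~\ref{one-step-thm} — which is where the bookkeeping trick of putting $\dot\nu_\infty$ into $\MM^{p}$ at the $\beta$-th coordinate does the real work. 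Once those are in place, the present corollary is a one-line consequence.

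\begin{proof}
By the preceding proposition, $\PP_{\omega_1}\forces \dot{\B}_{\omega_1}$ is balanced and satisfies $(\mathcal{G})$. By Proposition~\ref{NN_implies_not_Nikodym}, being balanced implies that $\dot{\B}_{\omega_1}$ does not have the Nikodym property, and by Proposition~\ref{G_implies_Grothendieck}, satisfying $(\mathcal{G})$ implies that $\dot{\B}_{\omega_1}$ has the Grothendieck property. It remains to bound the cardinality. By Definition~\ref{iteration-definition}, $\PP_{\omega_1}\forces \dot{\B}_0 = \clop(C)$ is countable, $\PP_{\omega_1}\forces\dot{\B}_{\alpha+1}=\dot{\B}'_\alpha$ is generated by $\dot{\B}_\alpha$ together with a single set $\dot{G}$ (cf. Definition~\ref{important_names}), so $\PP_{\omega_1}\forces|\dot{\B}_{\alpha+1}|=|\dot{\B}_\alpha|$, and at limit ordinals $\gamma<\omega_1$ we have $\PP_{\omega_1}\forces\dot{\B}_\gamma=\bigcup_{\alpha<\gamma}\dot{\B}_\alpha$. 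A transfinite induction yields $\PP_{\omega_1}\forces|\dot{\B}_\alpha|\leqslant\omega_1$ for every $\alpha<\omega_1$, hence $\PP_{\omega_1}\forces|\dot{\B}_{\omega_1}|\leqslant\omega_1$; since $\dot{\B}_{\omega_1}$ is infinite, $\PP_{\omega_1}\forces|\dot{\B}_{\omega_1}|=\omega_1$. Therefore $\PP_{\omega_1}$ forces that $\dot{\B}_{\omega_1}$ is a Boolean algebra of size $\omega_1$ with the Grothendieck property but without the Nikodym property.
\end{proof}
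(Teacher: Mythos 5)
Your proof matches the paper's approach exactly: the paper introduces this corollary with the words ``In particular, by Proposition~\ref{NN_implies_not_Nikodym} and Proposition~\ref{G_implies_Grothendieck} we obtain,'' relying on the immediately preceding proposition that $\PP_{\omega_1}\forces\dot{\B}_{\omega_1}$ is balanced and satisfies $(\mathcal{G})$, and leaving the cardinality bound entirely implicit. Your explicit cardinality discussion is a welcome addition, but the final step ``since $\dot{\B}_{\omega_1}$ is infinite, $\PP_{\omega_1}\forces|\dot{\B}_{\omega_1}|=\omega_1$'' is a non sequitur: infinitude alone does not rule out $|\dot{\B}_{\omega_1}|=\omega$. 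What you actually need is that the chain $(\dot{\B}_\alpha)_{\alpha<\omega_1}$ is strictly increasing, i.e.\ that the generic set $\dot G$ adjoined at stage $\alpha$ lies outside $\dot{\B}_\alpha$; this follows from a routine genericity/density argument (for any $p$ one can find two extensions deciding $\dot G\cap A$ incompatibly for some fixed $A\in\B_\alpha$, so no ground-model Borel set can be forced to equal $\dot G$). With that one line supplied, the cardinality claim is fine and your proof is complete.
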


The existence of a Boolean algebra with the Grothendieck property of small cardinality has influence on certain cardinal characteristics of the continuum. Below $\mathfrak p$ denotes the pseudointersection number, $\mathfrak s$ is the splitting number and $\cov(\MM)$ is the covering number of the ideal of meager sets in $\R$. 

\begin{corollary}\label{pseudointersection_number}
    $\PP_{\omega_1} \forces \mathfrak p= \mathfrak s = \cov(\MM)=\omega_1$.
\end{corollary}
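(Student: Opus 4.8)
The plan is to derive the whole statement from two {\sf ZFC} facts — $\cov(\MM),\mathfrak s,\mathfrak p\geqslant\omega_1$ always, and $\mathfrak p\leqslant\mathfrak s$ (equivalently, by Bell's theorem that $\mathfrak p$ is the least $\kappa$ for which $\mathrm{MA}_\kappa$ fails for $\sigma$-centered posets, together with the observation that countable posets are $\sigma$-centered, $\mathfrak p\leqslant\cov(\MM)$) — plus the classical behaviour of finite support iterations of length $\omega_1$. Thus it suffices to prove $\PP_{\omega_1}\forces\cov(\MM)\leqslant\omega_1$ and $\PP_{\omega_1}\forces\mathfrak s\leqslant\omega_1$; then $\mathfrak p\leqslant\mathfrak s\leqslant\omega_1$ comes for free.

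First I would fix a $\PP_{\omega_1}$-generic $G$, write $G_\alpha=G\cap\PP_\alpha$ and $V_\alpha=V[G_\alpha]$, and record the standard reflection fact: since $\PP_{\omega_1}$ is $\sigma$-centered, hence ccc, and $\omega_1$ is regular, every real of $V_{\omega_1}$ already lies in some $V_\alpha$ with $\alpha<\omega_1$ (a nice name for a real uses only countably many conditions, each of finite support, so their supports are bounded below $\omega_1$). Next I would check that each iterand $\PP_{\dot\B_\alpha}$ is forced to be atomless: given any condition, Lemma \ref{simple-extension} produces an extension with $k$ increased by one, and two such extensions with distinct admissible values of $m_{k^p+1}$ are incompatible. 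Consequently the classical theorem on finite support iterations applies to each tail: for every $\alpha<\omega_1$, the quotient $\PP_{\alpha+\omega}/\PP_\alpha$ is, in $V_\alpha$, the finite support iteration of the coordinates in $[\alpha,\alpha+\omega)$, a length-$\omega$ iteration of nontrivial forcings, and therefore adds a Cohen real $c_\alpha$ over $V_\alpha$; note $c_\alpha\in V_{\alpha+\omega}\subseteq V_{\omega_1}$.

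Then I would run the two routine computations. For $\cov(\MM)$: since $c_\alpha$ is Cohen over $V_\alpha$, the set $C\cap V_\alpha$ is meager in $V_{\omega_1}$; choosing a meager Borel set $M_\alpha\supseteq C\cap V_\alpha$ and invoking the reflection fact gives $C=\bigcup_{\alpha<\omega_1}M_\alpha$, so $\cov(\MM)\leqslant\omega_1$. For $\mathfrak s$: a Cohen real over a model $M$ splits every infinite subset of $\omega$ lying in $M$ (for a fixed infinite $A\in M$ the set of reals splitting $A$ is comeager), so $\{c_\alpha:\alpha<\omega_1\}$, regarded as subsets of $\omega$, is a splitting family by the reflection fact, whence $\mathfrak s\leqslant\omega_1$. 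Combining these with the {\sf ZFC} lower bounds and $\mathfrak p\leqslant\mathfrak s$ completes the proof.

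I do not expect a genuine obstacle here: the only points needing care are the verification that the iterands $\PP_{\dot\B_\alpha}$ are nontrivial (so that Cohen reals are added cofinally) and the bookkeeping identifying $\PP_{\alpha+\omega}/\PP_\alpha$, inside $V_\alpha$, with the finite support iteration of the remaining coordinates; everything else is the standard analysis of finite support iterations of length $\omega_1$. As an alternative route to $\mathfrak p=\omega_1$ one could instead combine Corollary \ref{forcing_corollary} with \cite[Corollary 3F]{Haydon_Levy_Odell} (a Boolean algebra with the Grothendieck property of size $<\mathfrak c$ rules out $\mathfrak p=\mathfrak c$, hence forces $\mathfrak p=\omega_1$ in our model), but the argument through $\mathfrak p\leqslant\mathfrak s$ above is self-contained and also delivers $\mathfrak s$ and $\cov(\MM)$.
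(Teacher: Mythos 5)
Your argument is correct, but it takes a genuinely different route from the paper's. The paper's proof is a one-line citation to \cite[Corollary 4.3]{Sobota_kukuryku}, a general {\sf ZFC} result of Sobota and Zdomskyy that bounds $\mathfrak p,\mathfrak s,\cov(\MM)$ from above by the least cardinality of an infinite Boolean algebra with the Grothendieck property; combined with Corollary \ref{forcing_corollary} this immediately yields all three equalities. This is precisely the style of your alternate route at the end, except that citing Sobota--Zdomskyy delivers $\mathfrak s$ and $\cov(\MM)$ as well, not just $\mathfrak p$; the Haydon--Levy--Odell reference you mention really only treats $\mathfrak p$. Your main argument instead analyses the iteration directly: you correctly observe that each iterand $\PP_{\dot\B_\alpha}$ is atomless (Remark \ref{dla_dobra_nauki} gives infinitely many admissible values of $m_{k^p+1}$, and distinct choices yield incompatible extensions by the requirement $m^q_n=m^p_n$ for $n\leqslant k^p$), so each $\omega$-block of the finite-support iteration adds a Cohen real over the intermediate model; together with the standard ccc reflection of reals into intermediate models and the classical facts that Cohen reals witness smallness of $\cov(\MM)$ and of $\mathfrak s$, and the {\sf ZFC} inequality $\mathfrak p\leqslant\mathfrak s$ (or $\mathfrak p\leqslant\cov(\MM)$), this is complete. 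The trade-off is that the paper's citation makes the corollary a direct consequence of the Grothendieck property of the constructed algebra, reinforcing the theme that small Grothendieck algebras constrain the continuum, whereas your argument is self-contained and exposes the Cohen-real mechanism inside the forcing, but does not exploit the Grothendieck property at all and would apply to any nontrivial finite-support ccc iteration of length $\omega_1$.
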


\begin{proof}
    Apply \cite[Corollary 4.3]{Sobota_kukuryku}.
\end{proof}

\begin{theorem}\label{main}
    It is consistent with $\neg${\sf CH} that there is a Boolean algebra of size $\omega_1$ with the Grothendieck property but without the Nikodym property. 
\end{theorem}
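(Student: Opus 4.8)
The plan is to deduce Theorem~\ref{main} immediately from the work already assembled in the Forcing section, essentially by reading off what the iteration $\PP_{\omega_1}$ does to the ground model. First I would observe that by Lemma~\ref{ccc} and its iterated version, $\PP_{\omega_1}$ is $\sigma$-centered, hence ccc, so forcing with it preserves all cardinals and in particular $\omega_1^V=\omega_1^{V^{\PP_{\omega_1}}}$; thus a ``Boolean algebra of size $\omega_1$'' in the extension is genuinely of size the first uncountable cardinal. Next I would invoke the preceding proposition: $\PP_{\omega_1}\forces \dot{\B}_{\omega_1}$ is balanced and satisfies $(\mathcal{G})$. Combining this with Proposition~\ref{NN_implies_not_Nikodym} (balanced $\Rightarrow$ not Nikodym) and Proposition~\ref{G_implies_Grothendieck} (property $(\mathcal{G})\Rightarrow$ Grothendieck) — exactly the content packaged in Corollary~\ref{forcing_corollary} — gives that in $V^{\PP_{\omega_1}}$ there is a Boolean algebra of size $\omega_1$ with the Grothendieck property and without the Nikodym property.

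The only remaining point is to secure $\neg${\sf CH} in the extension, and here I would simply start from a model of $\neg${\sf CH}: begin with a ground model $V$ satisfying, say, ${\sf MA}+2^{\aleph_0}=\aleph_2$ (or any model with $\mathfrak c\geqslant\omega_2$, e.g.\ add $\omega_2$ Cohen reals). Since $\PP_{\omega_1}$ is $\sigma$-centered it has size at most $\mathfrak c^V$ (its conditions are finite tuples of naturals, finite sequences of Borel codes, and finite sets of measures on a countable algebra, and at each of the $\omega_1$ stages there are only $\mathfrak c$ of these), so $|\PP_{\omega_1}|\leqslant\mathfrak c^V$ and the usual nice-names counting gives $V^{\PP_{\omega_1}}\models 2^{\aleph_0}\leqslant (\mathfrak c^V)^{\aleph_0}=\mathfrak c^V$; on the other hand $\mathfrak c$ cannot drop, so $\mathfrak c^{V^{\PP_{\omega_1}}}=\mathfrak c^V\geqslant\omega_2$. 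Hence $V^{\PP_{\omega_1}}\models\neg${\sf CH}, and by the previous paragraph it also contains the desired algebra of size $\omega_1<\mathfrak c$. This establishes the consistency statement.

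I do not expect a genuine obstacle here: all the heavy lifting — the combinatorics of balanced families (Section~\ref{balanced-families-section}), the one-step extension destroying weak*-convergence while preserving balance (Lemma~\ref{one-step-forcing}, Proposition~\ref{one-step-thm}), the chain condition and the reflection argument via Lemma~\ref{disjoint-supports} and Lemma~\ref{decidicion-club}, and the final verification that $\dot\B_{\omega_1}$ is balanced and satisfies $(\mathcal{G})$ — has already been carried out above. The ``hard part'' of Theorem~\ref{main}, such as it is, is purely bookkeeping: checking that the cardinal arithmetic goes through (the ccc-ness of $\PP_{\omega_1}$ preserves cofinalities, so no cardinal is collapsed and $\mathfrak c$ is not inflated beyond $(\mathfrak c^V)^{\aleph_0}$), and noting that one is free to choose the ground model so that $\mathfrak c>\omega_1$ persists. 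One could also remark, as the authors do via Corollary~\ref{pseudointersection_number}, that in fact $\mathfrak p=\omega_1$ holds in the extension, which reconfirms $\omega_1<\mathfrak c$ without appealing to the counting argument; but the cleanest proof is just: start with $\neg${\sf CH}, force with $\PP_{\omega_1}$, apply Corollary~\ref{forcing_corollary}, and check cardinals are preserved.
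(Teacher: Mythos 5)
Your proposal is essentially identical to the paper's proof: start from a ground model of $\neg${\sf CH}, force with the $\sigma$-centered (hence ccc) iteration $\PP_{\omega_1}$, invoke Corollary~\ref{forcing_corollary} to obtain the algebra, and observe that cardinals and the value of $\mathfrak c$ are preserved so $\neg${\sf CH} persists. You supply somewhat more of the routine nice-names/cardinal-arithmetic detail than the paper's one-line "$\sigma$-centered preserves cardinals and the value of the continuum," but the argument is the same.
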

\begin{proof}
    Start with a model $V$ of {\sf ZFC} satisfying $\neg${\sf CH}. Since $\PP_{\omega_1}$ is $\sigma$-centered, it preserves cardinals and the value of the continuum, so we have 
    $$V^{\PP_{\omega_1}}\models \mathfrak c = \mathfrak{c}^V> \omega_1^V=\omega_1.$$
By Corollary \ref{forcing_corollary} in $V^{\PP_{\omega_1}}$ there is a Boolean algebra with the Grothendieck property, but without the Nikodym property.
\end{proof}

\section{Final remarks}
Let us start with a comment concerning differences between the original Talagrand's contruction and our approach. 
To obtain the Grothendieck property, Talagrand uses {\sf CH} to enumerate (in a sequence of length $\omega_1$) all normalized sequences $(\nu_n)_{n\in\N}$ of measures on countable subalgebras of Borel subsets of the Cantor set, for which there exists an antichain $(H_n)_{n\in\N}$ such that $|\nu_n|(H_n)\geqslant 0.95$. Then for each such sequence he constructs another antichain $(G_n)_{n\in\N}$ satisfying the hypothesis of Lemma \ref{NN-lemma}, such that for $G=\bigcup_{n\in\N} G_n$ we have 
\begin{itemize}
    \item $|\nu_n(G\cap H_n)|\geqslant 0.4$ for infinitely many $n\in \N$, 
    \item $|\nu_n(G\cap H_n)|< 0.1$ for infinitely many $n\in \N$. 
\end{itemize}
It follows that extending a given Boolean algebra with $G$ keeps it balanced, and in the extension the sequence $(\nu_n)_{n\in\N}$ satisfies a property similar to $(\mathcal{G^*})$ from Definition \ref{G*-definition}. Thus, the final algebra has the Grothendieck property and does not have the Nikodym property. The same approach was used in \cite{SZpreprint}.

However, this technique applies only when we work with one sequence of measures at a time. In our method, we construct a suitable antichain $(H_n)_{n\in\N}$ along with $(G_n)_{n\in\N}$, which allows us to pick both the antichains in a generic way, making them working for uncountably many sequences of measures simultaneously.

The method of construction we have described relies strongly on the fact that the Boolean algebras we extend are countable.

\begin{question}
    Let $\B\subseteq\bor(C)$ be a balanced Boolean algebra of cardinality $<\mathfrak c$ and $\nu$ be a normal sequence of measures on $\B$.
    
    Does there exist a balanced Boolean algebra $\B\subseteq \B'\subseteq \bor(C)$ such that $(\B,\B',\nu)$ satisfies $(\mathcal{G}^*)$?
\end{question}
Positive answer for the above question would allow us to construct (by induction of length $\mathfrak c$) a balanced Boolean algebra of size $\mathfrak c$. Thus, it would imply the positive answer for the following question:
\begin{question}
    Does there exist (in {\sf ZFC}) a balanced Boolean algebra with the Grothendieck property?
\end{question}
One may look for candidates for Boolean algebras with the Grothendieck and without Nikodym property among maximal balanced Boolean algebras. 
\begin{question}
    Let $\B\subseteq \bor(C)$ be a maximal balanced Boolean algebra. Does $\B$ have the Grothendieck property? 
\end{question}

\section*{Acknowledgments}
We would like to thank our PhD supervisors Piotr Koszmider and Piotr Borodulin-Nadzieja for introducing to the topic, careful reading and valuable suggestions. 
We would like to express our gratitude towards The Fields Institute for
Research in Mathematical Sciences in Toronto for the hospitality and excellent conditions for research during Thematic Program on Set Theoretic Methods in Algebra, Dynamics and Geometry (January - June, 2023), during which most of the results contained in this work were obtained. We would also like to thank Omar Selim for sharing his notes, Julia Millhouse and Forte Shinko for the language assistance with French.

\bibliographystyle{amsplain}
\bibliography{bibliography}
\end{document}